\def\ulangle
\def\drangle
\def\bigdrangle
\def\bigulangle
\def\bigsquare
\DeclareMathOperator{\colim}{colim}
\DeclareMathOperator{\pcolim}{pcolim}
\DeclareMathOperator{\hocolim}{hocolim}
\DeclareMathOperator{\holim}{holim}
\def\M{\mathcal{M}}
\def\Ext{\mathrm{Ext}}
\def\f{\phi}
\def\F{\mathcal{F}}
\def\FDu{\mathbb {FD}}
\def\FDm{{\mathbb F^{+}\mathbb D}}
\def\Mod{\mathrm{Mod}}
\def\End{\mathrm{End}}
\def\D{\mathbb D}
\def\E{\mathbb E}
\def\sfD{\mathbf D}
\def\T{\mathbf T}
\def\C{\mathcal C}
\def\FDp{\mathbb {F}^-{\mathbb D}}
\def\FD{\mathbb {F}^{b}{\mathbb D}}
\def\H{\mathcal H}
\def\Dia{\mathrm{Dia}}
\def\B{\mathcal B}
\def\Ho{\mathrm{Ho}}
\def\CAT{\mathrm{CAT}}
\def\sU{\mathbb U}
\def\sV{\mathbb V}
\def\Z{\mathbb Z}
\def\t{\mathbb t}
\def\A{\mathcal A}
\def\dia{\mathrm{dia}}
\def\bbone{\mathbb{1}}
\def\bbtwo{\mathbb{2}}
\def\ordn{\mathbb{n}}
\def\ordk{\mathbb{k}}
\def\ordnp{\mathbb{n+1}}
\def\Ker{\mathrm{Ker}}
\def\N{\mathbb N}
\def\id{\mathrm{id}}
\def\Ch{\mathrm{Ch}}
\def\Tot{\mathrm{Tot}}
\def\fincat{\mathrm{fd.Cat}}
\def\Cat{\mathrm{Cat}}
\def\Ob{\mathrm{Ob}}
\def\Set{\mathrm{Set}}
\def\PDer{\mathrm{PDer}}
\def\Der{\mathrm{Der}}
\def\real{\mathrm{real}}
\def\tow{\mathrm{tow}}
\def\Tow{\mathrm{Tow}}
\def\tel{\mathrm{tel}}
\def\Tel{\mathrm{Tel}}
\def\gr{\mathrm{gr}}
\def\Add{\mathrm{Add}}
\def\Ab{\mathrm{Ab}}
\def\G{\mathcal{G}}
\def\W{\mathcal W}
\def\op{{\mathrm{op}}}
\def\pt{\mathrm{pt}}
\def\S{\mathbf S}
\def\Tr{\mathrm{tr}}
\def\cone{\mathrm{cone}}
\def\fib{\mathrm{fib}}
\def\Prod{\mathrm{Prod}}
\numberwithin{equation}{section}
\newtheorem{thm}{Theorem}[section]
\newtheorem{cor}[thm]{Corollary}
\newtheorem{prop}[thm]{Proposition}
\newtheorem{lem}[thm]{Lemma}
\theoremstyle{plain}
\newtheorem{defn}[thm]{Definition}
\theoremstyle{plain}
\newtheorem{rmk}[thm]{Remark}
\newtheorem{eg}[thm]{Example}
\title{Morita theory for stable derivators}
\author{Simone Virili}
\thanks{The author was supported by the Ministerio de Econom\'\i a y Competitividad of Spain
via a grant `Juan de la Cierva-formaci\'on'. He was also supported by the research projects from the Ministerio de Econom\'\i a y
Competitividad of Spain (MTM2016-77445-P) and the Fundaci\'on `S\'eneca' of Murcia (19880/GERM/15),
both with a part of FEDER funds.}
\date{\today}
\subjclass[2010]{18E30, 18E35, 16E30, 16E35, 14F05.}
\keywords{Stable derivator, $t$-structure, tilting, realization functor.}
\begin{document}

\begin{abstract}
We give a general construction of realization functors for $t$-structures on the base of a strong stable derivator. In particular, given such a derivator $\D$, a $t$-structure $\t=(\mathcal D^{\leq0},\mathcal D^{\geq0})$ on the triangulated category $\D(\bbone)$, and letting $\A=\mathcal D^{\leq0}\cap \mathcal D^{\geq0}$ be its heart,  we construct, under mild assumptions, a morphism of prederivators
\[
\real_\t\colon \sfD_\A\to \D
\]
where $\sfD_\A$ is the natural prederivator enhancing the derived category of $\A$. Furthermore, we give criteria for this morphism to be fully faithful and essentially surjective. If the $t$-structure $\t$ is induced by a suitably ``bounded" co/tilting object, $\real_\t$ is an equivalence. Our construction unifies and extends most of the derived co/tilting equivalences appeared in the literature in the last years.  
\end{abstract}

\maketitle

\setcounter{tocdepth}{1}
\tableofcontents

\section*{Introduction}

{\bf Tilting theory} arose in Representation Theory as an extension of the classical Morita theory (see \cite{BB80,happel1982tilted,Bo81}). In this context, the equivalences of full module categories were substituted by suitable counter-equivalences of torsion pairs, induced by a finite dimensional tilting module. These results were successively extended to categories of modules over arbitrary rings, but still considering tilting modules with strong finiteness conditions (see \cite{CF90,miyashita-tilt}). 

\smallskip
Few years after their introduction, it was noticed that (classical) tilting modules could be used to construct equivalences  between derived categories (see \cite{CPS86,H87,happel1988triangulated}). It was finally shown by Rickard~\cite{rickard1991derived} and Keller~\cite{kel94-dg} that compact tilting complexes guarantee the existence of (bounded and unbounded) derived equivalences and vice-versa, hence establishing a derived Morita theory for rings. 

\smallskip
Motivated by problems in Approximation Theory of modules, large (i.e., non-compact) counterparts of co/tilting modules and complexes over rings were introduced (see \cite{hugel2001infinitely,CGM,CT95,stovicek2014derived,hugel2015silting,wei2013semi}). However, contrary to the compact case, these non-compact objects cannot be immediately substituted to their classical counterpart to construct derived equivalences, since their endomorphism rings are not derived equivalent to the original ring. A solution to this difficulty was suggested in \cite{stovicek2014derived}: instead of considering endomorphism rings, one should consider the hearts of the
naturally associated {\bf $t$-structures}. 

\smallskip
$t$-Structures in triangulated categories were introduced by Beilinson, Bernstein and Deligne~\cite{BBD} in their study of perverse
sheaves on an algebraic or analytic variety. A $t$-structure in a triangulated category $\mathcal{D}$  is a pair of full
subcategories satisfying a suitable set of axioms (see the precise definition
in the next section) which guarantee that their intersection is an
Abelian category $\mathcal{A}$, called the heart. $t$-Structures have found applications in many branches of Mathematics, such that Algebraic Geometry, Algebraic Topology and Representation Theory of Algebras.

\smallskip
Already in the first paper about $t$-structures, Beilinson, Bernstein and Deligne, were concerned in finding ways to compare the original category $\mathcal D$ with the bounded derived category $\sfD^b(\A)$ of the heart $\A$ of a given $t$-structure $\t$. When $\mathcal D$ is the derived category of a ring $R$, they were able to construct, via the filtered derived category of $R$, a {\bf realization functor}
\[
\real_\t^b\colon \sfD^b(\A)\to \sfD^b(\Mod(R))
\]
and to give necessary and sufficient conditions for this functor to be an equivalence. This technique was then generalized by Beilinson \cite{Beilinson} with the introduction of $f$-categories, an abstraction of the concept of filtered derived category.\\
For alternative approaches to bounded realization functors see \cite{bernhard1991derived} (and its translation \cite{Porta} in the setting of stable derivators), and \cite{keller1990chain, keller1987sous}. 

\smallskip
Beilinson's abstract construction of realization functors via {\bf $f$-categories} is at the base of the recent work of Psaroudakis and Vitoria \cite{Jorge_e_Chrisostomos}, where they use realization functors to establish a derived Morita theory for Abelian categories. The picture they obtain is extremely clear but their results do not extend to unbounded derived categories, due to the lack of general techniques to construct realization functors between unbounded derived categories. The initial motivation for this paper is to extend Beilinson's techniques to unbounded derived categories and, in this way, complete the results in \cite{Jorge_e_Chrisostomos}. 

\bigskip\noindent
{\bf General setting.} 
Beilinson's $f$-categories can be viewed, informally, as ``$\Z$-filtered" triangulated categories. What we need for our construction of unbounded realization functors is an ``$\N\times \N^{\op}\times \Z$-filtered" triangulated category. It is certainly possible to find a suitable set of axioms for such gadgets but, in this paper, we have made the choice to adopt instead the setting of derivators. Recall that a strong stable {\bf derivator} is just a $2$-functor
\[
\D\colon \Cat ^{\op}\to \CAT
\]
which satisfies certain axioms, where $\Cat $ is the 2-category of small categories and $\CAT$ is the $2$-``category" of all categories. The axioms in particular imply that the natural range of this 2-functor is the $2$-``category'' of triangulated categories. One usually views $\D$ as an enhancement of the triangulated category $\D(\bbone)$ (where $\bbone$ is the one-point category), the base of $\D$, which is in some sense the minimalistic enhancement which allows for a well-behaved calculus of homotopy (co)limits or, more generally, homotopy Kan extensions (see~\cite{Moritz}). A prototypical example is the assignment $\sfD_{\G}\colon I \mapsto \sfD(\G^I)$ for a Grothendieck category $\G$.

\smallskip
Localization theory for derivators and, in the stable context, the relation between $t$-structures and derivators has recently received a lot of attention by several researchers (see \cite{GLV,tderiv,SSV,coley,laking2018purity}). Our first result in this paper is to reformulate one of the main results of~\cite{SSV} in the following form, which is analogous to a result proved by Lurie \cite[Prop.\,1.2.1.16]{Lurie_higher_algebra} in the setting of $(\infty,1)$-categories:

\medskip\noindent
{\bf Theorem A} (Theorem  \ref{t_structures_are_co_localizations}){\bf .}
{\em
Let $\D\colon \Cat^{op}\to \CAT$ be a strong and stable derivator. There are bijections among the following classes:
\begin{enumerate}
\item $t$-structures in $\D(\bbone)$; 
\item extension-closed reflective sub-derivators of $\D$;
\item extension-closed coreflective sub-derivators of $\D$.
\end{enumerate}}

\medskip
Consider now a Grothendieck category $\G$ and the natural derivator $\sfD_\G$ enhancing the derived category of $\G$. It is an exercise on the definitions to prove that the (bounded) filtered derived category of $\G$ is a suitable full subcategory of $\sfD_\G(\Z)=\sfD(\G^\Z)$. For a general strong stable derivator $\D$, the analogous subcategory of $\D(\Z)$ is an $f$-category. In view of this analogy, the $\N\times \N^{\op}\times \Z$-filtered category we need in order to construct the unbounded realization functors will be naturally defined as a full subcategory of $\D(\N\times \N^{\op}\times \Z)$, so the language of derivators is extremely appropriate for the kind of constructions we are introducing.

\bigskip\noindent
{\bf Main results.} 
Let us now briefly list the main results of this paper. The main  statement about unbounded realization functors is the following:

\medskip\noindent
{\bf Theorem B} (Theorems  \ref{heart_is_ch} and \ref{unbounded_ff_real}){\bf .}
{\em Let $\D\colon \Cat^{op}\to \CAT$ be a strong and stable derivator, let $\t=(\mathcal D^{\leq0},\mathcal D^{\geq0})$ be a $t$-structure on its base $\D(\bbone)$, and let $\A:=\mathcal D^{\leq0}\cap \mathcal D^{\geq0}$ be the heart. Suppose that $\A$ has enough injectives and it is (Ab.$4^*$)-$k$ for some $k\in\N$, then there is an exact morphism of prederivators 
\[
\real_\t\colon \sfD_\A\to \D
\]
that restricts to bounded levels. If the following conditions hold:
\begin{enumerate}
\item $\t$ is (co)effa\c cable;
\item $\t$ is non-degenerate (right and left);
\item $\t$ is $k$-cosmashing for some $k\in\N$;
\item $\t$ is ($0$-)smashing;
\end{enumerate} 
then, $\real_\t\colon \sfD_\A\to \D$ commutes with products and coproducts and it is fully faithful.}

\medskip
Of course a dual statement holds for cosmashing $k$-smashing $t$-structures whose heart has enough projectives. The conditions appearing in the above general theorem are naturally satisfied by certain co/tilting $t$-structures. Indeed, recall from~\cite{NSZ,Jorge_e_Chrisostomos} that an object $X$ in a triangulated category is said to be {\bf tilting} provided $(T^{\perp_{>0}},T^{\perp_{<0}})$ is a $t$-structure and $\Add(T)\subseteq T^{\perp_{\neq0}}$; {\bf cotilting} objects are defined dually.
A tilting object is {\bf classical} if it is compact. 
Furthermore, two $t$-structures $\t_i=(\mathcal D_i^{\leq0},\mathcal D_i^{\geq0})$ ($i=1,2$) are said to be at {\bf finite distance} if there exists $n\in\N$ such that $\mathcal D_1^{\leq-n}\subseteq \mathcal D_2^{\leq0}\subseteq \mathcal D_1^{\leq n}$.

\medskip\noindent
{\bf Theorem C} (Theorem \ref{general_tilt_thm}){\bf .}
{\em Let $\D\colon \Cat^{\op}\to \CAT$ be a strong, stable derivator, and let  $T$ be a tilting object for which the associated $t$-structure $\t_T=(\D_T^{\leq0},\D_T^{\geq0})$ is at finite distance from a classical tilting $t$-structure. Letting $\A_{T}:=\D_{T}^{\leq0}(\bbone)\cap\D_{T}^{\geq0}(\bbone)$, there is an equivalence of prederivators
\[
\real_{\t_T}\colon \sfD_{\A_T}\to \D
\]
that restricts to bounded levels. Hence, $\sfD_{\A_T}$ is a strong, stable derivator.}

\medskip
As a corollary to the above theorem one can deduce one of the main results of~\cite{NSZ}, that is, bounded tilting sets in compactly generated algebraic triangulated categories induce exact equivalences (see Corollary \ref{coro_NSZ_tilt_alg}). Also in the dual setting of cotilting $t$-structures, we can obtain the following general statement:

\medskip\noindent
{\bf Theorem D} (Theorem \ref{general_co_tilt_thm}){\bf .}
{\em Let $\D\colon \Cat^{\op}\to \CAT$ be a strong, stable derivator, and let $C$ be a cotilting object for which the associated $t$-structure $\t_{ C}=(\D_{ C}^{\leq0},\D_{ C}^{\geq0})$ is at finite distance from a classical tilting $t$-structure. Letting $\A_{ C}:=\D_{ C}^{\leq0}(\bbone)\cap\D_{ C}^{\geq0}(\bbone)$, there is an equivalence of prederivators
\[
\real_{\t_{ C}}\colon \sfD_{\A_{ C}}\to \D
\]
that restricts to bounded levels. Hence, $\sfD_{\A_{\mathbf C}}$ is a strong, stable derivator.}

\medskip
The setting of the above theorem is inspired to the following situation: $\D=\sfD_{\Mod(R)}$ is the derivator enhancing the derived category of a ring $R$ and $C$ is a big cotilting $R$-module (see Definition \ref{big_cotilting}). Under these assumptions, $\t_{C}$ is at finite distance from the canonical $t$-structure on $\sfD_{\Mod(R)}$, and this $t$-structure is induced by the classical tilting object $R$. In this way, it is easy to recover as a corollary of the above theorem, one of the main results of~\cite{stovicek2014derived} (see Corollary \ref{coro_stovicek2014derived}).

\medskip
As another consequence of our general theory of realization functors, one can obtain a ``Derived Morita Theory" for Abelian categories, completing~\cite[Thm.\,A]{Jorge_e_Chrisostomos}:

\medskip\noindent
{\bf Theorem E} (Theorem \ref{morita_derived_derivators}){\bf.}
{\em Let $\A$ be a Grothendieck category (resp., an (Ab.$4^*$)-$h$ Grothendieck category for some $h\in\N$), denote by $\t_\A=(\sfD^{\leq0}(\A),\sfD^{\geq0}(\A))$ the canonical $t$-structure on $\sfD(\A)$, and let $\B$ be an Abelian category. The following are equivalent:
\begin{enumerate}
\item $\B$ has a projective generator (resp., an injective cogenerator) and there is an exact equivalence $\sfD(\B)\to \sfD(\A)$ that restricts to bounded derived categories;
\item $\B$ has a projective generator  (resp., an injective cogenerator) and there is an exact equivalence of prederivators $\sfD_\B\to \sfD_\A$ that restricts to an equivalence $\sfD^b_\B\to \sfD^b_\A$;
\item there is a tilting (resp., cotilting) object $X$  in $\sfD(\A)$,  whose heart $X^{\perp_{\neq 0}}$ (resp., $ {}^{\perp_{\neq0}}X$) is equivalent to $\B$ and such that the associated $t$-structure $\t_X=(\D_X^{\leq0},\D_X^{\geq0})$ has finite distance from $\t_\A$.
\end{enumerate}}

\medskip\noindent
{\bf Acknowledgement.} It is a pleasure for me to thank Dolors Herbera, Fosco Loregian, Moritz Groth, Chrysostomos Psaroudakis, Manuel Saor\'\i n, Nico Stein, Jan \v{S}\v{t}ov\'{\i}\v{c}ek, and Jorge Vitoria for several discussions and suggestions.

\newpage
\section{Generalities and preliminaries}

\subsection{Preliminaries and notation}\label{sec_prelim}

Given a category $\C$ and two objects $x,\, y\in\Ob(\C)$, we denote by $\C(x,y):=\hom_\C(x,y)$ the set of morphism from $x$ to $y$ in $\C$.

\medskip\noindent
\textbf{Ordinals.}
Any ordinal $\lambda$ can be viewed as a category in the following way: the objects of $\lambda$ are the ordinals $\alpha<\lambda$ and, given $\alpha$, $\beta<\lambda$, the $\hom$-set $\lambda(\alpha,\beta)$ is a point if $\alpha\leq \beta$, while it is empty otherwise. Following this convention,
\begin{itemize}
\item $\bbone=\{0\}$ is the category with one object and no non-identity morphisms;
\item $\bbtwo=\{0\to 1\}$ is the category with exactly two different objects and one non-identity morphism between them;
\item in general, $\ordn=\{0\to 1\to\cdots\to (n-1)\}$, for any $n\in\N_{>0}$.
\end{itemize}

\medskip\noindent
\textbf{Functor categories, limits and colimits.}
A category $I$ is said to be ({\bf skeletally}) {\bf small} when (the isomorphism classes of) its objects form a set. If $\C$ and $I$
are an arbitrary and a small category, respectively, a functor $I\to \C$ is said to be a {\bf diagram} on $\C$ of shape $I$. The category of diagrams on $\C$ of shape $I$, and natural transformations among them, will be denoted by $\C^I$.
A diagram $X$ of shape $I$, will be also denoted as $(X_i)_{i\in I}$, where $X_i
:= X(i)$ for each $i \in \Ob (I)$. When any diagram of shape $I$ has a limit (resp.\,colimit), we say
that $\C$ has all $I$-limits (resp., colimits). In this case, $\lim_I \colon \C^I\to \C$ (resp., $\colim_I \colon \C^I\to \C$) will denote
the ($I$-)limit (resp., ($I$-)colimit) functor and it is right (resp., left) adjoint to the constant diagram functor
$\Delta_I\colon \C\to \C^I$. 
The category $\C$ is said to be
{\bf complete} (resp., {\bf cocomplete, bicomplete}) when $I$-limits (resp., $I$-colimits, both) exist in $\C$, for any small category $I$.
When $I$ is a directed set, viewed as
a small category in the usual way, the corresponding colimit
functor is the ($I$-){\bf directed colimit functor} 
$\varinjlim_I\colon \C^I\to \C$. The $I$-diagrams on $\C$ are usually called {\bf directed
systems}  of shape $I$ in $\C$.

\medskip\noindent
\textbf{Triangulated categories.}
We refer  to~\cite{Neeman} for the precise definition of triangulated category. In particular, given a triangulated category $\mathcal D$, we will always denote by $\Sigma\colon\mathcal D\to \mathcal D$ the {\bf suspension functor}, and we will denote ({\bf distinguished}) {\bf triangles} in $\mathcal D$ either by $X \to Y\to  Z\overset{+}\to$ or  by $X\to Y \to Z\to \Sigma X$.
A set $\S \subseteq \Ob(\mathcal D)$ is called a {\bf set of generators} of $\mathcal D$ if an object $X$ of $\mathcal D$ is zero whenever $\mathcal D(\Sigma^kS, X) = 0$,
for all $S \in\S$ and $k \in\Z$. In case $\mathcal D$ has coproducts, we shall say that an object $X$ is a {\bf compact object} when the functor $\mathcal D(X,-) \colon \mathcal D\to \Ab$ preserves coproducts. We will say that $\mathcal D$ is {\bf compactly generated} when it has a set of compact generators. Given a set $\mathcal X$ of objects in $\mathcal D$ and a subset $I\subseteq \Z$, we let
\begin{align*}
\mathcal{X}^{\perp_{I}}&:=\{Y\in\mathcal D:\mathcal D(X,\Sigma^iY)=0\text{, for all }X\in\mathcal{X}\text{ and }i\in I\}\\
{}^{\perp_{I}}\mathcal{X}&:=\{Z\in\mathcal D:\mathcal D(Z,\Sigma^iX)=0\text{, for all }X\in\mathcal{X}\text{ and }i\in I\}.
\end{align*}
If $I=\{i\}$ for some $i\in \Z$, then we let $\mathcal{X}^{\perp_{i}}:=\mathcal{X}^{\perp_{I}}$ and ${}^{\perp_{i}}\mathcal{X}:={}^{\perp_{I}}\mathcal{X}$. If $i=0$, we even let $\mathcal{X}^{\perp_{}}:=\mathcal{X}^{\perp_{0}}$ and ${}^{\perp_{}}\mathcal{X}:={}^{\perp_{0}}\mathcal{X}$.

\medskip\noindent
\textbf{D\'evissage} 
Let $\mathcal D$ be a triangulated category and $\S\subseteq \mathcal D$ a subclass. we denote by $\mathrm{thick}(\S)$ the smallest subcategory containing  $\S$ which is triangulated and closed under direct summands, and by $\mathrm{Loc}(\S)$ (resp., $\mathrm{coLoc}(\S)$) the smallest subcategory containing $\S$ which is triangulated and closed under small coproducts (resp., products). We recall from~\cite{nicolas2008torsion}, that $\mathcal D$ is said to satisfy the {\bf principle of d\'evissage} (resp., {\bf infinite d\'evissage}, {\bf dual infinite d\'evissage}) with respect to $\S$ provided $\mathrm{thick}(\S)=\mathcal D$ (resp., $\mathrm{Loc}(\S)=\mathcal D$, $\mathrm{coLoc}(\S)=\mathcal D$).

\medskip\noindent
\textbf{Cohomological functors and $t$-structures.}
Given a triangulated category $\mathcal D$ and an Abelian category $\C$, an additive functor $H^0 \colon \mathcal D\to \C$ is said to be a {\bf cohomological functor} if, for any given triangle $X\to Y\to Z\to \Sigma X$, the sequence $H^0(X)\to H^0(Y)\to H^0(Z)$ is exact in $\C$. In particular, one obtains a long exact sequence as follows:
\[
\cdots \to H^{n-1}(Z)\to H^{n}(X)\to H^{n}(Y)\to H^{n}(Z)\to H^{n+1}(X)\to \cdots
\]
where $H^n := H^0 \circ \Sigma^{n}$, for any $n \in \Z$.
A {\bf $t$-structure} in $\mathcal D$ is a pair $\t=(\mathcal D^{\leq0}, \mathcal D^{\geq0})$ of full subcategories, closed
under taking direct summands in $\mathcal D$, which satisfy the following properties, where $\mathcal D^{\leq n}:=\Sigma^{-n}\mathcal D^{\leq0}$, and $\mathcal D^{\geq n}:=\Sigma^{-n}\mathcal D^{\geq0}$:
\begin{enumerate}
\item[($t$-S.1)] $\mathcal D(X, Y) = 0$, for all $X \in \mathcal D^{\leq0}$ and $Y \in \mathcal D^{\geq1}$;
\item[($t$-S.2)] $ \mathcal D^{\leq-1} \subseteq \mathcal D^{\leq0}$ (or, equivalently, $ \mathcal D^{\geq1} \subseteq \mathcal D^{\geq0}$);
\item[($t$-S.3)] for each $X \in \Ob(\mathcal D)$, there is a triangle 
\[
X^{\leq0} \to X \to X^{\geq 1}\overset{+}\to
\] 
in $\mathcal D$, where $X^{\leq0} \in \mathcal D^{\leq0}$ and $X^{\geq 1} \in \mathcal D^{\geq 1}$.
\end{enumerate}
We will call $\mathcal D^{\leq0}$ and $\mathcal D^{\geq0}$ the {\bf aisle} and the {\bf co-aisle} of the $t$-structure, respectively. The objects $X^{\leq0}$ and $X^{\geq1}$ appearing in the triangle of the above axiom ($t$-S.3) are uniquely determined by $X$, up to a unique isomorphism, and define
functors $(-)^{\leq0}\colon \mathcal D\to \mathcal D^{\leq0}$ and $(-)^{\geq1}\colon \mathcal D\to \mathcal D^{\geq1}$ which are right and left adjoints to the respective inclusion functors. We call them the {\bf left} and {\bf right truncation functors} with respect to the given $t$-structure $\t$. Furthermore, the above triangle will be referred to as the {\bf truncation triangle} of $X$ with respect to $\t$. 
The full subcategory $\mathcal H := \mathcal D^{\leq0} \cap \mathcal D^{\geq0}$ is called the {\bf heart} of the $t$-structure.

The following easy lemma will be needed later on.

\begin{lem}\label{general_iso_truncations}
Let $\mathcal D$ be a triangulated category and let $\t=(\mathcal D^{\leq0},\mathcal D^{\geq0})$ be a $t$-structure in $\mathcal D$. Given a triangle
\[
A\to B\to C\to \Sigma A
\]
such that $A\in\mathcal D^{\leq-1}$, we have that $B^{\geq0}\cong C^{\geq0}$.
\end{lem}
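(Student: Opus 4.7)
The plan is to exhibit a triangle of the form $D\to C\to B^{\geq0}\to\Sigma D$ with $D\in\mathcal D^{\leq-1}$ and $B^{\geq0}\in\mathcal D^{\geq0}$; by the uniqueness of truncation triangles (up to unique isomorphism), this identifies the map $C\to B^{\geq0}$ with the canonical map $C\to C^{\geq0}$, yielding the desired isomorphism $B^{\geq0}\cong C^{\geq0}$.

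The first step is to produce a candidate map $f\colon C\to B^{\geq0}$. The composition $A\to B\to B^{\geq0}$ is zero, because $A\in\mathcal D^{\leq-1}$, $B^{\geq 0}\in\mathcal D^{\geq0}\subseteq\mathcal D^{\geq0}$, and axiom ($t$-S.1) forces $\mathcal D(A,B^{\geq0})=0$. Applying the cohomological functor $\mathcal D(-,B^{\geq0})$ to the given triangle $A\to B\to C\to\Sigma A$, we obtain $f\colon C\to B^{\geq0}$ such that the truncation map $B\to B^{\geq0}$ equals $B\to C\xrightarrow{f} B^{\geq0}$.

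Now complete $f$ to a triangle $D\to C\xrightarrow{f} B^{\geq0}\to\Sigma D$. To check that $D\in\mathcal D^{\leq-1}$ I would apply the octahedral axiom to the composition $B\to C\xrightarrow{f} B^{\geq0}$, whose three constituent cone triangles are, respectively, the given triangle (with cone $\Sigma A$), the triangle defining $f$ (with cone $\Sigma D$), and the truncation triangle of $B$ (with cone $\Sigma B^{\leq-1}$). The octahedral axiom then produces a triangle
\[
A\to B^{\leq-1}\to D\to \Sigma A.
\]
Since $\mathcal D^{\leq-1}$ is closed under extensions (a standard consequence of the $t$-structure axioms) and both $A$ and $B^{\leq-1}$ lie there, we conclude $D\in\mathcal D^{\leq-1}$.

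Combining the two facts, $D\to C\to B^{\geq0}\to\Sigma D$ is a truncation triangle for $C$, so $f$ is the canonical map and $B^{\geq0}\cong C^{\geq0}$ naturally. The only mildly delicate point is the octahedral bookkeeping to identify the three cones correctly; once that is done, extension-closure of $\mathcal D^{\leq-1}$ finishes the argument immediately.
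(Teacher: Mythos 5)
Your proof is correct and is essentially the paper's argument run from the other end: the paper uses the $3\times3$ (octahedral) diagram to produce a triangle $A'\to B\to C^{\geq0}\to\Sigma A'$ with $A'\in\mathcal D^{\leq-1}$ and identifies $C^{\geq0}$ with $B^{\geq0}$ by uniqueness of truncation triangles for $B$, whereas you produce $D\to C\to B^{\geq0}\to\Sigma D$ and invoke uniqueness of truncation triangles for $C$. One small imprecision worth fixing: in your octahedral triangle $A\to B^{\leq-1}\to D\to\Sigma A$, the object $D$ is the \emph{cone} of $A\to B^{\leq-1}$, not the middle term, so ``closed under extensions'' does not apply verbatim; after rotating to $B^{\leq-1}\to D\to\Sigma A\to\Sigma B^{\leq-1}$ and noting $\Sigma A\in\mathcal D^{\leq-2}\subseteq\mathcal D^{\leq-1}$, extension-closure does give $D\in\mathcal D^{\leq-1}$ as claimed.
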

\begin{proof}
Consider the following commutative diagram with exact rows and columns, that comes from the ``tetrahedral axiom":
\[
\xymatrix{
&&C^{\leq-1}\ar[d]\ar@{=}[r]&C^{\leq-1}\ar[d]\\
A\ar[r]\ar[d]&B\ar[r]\ar@{=}[d]&C\ar[r]\ar[d]&\Sigma A\ar[d]\\
A'\ar[r]&B\ar[r]&C^{\geq0}\ar[r]\ar[d]&\Sigma A'\ar[d]\\
&&\Sigma C^{\leq-1}\ar@{=}[r]&\Sigma C^{\leq-1}&
}
\]
Since $A$ and $C^{\leq-1}\in\mathcal D^{\leq-1}$, and using that $\mathcal D^{\leq-1}$ is closed under taking extensions, one shows that $A'\in \mathcal D^{\leq-1}$. But then, the triangle $A'\to B\to C^{\geq0}\to \Sigma A'$ is an approximation triangle for $B$, showing that $C^{\leq-1}\cong A'$ and the desired isomorphism $B^{\geq0}\cong C^{\geq0}$.
\end{proof}

\medskip\noindent
\textbf{Model categories and model approximations.} 
For the definition of  {\bf model category}  we refer to~\cite{dwyer1995homotopy}. In particular, 
a model category for us is just a finitely bicomplete category $\M$ endowed with a model structure $(\W,\C,\F)$. When needed we will explicitly mention that $\M$ is a {\bf $\Dia$-bicomplete} model category to mean that $\M$ has all limits and colimits of diagrams of shape $I$, for any $I\in \Dia$ (where $\Dia$ is a suitable class of small categories).

A rich source of model categories for us is the construction of towers and telescopes of model categories. We now recall these constructions  from~\cite{chacholski2017relative}:

\begin{defn}\label{tower_of_models}
Let $\M_\bullet=\{\M_n:n\in\N\}$ be a sequence of categories connected with adjunctions 
$$l_{n+1}:  \xymatrix{\M_{n+1}\ar@<2pt>[r]\ar@<-2pt>@{<-}[r]&\M_{n}} :r_n\,.$$ 
The {\bf category of towers on $\M_\bullet$}, $\Tow(\M_\bullet)$ is defined as follows:
\begin{itemize}
\item[\rm (Tow.$1$)]  an {\em object} is a pair $(a_\bullet, \alpha_\bullet)$, where $a_\bullet=\{a_n\in \M_n:n\in\N\}$ is a sequence of objects one for each $\M_n$, and $\alpha_\bullet=\{\alpha_{n+1}\colon a_{n+1}\to r_n(a_n):n\in\N\}$ is a sequence of morphisms;
\item[\rm (Tow.$2$)] a {\em morphism} $f_\bullet\colon(a_\bullet,\alpha_\bullet)\to (b_\bullet,\beta_\bullet)$ is a sequence of morphisms $f_\bullet=\{f_n\colon a_n\to b_n:n\in\N\}$ such that $r_n(f_n)\circ\alpha_{n+1}=\beta_{n+1}\circ f_{n+1}$, for all $n\in\N$.
\end{itemize}
The dual of a category of towers is said to be a {\bf category of telescopes}, and denoted by $\Tel(\M_\bullet')$.
\end{defn}

If each $\M_n$ in the above definition is a $\Dia$-bicomplete category, then one can construct limits and colimits component-wise in $\Tow(\M_\bullet)$, so, under these hypotheses, the category of towers is $\Dia$-bicomplete. 

\begin{prop}\label{model_torre}{\rm~\cite[Prop.\,4.3]{chacholski2017relative}}
Let $\M_\bullet=\{(\M_n,\W_n,\C_n,\F_n):n\in\N\}$ be a sequence of model categories connected with adjunctions 
\[
l_{n+1}:  \xymatrix{\M_{n+1}\ar@<2pt>[r]\ar@<-2pt>@{<-}[r]&\M_{n}} :r_n
\]
and suppose that each $r_n$ preserves fibrations and acyclic fibrations. Then there exists a model structure $(\W_\Tow,\C_\Tow,\F_\Tow)$ on the category of towers $\Tow(\M_\bullet)$, such that $\W_\Tow=\{f_\bullet:f_n\in\W_n \,,\ \forall n\in\N\}$ and
 $\F_\Tow=\{f_\bullet:f_n\in\F_n \,,\ \forall n\in\N\}$.
\end{prop}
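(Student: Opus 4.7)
\medskip
The plan is to verify the axioms of a model category on $\Tow(\M_\bullet)$ with the given classes $\W_\Tow$ and $\F_\Tow$, defining the cofibrations $\C_\Tow$ as the morphisms with the left lifting property (LLP) with respect to $\W_\Tow\cap\F_\Tow$. First one checks that $\Tow(\M_\bullet)$ is finitely bicomplete: limits are computed componentwise, with structure maps glued together using that each $r_n$, being a right adjoint, preserves limits; colimits are computed componentwise by first transposing the structure maps under the adjunction $l_{n+1}\dashv r_n$, taking the colimit in $\M_n$, and transposing back. The class $\W_\Tow$ satisfies the two-out-of-three property and is closed under retracts, and $\F_\Tow$ is closed under composition, pullback, and retracts, all for componentwise reasons. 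The lifting axiom between $\C_\Tow$ and $\W_\Tow\cap\F_\Tow$ holds by the very definition of $\C_\Tow$.

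\medskip
The technical heart of the argument is the functorial construction of the two factorizations by induction on the tower level. Given $f_\bullet\colon (a_\bullet,\alpha_\bullet)\to (b_\bullet,\beta_\bullet)$, one builds the middle tower $(c_\bullet,\gamma_\bullet)$ as follows. At level $0$, factor $f_0$ in $\M_0$; at step $n+1$, having factored $f_n$ as $a_n\to c_n\xrightarrow{p_n} b_n$, consider the induced map
\[
a_{n+1}\longrightarrow b_{n+1}\times_{r_n(b_n)}r_n(c_n),
\]
the pullback being formed along $\beta_{n+1}$ and $r_n(p_n)$. Factor this map in $\M_{n+1}$, let $c_{n+1}$ be the middle object, let the structure map $\gamma_{n+1}\colon c_{n+1}\to r_n(c_n)$ be the composition with the second projection, and let $p_{n+1}\colon c_{n+1}\to b_{n+1}$ be the composition with the first. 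The hypothesis that $r_n$ preserves fibrations (respectively, acyclic fibrations) ensures that $b_{n+1}\times_{r_n(b_n)}r_n(c_n)\to b_{n+1}$ is a fibration (respectively, acyclic fibration) in $\M_{n+1}$. Hence, choosing (cofibration, acyclic fibration) factorizations at every level assembles a $\Tow$-factorization of $f_\bullet$ as a leftmost map followed by a level-wise acyclic fibration; choosing instead (acyclic cofibration, fibration) factorizations yields a level-wise acyclic cofibration $j_\bullet$ followed by a level-wise fibration.

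\medskip
To settle the remaining lifting axiom $(\C_\Tow\cap\W_\Tow,\F_\Tow)$, I would first prove that the morphism $j_\bullet$ produced above has the LLP against every $p_\bullet\in\F_\Tow$. Given a lifting square $(\phi_\bullet,\psi_\bullet)\colon j_\bullet\Rightarrow p_\bullet$, one constructs a lift $h_\bullet$ inductively: $h_0$ exists because $j_0$ is acyclic cofibration and $p_0$ is fibration in $\M_0$; at level $n+1$, the required compatibility $\xi_{n+1}\circ h_{n+1}=r_n(h_n)\circ \gamma_{n+1}$ is encoded by asking the lift to factor through a pullback of $r_n(c_n)$ with $u_{n+1}$ over $r_n(u_n)$, and the hypothesis that $r_n$ preserves fibrations is used to produce, in $\M_{n+1}$, a fibration against which $j_{n+1}$ can be lifted. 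Once this is known, the standard retract argument finishes: given any $i_\bullet\in\C_\Tow\cap\W_\Tow$, apply the second factorization to obtain $i_\bullet=q_\bullet\circ j_\bullet$; two-out-of-three forces $q_\bullet\in\W_\Tow\cap\F_\Tow$, the definition of $\C_\Tow$ provides a lift exhibiting $i_\bullet$ as a retract of $j_\bullet$, and $i_\bullet$ inherits the LLP from $j_\bullet$. The principal obstacle is precisely this compatible inductive lifting for $j_\bullet$: organizing the target-side pullback at each level so that the lifting problem in $\M_{n+1}$ really is a lift against a fibration there, which is exactly the step where the hypothesis that $r_n$ preserves fibrations becomes indispensable.
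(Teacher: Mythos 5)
The proposition is quoted directly from Chach\'olski et al.\ and is not proved in the paper, so I cannot compare your argument to a proof the paper itself supplies. Evaluating it on its own terms: the bicompleteness discussion, the closure properties of $\W_\Tow$ and $\F_\Tow$, and the overall shape of the argument (define $\C_\Tow$ by LLP, build factorizations level by level, settle the second lifting axiom by lifting the constructed $j_\bullet$ and then retracting) are all reasonable, and your inductive factorization does correctly produce a level-wise (acyclic) fibration on the right, using that $r_n$ preserves (acyclic) fibrations to control the pullback projection $b_{n+1}\times_{r_n(b_n)}r_n(c_n)\to b_{n+1}$.

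The gap is in the key lifting step, and it affects both factorizations and the remaining lifting axiom. When you try to lift $j_\bullet$ (a level-wise acyclic cofibration) against a level-wise fibration $p_\bullet\colon u_\bullet\to v_\bullet$, the constrained problem at level $n+1$ amounts to lifting $j_{n+1}$ against the relative matching map
\[
u_{n+1}\;\longrightarrow\; v_{n+1}\times_{r_n(v_n)}r_n(u_n)
\]
(or, equivalently, its base change along $r_n(h_n)\colon r_n(c_n)\to r_n(u_n)$, which is the map you describe). The hypothesis that $r_n$ preserves fibrations tells you only that $r_n(p_n)$ is a fibration, hence that the projection $v_{n+1}\times_{r_n(v_n)}r_n(u_n)\to v_{n+1}$ is a fibration; it says nothing about the matching map itself, which also involves the tower structure maps $\xi^u_{n+1}\colon u_{n+1}\to r_n(u_n)$ and $\xi^v_{n+1}$, and these are arbitrary. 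Indeed $p_{n+1}$ and the projection are both fibrations and factor through the matching map, but a map that factors a fibration through a fibration need not itself be a fibration. So the step \emph{``$r_n$ preserves fibrations is used to produce, in $\M_{n+1}$, a fibration against which $j_{n+1}$ can be lifted''} does not go through. The same issue already arises in showing that the leftmost piece of your first factorization lies in $\C_\Tow$: being a level-wise cofibration only guarantees the LLP against \emph{Reedy} acyclic fibrations (those whose relative matching maps are acyclic fibrations), which is a strictly smaller class than the level-wise acyclic fibrations that define $\C_\Tow$.

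Put differently, your construction mimics the Reedy (inverse) model structure on $\Tow(\M_\bullet)$, whose weak equivalences and \emph{cofibrations} are level-wise and whose \emph{fibrations} are given by the relative matching condition $u_{n+1}\to v_{n+1}\times_{r_n(v_n)}r_n(u_n)$ being a fibration. The proposition instead asserts that \emph{level-wise} fibrations are the fibrations of a model structure, which is a genuinely stronger (and, without extra hypotheses such as cofibrant generation, nonobvious) claim; the two classes are not equal, since a level-wise fibration has no reason to have fibrant matching maps. To close the gap you would either need to find a different inductive scheme for the lift (e.g.\ one that allows correcting the lower-level lifts, or exploits special features of the factorization you built, such as the fibration $c_{n+1}\to b_{n+1}\times_{r_n(b_n)}r_n(c_n)$), or a different characterization of $\C_\Tow$ adapted to the left adjoints $l_{n+1}$; as written, the crucial step is asserted but not justified.
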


The concept of model approximation was introduced by Chach{\'o}lski and Scherer in order to circumvent the difficulties in constructing homotopy limits (see~\cite{PS}):

\begin{defn}\label{mod_appr} Let $(\M',\W')$ be a category with weak equivalences, that is, $\W'$ is a class o morphisms in $\M'$, containing all the isos and with the $3$-for-$2$ property. A {\bf right model approximation} for $(\M',\W')$ is a model category $(\M,\W,\C,\F)$ and a pair of functors 
\[
l :  \xymatrix{\M'\ar@<2pt>[r]\ar@<-2pt>@{<-}[r]&\M}  : r
\]
satisfying the following conditions:
\begin{enumerate}
\item[\rm ({MA}.1)] $l$ is left adjoint to $r$;
\item[\rm ({MA}.2)] if $\f\in\W'$, then $l(\f)\in\W$; 
\item[\rm ({MA}.3)] if $\psi$ is a weak equivalence between fibrant objects, then $r(\psi)\in\W'$;
\item[\rm ({MA}.4)] if $l(X)\to Y$ is a weak equivalence in $\M$ with $X$ fibrant, the adjoint morphism $X \to r(Y)$ is in $\W'$.
\end{enumerate}
One defines dually {\bf left model approximations} for $(\M',\W')$.
\end{defn}

In fact, the notion of model approximation is as good as the notion of model category in order to construct homotopy categories. Indeed, let $l:(\M',\W')\rightleftarrows (\M,\W,\C,\F):r$ be a right model approximation, then the category $\M'[\W'{}^{-1}]$ is locally small as it is equivalent to the category $\Ho(\M')$, which is constructed as follows: objects of $\Ho(\M')$ are those of $\M'$ and, given $M_1,\, M_2\in\M'$, 
\[
\Ho(\M')(M_1,M_2):=\Ho(\M)(l(M_1),l(M_2)),
\]
see~\cite[Prop.\,5.5]{PS}.
The category $\Ho(\M')$ is said to be the {\bf homotopy category} of the above model approximation. It is easily seen that it is equivalent to a full subcategory of $\Ho(\M)$.

\subsection{Generalities on (pre)derivators}

We denote by $\Cat$ the $2$-category of small categories and by $\Cat^{\op}$ the $2$-category obtained by reversing the direction of the functors in $\Cat$ (but letting the direction of natural transformations unchanged). Similarly, we  denote by $\CAT$ the $2$-``category" of all categories. This, when taken literally, may cause some set-theoretical problems that, for our constructions, can be safely ignored: see the discussion after~\cite[Def.\,1.1]{Moritz}.

\begin{defn}
A {\bf category of diagrams} is a full $2$-subcategory $\Dia$ of $\Cat$, such that
\begin{enumerate}
\item[(Dia.1)] all finite posets, considered as categories, belong to $\Dia$;
\item[(Dia.2)] given $I\in \Dia$ and $i\in I$, the slices  $I_{i/}$ and $I_{/i}$ belong to $\Dia$;
\item[(Dia.3)] if $I\in \Dia$, then $I^\op\in \Dia$;
\item[(Dia.4)] for every Grothendieck fibration $u : I\to J$, if all fibers $I_{j}$, for $j\in J$,  and the base $J$ belong to $\Dia$, then so does $I$. 
\end{enumerate}
\end{defn}

\begin{eg}
A category $I$ is said to be a {\bf finite directed category} if it has a finite number of objects and morphisms, and if there is no directed cycle in the quiver whose vertices are the objects of $I$ and the arrows are the non-identity morphisms in $I$. Equivalently, the nerve of $I$ has a finite number of non-degenerate simplices. We denote by $\fincat$ be the $2$-category of finite directed categories (categories whose nerve has a finite number of non-degenerate simplices). This $2$-category is a category of diagrams.
\end{eg}

Given a category of diagrams $\Dia$, a {\bf prederivator of type} $\Dia$ is a strict $2$-functor
\[
\D\colon \Dia^\op\to \CAT.
\]
All along this paper, we will follow the following notational conventions:
\begin{itemize}
\item the letter $\D$ will always denote a (pre)derivator;
\item for a prederivator $\D\colon \Dia^{\op}\to \CAT$ and a small category $I\in\Dia$, we denote by 
\[
\D^I\colon \Dia^{\op}\to\CAT
\] 
the {\bf shifted prederivator} such that $\D^I(J):=\D(I\times J)$;
\item  for any natural transformation $\alpha\colon u\to v\colon J\to I$ in $\Dia$, we will always use the notation $\alpha^*:=\D(\alpha)\colon u^*\to v^*\colon \D(I)\to \D(J)$. Furthermore, we denote respectively by $u_!$ and $u_*$ the left and the right adjoint to $u^*$ (whenever they exist), and call them respectively the {\bf left} and {\bf right homotopy Kan extension of $u$};
\item the letters $K$, $U$, $V$, $W$, $X$, $Y$, $Z$,  will be used either for objects in the base $\D(\bbone)$ or for (incoherent) diagrams on $\D(\bbone)$, that is, functors $I\to \D(\bbone)$, for some small category $I$;
\item the letters $\mathscr K$, $\mathscr U$, $\mathscr V$, $\mathscr W$, $\mathscr X$, $\mathscr Y$, $\mathscr Z$,  will be used for objects in some image $\D(I)$ of the derivator, for $I$ a category (possibly) different from $\bbone$. Such objects will be usually referred to as {\bf coherent diagrams of shape $I$};
\item given $I\in \Dia$, consider the unique functor $\pt_I\colon I\to \bbone$. We usually denote by $\hocolim_I\colon \D(I)\to \D(\bbone)$ and $\holim_I\colon \D(I)\to \D(\bbone)$ respectively the left and right homotopy Kan extensions of $\pt_I$; these functors are called respectively {\bf homotopy colimit} and {\bf homotopy limit}. 
\end{itemize}

For a given object $i\in I$, we also denote by $i$ the inclusion $i\colon\bbone\to I$ such that $0\mapsto i$. We obtain an evaluation functor $i^*\colon \D(I)\to \D(\bbone)$. For an object $\mathscr X\in \D(I)$, we let $\mathscr X_i:=i^*\mathscr X$. Similarly, for a morphism $\alpha\colon i\to j$ in $I$, one can interpret $\alpha$ as a natural transformation from $i\colon \bbone \to I$ to $j\colon \bbone \to I$. In this way, to any morphism $\alpha$ in $I$, we can associate $\alpha^*\colon i^*\to j^*$. For an object $\mathscr X\in \D(I)$, we let $\mathscr X_\alpha:=\alpha^*_{\mathscr X}\colon \mathscr X_i\to \mathscr X_j$.
For  $I$ in $\Dia$, we denote by 
\[
\dia_I\colon \D(I)\to \D(\bbone)^{I}
\]
the {\bf diagram functor}, such that, given $\mathscr X\in \D(I)$, $\dia_I(\mathscr X)\colon I\to \D(\bbone)$ is defined by $\dia_I(\mathscr X)(i\overset{\alpha}{\to} j)=(\mathscr X_i\overset{\mathscr X_\alpha}{\to} \mathscr X_j)$. We will refer to $\dia_I(\mathscr X)$ as the {\bf underlying} (incoherent) {\bf diagram} of the coherent diagram $\mathscr X$.

\begin{eg}\label{description_constant_diagram}
Let $\D\colon \Dia^{\op}\to \CAT$ be a prederivator. Given $I\in \Dia$, consider the unique functor $\pt_I\colon I\to \bbone$, let $X\in \D(\bbone)$ and consider $\mathscr X:=\pt_I^*X\in \D(I)$. Then the underlying diagram $\dia_I(\mathscr X)\in \D(\bbone)^I$ is constant, that is, $\mathscr X_i= X$ for all $i\in I$, and the map $\mathscr X_\alpha\colon \mathscr X_i\to \mathscr X_j$ is the identity of $X$ for all $(\alpha\colon i\to j)\subseteq I$.
\end{eg}

\begin{defn}
A {\bf derivator of type $\Dia$}, for a given category of diagrams $\Dia$, is a prederivator $\D\colon \Dia^\op\to\CAT$ satisfying the following axioms:
\begin{enumerate}
\item[(Der.$1$)] if $\coprod_{i\in I} J_i$ is a disjoint union in $\Dia$, then the canonical functor $\D(\coprod_I J_i) \to \prod_I\D(J_i)$ is an equivalence of categories;
\item[(Der.$2$)] for any $I\in\Dia$ and a morphism $f\colon \mathscr X \to \mathscr Y$ in $\D(I)$, $f$ is an isomorphism if and only if $i^*(f)\colon i^*(\mathscr{X}) \to i^*(\mathscr{Y})$ is an isomorphism in $\D(\bbone)$ for each $i\in I$;
\item[(Der.$3$)] the following conditions hold true:
\begin{enumerate}
\item[(L.Der.$3$)] for each functor $u\colon I \to J$ in $\Dia$, the functor $u^*$ has a left adjoint $u_!$ (i.e.\  left homotopy Kan extensions are required to exist);
\item[(R.Der.$3$)] for each functor $u\colon I \to J$ in $\Dia$, the functor $u^*$ has a right adjoint $u_*$ (i.e.\  right homotopy Kan extensions are required to exist);
\end{enumerate}
\item[(Der.$4$)] the following conditions hold true:
\begin{enumerate}
\item[(L.Der.$4$)] the left homotopy Kan extensions can be computed pointwise in that for each $u\colon I \to J$ in $\Dia$ and $j\in J$, there is a canonical isomorphism $\hocolim_{u/j} p^*(\mathscr X) \cong (u_!\mathscr X)_j$, where $p\colon u/j\to I$ is the canonical functor from the slice category;
\item[(R.Der.$4$)] the right homotopy Kan extensions can be computed pointwise in that for each $u\colon I \to J$ in $\Dia$ and $j\in J$, there is a  canonical isomorphism $(u_*\mathscr X)_j \cong \holim_{j/u } q^*(\mathscr X)$ in $\D(\bbone)$, where $q\colon j/u\to I$ is the canonical functor from the slice category.
\end{enumerate}
\end{enumerate}
 If $\Dia$ is not explicitly mentioned we just assume that $\Dia=\Cat$. 
\end{defn}

We refer to~\cite{Moritz} for a detailed discussion, as well as for the precise definitions of {\bf pointed} derivators ($\D(\bbone)$ has a zero object), and {\bf strong} derivators (the partial diagram functors $\D(\bbtwo\times I) \to \D(I)^\bbtwo$ are full and essentially surjective for each $I\in\Cat$).  

For a fixed category of diagrams $\Dia$, the prederivators of type $\Dia$ form a $2$-category, that we denote by $\PDer_\Dia$, where $0$-cells are the $2$-functors $\Dia^{\op}\to \CAT$ (i.e., the prederivators), $1$-cells are $2$-natural transformations among these functors, and $2$-cells are modifications. We will usually refer to the $1$-cells of $\PDer_\Dia$ as {\bf morphisms of prederivators} or, abusing terminology, {\bf functors} between prederivators, while we will usually refer to the $2$-cells of $\PDer_\Dia$ as {\bf natural transformations}.

\begin{eg}
Given a prederivator $\D\colon \Dia^\op\to \CAT$ and a morphism $u\colon J\to I$ in $\Dia$, for any $K\in \Dia$, one can consider $u^*\colon \D^I(K)\to \D^J(K)$. These functors can be assembled together to form a morphism of prederivators that, abusing notation, we denote again by $u^*\colon \D^I\to \D^J$.\\ If $\D$ is a derivator, then one can construct similarly two morphisms of derivators $u_!,u_*\colon \D^J\to \D^I$. Of course there are adjunctions $(u_!,u^*)$ and $(u^*,u_*)$ in the $2$-category $\PDer_\Dia$.
\end{eg}

\subsection{Derivators induced by model categories}

In this subsection we introduce our main source for explicit examples of derivators. Indeed, for a given $\Dia$-bicomplete model category $\M$ whose class of weak equivalences is denoted by $\W$,  the following results are proved in~\cite{Cis03}:
\begin{itemize}
\item for any $I\in \Dia$, let $\W_I$ be the class of morphisms in $\C^I$ which belong pointwise to $\W$. Then,~\cite[Thm.\,1]{Cis03} tells us that the category of fractions $\M^I[\W_I^{-1}]$ can always be constructed (in the same universe);
\item the assignment $I\mapsto \M^{I}[\W_I^{-1}]$ underlies a derivator \[\D_{(\M,\W)}\colon \Dia^{\op}\to \CAT;\]
\item the derivator $\D_{(\M,\W)}$ is strong and it is pointed if $\M$ is pointed.
\end{itemize}

In fact, there is another approach to the proof of the above facts that uses model approximations. This is based on the following theorem that collects several facts proved in~\cite{PS}:

\begin{thm}\label{derivator_from_approximation}
Let $(\M',\W')\rightleftarrows (\M,\W,\C,\F)$ be a left model approximation. For any small category $I$, let $(\W')^I$ be the class of maps in $(\M')^I$ that belong pointwise to $\W'$, then there exists a model category $(\M_I,\W_I,\C_I,\F_I)$ and a left model approximation $((\M')^I,(\W')^I)\rightleftarrows (\M_I,\W_I,\C_I,\F_I)$. Hence, the category $\Ho((\M')^I)$ has small hom-sets and we can define a prederivator 
\begin{align*}
\D_{(\M',\W')}\colon \Cat^\op&\to \CAT\\
I&\mapsto \Ho((\M')^I).
\end{align*}
Furthermore, the above approximations are ``good for left Kan extensions", so they can be used to show that the prederivator $\D_{(\M',\W')}$ satisfies (Der.$1$), (Der.$2$), (L.Der.$3$) and (L.Der.$4$). 

As a consequence, if $(\M',\W')$ admits both a left and a right model approximation (e.g., if $\M'$ admits a model structure on it for which the class of weak equivalences is exactly $\W'$), then $\D_{(\M',\W')}$ is a derivator.
\end{thm}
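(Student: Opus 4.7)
The plan is to assemble the relevant results from~\cite{PS} into the statement. The first step is, for each small category $I$, to construct the model category $(\M_I,\W_I,\C_I,\F_I)$ together with a left model approximation $((\M')^I,(\W')^I)\rightleftarrows(\M_I,\W_I,\C_I,\F_I)$. Following Chach\'olski--Scherer, one takes $\M_I$ to be a suitable ``bounded" category of $I$-shaped diagrams in $\M$, equipped with a model structure whose weak equivalences are pointwise and whose cofibrations/fibrations are defined via a small object argument; the adjunction is then induced pointwise from the given $l\colon\M'\rightleftarrows\M\colon r$. Verifying (MA.$1$)--(MA.$4$) for this pair reduces to pointwise checks using the corresponding axioms for the original approximation.

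Once these approximations are available, the prederivator structure is produced as follows. For a functor $u\colon J\to I$ in $\Cat$, precomposition yields $u^*\colon(\M')^I\to(\M')^J$, which evidently preserves pointwise weak equivalences and so descends to a functor $u^*\colon\Ho((\M')^I)\to\Ho((\M')^J)$. Strict $2$-functoriality at the homotopy level is inherited from strict $2$-functoriality at the point-set level. The fact that $\Ho((\M')^I)$ has small hom-sets follows from the equivalence $\Ho((\M')^I)\simeq\Ho(\M_I)$ guaranteed by any left model approximation (see the discussion after Definition~\ref{mod_appr}).

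Next I would check (Der.$1$), (Der.$2$), (L.Der.$3$) and (L.Der.$4$). Axiom (Der.$1$) is formal: a diagram on $\coprod_i J_i$ is the same as a family of diagrams on the $J_i$'s, and this identification is compatible with pointwise weak equivalences. Axiom (Der.$2$) is essentially the definition of $(\W')^I$, combined with the fact that isomorphisms in $\Ho((\M')^I)$ are represented by zig-zags of morphisms in $(\W')^I$. For (L.Der.$3$), given $u\colon J\to I$, one builds the left adjoint $u_!$ of $u^*$ at the level of $\M_J\to\M_I$ using the $\Dia$-bicompleteness of $\M$, and derives it via the left model approximation. The pointwise formula in (L.Der.$4$) is precisely the statement that the constructed approximations are ``good for left Kan extensions" in the sense of~\cite{PS}, so that $(u_!\mathscr{X})_i\cong\hocolim_{u/i}p^*\mathscr{X}$.

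Finally, if $(\M',\W')$ also admits a right model approximation, the dual construction produces, for each small $I$, a right model approximation on $((\M')^I,(\W')^I)$, giving (R.Der.$3$) and (R.Der.$4$) by the dual argument; hence $\D_{(\M',\W')}$ is then a derivator. I expect the main obstacle to be the construction, in the first step, of the model structure on $\M_I$ together with the verification that the induced pointwise adjunction is a left model approximation which is, in addition, good for left Kan extensions; once this technical package of~\cite{PS} is in place, the rest of the argument is essentially formal bookkeeping.
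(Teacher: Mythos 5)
Your proposal is correct and follows essentially the same route as the paper, which gives no independent argument but simply collects the relevant facts from~\cite{PS}: the model structure on bounded $I$-shaped diagrams, the induced left model approximation of $((\M')^I,(\W')^I)$, local smallness of $\Ho((\M')^I)$, and the ``good for left Kan extensions'' property yielding (Der.$1$), (Der.$2$), (L.Der.$3$), (L.Der.$4$), with the dual approximation supplying the right Kan extension axioms. The only slight inaccuracy is your description of the cofibrations in $\M_I$ as coming from a small object argument — in Chach\'olski--Scherer they are characterized combinatorially (relative freeness on bounded diagrams), since no cofibrant generation is assumed — but as you defer that construction to~\cite{PS} exactly as the paper does, this does not affect the argument.
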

%
%

\subsection{Stable derivators}\label{prelim_stab_der}

Let $\bigsquare:=\bbtwo\times \bbtwo$, with the following labels for vertices and arrows:
\[
\xymatrix@R=10pt@C=18pt{
(0,0) \ar[r]^{N}\ar[dd]_W&(0,1)\ar[dd]^E\\
&&\\
(1,0)\ar[r]_S&(1,1).
}
\]
Then we let $\iota_{\bullet}\colon \bbone\to \bigsquare$ (with $\bullet\in \{(0,0),(0,1),(1,0),(1,1)\}$) and $\iota_{\bullet}\colon \bbtwo\to \bigsquare$ (with $\bullet\in \{N,S,W,E\}$) be the inclusion of the corresponding vertex and arrow, respectively. Furthermore, we consider the inclusion of the two obvious subcategories $\bigulangle \overset{\iota_{\ulangle}}{\longrightarrow}\bigsquare\overset{\iota_{\drangle}}{\longleftarrow}\bigdrangle$,
\[
\xymatrix@R=10pt@C=18pt{
(0,0)\ar[r]\ar[dd]&(0,1)&&(0,0) \ar[r]\ar[dd]&(0,1)\ar[dd]&&&(0,1)\ar[dd]\\
&\ar@{.>}[rr]|{\iota_{\ulangle}}&&&&&\ar@{.>}[ll]|{\iota_{\drangle}}&\\
(1,0)&&&(1,0)\ar[r]&(1,1)&&(1,0)\ar[r]&(1,1).
}
\]
Let also $\iota^{\ulangle}_{\bullet}\colon \bbone\to \bigulangle$ (with $\bullet\in \{(0,0),(0,1),(1,0)\}$), $\iota^\ulangle_{\bullet}\colon \bbtwo\to \bigulangle$ (with $\bullet\in \{N,W\}$), $\iota^{\drangle}_{\bullet}\colon \bbone\to \bigdrangle$ (with $\bullet\in \{(1,0),(0,1),(1,1)\}$) and $\iota^\drangle_{\bullet}\colon \bbtwo\to \bigdrangle$ (with $\bullet\in \{S,E\}$) be the inclusion of the corresponding vertex and arrow, respectively. 

\medskip
Let $\Dia$ be some category of diagrams and let $\D\colon \Dia^\op\to \CAT$ be a pointed derivator. The {\bf suspension functor} $\Sigma\colon \D\to \D$ and the {\bf loop functor} $\Omega\colon \D\to \D$ are defined as follows:
\[
\Sigma:=\iota_{(1,1)}^*\circ (\iota_{\ulangle})_!\circ (\iota^\ulangle_{(0,0)})_*\qquad\Omega:=\iota_{(0,0)}^*\circ (\iota_{\drangle})_*\circ (\iota^\drangle_{(1,1)})_!
\]
and they form an adjoint pair $(\Sigma,\Omega)\colon \D\rightleftarrows \D$, which is an equivalence if and only if $\D$ is a {\bf stable derivator}. Furthermore, let 
\[
\mathrm C:=\iota_{E}^*\circ (\iota_{\ulangle})_!\circ (\iota^\ulangle_N)_*\qquad\mathrm F:=\iota_{W}^*\circ (\iota_{\drangle})_*\circ (\iota^\drangle_{S})_!
\]
this gives an adjoint pair $(\mathrm S,\mathrm F)\colon \D^{\bbtwo}\rightleftarrows \D^{\bbtwo}$ which, again, is an equivalence if and only if $\D$ is stable. We define, respectively, the {\bf cone functor} $\cone\colon \D^\bbtwo\to \D$ and the {\bf fiber functor} $\fib\colon \D^\bbtwo\to \D$ as 
\[
\cone:=\iota_{(1,1)}^*\circ (\iota_{\ulangle})_!\circ (\iota^\ulangle_N)_*\qquad\fib:=\iota_{(0,0)}^*\circ (\iota_{\drangle})_*\circ (\iota^\drangle_{S})_!.
\]
The key fact about stability, which can be found in~\cite[Thm.\,4.16 and Cor.\,4.19]{Moritz}, is that given a strong and stable derivator $\D$, the above structure can be used to endow each $\D(I)$ with a canonical triangulated structure, with respect to which, all the $u^*$ and all the Kan extensions $u_!$ and $u_*$ are naturally triangulated functors.

\begin{eg}
Let $(\C,\mathcal W,\mathcal B,\mathcal F)$ be a model category and consider the associated derivator  $\D_{(\C,\W)}\colon \Cat^{\op}\to \CAT$. If $(\C,\mathcal W,\mathcal B,\mathcal F)$ is stable in the sense of model categories, then $\D_{(\C,\W)}$ is strong and stable. 
\end{eg}

\subsection{Localization of derivators}

In this subsection we recall the definition and some basic facts about co/reflections in $\PDer_\Dia$. This concept is studied, for example, in~\cite{heller,cis08,GLV,coley}: 
\begin{defn}\label{defn:refl-loc}
Let $L :\D\rightleftarrows\mathbb E : R$ be an adjunction in $\PDer_\Dia$.
\begin{enumerate}
\item[$\bullet$] The adjunction $(L,R)$ is a \textbf{reflection} if $R$ is fully faithful, i.e.\, the counit $\varepsilon \colon LR\to\id$ is invertible.
\item[$\bullet$] The adjunction $(L,R)$ is a \textbf{coreflection} if $L$ is fully faithful, i.e.\, the unit $\eta \colon \id\to RL$ is invertible.
\end{enumerate}
\end{defn}

\begin{lem}\label{loc_is_der}{\rm~\cite[Prop.\,3.12]{GLV}}
Let $L :\D\rightleftarrows\mathbb E : R$ be a reflection in $\PDer_\Dia$. If $\D$ is a (pointed, strong) derivator of type $\Dia$ for some category of diagrams $\Dia$, so is $\mathbb E$. Furthermore, given $u\colon I\to J$ in $\Dia$, the homotopy Kan extensions $u_!,u_*\colon \mathbb E(I)\to \mathbb E(J)$ can be computed, respectively, as follows:
\[
L_Ju_!R_I :\E(I)\to\E(J)\qquad\text{and}\qquad L_Ju_* R_I :\E(I)\to\E(J).
\]
where the $u_!$ and $u_*$ appearing in the above equation are the homotopy Kan extensions in $\D$.
\end{lem}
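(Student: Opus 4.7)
The plan is to transport the derivator structure from $\D$ to $\E$ using the reflection $(L,R)$, exploiting two features of the adjunction: $R$ is fully faithful (equivalently, the counit $\varepsilon\colon LR\to \id$ is invertible), and $R$ is a morphism of prederivators (so $R_I\circ u^* = u^*\circ R_J$ for every $u\colon I\to J$ in $\Dia$).

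First I would establish the formula $u_!^\E = L_J u_! R_I$ via the natural chain
\[
\E(J)(L_J u_! R_I \mathscr X, \mathscr Y) \cong \D(J)(u_! R_I \mathscr X, R_J \mathscr Y) \cong \D(I)(R_I \mathscr X, u^* R_J \mathscr Y) \cong \D(I)(R_I \mathscr X, R_I u^* \mathscr Y) \cong \E(I)(\mathscr X, u^* \mathscr Y),
\]
using in turn the $(L_J,R_J)$-adjunction, the $(u_!,u^*)$-adjunction in $\D$, the commutation of $R$ with $u^*$, and full faithfulness of $R_I$. The companion formula $u_*^\E = L_J u_* R_I$ is the subtler half. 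Here I would first show that the essential image of each $R_K$ is closed under $u_*$: by (R.Der.$4$) in $\D$, the fiber of $u_* R_I \mathscr X$ at $j\in J$ is $\holim_{j/u} R_{j/u} q^* \mathscr X$, which lies in $R_\bbone(\E(\bbone))$ because $R_\bbone$ is a right adjoint and so its essential image is closed under limits; (Der.$2$) in $\D$ then upgrades this to invertibility of the unit $u_* R_I \mathscr X \to R_J L_J u_* R_I \mathscr X$. A dual adjunction computation then delivers the desired right adjoint.

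With the Kan extensions in hand, the axioms for $\E$ come by systematic transfer: (Der.$1$) because $R_{\coprod J_i}$ factors the equivalence $\D(\coprod J_i) \simeq \prod \D(J_i)$ supplied by $\D$, with image exactly $\prod R_{J_i}(\E(J_i))$; (Der.$2$) because full faithfulness of $R$ at each level, combined with $R_\bbone \circ i^* = i^* \circ R_I$ and (Der.$2$) in $\D$, detects isomorphisms in $\E(I)$ pointwise in $\E(\bbone)$; (Der.$3$) is Step~$1$; and (L.Der.$4$), (R.Der.$4$) follow by evaluating the Kan extension formulas at each $j\in J$ and commuting $L$ past $j^*$, thereby reducing to the pointwise formulas already known in $\D$. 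Pointedness is automatic: a zero object of $\D(\bbone)$ sits in $R_\bbone(\E(\bbone))$ since reflective inclusions inherit both limits and colimits. For strongness, given $f\colon \mathscr X\to \mathscr Y$ in $\E(I)$ I would lift $R_I f$ to a coherent square $\mathscr F\in \D(\bbtwo\times I)$ via strongness of $\D$ and then form $L_{\bbtwo\times I}\mathscr F\in \E(\bbtwo\times I)$; its underlying $\bbtwo$-diagram is isomorphic to $f$ because $L$ commutes with the inclusion functors $\iota_0^*,\iota_1^*$ and $LR\cong \id$. Fullness of the partial underlying diagram functor for $\E$ is handled by the same lift-then-reflect recipe applied to $2$-cells.

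The main obstacle is the $u_*$-half of Step~$1$: whereas the $u_!$-formula is pure adjunction juggling, asserting $u_*^\E = L_J u_* R_I$ hinges on the nontrivial closure of the essential image of $R$ under $u_*$, and this is where (R.Der.$4$) and (Der.$2$) in $\D$ genuinely intervene rather than formal adjointness alone.
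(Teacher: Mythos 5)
The paper supplies no proof of this lemma; it is cited as \cite[Prop.\,3.12]{GLV}, so there is no internal argument to compare against. Your outline is nevertheless the standard way to transfer a derivator structure along a reflective localization, and it correctly isolates the $u_*$-formula as the genuinely nontrivial half: the $u_!$-formula is pure adjunction bookkeeping (your displayed chain is fine), whereas $u_*^{\E}=L_J u_* R_I$ requires knowing that the essential image of $R$ is closed under $u_*$, after which $L_J u_* R_I$ is right adjoint to $u^*$ on $\E$ by a chain of the same shape.

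The one step that needs patching is your justification of that closure. You write that $\holim_{j/u}R_{j/u}q^*\mathscr X$ lies in $R_\bbone(\E(\bbone))$ ``because $R_\bbone$ is a right adjoint and so its essential image is closed under limits.'' The $1$-categorical fact you invoke (reflective subcategories are closed under limits computed in the ambient category) does not literally apply here, because $\holim_{j/u}\colon\D(j/u)\to\D(\bbone)$ is the right homotopy Kan extension of the derivator, not a categorical limit of a family of objects of $\D(\bbone)$ lying in $R_\bbone(\E(\bbone))$. The correct argument uses instead that $L$, being a morphism of prederivators, commutes strictly with $\pt_{j/u}^*$: if $f$ is a map in $\D(\bbone)$ with $L_\bbone f$ invertible, then $L_{j/u}(\pt_{j/u}^* f)=\pt_{j/u}^*(L_\bbone f)$ is invertible, so the $L_{j/u}$-local object $R_{j/u}q^*\mathscr X$ sees $\pt_{j/u}^* f$ as a bijection on hom-sets, and the $(\pt_{j/u}^*,\holim_{j/u})$-adjunction turns this into the statement that $\holim_{j/u}R_{j/u}q^*\mathscr X$ is $L_\bbone$-local. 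Your slogan is morally right --- reflective localizations of derivators are closed under homotopy limits --- but the reason has to pass through the $2$-naturality of $L$ rather than through right-adjointness of $R_\bbone$ alone. With that replacement, the rest of your transfer of (Der.1)--(Der.4), pointedness, and strongness goes through as you sketch it.
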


In general a co/reflection of a stable derivator need not be stable. However the following result allows to easily check when a reflection of a stable derivator is again stable:

\begin{lem}\label{stability_of_loc}
Let $L :\D\rightleftarrows\mathbb E : R$ be a reflection in $\PDer_\Dia$ and suppose that $\D$ is strong and stable. Denote by $(\bar \Sigma, \bar \Omega)$ the suspension-loop adjunction in $\mathbb E$. Then $\bar \Sigma\cong L_{}\Sigma R_{}$ and $R_{}\bar \Omega\cong \Omega R_{}$.
\end{lem}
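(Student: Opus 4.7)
The plan is to expand the definitions of $\bar\Sigma$ and $\bar\Omega$, use Lemma~\ref{loc_is_der} to relate Kan extensions in $\mathbb E$ to those in $\D$, and combine these with two compatibility facts for the adjunction $L\dashv R$ that I would establish first. Since $L$ commutes strictly with $u^*$ (as a morphism of prederivators) and is a left adjoint, a standard Yoneda computation using the adjunctions $L\dashv R$, $u_!^\D\dashv u^{*,\D}$ and $u_!^\mathbb E\dashv u^{*,\mathbb E}$ produces a natural isomorphism $Lu_!^\D\cong u_!^\mathbb E L$ for every $u$ in $\Dia$. The dual argument gives $Ru_*^\mathbb E\cong u_*^\D R$.

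For $\bar\Sigma\cong L\Sigma R$ the argument would be direct: starting from $L\Sigma R=L\iota_{(1,1)}^{*,\D}(\iota_\ulangle)_!^\D(\iota_{(0,0)}^\ulangle)_*^\D R$, I would commute $L$ past $\iota_{(1,1)}^{*,\D}$ and then past $(\iota_\ulangle)_!^\D$ (using $Lu_!^\D\cong u_!^\mathbb E L$) to reach $\iota_{(1,1)}^{*,\mathbb E}(\iota_\ulangle)_!^\mathbb E L(\iota_{(0,0)}^\ulangle)_*^\D R$. Lemma~\ref{loc_is_der} then identifies $L(\iota_{(0,0)}^\ulangle)_*^\D R$ with $(\iota_{(0,0)}^\ulangle)_*^\mathbb E$, and the composition reassembles to $\bar\Sigma$ by definition.

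The second isomorphism $R\bar\Omega\cong\Omega R$ I would approach dually. Expanding and commuting $R$ past $\iota_{(0,0)}^{*,\mathbb E}$ and past $(\iota_\drangle)_*^\mathbb E$ (using $Ru_*^\mathbb E\cong u_*^\D R$) reduces the claim to identifying $R(\iota_{(1,1)}^\drangle)_!^\mathbb E$ with $(\iota_{(1,1)}^\drangle)_!^\D R$. This is where I expect the main obstacle, since $R$ is a right adjoint and does not generally commute with left Kan extensions. My plan is to invoke the mate transformation $(\iota_{(1,1)}^\drangle)_!^\D R\Rightarrow R(\iota_{(1,1)}^\drangle)_!^\mathbb E$ arising from the strict identity $Ru^{*,\mathbb E}=u^{*,\D}R$, and verify that it is pointwise an isomorphism in $\D(\drangle)$: at the vertex $(1,1)$ both sides identify canonically with $RY$ (Kan extension along a fully faithful inclusion restricts to the identity on its image), while at $(0,1)$ and $(1,0)$ the slice categories $\iota_{(1,1)}^\drangle/(0,1)$ and $\iota_{(1,1)}^\drangle/(1,0)$ are empty, so the pointwise formula for $u_!$ gives zero objects on both sides, which coincide because $R$ preserves zeros. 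Axiom~(Der.$2$) applied in $\D$ then promotes this pointwise isomorphism to a global one, completing the argument.
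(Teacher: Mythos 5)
Your proof is correct, but it takes a genuinely different route from the paper. The paper's proof is two lines: it cites~\cite[Prop.\,2.11]{Moritz}, according to which right-exact (in particular left adjoint) morphisms of pointed derivators commute with suspensions and left-exact (in particular right adjoint) ones commute with loops, so $L\Sigma R\cong\bar\Sigma LR\cong\bar\Sigma$ (using that the counit $LR\to\id$ is invertible for a reflection) and $\Omega R\cong R\bar\Omega$ directly. You instead unfold the Kan-extension definitions of $\Sigma,\Omega,\bar\Sigma,\bar\Omega$ and verify the commutations by hand: the routine ones ($L$ preserves left Kan extensions, $R$ preserves right Kan extensions, both commute with restrictions) via Yoneda, Lemma~\ref{loc_is_der} to identify $Lu_*R$ with $u_*^{\mathbb E}$, and the genuinely delicate step --- that the right adjoint $R$ nevertheless commutes with the left Kan extension along $\iota_{(1,1)}^{\drangle}$ --- via the mate of $Ru^{*,\mathbb E}=u^{*,\D}R$, checked pointwise: the slice categories over $(0,1)$ and $(1,0)$ are empty so both sides are zero (here you correctly use that $\mathbb E$ is pointed and $R$, being a right adjoint, preserves zero objects), at $(1,1)$ full faithfulness of the inclusion makes both units invertible, and (Der.$2$) promotes the pointwise isomorphism. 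In effect you reprove the relevant special case of the cited proposition: extension-by-zero along a cosieve is preserved by any zero-preserving morphism that commutes with restrictions, which is exactly how such statements are established. What the paper's route buys is brevity by black-boxing this into Groth's result; what yours buys is a self-contained argument that makes explicit where pointedness, the pointwise formula (L.Der.$4$), and (Der.$2$) enter.
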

\begin{proof}
By~\cite[Prop.\,2.11]{Moritz}, left/right exact (in particular right/left adjoint) morphisms of pointed derivators commute with loops/suspensions. In particular, for any $I\in\Dia$, $L_I\Sigma R_{I}\cong \bar\Sigma  L_{I} R_{I}\cong \bar \Sigma $, while $\Omega R_{I} \cong R_{I}\bar \Omega $.
\end{proof}
If we consider $\E$ as a sub-derivator of $\D$, then the above corollary means that $\bar \Omega$ is just a restriction of $\Omega$ to $\E$, while applying $\bar\Sigma$ is the same as first applying $\Sigma$ and then reflecting onto $\E$. In particular, $\E$ is closed under loops in $\D$ and, being $\Omega$ an equivalence, $\bar\Omega$ is always fully faithful, so it is an equivalence if and only if it is essentially surjective.

\subsection{Pseudo-colimits of prederivators}\label{ps_colims_in_pder}

Let $\C$ be a $2$-category (e.g., $\C=\CAT$ or $\C=\PDer_\Dia$) and consider a functor $F\colon \N\to \C$, which is determined by a sequence of ($1$-)morphisms in $\C$:
\[
\xymatrix{
F(0):=C_0\ar[r]^{F_0}&F(1):=C_1\ar[r]^{F_1}&\cdots\ar[r]&F(n):=C_n\ar[r]^{F_n}&\cdots
}
\]
and let $F_{n,m}:=F_m\circ\ldots\circ F_{n-1}$ for $m<n$. The {\bf pseudo-colimit}
\[
C:=\pcolim F
\]
of this diagram is defined by a universal property expressed by the following natural isomorphism of categories:
\[
\C^{\N}(F,\Delta_A)\cong \C(\pcolim F,A)
\]
for any $A\in \C$, where $\Delta_A\colon \N\to \C$ is the functor that takes constantly the value $A$. In the following lemma we give an explicit construction of a pseudo-colimit as above, in the particular case when $\C=\CAT$:

\begin{lem}
Consider a functor $F\colon \N\to \CAT$. The pseudo-colimit $\C:=\pcolim F$ exists and it can be constructed as follows: the class of objects of $\C$ is the disjoint union of the objects of the $\C_n$, that we can write as $\Ob(\C)=\{(X,n):X\in \C_n\}$, while the morphism sets are:
\begin{equation}\label{homs_in_pcolim}
\C((X,n),(Y,m))=\colim_{k\geq n\lor m} \C_k(F_{k,n}X,F_{k,m}Y).
\end{equation}
where this direct limit is taken in $\Set$.
\end{lem}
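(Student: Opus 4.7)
The plan is to verify the universal property of the pseudo-colimit directly. With $\C$ constructed as stated, I will (i) check $\C$ is a well-defined category, (ii) exhibit a canonical pseudo-cocone $\iota\colon F\Rightarrow\Delta_\C$, and (iii) show that any pseudo-cocone $\alpha\colon F\Rightarrow\Delta_A$ factors essentially uniquely through $\iota$ via a functor $\Phi\colon\C\to A$.

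For (i), composition is dictated by the filtered character of the colimit in \eqref{homs_in_pcolim}: given morphism classes $[f]\colon(X,n)\to(Y,m)$ and $[g]\colon(Y,m)\to(Z,p)$ with representatives $f\in C_k(F_{k,n}X,F_{k,m}Y)$ and $g\in C_{k'}(F_{k',m}Y,F_{k',p}Z)$, I lift both to a common level $\ell\geq k\lor k'$ via $F_{\ell,k}$ and $F_{\ell,k'}$, and compose inside $C_\ell$. Each $F_j$ being a functor ensures this is independent of the chosen $\ell$ and associative, and $[\id_X]\in C_n(X,X)$ gives the identity of $(X,n)$.

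For (ii), I define $\iota_n\colon C_n\to\C$ on objects by $X\mapsto(X,n)$ and on morphisms by sending $f\colon X\to Y$ to its class $[f]$ represented at level $k=n$. The identity $\id_{F_nX}\in C_{n+1}(F_nX,F_nX)$ represents, via the equality $F_{k,n+1}F_n=F_{k,n}$, an element of $\colim_{k\geq n+1}C_k(F_{k,n+1}F_nX,F_{k,n}X)$, hence an isomorphism $\theta_{n,X}\colon(F_nX,n+1)\xrightarrow{\sim}(X,n)$ in $\C$; naturality in $X$ is immediate, giving a 2-cell $\theta_n\colon \iota_{n+1}F_n\cong\iota_n$. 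Coherence along longer composable strings in $\N$ is automatic since composites of identities are identities in the relevant $C_\ell$.

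For (iii), given a pseudo-cocone $(\alpha_n\colon C_n\to A,\ \sigma_n\colon\alpha_{n+1}F_n\cong\alpha_n)$, I define $\Phi\colon\C\to A$ on objects by $\Phi(X,n):=\alpha_n(X)$, and on a morphism $[f]\colon(X,n)\to(Y,m)$ with representative $f\in C_k(F_{k,n}X,F_{k,m}Y)$ by the composite
\[
\alpha_n(X)\xrightarrow{\sim}\alpha_k(F_{k,n}X)\xrightarrow{\alpha_k(f)}\alpha_k(F_{k,m}Y)\xrightarrow{\sim}\alpha_m(Y),
\]
where the two outer isomorphisms are obtained by pasting the $\sigma_j$ (or their inverses) across the intervals $[n,k]$ and $[m,k]$. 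The main obstacle, and the only genuinely non-routine step, is showing that $\Phi([f])$ is independent of the chosen representative: if $f$ is replaced by its image $F_{k',k}(f)\in C_{k'}(F_{k',n}X,F_{k',m}Y)$ for some $k'\geq k$, the naturality of each $\sigma_j$ (as a natural transformation between functors $C_{j+1}\to A$) ensures that the two recipes yield the same morphism $\alpha_n(X)\to\alpha_m(Y)$. Once well-definedness is in hand, functoriality of $\Phi$, the identities $\Phi\iota_n=\alpha_n$, the compatibility $\Phi\theta_n=\sigma_n$, and the essential uniqueness of $\Phi$ all follow by straightforward diagram chasing using the filtered-colimit description of the hom-sets; the analogous argument at the level of modifications upgrades the bijection on objects to the full equivalence of categories required by the universal property.
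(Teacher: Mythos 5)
The paper states this lemma without proof, so there is no argument of the author's to compare against: your direct verification of the 2-categorical universal property is exactly the routine check being left to the reader, and it is correct — in particular you isolate the one genuinely non-formal point, namely that $\Phi[f]$ is independent of the chosen representative, and the naturality of the pasted coherence isomorphisms $\sigma_j$ (equivalently of the composite $\alpha_{k'}F_{k',k}\Rightarrow\alpha_k$) is indeed what makes it work; likewise your reading of the cocones as pseudo-natural (data $(\alpha_n,\sigma_n)$ with no conditions, $\N$ being free on the arrows $n\to n+1$) is the right one. One small sharpening: the universal property in the paper is an \emph{isomorphism} of categories $\CAT(\C,A)\cong\CAT^{\N}(F,\Delta_A)$, and your construction actually delivers this on the nose rather than just "essentially uniquely" — $\Phi$ is forced on objects by $\Phi\iota_n=\alpha_n$ and on morphisms by writing any $[f]$ with representative at level $k$ as a composite of $\theta$-isomorphisms with $\iota_k(f)$, and the modification-versus-natural-transformation correspondence is likewise a bijection — so you can (and should) claim strict uniqueness and an isomorphism, not merely an equivalence, at no extra cost.
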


If in the above lemma all the $F_n$ are  fully faithful, then the direct limit \eqref{homs_in_pcolim} is eventually constant, that is $\C((X,n),(Y,m))\cong\C_k(F_{k,n}X,F_{k,m}Y)$, where $k \geq n\lor m $ is any positive integer.

The above construction of sequential pseudo-colimits in $\CAT$ can be used to give an explicit construction of sequential pseudo-colimits in $\PDer_\Dia$:

\begin{lem}
Consider a functor $F\colon \N\to \PDer_\Dia$. The pseudo-colimit $\D:=\pcolim F$ in $\PDer_\Dia$ exists and, furthermore, $\D(I)$ is the pseudo-colimit $\pcolim F_I$ in $\CAT$, where $F_I\colon \N\to \CAT$ is the functor $F_I(n):=(F(n))(I)$.  
\end{lem}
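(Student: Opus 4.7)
The plan is to build $\D$ pointwise from the explicit construction of pseudo-colimits in $\CAT$ given in the preceding lemma, and then to check that the resulting assignment $I\mapsto \D(I)$ upgrades to a strict $2$-functor $\Dia^{\op}\to \CAT$ satisfying the required universal property in $\PDer_\Dia$.

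\smallskip\noindent
\textbf{Construction.} For each $I\in \Dia$, set $\D(I):=\pcolim F_I$, described explicitly as in the previous lemma: objects are pairs $(\mathscr X,n)$ with $\mathscr X\in F(n)(I)$, and morphism sets are sequential colimits of $F(k)(I)(F_{k,n}\mathscr X, F_{k,m}\mathscr Y)$. For a morphism $u\colon J\to I$ in $\Dia$, the family of functors $u^*_{F(n)}\colon F(n)(I)\to F(n)(J)$ is \emph{strictly} compatible with the transition morphisms, since every $F_n\colon F(n)\to F(n+1)$ is a $2$-natural transformation and hence satisfies $F_n(J)\circ u^*_{F(n)}=u^*_{F(n+1)}\circ F_n(I)$ on the nose. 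Composing with the structural morphisms of $\pcolim F_J$ yields a cocone on $F_I$ with vertex $\D(J)$, which by the universal property of $\pcolim F_I$ in $\CAT$ induces a unique functor $u^*_{\D}\colon \D(I)\to \D(J)$ sending $(\mathscr X,n)$ to $(u^*_{F(n)}\mathscr X,n)$. An entirely analogous argument, using that each $F_n$ also commutes strictly with the action on $2$-cells $\alpha^*$ in each $F(n)$, promotes $u^*_\D$ to a strict $2$-functor in the variable $u$.

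\smallskip\noindent
\textbf{Functoriality of $\D$.} Given composable $v\colon K\to J$ and $u\colon J\to I$, the equality $v^*_{F(n)}\circ u^*_{F(n)}=(uv)^*_{F(n)}$ holds in each $F(n)$ because $F(n)$ is a prederivator, and this strict equality propagates to the pseudo-colimits because our explicit description defines the induced functors levelwise on representatives. Likewise, identities are preserved. Hence $\D\colon \Dia^{\op}\to \CAT$ is a strict $2$-functor, i.e., a prederivator. By construction, the canonical inclusions $\iota_{n,I}\colon F(n)(I)\to \D(I)$ commute strictly with $u^*$, so they assemble into morphisms of prederivators $\iota_n\colon F(n)\to \D$ that form a cocone $F\to \Delta_{\D}$ in $\PDer_\Dia^{\N}$.

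\smallskip\noindent
\textbf{Universal property.} Given any prederivator $\E$ and any cocone $G_\bullet\colon F\to \Delta_\E$ in $\PDer_\Dia^{\N}$, evaluate at $I\in\Dia$ to obtain a cocone of functors $G_{\bullet,I}\colon F_I\to \Delta_{\E(I)}$ in $\CAT$; the previous lemma produces a unique functor $G_I\colon \D(I)\to \E(I)$ such that $G_I\circ\iota_{n,I}=G_{n,I}$ for all $n$. Strict $2$-naturality of each $G_n$ combined with our definition of $u^*_\D$ implies that the square $u^*_\E\circ G_I=G_J\circ u^*_\D$ commutes on every object of the form $(\mathscr X,n)$ and on every morphism representative, and so commutes identically by the universal property of $\pcolim F_I$. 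A parallel check for $2$-cells $\alpha\colon u\to v$ shows that the family $\{G_I\}$ is a strict $2$-natural transformation, giving the unique morphism $G\colon \D\to \E$ in $\PDer_\Dia$ with $G\circ\iota_n=G_n$.

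\smallskip\noindent
\textbf{Main obstacle.} The only nontrivial point is to verify that the pointwise universal property really assembles strictly, rather than only up to coherent isomorphism, so that $\D$ is a genuine strict $2$-functor and $G$ is genuinely $2$-natural. This reduces to the observation that the transition morphisms $F_n$ are \emph{strict} $2$-natural transformations of prederivators, so all the coherences one would expect in a bicolimit degenerate to strict equalities at the level of representatives, which are then inherited by the pseudo-colimit in each $\CAT$ fibre.
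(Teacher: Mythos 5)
Your proposal is correct and follows essentially the same route as the paper: define $\D(I):=\pcolim F_I$ pointwise, use the strictness of the transition morphisms (as $2$-natural transformations) together with the universal property of the pseudo-colimit in $\CAT$ to define $u^*$ and $\alpha^*$ levelwise, and then deduce the universal property in $\PDer_\Dia$ from the pointwise one. The only presentational difference is that you argue on explicit representatives $(\mathscr X,n)$ while the paper phrases the definition of $u^*$ and $\alpha^*$ through the isomorphism $\CAT(\D(J),\D(I))\cong\CAT^{\N}(F_J,\Delta_{\D(I)})$; the content is the same.
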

\begin{proof}
Let us define a prederivator $\D\colon \Dia^{\op}\to \CAT$ as follows:
\begin{itemize}
\item for  $I\in\Dia$, let $\D(I):=\pcolim F_I$;
\item for a functor $u\colon I\to J$, the functor $u^*\colon \D(J)\to \D(I)$ is the unique object in the category on the left-hand side
\begin{equation}\label{univ_prop_pcolim_dim}
\CAT(\D(J),\D(I))\cong \CAT^{\N}(F_J,\Delta_{\D(I)})
\end{equation}
corresponding to an object $F_J\to \Delta_{\D(I)}$ in the category on the right-hand side, whose $n$-th component is the composition $F_J(n)\to F_I(n)\to \D(I)$ (where $F_J(n)\to F_I(n)$ is $u^*$ for the derivator $F(n)$, while $\varepsilon^I_n\colon F_I(n)\to \D(I)$ is the canonical map to the pseudo-colimit);
\item for a natural transformation $\alpha\colon u\to v\colon I\to J$, the natural transformation $\alpha^*\colon u^*\to v^*\colon \D(J)\to \D(I)$ is the unique morphism $\alpha^*$ in the category on the left-hand side in \eqref{univ_prop_pcolim_dim}, corresponding to a morphism in $\CAT^{\N}(F_J,\Delta_{\D(I)})$ whose $n$-th component is 
\[
\xymatrix{
F_J(n)\ar@/_-10pt/[rr]^{u^*}\ar@/_10pt/[rr]_{v^*}&\Downarrow{\text{\scriptsize $\alpha^*$}}&F_I(n)\ar[rr]^{\varepsilon^I_n}&&\D(I).
}
\]
\end{itemize}
With this explicit definition, it is not difficult to deduce from the universal property of pseudo-colimits in $\CAT$ that $\D$ is the pseudo-colimit of $F$ in $\PDer_\Dia$, that is, that there is an isomorphism of categories 
\[
\PDer_\Dia(\D,\E)\cong (\PDer_\Dia)^{\N}(F,\Delta_{\E}),
\] for any prederivator $\E$. 
\end{proof}

\newpage
\section{Preliminaries about derived categories}\label{prelim_der_sec}

In this section we start with an Abelian category $\A$ and we study the standard prederivator whose base is the category of bounded (resp., left-bounded, right-bounded, unbounded) cochain complexes $\Ch^b(\A)$ on $\A$ (resp., $\Ch^{+}(\A)$, $\Ch^-(\A)$, $\Ch(\A)$). In particular, we show how to obtain bounded complexes as a (double) pseudo-colimit of uniformly bounded complexes. Furthermore, we study the localizations of these prederivators with respect to weak equivalences, obtaining the natural prederivators enhancing the bounded (resp., left-bounded, right-bounded,  unbounded) derived category of $\A$. Using model approximations, we give standard assumptions on $\A$ for the unbounded derived category to have small hom-sets, and to  embed it as a full subcategories of the homotopy category of towers/telescopes over $\A$. In this way, the concepts of left-complete, right-complete, and two-sided complete derived category naturally arise. 

\medskip\noindent
{\bf Setting for Section \ref{prelim_der_sec}.}
We fix through this section a category of diagrams $\Dia$ and a $\Dia$-bicomplete Abelian category $\A$. When working with categories of half-bounded or unbounded complexes we will need $\Dia$ to contain the discrete category on a countable set, that is, we need $\A$ to be countably bicomplete.

\subsection{Categories of bounded complexes}

Let us start introducing notations for the following prederivators:
\begin{itemize}
\item $\Ch_\A\colon \Dia^\op\to \CAT$ is the discrete Abelian derivator represented by  the category of unbounded cochain complexes $\Ch(\A)$;
\item $\Ch_\A^b\colon \Dia^\op\to \CAT$ is the full sub-prederivator of $\Ch_\A$ such that $\Ch^b_\A(I):=\Ch^b(\A^I)$, the bounded complexes on $\A^I$;
\item $\Ch_\A^{(b,+)}\colon \Dia^\op\to \CAT$ is the full sub-prederivator of $\Ch^b_\A$ such that $\Ch^{(b,+)}_\A(I):=\Ch^{(b,+)}(\A^I)$, the category of those bounded complexes $X\in \Ch^b(\A^I)$ such that $X^h=0$ for all $h<0$;
\item given $n\in\N$, $\Ch_\A^{\ordn}\colon \Dia^\op\to \CAT$ is the full sub-prederivator of $\Ch^{(b,+)}_\A$ such that $\Ch^{\ordn}_\A(I):=\Ch^{\ordn}(\A^I)$, the category of those bounded complexes $X\in \Ch^b(\A^I)$ such that $X^h=0$ for all $h<0$ and all $h\geq n$.
\end{itemize}

\begin{rmk}
Notice that $\Ch_\A^{\ordn}$ is again a discrete Abelian derivator, but the same is not true for $\Ch_\A^b$ and $\Ch_\A^{(b,+)}$. In fact, while it is true that the associated diagram functors are fully faithful, given $I\in \Dia$, in general the inclusions $\Ch^{b}(\A^I)\subseteq \Ch^b(\A)^I$ and $\Ch^{(b,+)}(\A^I)\subseteq\Ch^{(b,+)}(\A)^I$ may be strict.
\end{rmk}

Consider the following sequence of prederivators, where the maps are the obvious inclusions:
\[
\Ch_\A^{\bbone}\to \Ch_\A^{\bbtwo}\to \cdots \to \Ch_\A^{\ordn}\to \cdots.
\]
and let $\pcolim_n \Ch_\A^{\ordn}$ be the pseudo-colimit in $\PDer_\Dia$, as described in Subsection \ref{ps_colims_in_pder}.
Since these inclusions are componentwise fully faithful one can give a straightforward proof of the following lemma:

\begin{lem}
In the above notation, there is an equivalence of prederivators 
\[
F\colon \pcolim_n \Ch_\A^{\ordn}\to \Ch_{\A}^{(b,+)},
\] 
such that, for all $n\in \N_{>0}$, $I\in \Dia$ and $X\in \Ch_\A^{\ordn}(I)$, $(X,n)\mapsto X$.
\end{lem}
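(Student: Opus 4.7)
The plan is to exploit the explicit description of the pseudo-colimit provided in the preceding lemma together with the elementary observation that the inclusions $\Ch_\A^{\ordn}\hookrightarrow \Ch_\A^{\ordnp}$ are componentwise fully faithful, so the sequential direct limits defining $\text{Hom}$-sets in $\pcolim_n\Ch_\A^{\ordn}(I)$ are eventually constant.

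First I would construct $F$ via the universal property of the pseudo-colimit. For each $I\in\Dia$ there is an obvious cocone of fully faithful inclusions $\Ch^{\ordn}(\A^I)\hookrightarrow \Ch^{(b,+)}(\A^I)$ (just regard a complex concentrated in degrees $[0,n)$ as a bounded below complex that happens to vanish in degree $\geq n$). These inclusions are natural in $I\in\Dia^{\op}$: for any $u\colon I\to J$, restriction along $u$ preserves the vanishing conditions defining $\Ch^{\ordn}$ and $\Ch^{(b,+)}$, and similarly for $\alpha^*$ associated to a natural transformation. This naturality provides a $2$-natural transformation from $F_\bullet=\{\Ch_\A^{\ordn}\}_{n\in\N}$ to the constant diagram $\Delta_{\Ch_{\A}^{(b,+)}}$ in $\PDer_\Dia^{\N}$, which by the universal property yields a morphism of prederivators $F\colon \pcolim_n\Ch_\A^{\ordn}\to \Ch_\A^{(b,+)}$ whose effect on objects is $(X,n)\mapsto X$.

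Next I would verify that $F$ is fully faithful at each level $I\in\Dia$. Given $(X,n)$ and $(Y,m)$ in $\pcolim_n\Ch_\A^{\ordn}(I)$, the explicit formula for morphisms gives
\[
\Hom\bigl((X,n),(Y,m)\bigr)=\colim_{k\geq n\lor m}\Ch^{\ordk}(\A^I)(X,Y).
\]
Since each transition map $\Ch^{\ordk}(\A^I)(X,Y)\to \Ch^{\ordkp}(\A^I)(X,Y)$ is a bijection (both sides just parametrize chain maps between complexes supported in $[0,n)$ and $[0,m)$ respectively), this colimit is eventually constant and equals $\Ch^{\ordk}(\A^I)(X,Y)$ for any $k\geq n\lor m$. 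On the other hand, for complexes $X,Y\in\Ch^{(b,+)}(\A^I)$ that vanish in degrees $\geq k$, a chain map $X\to Y$ in $\Ch^{(b,+)}(\A^I)$ is precisely a chain map in $\Ch^{\ordk}(\A^I)$, so the two Hom-sets coincide.

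Finally I would check essential surjectivity: any $X\in \Ch^{(b,+)}(\A^I)$ is, by definition, a bounded complex vanishing in negative degrees, so there exists $n\in\N$ with $X^h=0$ for all $h\geq n$; hence $X$ is in the image of $\Ch^{\ordn}(\A^I)\hookrightarrow \Ch^{(b,+)}(\A^I)$, i.e.\ $F(X,n)=X$. Combining fully faithfulness and essential surjectivity at each level yields that $F$ is a levelwise equivalence, and since equivalences of prederivators are detected levelwise, $F$ is an equivalence of prederivators. No step presents a genuine obstacle; the only bookkeeping to watch is that the cocone maps really are $2$-natural in the $\Dia^{\op}$-variable, which is immediate because restriction and whiskering by natural transformations act degreewise on complexes.
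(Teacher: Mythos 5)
Your proof is correct and fills in exactly the straightforward argument the paper alludes to without writing out. You use the universal property of the pseudo-colimit to construct $F$, then verify full faithfulness via the explicit Hom-formula (observing it stabilizes because the inclusions are fully faithful) and essential surjectivity by noting every object of $\Ch^{(b,+)}(\A^I)$ lies in some $\Ch^{\ordn}(\A^I)$; this is precisely the argument one expects. The only point worth flagging is the final sentence: a strict $2$-natural transformation that is a levelwise equivalence need not, in general, have a \emph{strict} $2$-natural quasi-inverse (the naive candidate $X\mapsto (X,n(X))$ is only pseudo-natural, since $n(u^*X)$ can be strictly smaller than $n(X)$). This is, however, the same convention the paper itself uses implicitly (``equivalence of prederivators'' $=$ levelwise equivalence), so it is not a gap relative to the paper.
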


Let us conclude this subsection showing that the prederivator $\Ch_\A^b$ can be seen as a pseudo-colimit of copies of $\Ch_{\A}^{(b,+)}$. Indeed, define the following endofunctors:
\[
\Omega\colon \Ch_\A\to \Ch_\A \quad (\text{resp., } \Sigma\colon \Ch_\A\to \Ch_\A)
\]
such that, given $X\in \Ch_\A(I)=\Ch(\A^I)$, we have $(\Omega X)^h:=X^{h-1}$ (resp., $(\Sigma X)^h:=X^{h+1}$), with the usual sign conventions for differentials. Notice that the above use of the symbols $\Sigma$ and $\Omega$ does not coincide with the general definition for pointed derivators given in Subsection \ref{prelim_stab_der}. 

\begin{lem}
In the above notation, consider a diagram $F\colon \N\to \PDer_\Dia$ such that $F(i)=\Ch_{\A}^{(b,+)}$ for all $i\in \N$ and $F(i)\to F(i+1)$ is a suitable restriction of $\Omega\colon \Ch_\A\to \Ch_\A$. Then there is an equivalence of prederivators
\[
\pcolim F=\pcolim_n \Ch^{(b,+)}_\A\to \Ch_{\A}^{b}.
\]
\end{lem}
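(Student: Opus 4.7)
The plan is to mimic the proof of the previous lemma, using that $\Sigma$ and $\Omega$ are mutually strict inverse autoequivalences of $\Ch_\A$ (one checks degreewise that $\Sigma\Omega = \id = \Omega\Sigma$ on the nose). First observe that $\Omega$ restricts to an endo-functor of $\Ch_\A^{(b,+)}$: if $X^h = 0$ for $h<0$ then $(\Omega X)^h = X^{h-1} = 0$ for $h < 1$. Since this restricted $\Omega$ admits $\Sigma$ as a (strict) left inverse, it is fully faithful at every level $I\in\Dia$. Therefore, by the explicit formula for pseudo-colimits in $\CAT$ and the fact that pseudo-colimits in $\PDer_\Dia$ are computed levelwise, the filtered colimit defining the morphism sets stabilises and for any $k\geq n\vee m$ one has
\[
(\pcolim F)(I)\bigl((X,n),(Y,m)\bigr)\ \cong\ \Ch^{(b,+)}(\A^I)\bigl(\Omega^{k-n}X,\,\Omega^{k-m}Y\bigr).
\]

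Next I would define the candidate equivalence $G\colon \pcolim F\to \Ch_\A^b$ levelwise by $G_I(X,n):=\Sigma^n X$ on objects, and by sending a morphism represented by $f\colon \Omega^{k-n}X\to \Omega^{k-m}Y$ to $\Sigma^k f\colon \Sigma^n X\to \Sigma^m Y$; the equality $\Sigma^k\Omega^{k-n}=\Sigma^n$ holds on the nose, and independence from the choice of $k\geq n\vee m$ follows from $\Sigma\Omega = \id$. Essential surjectivity is immediate: any $Z\in \Ch^b(\A^I)$ is supported in some interval $[-n,N]$, so $\Omega^n Z\in\Ch^{(b,+)}(\A^I)$ and $G_I(\Omega^n Z,n) = \Sigma^n\Omega^n Z = Z$. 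Fully faithfulness follows from the displayed formula combined with the fact that $\Sigma^k$ is a bijection on Hom-sets in $\Ch(\A^I)$, so
\[
\Ch^{(b,+)}(\A^I)\bigl(\Omega^{k-n}X,\Omega^{k-m}Y\bigr)\ \xrightarrow{\ \Sigma^k\ }\ \Ch^b(\A^I)(\Sigma^n X,\Sigma^m Y)
\]
is bijective.

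The only step that is not purely formal is checking that these levelwise equivalences assemble into a genuine morphism of prederivators, i.e.\ verifying pseudo-naturality in $I\in\Dia$. This is where I expect the main (mild) obstacle to lie, and I would handle it by invoking that the shift operations $\Sigma$, $\Omega$ are defined degreewise and hence commute \emph{strictly} with every restriction functor $u^*\colon \Ch(\A^J)\to \Ch(\A^I)$ induced by $u\colon I\to J$. Combined with the universal property of $\pcolim F$ in $\PDer_\Dia$ recalled in Subsection \ref{ps_colims_in_pder}, this strict commutation shows that the collection $\{G_I\}_{I\in\Dia}$ is $2$-natural and produces a morphism $G$ of prederivators. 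Since being an equivalence in $\PDer_\Dia$ is detected levelwise, the pointwise verification above completes the proof.
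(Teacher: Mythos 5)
The paper leaves this lemma (and the one immediately preceding it) without proof, treating it as a straightforward consequence of the explicit description of sequential pseudo-colimits in $\PDer_\Dia$; there is therefore no ``paper proof'' to compare against. Your argument is a correct fill-in. You correctly identify the two load-bearing facts: that $\Omega$ is fully faithful on $\Ch_\A^{(b,+)}$ (so the hom-colimits in \eqref{homs_in_pcolim} stabilise, as the paper itself remarks just after that formula), and that $\Sigma$, $\Omega$ are degreewise shifts that commute strictly with every $u^*$, so the levelwise functors $G_I\colon (X,n)\mapsto \Sigma^n X$ really do assemble to a $1$-cell in $\PDer_\Dia$. The computation $\Sigma^k\Omega^{k-n}=\Sigma^n$, the well-definedness on morphisms independent of the choice of $k\geq n\vee m$, and the essential surjectivity via $\Omega^n Z$ are all right. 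The only stylistic quibble is that strict equality $\Sigma\Omega=\id$ on differentials depends on the sign convention used for shifts, but the paper explicitly invokes ``the usual sign conventions,'' under which $d_{\Sigma\Omega X}^h=d_X^h$, so your claim stands; if one worried about this, the argument would survive unchanged with $\Sigma\Omega\cong\id$ via a canonical natural isomorphism rather than equality.
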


\subsection{Naive and smart truncations of complexes}
Let us introduce the so-called {\bf smart truncations} of a complex. Indeed, given an integer $k\in\N$ and $X\in \Ch(\A)$, we let
\[
\tau^{\leq k}X:\quad(\xymatrix@C=10pt{\cdots\ar[r]&X^{k-2}\ar[r]&X^{k-1}\ar[r]&\ker(d^{k})\ar[r]&0\ar[r]&\cdots});
\]
\[
\tau^{\geq k}X:\quad(\xymatrix@C=10pt{\cdots\ar[r]&0\ar[r]&\mathrm{CoKer}(d^{k-1})\ar[r]&X^{k+1}\ar[r]&X^{k+2}\ar[r]&\cdots}).
\]
Similarly, the {\bf naive truncations} of $X$ are defined as follows
\[
X^{<k}:\quad(\xymatrix@C=10pt{\cdots\ar[r]&X^{k-2}\ar[r]&X^{k-1}\ar[r]&0\ar[r]&\cdots});
\]
\[
X^{>k}:\quad(\xymatrix@C=10pt{\cdots\ar[r]&0\ar[r]&X^{k+1}\ar[r]&X^{k+2}\ar[r]&\cdots}).
\]
Notice that there is a short exact sequence: 
\[
0\to X^{>k-1}\to X\to X^{<k}\to0.
\] 
Furthermore, $X$ is an exact complex if and only if both $\tau^{\leq k}X$ and $\tau^{\geq k+1}X$ are exact.

\begin{lem}\label{devissage_of_der}
Let $\A$ be an Abelian category and let $\sfD^b(\A)$, $\sfD^+(\A)$, $\sfD^-(\A)$ and $\sfD(\A)$ be its bounded, left-bounded, right-bounded and unbounded derived category, respectively. Suppose furthermore that these derived categories all have small hom-sets. Then, 
\begin{enumerate}
\item $\sfD^b(\A)$ satisfies the principle of d\'evissage with respect to $\A$, embedded in $\Ch^{b}(\A)$ as the class of complexes concentrated in degree $0$;
\item if $\A$ has exact coproducts (resp., products), then $\sfD^+(\A)$,  $\sfD^-(\A)$, and $\sfD(\A)$ satisfy the principle of infinite (dual) d\'evissage with respect to $\A$.
\end{enumerate}
\end{lem}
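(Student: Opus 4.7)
My plan is to prove both statements by d\'evissage on the support of a complex, using the short exact sequences $0\to X^{>k-1}\to X\to X^{<k}\to 0$ recorded in the excerpt.

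For (1), I would induct on the length $b-a+1$ of the smallest interval $[a,b]$ containing the degrees in which $X$ is nonzero. When the length is $1$, we have $X\cong \Sigma^{-a}Y$ for some $Y\in \A$ placed in degree $0$, which belongs to $\mathrm{thick}(\A)$. For larger length, applying the above s.e.s.\ with $k=a+1$ gives a distinguished triangle $X^{>a}\to X\to X^{<a+1}\overset{+}{\to}$ in $\sfD^b(\A)$, in which $X^{<a+1}$ is concentrated in degree $a$ (length $1$) and $X^{>a}$ is supported in $[a+1,b]$ (length $b-a$); by the induction hypothesis both outer terms lie in $\mathrm{thick}(\A)$, and hence so does $X$.

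For (2), assume $\A$ has exact coproducts (the dual argument with exact products handles the dual d\'evissage). The key technical input is that, for any countable system of inclusions $X_0\hookrightarrow X_1\hookrightarrow\cdots$ in $\Ch(\A)$, the telescope sequence
\[
0\to\bigoplus_n X_n\xrightarrow{1-s}\bigoplus_n X_n\to \colim_n X_n\to 0
\]
is short exact in $\Ch(\A)$. Under the Ab.$4$ hypothesis, $\sfD(\A)$ admits countable coproducts computed as in $\Ch(\A)$, and the above s.e.s.\ becomes a distinguished triangle in $\sfD(\A)$ identifying $\colim_n X_n$ with the homotopy colimit $\hocolim_n X_n$, which lies in $\mathrm{Loc}(\A)$ as soon as each $X_n$ does.

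I would then apply this observation three times. First, for $X\in \sfD^+(\A)$ with support in $[a,\infty)$, the smart truncations $\tau^{\leq n}X$ ($n\geq a$) are bounded, include into one another, and colimit to $X$; by (1) each $\tau^{\leq n}X$ lies in $\mathrm{thick}(\A)$, so $X\cong\hocolim_n\tau^{\leq n}X\in \mathrm{Loc}(\A)$. Second, for $X\in \sfD^-(\A)$ with support in $(-\infty,b]$, the naive truncations $X^{>-n-1}$ are bounded subcomplexes of $X$ exhausting it, giving $X\in \mathrm{Loc}(\A)$ by the same argument. Third, for general $X\in \sfD(\A)$ the smart truncations $\tau^{\leq n}X$ exhaust $X$ by inclusions but now only lie in $\sfD^-(\A)$; by the second step they already belong to $\mathrm{Loc}(\A)$, so $X\cong\hocolim_n\tau^{\leq n}X\in \mathrm{Loc}(\A)$. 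The dual statement follows the same pattern with $\tau^{\geq -n}X$ and $X^{<n+1}$ as inverse systems of quotients and $\holim$ in place of $\hocolim$. The main obstacle throughout is the identification of strict co/limits in $\Ch(\A)$ with the corresponding homotopy co/limits in $\sfD(\A)$, which is precisely what the Ab.$4$ (resp.\ Ab.$4^*$) hypothesis provides, both by making $\sfD(\A)$ admit the relevant co/products and by guaranteeing that the telescope/tower s.e.s.\ remains a distinguished triangle upon passage to the derived category.
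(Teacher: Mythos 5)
Your proof is correct and takes essentially the same route as the paper: part (1) by induction on the length of the support via naive truncations, and part (2) by reducing to part (1) through the telescope short exact sequence $0\to\bigoplus_n X_n\to\bigoplus_n X_n\to\colim_n X_n\to 0$, applied to the systems $(\tau^{\leq k}X)_k$ and $(X^{>-k})_k$ of bounded subcomplexes. The only (inessential) divergence is in the unbounded case: the paper writes $X$ as a single extension of a right-bounded by a left-bounded complex via $0\to X^{>k-1}\to X\to X^{<k}\to 0$, whereas you run the telescope argument a third time over the $\tau^{\leq n}X\in\sfD^-(\A)$; both reductions are equally valid.
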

\begin{proof}
(1) is clear, since any bounded complex can be constructed inductively taking extensions of shifted copies of objects in $\A$. 

(2) We just deal with the principle of infinite d\'evissage since the infinite dual d\'evissage can be proved by dual arguments. Let us start with $X\in \Ch^+(\A)$, and consider the direct system $(\tau^{\leq k}X)_{k\in\N}$ in $\Ch^{b}(\A)$, whose direct limit (in $\Ch^+(\A)$) is isomorphic to $X$. There is a short exact sequence of the form:
\[
0\to \bigoplus_{k\in\N}\tau^{\leq k}X\to \bigoplus_{k\in\N}\tau^{\leq k}X\to \colim_{k\in\N}\tau^{\leq k}X(\cong X)\to 0.
\]
Since coproducts are exact in $\A$, the above coproducts are also coproducts in $\sfD^+(\A)$ and the above short exact sequence induces a triangle in $\sfD^+(\A)$. Hence, $X$ belongs to $\mathrm{Loc}(\sfD^b(\A))$ and, by part (1), $\mathrm{Loc}(\sfD^b(\A))=\mathrm{Loc}(\mathrm{thick}(\A))=\mathrm{Loc}(\A)$.
\\
Let  now $X\in \Ch^-(\A)$, and consider the direct system $(X^{>-k})_{k\in\N}$ in $\Ch^{b}(\A)$, whose direct limit (in $\Ch^-(\A)$) is isomorphic to $X$. There is a short exact sequence of the form:
\[
0\to \bigoplus_{k\in\N}X^{>-k}\to \bigoplus_{k\in\N}X^{>-k}\to \colim_{k\in\N}X^{>-k}(\cong X)\to 0.
\]
Since coproducts are exact in $\A$, the above coproducts are also coproducts in $\sfD^-(\A)$ and the above short exact sequence induces a triangle in $\sfD^-(\A)$. Hence, $X$ belongs to $\mathrm{Loc}(\Ch^b(\A))$ and, by part (1), $\mathrm{Loc}(\Ch^b(\A))=\mathrm{Loc}(\mathrm{thick}(\A))=\mathrm{Loc}(\A)$.
\\
Finally, suppose $X\in \Ch(\A)$. Then $X$ is an extension of a complex in $\Ch^{-}(\A)$ and one in $\Ch^{+}(\A)$, hence $X\in \mathrm{Loc}(\sfD^+(\A)\cup\sfD^{-}(\A))=\mathrm{Loc}(\A)$, by what we have just proved for half-bounded complexes.
\end{proof}

\subsection{Towers and telescopes of complexes}\label{complete_subsection_tower_tel_complexes}

Let us start introducing the following sequence of model categories on categories of uniformly left/right bounded complexes:

\begin{lem}\cite[Thm.\,5.2]{chacholski2017relative}\label{model_troncati}
Suppose $\A$ has enough injectives and let $n\in\N$. Then there is a model category $(\Ch^{\geq -n}(\A),\W^{\geq -n},\C^{\geq -n},\F^{\geq -n})$, where:
\begin{itemize}
\item $\W^{\geq -n}=\{\phi^\bullet:\phi^\bullet \text{ is a quasi-isomorphism}\}$;
\item $\C^{\geq -n}=\{\phi^\bullet:\phi^k \text{ is an epi with injective kernel, for all $k\geq-n$}\}$;
\item $\F^{\geq -n}=\{\phi^\bullet:\phi^k\text{ is a mono for all $k\geq -n$}\}$.
\end{itemize}
Furthermore, there is an adjunction
\begin{equation}\label{adj_n}
l_{n}:  \xymatrix{\Ch^{\geq -n-1}(\A)\ar@<4pt>[r]\ar@<-2pt>@{<-}[r]&\Ch^{\geq -n}(\A)} :r_n,
\end{equation}
where $l_{n}$ is the obvious inclusion while $r_n$ is the (smart) truncation functor. In this situation, $r_n$ preserves fibrations and acyclic fibrations.

Dually, if $\A$ has enough projectives, then for any $n\in\N$ there is a model category $(\Ch^{\leq n}(\A),\W^{\leq n},\C^{\leq n},\F^{\leq n})$, where:
\begin{itemize}
\item $\W^{\leq n}=\{\phi^\bullet:\phi^\bullet \text{ is a quasi-isomorphism}\}$;
\item $\C^{\leq n}=\{\phi^\bullet:\phi^k\text{ is an epi for all $k\leq n$}\}$;
\item $\F^{\leq n}=\{\phi^\bullet:\phi^k \text{ is a mono with projective coker, for all $k\leq n$}\}$.
\end{itemize}
Furthermore, there is an adjunction
\begin{equation}\label{adj_n}
l'_{n}:  \xymatrix{\Ch^{\leq n}(\A)\ar@<4pt>[r]\ar@<-2pt>@{<-}[r]&\Ch^{\leq n+1}(\A)} :r'_n,
\end{equation}
where $l'_{n}$ is the (smart) truncation functor and $r'_n$ is the  obvious inclusion.  In this situation, $l'_n$ preserves cofibrations and acyclic cofibrations.
\end{lem}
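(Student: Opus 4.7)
The plan is to verify the model category axioms for $(\Ch^{\geq -n}(\A),\W^{\geq -n},\C^{\geq -n},\F^{\geq -n})$ together with the stated properties of the adjunction $(l_n,r_n)$; the dual statement for $\Ch^{\leq n}(\A)$ should then follow formally by applying the first assertion to $\A^{\op}$, noting that enough injectives in $\A^{\op}$ is enough projectives in $\A$, that monos and epis interchange, and that kernels and cokernels swap, so that the dualised classes of cofibrations and fibrations acquire exactly the descriptions claimed.

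The bicompleteness of $\Ch^{\geq -n}(\A)$ follows from that of $\A$, and the $2$-out-of-$3$ property for $\W^{\geq -n}$ is standard, being a consequence of the long exact sequence in cohomology. Closure of each of $\W^{\geq -n}$, $\C^{\geq -n}$, $\F^{\geq -n}$ under retracts is then immediate, since being a quasi-isomorphism, being a degreewise epi, being a degreewise mono, and having injective kernel are all retract-stable conditions in any abelian category.

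The substantive step is the construction of the two functorial weak factorisation systems $(\C^{\geq -n}\cap \W^{\geq -n},\F^{\geq -n})$ and $(\C^{\geq -n},\F^{\geq -n}\cap \W^{\geq -n})$ of an arbitrary morphism $\phi^\bullet\colon X^\bullet\to Y^\bullet$. I would proceed by induction on the degree, beginning in degree $-n$ and working upward, producing at each stage an intermediate object by extending along a suitable map into an injective object of $\A$, chosen so as to control the kernel and to adjust the cohomology defect of $\phi^\bullet$. A mapping-cylinder-type construction is the natural source of the acyclic-cofibration/fibration factorisation, while a small-object-style argument using the supply of injectives delivers the cofibration/acyclic-fibration factorisation. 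The lifting axioms then reduce to a degreewise diagram chase, exploiting the injectivity of the kernels of cofibrations against the monomorphism condition defining fibrations.

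For the adjunction, $(l_n,r_n)$ is the standard pair between smart truncation and inclusion, and preservation of fibrations and acyclic fibrations by $r_n$ is then immediate, since the defining conditions of $\F^{\geq -n}$ only constrain the components in degrees $\geq -n$, which are left unaffected by the passage between $\Ch^{\geq -n-1}(\A)$ and $\Ch^{\geq -n}(\A)$. The principal obstacle is the careful bookkeeping in the factorisation construction: ensuring that each intermediate complex lies in $\Ch^{\geq -n}(\A)$, that the differentials are compatible at the joining degrees, and that the choices can be made functorial. For the full explicit recipe I would follow the treatment in~\cite{chacholski2017relative}.
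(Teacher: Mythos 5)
The paper itself contains no proof of this lemma --- it is imported verbatim with the citation \cite[Thm.\,5.2]{chacholski2017relative} --- so deferring the factorisations to that reference is, in effect, the same move the paper makes; there is nothing internal to compare against. The problems lie in the two places where you do commit to an argument. The step you call ``immediate'' is false: the smart truncation $r_n=\tau^{\geq -n}$ does \emph{not} leave the components in degrees $\geq -n$ untouched --- it replaces the degree $-n$ component by $\coker(d^{-n-1})$ --- and it does not preserve the printed fibrations (degreewise monos). For $\A=\Ab$ take $Y=(\Z\xrightarrow{\ \id\ }\Z)$ in degrees $-n-1,-n$, let $X=\Z$ be concentrated in degree $-n$ and let $\phi\colon X\to Y$ be the identity in degree $-n$: this is a degreewise mono, yet $\tau^{\geq-n}\phi$ is $\Z\to 0$. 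The functor for which preservation of the fibration conditions really is trivial is the inclusion $\Ch^{\geq-n}(\A)\hookrightarrow\Ch^{\geq-n-1}(\A)$, and the inclusion is in fact the \emph{right} adjoint of the pair: a chain map $Y\to X$ with $X\in\Ch^{\geq-n}(\A)$ kills the image of $d_Y^{-n-1}$ and hence factors through $\tau^{\geq-n}Y$, so $\tau^{\geq-n}\dashv(\text{inclusion})$; this is also what the tower formalism of Definition \ref{tower_of_models} requires, since $r_n$ must land in $\M_{n+1}$ and the structure maps of $\tow(X)$ are $\tau^{\geq-n-1}X\to\tau^{\geq-n}X$. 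So the assertion to establish is that the truncation is the left adjoint, the inclusion the right adjoint, and the \emph{inclusion} preserves fibrations and acyclic fibrations; your argument attaches the easy property to the wrong functor and, as stated, fails.

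More fundamentally, ``verifying the model axioms'' for the classes exactly as printed cannot succeed, because these classes are transposed: if cofibrations are degreewise epimorphisms and (acyclic) fibrations are degreewise monomorphisms, then in any factorisation of a map as a cofibration followed by an acyclic fibration the middle object is forced, degree by degree, to be the image; for $\A=\Ab$ and the map from $\Z/4$ (concentrated in degree $0$) to $0$ this would force $\Z/4$ to be an injective kernel, so both halves of the factorisation axiom fail on this single map. What \cite[Thm.\,5.2]{chacholski2017relative} actually provides --- and what the later applications need, e.g.\ fibrant towers must be towers of complexes of injectives for $\lim_{\N^{\op}}$ to preserve weak equivalences in Proposition \ref{prop_tow_tel_approx} --- is the injective model structure: cofibrations the degreewise monomorphisms, fibrations the degreewise epimorphisms with injective kernels (dually, with projectives, for $\Ch^{\leq n}(\A)$). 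For that structure your outline (dualisation via $\A^{\op}$, retract and two-out-of-three checks, factorisations built degreewise by induction using enough injectives) has the right shape --- note that a genuine small-object argument is not available in a bare abelian category with enough injectives, so the degreewise induction \emph{is} the argument --- but as written your verification targets the transposed classes and the misassigned adjoint, and would break down at precisely these points.
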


Lemma \ref{model_troncati} shows that the adjunctions $l_{n}\colon\Ch^{\geq -n-1}(\A)\rightleftarrows \Ch^{\geq -n}(\A)\colon r_n$ give a tower of model categories (see Definition \ref{tower_of_models}), while the adjunctions $l'_{n}\colon\Ch^{\leq n}(\A)\rightleftarrows \Ch^{\leq n+1}(\A)\colon r'_n$ give a telescope. Hence, we can construct a category of towers $\Tow(\A)$ and a category of telescopes $\Tel(\A)$. A typical object $X^{\bullet}_\bullet$ of $\Tow(\A)$ can be thought as a commutative diagram of the form
\[
\xymatrix{
\vdots\ar[d] & \vdots\ar[d] & \vdots\ar[d] & \vdots\ar[d] & \vdots\ar[d] & \vdots\ar[d] \\
0\ar[r] & X_{2}^{-2}\ar[d]\ar[r]^{d^{-2}_{2}} & X_{2}^{-1}\ar[d]^{t^{-1}_{2}} \ar[r]^{d^{-1}_{2}}& X_{2}^{0}\ar[d]^{t^{0}_{2}} \ar[r]^{d^{0}_{2}}& X_{2}^{1}\ar[d]^{t^{1}_{2}}\ar[r]^{d^{1}_{2}} & X_{2}^{2} \ar[d]^{t^{2}_{2}}\ar[r]^{d^{2}_{2}}& \dots \\
   & 0 \ar[r] & X_{1}^{-1}\ar[d] \ar[r]^{d^{-1}_{1}}& X_{1}^{0}\ar[d]^{t^{0}_{1}}\ar[r]^{d^{0}_{1}} & X_{1}^{1}\ar[d]^{t^{1}_{1}} \ar[r]^{d^{1}_{1}}& X_{1}^{2}\ar[d]^{t^{2}_{1}} \ar[r]^{d^{2}_{1}}& \dots \\
   &     & 0\ar[r]  & X_{0}^{0}\ar[r]^{d^{0}_{0}} & X_{0}^{1}\ar[r]^{d^{1}_{0}} & X_{0}^{2}\ar[r]^{d^{2}_{0}} & \dots 
}
\]
where $(X^\bullet_n,d_n^\bullet)$ is a cochain complex for all $n\in\N$, and $X^m_n=0$ for all $m<-n$. An analogous description holds for $\Tel(\A)$.

\begin{defn}
Let $\A$ be an Abelian complete category with enough injectives, and let $\M_\bullet=\{(\Ch^{\geq -n}(\A),\W^{\geq -n},\C^{\geq -n},\F^{\geq -n}):n\in\N\}$ be the tower of model categories defined in Lemma \ref{model_troncati}. We denote by 
\[
(\Tow(\A),\W_\Tow,\C_\Tow,\F_\Tow)
\] 
the induced model category constructed as in Proposition \ref{model_torre}. Furthermore, we denote by 
\[
\tow:\Ch(\A)\to \Tow(\A)
\] 
the so-called {\bf tower functor}, which sends a complex $X$ to the sequence of its successive truncations $\dots\to \tau^{\geq -n}X\to \tau^{\geq -n+1}X\to \dots\to \tau^{\geq0}X$ and acts on morphisms in the obvious way (see~\cite{chacholski2017relative}).

Dually, let $\A$ be an Abelian cocomplete category with enough projectives, and let $\M'_\bullet=\{(\Ch^{\leq n}(\A),\W^{\leq n},\C^{\leq n},\F^{\leq n}):n\in\N\}$ be the telescope of model categories defined in Lemma \ref{model_troncati}. We denote by 
\[
(\Tel(\A),\W_\Tow,\C_\Tow,\F_\Tow)
\] 
the induced model category constructed as in Proposition \ref{model_torre}. Furthermore, we denote by 
\[
\tel:\Ch(\A)\to \Tow(\A)
\] 
the so-called {\bf telescope functor}, which sends a complex $X$ to the sequence of its successive truncations $\tau^{\leq0}X\to \tau^{\leq 1}X\to \dots\to \tau^{\leq n}X\to \dots$ and acts on morphisms in the obvious way.
\end{defn}

In the same setting of the above definition, notice that the category $\Tow(\A)$ can be seen as a full subcategory of the category $\Ch(\A)^{\N^{\op}}$ of functors $\N^{\op}\to \Ch(\A)$ and so we can restrict the usual limit functor to obtain a functor $\lim_{\N^\op}\colon\Tow(\A)\to \Ch(\A)$ which is a right adjoint for the tower functor. If we endow $\Ch(\A)$ with the class of weak equivalences given by the quasi-isomorphisms of complexes, we would like to verify that $(\lim_{\N^{\op}},\tow)$ is a right model approximation. A similar question holds for the dual adjunction $(\tel, \colim_\N)$. In fact, this is not true in general. A sufficient condition for this adjunction to be an approximation is provided by the following definition:

\begin{defn}\cite{Roos}
If $\A$ has enough injectives, we say that it is (Ab.$4^*$)-$k$ if, for any
countable family of objects $\{B_n\}_{n\in\N}$ and any choice of injective resolutions $B_n\to E_n$ with $E_n\in\Ch^{\geq0}(\A)$, we have that $H^h(\prod_{n\in\N}E_n)=0$ for all $h>k$.

Dually, if $\A$ has enough projectives, it is (Ab.$4$)-$k$ if, for any
countable family of objects $\{B_n\}_{n\in\N}$ and any choice of projective resolutions $B_n\to E_n$ with $E_n\in\Ch^{\leq0}(\A)$, we have that $H^h(\coprod_{n\in\N}E_n)=0$ for all $h<-k$.
\end{defn}

\begin{prop}\label{prop_tow_tel_approx}\cite[Thm.\,6.4]{chacholski2017relative}
Given $k\in\N$, suppose that $\A$ has enough injectives and it is (Ab.$4^*$)-$k$. Then the following adjunction is a right model approximation, 
\begin{equation}\label{tower_adj_to}
\tow:  \xymatrix{(\Ch(\A),\W)\ar@<4pt>[r]\ar@<-2pt>@{<-}[r]&(\Tow(\A),\W_\Tow,\C_\Tow,\F_\Tow)} :\lim_{\N^{\op}}.
\end{equation}
Dually, if $\A$ has enough projectives and it is (Ab.$4$)-$k$, then the following adjunction is a left model approximation, 
\begin{equation}\label{tel_adj_to}
\colim_\N:  \xymatrix{(\Tel(\A),\W_\Tel,\C_\Tel,\F_\Tel)\ar@<4pt>[r]\ar@<-2pt>@{<-}[r]& (\Ch(\A),\W)}:\tel.
\end{equation}
\end{prop}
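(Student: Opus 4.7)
My plan is to verify directly the four axioms (MA.1)--(MA.4) that define a right model approximation for the adjunction $(\tow,\lim_{\N^{\op}})$; the dual statement about $(\colim_{\N},\tel)$ then follows at once by applying the same argument in the opposite category $\A^{\op}$. For (MA.1), adjointness is routine: a morphism $\tow(X)\to Y$ in $\Tow(\A)$ is a compatible family of chain maps $\tau^{\geq -n}X\to Y_n$, and the universal property of $\lim_{\N^{\op}}Y$, computed componentwise in $\Ch(\A)$, identifies this datum with a single chain map $X\to \lim_{\N^{\op}}Y$. Axiom (MA.2) is equally direct, since smart truncation preserves quasi-isomorphisms: if $\phi$ is a quasi-isomorphism in $\Ch(\A)$, then each $\tau^{\geq -n}\phi$ is one, hence $\tow(\phi)\in \W_{\Tow}$ by the componentwise definition of the weak equivalences in Proposition \ref{model_torre}.

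The substantive part of the argument is (MA.3), and this is where the hypothesis (Ab.$4^*$)-$k$ enters. A fibrant object $Y\in \Tow(\A)$ has each component $Y_n$ fibrant in $\Ch^{\geq -n}(\A)$ in the sense of Lemma \ref{model_troncati}, with structure maps $Y_{n+1}\to r_n Y_n$ compatible with truncation; up to levelwise equivalence this forces $Y$ to be a compatible system of bounded-below injective resolutions of its truncations. For such a fibrant $Y$ I would compute $\lim_{\N^{\op}}Y$ via the standard presentation
\begin{equation*}
0\to {\lim}_{n}Y_n\to \prod_{n\in\N}Y_n\xrightarrow{\mathrm{shift}-\id} \prod_{n\in\N}Y_n\to 0,
\end{equation*}
which induces, for every $h\in\Z$, a Milnor-type short exact sequence relating $H^h(\lim_n Y_n)$ to $\lim_n H^h(Y_n)$ and ${\lim}^1_{n}H^{h-1}(Y_n)$. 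At each fixed $h$ the tower $(H^h(Y_n))_n$ is eventually constant (once $-n<h$), so the $\lim$ term poses no issue; the obstruction is carried entirely by the $\lim^{1}$ term. Here (Ab.$4^*$)-$k$ is decisive: the hypothesis that $H^h(\prod_{n\in\N} E_n)=0$ for $h>k$ and for every countable family of injective resolutions $E_n$ ensures, via the displayed exact sequence, that the higher derived inverse limits vanish in a range wide enough to force $\lim_{\N^{\op}}\psi$ to be a cohomological isomorphism in every degree, for every weak equivalence $\psi\colon Y\to Y'$ between fibrant towers.

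Finally, for (MA.4), given $\tow(X)\to Y$ a weak equivalence in $\Tow(\A)$ with $Y$ fibrant, step (MA.3) yields that $\lim_{\N^{\op}}\tow(X)\to \lim_{\N^{\op}}Y$ is a quasi-isomorphism; and the unit $X\to \lim_{\N^{\op}}\tow(X)$ is an isomorphism in $\Ch(\A)$, because the tower of smart truncations of $X$ stabilises at each cohomological degree. Composing the two gives the required quasi-isomorphism $X\to \lim_{\N^{\op}} Y$. I expect the principal technical obstacle to be the content of (MA.3): translating the (Ab.$4^*$)-$k$ bound on cohomology of countable products of injective resolutions into the vanishing of the higher derived inverse limits of fibrant towers, and then verifying that this vanishing range is indeed wide enough to yield cohomological isomorphisms in every single degree. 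Once this spectral-sequence or Milnor-sequence analysis is in hand, the remaining steps reduce to routine manipulations with smart truncation and the universal property of limits.
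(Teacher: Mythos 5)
First, note that the paper does not prove this proposition at all: it is imported verbatim from \cite[Thm.\,6.4]{chacholski2017relative}, so there is no internal argument to compare against, and your plan of verifying (MA.1)--(MA.4) directly is indeed the shape of the argument in the cited source. Your treatment of (MA.1) and (MA.2) is fine, as is the observation that the unit $X\to \lim_{\N^{\op}}\tow(X)$ is an isomorphism (the truncation tower is degreewise eventually constant). The problems are in the two steps that carry all the content.

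For (MA.4) you apply (MA.3) to the weak equivalence $\tow(X)\to Y$, but $\tow(X)$ is not fibrant in $\Tow(\A)$: its levels are the truncations $\tau^{\geq -n}X$, not complexes of injectives, so (MA.3) simply does not apply, and (MA.4) is not a formal consequence of (MA.3) plus the unit being an isomorphism. Showing that $X\to\lim_{\N^{\op}}Y$ is a quasi-isomorphism when $Y$ is a fibrant model of the truncation tower is precisely the left-completeness statement that the hypothesis (Ab.$4^*$)-$k$ exists to secure, and it needs its own computation. Within (MA.3) itself the sketch also has real gaps: a fibrant object of $\Tow(\A)$ is merely a tower whose levels are fibrant (together with whatever condition the tower model structure imposes on the structure maps); nothing forces consecutive levels to be related by truncation, so your claim that each tower $(H^h(Y_n))_n$ is eventually constant is unjustified --- that property characterizes the subcategory $\widehat{\Ch(\A)}$ of Subsection \ref{two_sided_derived_completion}, not arbitrary fibrant towers. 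Moreover, the exactness of $0\to\lim_n Y_n\to\prod_n Y_n\to\prod_n Y_n\to 0$ requires the shift-minus-identity map to be degreewise surjective, which must be extracted from fibrancy rather than assumed; and, most importantly, passing from the long exact sequence of that product sequence to statements about $\lim$ and $\lim^1$ of the groups $H^h(Y_n)$ requires comparing $H^h(\prod_n Y_n)$ with $\prod_n H^h(Y_n)$, which is exactly what fails when $\A$ is not (Ab.$4^*$). The hypothesis (Ab.$4^*$)-$k$ only bounds the cohomology of products of injective resolutions of objects, so one still needs a d\'evissage to control $H^h(\prod_n Y_n)$ for arbitrary bounded-below complexes of injectives, and then a truncation/shift argument showing that the resulting error window of width $k$ does not obstruct the conclusion in any fixed degree. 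These are the actual theorems of \cite{chacholski2017relative}; as written, your outline assumes them rather than proves them.
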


Hence, if $\A$ satisfies one of the two following sets of hypotheses
\begin{itemize}
\item enough injectives and (Ab.$4^*$)-$k$ for some $k\in\N$,
\item enough projectives and (Ab.$4$)-$k$ for some $k\in\N$,
\end{itemize}
a consequence of the above Proposition \ref{prop_tow_tel_approx} and Theorem \ref{derivator_from_approximation} is that the derived category $\sfD(\A^I)$ has small hom-sets for any small category $I\in \Dia$. In particular, we can safely define a prederivator 
\[
\sfD_\A\colon \Dia^{\op}\to \CAT,\qquad I\mapsto \sfD(\A^I),
\]
enhancing the derived category of $\A$. Notice that all the images of $\sfD_\A$ are canonically triangulated but, without more hypotheses on $\A$, $\sfD_\A$ is just a pre-derivator. Anyway, one can verify that the homotopy Kan extensions involved in the definition of cones, fibers, suspensions and loops, always exist in $\sfD_\A$ and that the definitions given classically coincide with those in Subsection \ref{prelim_stab_der}. Let us remark that one can safely define the above prederivator under even more general hypotheses, as explained in~\cite{saneblidze2007derived}.

%

\subsection{Two-sided completion of the derived category}\label{two_sided_derived_completion}
Let us define the following categories with weak equivalences: 
\begin{enumerate}
\item the full subcategory $\widehat{\Ch(\A)}\subseteq \Ch(\A)^{\N^{\op}}$ spanned by those diagrams $X\colon \N^{\op}\to\Ch(\A)(I)$ such that, for all $n\in\N$,
\begin{itemize}
\item $X(n)\in \Ch^{\geq-n}(\A)$;
\item the map $X(n)\leftarrow X(n+1)$ induces a q.i. $X(n)\leftarrow \tau^{\geq-n}X(n+1)$.
\end{itemize}
Consider also the class of weak equivalences 
\[
\widehat\W:=\{\phi\colon X\Rightarrow X':\text{$\phi_{n}\colon X(n)\to X'(n)$ is a q.i.},\, \forall n\in\N\};
\] 
\item the full subcategory $\widecheck{\Ch(\A)}\subseteq \Ch(\A)^{\N}$ spanned by those diagrams $Y\colon \N\to\Ch(\A)(I)$ such that, for all $n\in\N$,
\begin{itemize}
\item $Y(n)\in \Ch^{\leq n}(\A)$;
\item the map $Y(n)\to Y(n+1)$ induces a q.i. $Y(n)\to \tau^{\leq n}Y(n+1)$.
\end{itemize}
Consider also the class of weak equivalences 
\[
\widecheck\W:=\{\phi\colon Y\Rightarrow Y':\text{$\phi_{n}\colon Y(n)\to Y'(n)$ is a q.i.},\, \forall n\in\N\};
\] 
\item the full subcategory $\Ch(\A)^{\diamond}\subseteq \Ch(\A)^{\N\times \N^{\op}}$ spanned by those $X\colon \N\times \N^{\op}\to \Ch(\A)$ such that, for all $n$ and $m\in\N$,
\begin{itemize}
\item $X(m,-)\in \widehat{\Ch(\A)}$;
\item $X(-,n)\in \widecheck{\Ch(\A)}$.
\end{itemize}
Consider also the class of weak equivalences 
\[
\W_{\diamond}:=\{\phi\colon X\Rightarrow X':\text{$\phi_{(m,n)}\colon X(m,n)\to X'(m,n)$ is a q.i.},\, \forall n,\, m\in\N\}.
\] 
\end{enumerate}

There are obvious pairs of adjoint morphisms of prederivators:
\[
\colim_\N:  \xymatrix{\Ch_\A^{\diamond}\ar@<4pt>[r]\ar@<-2pt>@{<-}[r]&\widehat{\Ch_\A}} :\widehat\tel\qquad\text{and}\qquad\widecheck\tow:  \xymatrix{\widecheck{\Ch_\A}\ar@<4pt>[r]\ar@<-2pt>@{<-}[r]&\Ch_\A^{\diamond}}:\lim_{\N^{\op}}
\]
where, given $Y\in \widehat{\Ch_\A}(I)$, $\widehat\tel(Y)(m,n):=\tau^{\leq m}Y(n)$ while, given $X\in \widecheck{\Ch_\A}(I)$, $\widecheck\tow(X)(m,n):=\tau^{\geq -n}X(m)$. 

Remember that, given two categories with weak equivalences $(\C_1,\W_1)$ and $(\C_2,\W_2)$, a functor $F\colon \C_1\to \C_2$ is said to be {\bf homotopically invariant} if $F(w)\in \W_2$ for all $w\in \W_1$. In this case, the functor $F$ induces a functor $\C_1[\W_1^{-1}]\to \C_2[\W_2^{-1}]$ that acts like $F$ on objects.

\begin{prop}\label{filt_is_an_eq}
Let $k\in\N$ and suppose that $\A$ has enough injectives, it is (Ab.$4^*$)-$k$ and it has exact coproducts. 
Then, the localizations $\widehat {\Ch(\A)}[\widehat\W^{-1}]$ and $\Ch(\A)^{\diamond}[\W_{\diamond}^{-1}]$ exist (i.e., they are locally small). Furthermore, the functors $\colim_\N$ and $\widehat\tel$ are homotopically invariant and they induce an equivalence $\sfD(\A)\cong\widehat {\Ch(\A)}[\widehat\W^{-1}]\cong\Ch(\A)^{\diamond}[\W_{\diamond}^{-1}]$.

Dually, if $\A$ has enough projectives, it is (Ab.$4$)-$k$ and it has exact products,
then, the localizations $\widecheck {\Ch(\A)}[\widecheck\W^{-1}]$ and $\Ch(\A)^{\diamond}[\W_{\diamond}^{-1}]$ exist (i.e., they are locally small). Furthermore, the functors $\lim_{\N^{\op}}$ and $\widecheck\tow$ are homotopically invariant and they induce an equivalence $\sfD(\A)\cong\widecheck {\Ch(\A)}[\widecheck\W^{-1}]\cong\Ch(\A)^{\diamond}[\W_{\diamond}^{-1}]$.
\end{prop}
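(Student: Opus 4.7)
The proof will rest on Proposition~\ref{prop_tow_tel_approx}: the pair $(\tow, \lim_{\N^{\op}})$ is a right model approximation of $(\Ch(\A),\W)$ by $(\Tow(\A),\W_\Tow,\C_\Tow,\F_\Tow)$, so the general theory of model approximations recalled before Definition~\ref{mod_appr} immediately gives that $\sfD(\A) = \Ho(\Ch(\A),\W)$ has small hom-sets and is equivalent to $\Ho(\Tow(\A))$. The plan is then to realize both $\widehat{\Ch(\A)}[\widehat\W^{-1}]$ and $\Ch(\A)^{\diamond}[\W_\diamond^{-1}]$ as equivalent to this common homotopy category; the dual statement will follow by reversing all arrows and invoking the telescope half of Proposition~\ref{prop_tow_tel_approx}.

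For the first equivalence, I view $\widehat{\Ch(\A)}$ as the full subcategory of $\Tow(\A)$ cut out by the additional truncation-compatibility axiom. The tower functor factors as $\tow\colon \Ch(\A)\to \widehat{\Ch(\A)}\hookrightarrow \Tow(\A)$, because $\tau^{\geq -n}\tau^{\geq -(n+1)}X = \tau^{\geq -n}X$, and the right adjoint $\lim_{\N^{\op}}$ restricts to $\widehat{\Ch(\A)}$. I would then show that the restricted pair $\tow\dashv \lim_{\N^{\op}}\colon \Ch(\A)\rightleftarrows \widehat{\Ch(\A)}$ induces an equivalence of localizations. The functor $\tow$ trivially preserves quasi-isomorphisms, the unit $X\to \lim_{\N^{\op}}\tow X$ is a quasi-isomorphism controlled by the (Ab.$4^*$)-$k$ hypothesis, and the counit $\tow\lim_{\N^{\op}}Y\to Y$ lies in $\widehat\W$ for $Y\in\widehat{\Ch(\A)}$: its level-$n$ component is $\tau^{\geq -n}\lim Y\to Y(n)$, a quasi-isomorphism obtained by combining the compatibility built into $\widehat{\Ch(\A)}$ with the cohomological control supplied by (Ab.$4^*$)-$k$. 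This yields $\widehat{\Ch(\A)}[\widehat\W^{-1}]\simeq \sfD(\A)$.

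For the second equivalence, the displayed adjunction $\colim_\N\dashv \widehat\tel\colon \Ch(\A)^{\diamond} \rightleftarrows \widehat{\Ch(\A)}$ does most of the work. Its counit is a strict isomorphism, since at each level $n$ one has $(\colim_\N \widehat\tel Y)(n) = \colim_m \tau^{\leq m}Y(n) = Y(n)$. Its unit at a bi-index $(m,n)$ reads $X(m,n) \to \tau^{\leq m}\colim_p X(p,n)$, which is a quasi-isomorphism because the column $X(-,n)$ lies in $\widecheck{\Ch(\A)}$; the exactness of countable coproducts together with the telescope short exact sequence used in the proof of Lemma~\ref{devissage_of_der} yields the expected cohomology computation for $\colim_p X(p,n)$. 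Both $\colim_\N$ (by exactness of coproducts) and $\widehat\tel$ (since smart truncations preserve quasi-isomorphisms) are homotopically invariant, so the adjunction descends to an equivalence on localizations.

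The main obstacle will be the first equivalence, specifically verifying that $\lim_{\N^{\op}}$ is homotopically invariant on $\widehat{\Ch(\A)}$ and that the counit lies in $\widehat\W$. Both reduce to analyzing the cohomology of an inverse limit of a compatible tower of complexes, where (Ab.$4^*$)-$k$ is precisely what controls the failure of $\lim_{\N^{\op}}$ to be exact. The cleanest route I envisage is to leverage Proposition~\ref{prop_tow_tel_approx} directly: any $Y\in\widehat{\Ch(\A)}$ should admit a fibrant replacement in $\Tow(\A)$ whose levels remain truncation-compatible (built from level-wise injective resolutions that intertwine the truncation functors $r_n$), after which axiom (MA.3) of Definition~\ref{mod_appr} applied to the given model approximation delivers the required preservation of weak equivalences without an explicit $\lim^j$-computation.
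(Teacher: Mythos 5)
Your proposal takes essentially the same route as the paper: both start from the right model approximation of Proposition~\ref{prop_tow_tel_approx}, identify $\widehat{\Ch(\A)}[\widehat\W^{-1}]$ as the essential image of $\sfD(\A)$ inside the homotopy category of towers, and then pass from $\widehat{\Ch(\A)}$ to $\Ch(\A)^\diamond$ via the adjoint pair $(\colim_\N,\widehat\tel)$. You are also correct that your reduction of the second equivalence is watertight: the counit $\colim_\N\widehat\tel Y\to Y$ is a genuine isomorphism, the unit $X(m,n)\to\tau^{\leq m}\colim_p X(p,n)$ is a quasi-isomorphism by the exactness of countable filtered colimits, and both functors are homotopically invariant. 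If anything, you are a shade more careful here than the paper's text, which dubs the pair an ``equivalence of categories'' when in fact only the counit is an iso and the unit is only a quasi-iso; your formulation is the correct one and gives the same conclusion.

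The genuine gap is exactly where you flagged it, and it is worth being precise about why your naive argument would fail. You assert that the counit $\tow\lim_{\N^{\op}}Y\to Y$ lies in $\widehat\W$ for all $Y\in\widehat{\Ch(\A)}$, with level-$n$ component $\tau^{\geq-n}\lim Y\to Y(n)$. This is not true for arbitrary $Y$: $\lim_{\N^{\op}}$ here is the strict inverse limit, and the transition maps $Y(n+1)\to Y(n)$ are only required to be quasi-isomorphisms after truncation, so at a fixed cochain degree $m$ the inverse system $(Y(n)^m)_n$ need not be Mittag--Leffler and the inverse limit can introduce $\lim^1$-phantoms in cohomology. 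The (Ab.$4^*$)-$k$ hypothesis controls the \emph{derived} inverse limit (i.e. $\lim$ applied after a fibrant replacement in $\Tow(\A)$), not the underived one. The correct statement is that the counit is a weak equivalence \emph{after} replacing $Y$ by a fibrant object of $(\Tow(\A),\W_\Tow,\C_\Tow,\F_\Tow)$, which is exactly what axiom (MA.3)/(MA.4) of Definition~\ref{mod_appr} and the general theory of model approximations encode. Your closing paragraph — replace $Y$ fibrantly inside $\widehat{\Ch(\A)}$ and then invoke (MA.3) — is the right fix and is also what the paper's ``clearly equivalent'' is silently invoking. To complete it, you would still need to check that fibrant replacement in $\Tow(\A)$ can be performed so as to land back in $\widehat{\Ch(\A)}$; this holds because the fibrations of Proposition~\ref{model_torre} are levelwise and the levelwise model structures of Lemma~\ref{model_troncati} are obtained by injective resolutions, which preserve the smart-truncation compatibility constraint. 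Spelling this out would close the gap.
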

\begin{proof}
We prove just the first half of the statement, as the rest of the proof is completely dual. Consider the right model approximation 
\begin{equation}\label{tower_adj_to}
\tow:  \xymatrix{(\Ch(\A),\W)\ar@<4pt>[r]\ar@<-2pt>@{<-}[r]&(\Tow(\A),\W_\Tow,\C_\Tow,\F_\Tow)} :\lim_{\N^{\op}}
\end{equation}
constructed in Proposition \ref{prop_tow_tel_approx}. Then, $\sfD_\A(I)$ is equivalent to its  image via (the derived functor of) $\tow$ in the homotopy category $\Ho_{\W_\Tow}(\Tow(\A^I))$; this essential image is clearly equivalent to $\widehat {\Ch(\A^I)}[\widehat\W^{-1}]$, so we get an equivalence:
\[
\tow:  \xymatrix@C=12pt{\sfD(\A^I)\ar@<3.5pt>[rr]\ar@<-3.5pt>@{<-}[rr]&&\widehat {\Ch(\A^I)}[\widehat\W^{-1}]} :\holim_{\N^{\op}}.
\]
Using the same argument of~\cite[\href{http://stacks.math.columbia.edu/tag/05RW}{Tag 05RW}]{stacks-project} we can embed $\Ch^{\diamond}(\A^I)[\W_\diamond^{-1}]$ into $\Ho(\Tow(\A)^\N)$, so this category is locally small. Furthermore, we can consider the following equivalence of categories
\[
\widehat\tel:  \xymatrix@C=12pt{\widehat{\Ch(\A)}\ar@<3.5pt>[rr]\ar@<-3.5pt>@{<-}[rr]&&\Ch(\A)^{\diamond}} :\colim_{\N}
\]
Now notice that $\widehat\tel$ is exact, so it is homotopically invariant and, since $\A$ is (Ab.4), one can use an argument dual to~\cite[Remark 2.3]{BN}
to show that the above restriction of the functor $\colim_\N$ is homotopically invariant. Hence, the above equivalence of categories induces an equivalence of the  localizations. 
\end{proof}

\newpage
\section{Lifting $t$-structures algong diagram functors}\label{Sec:lifting}

In this section we begin our study of $t$-structures on a strong and stable derivator $\D$. Using one of the main results of~\cite{SSV}, we show that a $t$-structure on the base $\D(\bbone)$ of our derivator naturally induces $t$-structures on each of the categories $\D(I)$. Furthermore, $t$-structures in $\D(\bbone)$ are in bijection with suitable families of co/localizations of $\D$. We also give an extremely short and self-contained argument for the abelianity of the heart of a $t$-structure. In the last subsection we show how a $t$-structure on $\D(\bbone)$ induces two special $t$-structures on $\D(\N)$ and $\D(\N^{\op})$ which are different from the ones obtained in~\cite{SSV}.

\medskip\noindent
{\bf Setting for Section \ref{Sec:lifting}.}
We fix through this section a category of diagrams $\Dia$ and a strong and stable derivator $\D\colon \Dia^\op\to \CAT$. In Subsections \ref{subs_epiv} and \ref{subs_other_t} we will need to assume that $\N\in\Dia$.

\subsection{$t$-Structures on stable derivators}
\label{sec:loc-coloc}

Let us start with the following definition

\begin{defn}
A $t$-\textbf{structure} $\t=(\D(\bbone)^{\leq0},\D(\bbone)^{\geq0})$ on $\D$ is, by definition, a $t$-structure on the triangulated category $\D(\bbone)$. 
\end{defn}

The following proposition is one of the main results of~\cite{SSV}:
\begin{prop}\label{lift_tstructure_general}
Let $\t=(\D(\bbone)^{\leq0},\D(\bbone)^{\geq0})$ be a $t$-structure on $\D$, and let $I\in \Dia$. Letting 
\begin{align*}
\D(I)^{\leq0}&:=\{\mathscr X\in \D(I):\mathscr X_i\in \D(\bbone)^{\leq0},\, \forall i\in I\},\\
\D(I)^{\geq0}&:=\{\mathscr X\in \D(I):\mathscr X_i\in \D(\bbone)^{\geq0},\, \forall i\in I\},
\end{align*}
$\t_I:=(\D(I)^{\leq0},\D(I)^{\geq0})$ is a $t$-structure on $\D(I)$. Furthermore, 
\[
\dia_I\colon \D(I)\to \D(\bbone)^I
\] 
induces an equivalence $\A_{I}\cong \A^I$ between the heart $\A_{I}$ of $\t_I$ and the category $\A^I$ of diagrams of shape $I$ in the heart $\A$ of $\t$.
\end{prop}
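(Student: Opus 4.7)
The statement has two independent claims: that the pointwise collections form a $t$-structure on $\D(I)$, and that $\dia_I$ restricts to an equivalence on hearts. Both hinge on lifting truncation data from the base $\D(\bbone)$ to $\D(I)$ coherently, for which strong stability is essential.

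For the $t$-structure axioms, my plan is to first produce a coherent lift of the truncation functors. Define sub-prederivators $\D^{\leq 0}, \D^{\geq 0}\colon \Dia^{\op}\to \CAT$ by the pointwise formulas; these are stable under $u^{*}$ for trivial reasons, so they form sub-prederivators of $\D$. The content of the cited SSV result is that $\D^{\leq 0}$ is reflective in $\D$ and $\D^{\geq 0}$ is coreflective, with reflector $(-)^{\geq 1}$ and coreflector $(-)^{\leq 0}$ acting at each $I$ pointwise as the truncation functors of $\t$. Granted this, axiom $(t\text{-S}.2)$ is immediate from the stability of $\D(\bbone)^{\leq 0}$ and $\D(\bbone)^{\geq 0}$ under shift. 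For $(t\text{-S}.3)$, given $\mathscr X\in\D(I)$ I would take the fiber sequence obtained from the unit of the reflection,
\[
\mathscr X^{\leq 0} \to \mathscr X \to \mathscr X^{\geq 1} \overset{+}\to,
\]
where $\mathscr X^{\leq 0}$ is the fiber; that $\mathscr X^{\leq 0}\in \D(I)^{\leq 0}$ follows from (Der.$2$) after checking that pointwise the sequence is the truncation triangle of $\mathscr X_i$. Axiom $(t\text{-S}.1)$ then follows formally: if $\mathscr X\in\D(I)^{\leq 0}$ and $\mathscr Y\in\D(I)^{\geq 1}$, then (Der.$2$) forces $\mathscr X^{\geq 1}=0$, hence
\[
\D(I)(\mathscr X,\mathscr Y)\cong \D^{\geq 1}(I)(\mathscr X^{\geq 1},\mathscr Y)=0.
\]

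For the equivalence $\A_I\cong \A^I$, note that $\dia_I$ sends $\A_I$ into $\A^I$ by the pointwise definition of $\A_I$. Full faithfulness is the cleaner of the two verifications: for $\mathscr X,\mathscr Y\in\A_I$, strong stability combined with the vanishing of negative $\mathrm{Ext}$'s between heart objects reduces computing $\D(I)(\mathscr X,\mathscr Y)$ to a compatible system of base-level homs, identifying it with $\A^I(\dia_I\mathscr X,\dia_I\mathscr Y)$. For essential surjectivity, given $F\colon I\to \A$ I would lift $F$ inductively over the nerve of $I$; the strong property lifts arrows one at a time, and the abelian nature of the heart (no higher $\mathrm{Ext}$ obstructions in the relevant degrees) makes the lifts compatible on composable pairs, yielding a coherent $\mathscr X\in\A_I$ with $\dia_I\mathscr X\cong F$.

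The main obstacle is the first step: producing coherent truncation functors on $\D(I)$ from the pointwise ones in $\D(\bbone)$. A naive objectwise truncation does not assemble into a coherent object of $\D(I)$, and strong stability is needed to organize the universal-property characterization of the reflection at the level of derivators. Once this is in hand, both the $t$-structure axioms and the heart equivalence follow via the now-standard formal arguments outlined above.
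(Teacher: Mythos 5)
The paper does not actually prove Proposition~\ref{lift_tstructure_general}: it states it as ``one of the main results of~\cite{SSV}'' and simply cites it. So there is no internal proof to compare your argument against. Assessed on its own terms, your proposal is not a self-contained proof but an outline that delegates the hard work back to that same citation. You write that ``the content of the cited SSV result is that $\D^{\leq 0}$ is reflective in $\D$ and $\D^{\geq 0}$ is coreflective\ldots acting at each $I$ pointwise as the truncation functors,'' and you then derive the axioms from this. But that (co)reflectivity, together with closure under shift and extensions, is essentially equivalent data to the statement ``$\t_I$ is a $t$-structure with pointwise truncations,'' so the argument is circular: the thing you assume from SSV is the thing you are asked to prove. (As a side remark, the labels are reversed: with truncation $(-)^{\leq 0}$ right adjoint to the inclusion $\D^{\leq 0}\hookrightarrow\D$, the subderivator $\D^{\leq 0}$ is \emph{coreflective}, and dually $\D^{\geq 0}$, with truncation $(-)^{\geq 0}$ as a left adjoint to inclusion, is \emph{reflective} — exactly what Corollary~\ref{lift_t_struc} records.)

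The second half of the proposal has a more substantive gap. For essential surjectivity of $\dia_I$ onto $\A^I$, lifting a diagram $F\colon I\to\A$ ``inductively over the nerve of $I$, the strong property lifting arrows one at a time'' cannot work for a general small category $I\in\Dia$: the nerve of $I$ is typically infinite, the strong axiom only gives full and essentially surjective partial diagram functors on $\bbtwo\times I$ (not a way to amalgamate lifts of individual arrows into a coherent object of $\D(I)$), and there is no straightforward induction that terminates or produces a well-defined coherent diagram. The obstruction analysis you sketch also conflates the relevant groups: the vanishing needed for coherence is that of \emph{negative} $\mathrm{Ext}$-groups between heart objects, i.e.\ $\D(\bbone)(\Sigma^k X, Y)=0$ for $k>0$ and $X,Y\in\A$ — not ``no higher $\mathrm{Ext}$ obstructions,'' which would be false for a general abelian heart. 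Likewise, full faithfulness of $\dia_I$ on the heart is not a formal consequence of a ``compatible system of base-level homs''; in general $\D(I)(\mathscr X,\mathscr Y)$ is computed from a d\'evissage spectral sequence, and the identification with $\A^I(\dia_I\mathscr X,\dia_I\mathscr Y)$ requires a genuine argument (such as the one behind Lemma~\ref{lem_epivalence_N_op} for $I=\N,\N^{\op}$, pushed to arbitrary shapes). The actual SSV proof constructs the adjoints to the pointwise aisle and coaisle inclusions directly via homotopy Kan extensions and a careful d\'evissage of $\D(I)$ over the objects of $I$, and then deduces the heart equivalence; none of that machinery appears in your sketch.
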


\begin{cor}\label{lift_t_struc}
Given a $t$-structure  $\t=(\D(\bbone)^{\leq0},\D(\bbone)^{\geq0})$ on $\D$, 
\begin{enumerate}
\item the prederivator $\D^{\le 0}\colon I\mapsto \D(I)^{\le 0}$ is a coreflection of $\D$;
\item the prederivator $\D^{\ge 0}\colon I\mapsto \D(I)^{\ge 0}$ is a reflection of $\D$;
\item the prederivator $\D^{\heartsuit}\colon I\mapsto\D(I)^{\le 0}\cap \D(I)^{\ge 0}$ can be seen either as a reflection of $\D^{\le 0}$, or as a coreflection of $\D^{\ge 0}$.
\end{enumerate}
In particular, $\D^{\le 0}$, $\D^{\ge 0}$, and $\D^{\heartsuit}$ are pointed, strong derivators of type $\Dia$. 
\end{cor}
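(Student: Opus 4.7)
The plan is to use Proposition \ref{lift_tstructure_general} to upgrade the levelwise $t$-structures $\t_I=(\D(I)^{\leq 0},\D(I)^{\geq 0})$ to adjunctions in $\PDer_\Dia$. Write $\iota^{\leq 0}_I,\iota^{\geq 0}_I$ for the levelwise inclusions and $\tau^{\leq 0}_I,\tau^{\geq 0}_I$ for the associated truncation functors. The first preliminary observation is that every restriction $u^*\colon \D(J)\to \D(I)$ associated to $u\colon I\to J$ in $\Dia$ preserves both the aisle and the coaisle: this is immediate from the pointwise characterization in Proposition \ref{lift_tstructure_general}, since $(u^*\mathscr X)_i=\mathscr X_{u(i)}$ for every $i\in I$. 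Consequently $I\mapsto \D(I)^{\leq 0}$ and $I\mapsto \D(I)^{\geq 0}$ define full sub-prederivators $\D^{\leq 0},\D^{\geq 0}\subseteq \D$ and the inclusions are strict morphisms in $\PDer_\Dia$.

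The heart of the argument is to show that the levelwise truncations assemble into morphisms of prederivators $\tau^{\leq 0}\colon \D\to \D^{\leq 0}$ and $\tau^{\geq 0}\colon \D\to \D^{\geq 0}$. The crucial input is that, since $\D$ is strong and stable, each $u^*$ is a triangulated functor between the canonically triangulated categories $\D(J)$ and $\D(I)$ by~\cite[Thm.\,4.16]{Moritz}. Given $\mathscr X\in \D(J)$ with truncation triangle $\tau^{\leq 0}_J\mathscr X\to \mathscr X\to \tau^{\geq 1}_J\mathscr X\overset{+}\to$, applying $u^*$ yields a triangle in $\D(I)$ whose outer vertices lie in $\D(I)^{\leq 0}$ and $\D(I)^{\geq 1}$ respectively, by the first paragraph. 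Uniqueness of the truncation triangle then produces a canonical natural isomorphism $u^*\tau^{\leq 0}_J\cong \tau^{\leq 0}_I u^*$, providing the structural $2$-cell that makes $\tau^{\leq 0}$ into a morphism of prederivators; the same argument applied to the dual truncation triangle handles $\tau^{\geq 0}$.

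Given this naturality, the levelwise adjunctions $\iota^{\leq 0}_I\dashv \tau^{\leq 0}_I$ and $\tau^{\geq 0}_I\dashv \iota^{\geq 0}_I$ assemble into adjunctions in $\PDer_\Dia$, with unit and counit coming pointwise from the adjunctions at each level; the triangle identities and compatibility with restrictions follow formally from the levelwise versions. Since each inclusion is fully faithful at every level $I$, it is fully faithful in $\PDer_\Dia$, which proves (1) and (2) in view of Definition \ref{defn:refl-loc}. For (3), given $\mathscr X\in \D(I)^{\heartsuit}$ and $\mathscr Y\in \D(I)^{\geq 0}$, the triangle $\tau^{\leq 0}_I\mathscr Y\to \mathscr Y\to \tau^{\geq 1}_I\mathscr Y\overset{+}\to$ has its last two terms in $\D(I)^{\geq 0}$, so closure of the coaisle under fibers places $\tau^{\leq 0}_I\mathscr Y$ in $\D(I)^{\heartsuit}$, and the adjunction in $\D(I)$ restricts to $\D(I)^{\geq 0}(\mathscr X,\mathscr Y)\cong \D(I)^{\heartsuit}(\mathscr X,\tau^{\leq 0}_I\mathscr Y)$; this exhibits $\D^{\heartsuit}$ as a coreflection of $\D^{\geq 0}$, and the dual argument exhibits it as a reflection of $\D^{\leq 0}$. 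The ``in particular'' clause then follows from Lemma \ref{loc_is_der} applied (in its evident coreflection version) to each of the three adjunctions so constructed.

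The main obstacle is securing the naturality of the truncation functors in the derivator direction; everything else is either a direct consequence of Proposition \ref{lift_tstructure_general} or a formal manipulation of adjunctions in the $2$-category $\PDer_\Dia$.
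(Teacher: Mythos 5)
Your proof is correct in substance but takes a genuinely different route from the paper at the key step. The paper invokes the general machinery from Groth: once it is checked that each $\D^{\leq 0}(I)\to\D(I)$ has a right adjoint, \cite[Lem.\,2.10]{Moritz} gives a \emph{lax} morphism $\D\to\D^{\leq 0}$, and then the paper shows the aisle is closed under $u_!$ (via the adjunction $(u_!,u^*)$ and the coaisle being preserved by $u^*$) and quotes \cite[Prop.\,2.11]{Moritz} to promote lax to pseudo. You instead construct the pseudo-naturality $2$-cell $u^*\tau^{\leq 0}_J\cong\tau^{\leq 0}_I u^*$ directly, exploiting that $u^*$ is triangulated (so applying it to the truncation triangle again yields a truncation triangle) and then using uniqueness of truncation. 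This is more self-contained --- it never mentions Kan extensions --- and it makes the structural $2$-cells explicit, at the small price of leaving the coherence of these isomorphisms for composable $u,v$ implicit (it does follow from the same uniqueness, and is worth saying so). Both approaches are valid; the paper's is more modular, yours more concrete.

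One small inaccuracy in the argument for (3): the coaisle $\D(I)^{\geq 0}$ is \emph{not} closed under taking fibers in general. Given a triangle $A\to B\to C\overset{+}\to$ with $B,C\in\D(I)^{\geq 0}$, rotating gives $\Sigma^{-1}C\to A\to B\overset{+}\to$ and $\Sigma^{-1}C\in\D(I)^{\geq -1}$, so closure under extensions only yields $A\in\D(I)^{\geq -1}$. In your situation the conclusion is nevertheless correct because $\tau^{\geq 1}_I\mathscr Y$ lies in the strictly smaller subcategory $\D(I)^{\geq 1}$, so $\Sigma^{-1}\tau^{\geq 1}_I\mathscr Y\in\D(I)^{\geq 0}$ and the extension argument applies at the right level. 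Replace the phrase ``closure of the coaisle under fibers'' by this one-degree-tighter observation. With that repaired, the proof is complete; the invocation of (the dual of) Lemma~\ref{loc_is_der} for the ``in particular'' clause matches the paper's own reasoning.
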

\begin{proof}
By Proposition \ref{lift_tstructure_general}, $\t_I$ is a $t$-structure on $\D(I)$ and $\dia_I\colon \D(I)\to \D(\bbone)^{I}$ induces an equivalence of categories $\D^{\heartsuit}(I)\cong \D^{\heartsuit}(\bbone)^I$, for all $I$ in $\Dia$. The proofs of (1), (2) and (3) are very similar so we just verify (1) and leave the other two statements to the reader. First of all we need to verify that $\D^{\le 0}\colon I\mapsto \D^{\le 0}(I)$ and $\D^{\ge 0}\colon I\mapsto \D^{\ge 0}(I)$ are sub-prederivators of $\D$. For that one should verify that, given $u\colon I\to J$ in $\Dia$,
\[\begin{split}
u^*\mathscr X\in \D^{\leq0}(I)\quad\text{ for all $\mathscr X\in \D^{\leq 0}(J)$}\\
u^*\mathscr Y\in \D^{\geq0}(I)\quad\text{ for all $\mathscr Y\in \D^{\geq 0}(J)$}.\\
\end{split}\]
But this is true since $\mathscr X\in \D^{\leq 0}(J)$ if and only if $\mathscr X_j\in\D^{\leq0}(\bbone)$, while $u^*\mathscr X\in \D^{\leq0}(I)$ if and only if $i^*(u^*\mathscr X)=(u(i))^*\mathscr X\in \D^{\leq0}(\bbone)$ for all $i\in I$. The proof of the second statement is analogous. 

To conclude the proof of (1) we need to verify that the obvious embedding $\D^{\le0}\to \D$ has a right adjoint. Since each $\t_I$ is a $t$-structure, each component $\D^{\le0}(I)\to \D(I)$ has a right adjoint so, by~\cite[Lem.2.10]{Moritz} there is a unique way to organize these right adjoints into a lax morphism of prederivators $\D\to \D^{\le0}$. Consider a functor $u\colon I\to J$ in $\Dia$; given $\mathscr X\in \D^{\leq 0}(I)$, let us show that $u_!\mathscr X\in \D^{\leq0}(J)$, equivalently, we can show that $\D(J)(u_!\mathscr X,\mathscr Y)=0$ for all $\mathscr Y\in  \D^{>0}(J)$, but this is clear since $\D(J)(u_!\mathscr X,\mathscr Y)=\D(I)(\mathscr X,u^*\mathscr Y)=0$, as we have already seen that $u^*\mathscr Y\in \D^{>0}(I)$. Hence, using the same argument of the proof of~\cite[Prop.2.11]{Moritz}, one shows that the morphism $\D\to \D^{\le0}$ is strict.

The last statement is a consequence of Lemma \ref{loc_is_der}.
\end{proof}

Recall that a derivator $\D\colon \Dia^\op\to \CAT$ is said to be {\bf discrete} if, for any $I\in \Dia$, the diagram functor $\dia_I\colon \D(I)\to \D(\bbone)^I$ is an equivalence. In particular, this means that the homotopy (co)limits and homotopy Kan extensions in such a derivator coincide with the classical (i.e., categorical) (co)limits and Kan extensions. A discrete derivator is {\bf Abelian} if $\D(\bbone)$ is an Abelian category. 

\begin{cor}\label{the_heart_is_Abelian}
Given a $t$-structure  $\t=(\D(\bbone)^{\leq0},\D(\bbone)^{\geq0})$ on $\D$, $\D^{\heartsuit}$ is a discrete, Abelian derivator.
\end{cor}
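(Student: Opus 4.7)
The plan is to combine the two preceding results and then invoke the classical abelianity theorem. From Corollary \ref{lift_t_struc} we already know that $\D^{\heartsuit}$ is a pointed strong derivator of type $\Dia$, so it only remains to check the discreteness of $\D^{\heartsuit}$ and the abelianity of its base $\A = \D^{\heartsuit}(\bbone)$.

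For discreteness, I would appeal directly to Proposition \ref{lift_tstructure_general}. For each $I\in\Dia$, that proposition asserts that the diagram functor $\dia_I\colon \D(I)\to \D(\bbone)^{I}$ induces an equivalence $\A_I\cong \A^I$, where $\A_I = \D(I)^{\leq 0}\cap \D(I)^{\geq 0}$ is precisely $\D^{\heartsuit}(I)$ and $\A^I = \D^{\heartsuit}(\bbone)^{I}$. Since the diagram functor $\dia^{\heartsuit}_I\colon \D^{\heartsuit}(I)\to \D^{\heartsuit}(\bbone)^{I}$ of the sub-prederivator $\D^{\heartsuit}$ is by construction nothing but the restriction of $\dia_I$ to the heart, this equivalence is exactly $\dia^{\heartsuit}_I$. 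Thus $\dia^{\heartsuit}_I$ is an equivalence for every $I\in\Dia$, which is the definition of $\D^{\heartsuit}$ being a discrete derivator.

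For the abelianity of the base, one can invoke the classical theorem of Beilinson--Bernstein--Deligne that the heart of any $t$-structure on a triangulated category is an Abelian category; alternatively, one can argue in a self-contained manner within the derivator framework by using Corollary \ref{lift_t_struc}: since $\D^{\heartsuit}$ is a reflection of $\D^{\leq 0}$ and a coreflection of $\D^{\geq 0}$, both $\t$-truncation functors $(-)^{\leq 0}$ and $(-)^{\geq 0}$ are available on $\A$, and one constructs kernels and cokernels in $\A$ as $\tau^{\geq 0}\circ\fib$ and $\tau^{\leq 0}\circ\cone$ respectively, computed in the ambient stable derivator $\D$; the axioms of a $t$-structure then force every mono to be a kernel and every epi a cokernel, which gives abelianity.

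The only step requiring a modicum of care is the identification of the diagram functor of $\D^{\heartsuit}$ with the restriction of $\dia_I$ to the heart, but this is immediate from the way the sub-prederivator $\D^{\heartsuit}$ was constructed in Corollary \ref{lift_t_struc}, since all structural functors of $\D^{\heartsuit}$ are inherited pointwise from those of $\D$. No further obstacle is anticipated.
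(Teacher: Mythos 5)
Your proposal is correct, and for the derivator and discreteness parts it coincides with the paper: both deduce that $\D^{\heartsuit}$ is a (pointed, strong) derivator from Corollary \ref{lift_t_struc} and read off discreteness from the equivalence $\A_I\cong\A^I$ of Proposition \ref{lift_tstructure_general}, the identification of $\dia_I^{\heartsuit}$ with the restriction of $\dia_I$ being immediate. Where you diverge is the abelianity of the base: your primary route is to quote the classical theorem of Beilinson--Bernstein--Deligne, which is perfectly rigorous here since $\D^{\heartsuit}(\bbone)=\D(\bbone)^{\leq0}\cap\D(\bbone)^{\geq0}$ is literally the heart of a $t$-structure on the triangulated category $\D(\bbone)$, and it buys brevity. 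The paper deliberately avoids this citation and gives a self-contained argument inside the derivator framework (this is even announced at the start of Section \ref{Sec:lifting}): using Lemma \ref{loc_is_der} it computes $\Ker(\phi)$ as the coreflection $\tau^{\geq0}K$ of the fiber $K$ of $\phi$ taken in $\D$, observes that $\phi$ is monic iff $K\in\D^{\geq1}(\bbone)$, deduces that then $\Sigma K$ lies in the heart, and reads off from the triangle $K\to X\to Y\to\Sigma K$ that $\phi$ is a kernel of $Y\to\Sigma K$ (dually for epis). Your sketched in-framework alternative follows exactly this route but compresses the decisive step into ``the axioms of a $t$-structure then force every mono to be a kernel''; that is where the actual work lies (showing $\Sigma K\in\D^{\heartsuit}(\bbone)$ and exhibiting $\phi$ as the kernel of $Y\to\Sigma K$), so on its own that branch would be incomplete --- but since your BBD citation already closes the argument, the proposal as a whole stands.
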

\begin{proof}
$\D^{\heartsuit}$ is a derivator by Corollary \ref{lift_t_struc} and it is discrete by Proposition \ref{lift_tstructure_general}. Hence we have just to verify that $\D^\heartsuit(\bbone)$ is an Abelian category. In fact, we already know that it is an additive and $\Dia$-bicomplete category, so it is enough to show that any monomorphism (epimorphism) is a (co)kernel.

Let us start with a morphism $\phi\colon X\to Y$ in $\D^\heartsuit(\bbone)$. By Lemma \ref{loc_is_der}, the kernel of $\phi$ can be constructed as follows: first we notice that such kernel is the limit (which is the same as ``homotopy colimit" since $\D^{\heartsuit}$ is discrete) of the following diagram
\[
\xymatrix{
&0\ar[d]\\
X\ar[r]^\phi&Y.
}
\]
This limit can be computed first as a homotopy limit in $\D^{\geq0}$ or, equivalently, in $\D$, and then coreflected in $\D^{\heartsuit}$ (see Lemma \ref{loc_is_der}). This shows that $\Ker(\phi)$ is the coreflection of $K$, where $K$ is the fiber of $\phi$, that is, there is a triangle in $\D(\bbone)$:
\begin{equation}\label{triangle_for_normality}
K\to X\to Y\to \Sigma K.
\end{equation}
This shows that $\phi$ is a monomorphism if and only if $\Ker(\phi)=0$ if and only if $K\in \D^{\geq1}(\bbone)$. This means exactly that in the triangle \eqref{triangle_for_normality}, $X, Y$ and $\Sigma K\in \D^\heartsuit(\bbone)$. By the above discussion, we obtain that $\phi\colon X\to Y$ represents a kernel for $Y\to \Sigma K$ in $\D^\heartsuit(\bbone)$. In particular any monomorphism in $\D^\heartsuit(\bbone)$ is a kernel; one verifies similarly that all the epimorphisms are cokernels.
\end{proof}

We have seen above that a $t$-structure on the base of a stable derivator can be used to produce a reflection and a coreflection. In the following theorem we prove that, in fact, all the extension-closed (co)reflections arise this way from a $t$-structure. 

\begin{thm}\label{t_structures_are_co_localizations}
There are bijections among the following classes:
\begin{enumerate}
\item $t$-structures in $\D(\bbone)$; 
\item extension-closed reflective sub-derivators of $\D$;
\item extension-closed coreflective sub-derivators of $\D$.
\end{enumerate}
\end{thm}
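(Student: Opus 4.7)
The plan is to construct mutually inverse assignments among (1), (2), (3), leveraging Corollary~\ref{lift_t_struc} and Proposition~\ref{lift_tstructure_general} as the main inputs.

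For $(1)\to(2)$ and $(1)\to(3)$, I would invoke Corollary~\ref{lift_t_struc} directly: a $t$-structure $\t$ on $\D(\bbone)$ produces the reflective sub-derivator $\D^{\geq 0}$ and the coreflective sub-derivator $\D^{\leq 0}$, and extension-closedness at the base is a standard property of aisles and co-aisles which lifts pointwise by Proposition~\ref{lift_tstructure_general} (since $\t_I$ is checked on each object $i\in I$).

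For $(2)\to(1)$, with $(3)\to(1)$ being dual, I would start from an extension-closed reflection $L\colon \D \rightleftarrows \E : R$ in $\PDer_\Dia$, identifying $\E$ with its replete image in $\D$ via the fully faithful $R$. Define on the base
\[
\D(\bbone)^{\geq 0} := \E(\bbone) \quad\text{and}\quad \D(\bbone)^{\leq 0} := \Omega\bigl(\{X \in \D(\bbone) : LX = 0\}\bigr),
\]
noting that the usual adjunction argument identifies $\ker L(\bbone)$ with ${}^{\perp}\E(\bbone)$. Axiom ($t$-S.1) is then immediate from this orthogonality. Axiom ($t$-S.2) follows from Lemma~\ref{stability_of_loc}: $\E$ is closed under $\Omega$, equivalently $\ker L$ is closed under $\Sigma$. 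For axiom ($t$-S.3), given $X \in \D(\bbone)$ I would form the fiber triangle of the unit $\eta_{\Sigma X}\colon \Sigma X \to RL\Sigma X$ and apply $\Omega$ to obtain the required approximation triangle for $X$; the crucial membership of the fiber in $\ker L$ is proved by applying $L$ to the fiber triangle and combining the triangle identity $L\eta_{\Sigma X} = \id$ with the exactness of $L$, which forces the fiber to be annihilated.

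For bijectivity, starting from a $t$-structure $\t$ and setting $\E := \D^{\geq 0}$, the reconstruction yields co-aisle $\E$ and aisle $\Omega({}^{\perp}\E) = \Omega\D^{\leq -1} = \D^{\leq 0}$, thus recovering $\t$; conversely, from any extension-closed reflection the reconstructed co-aisle agrees with $\E(\bbone)$ on the base, and Proposition~\ref{lift_tstructure_general} ensures that the whole sub-derivator is pinned down by its base (since $Z \in \E(I)$ if and only if each fibre $Z_i\in \E(\bbone)$, by axiom (Der.2) applied to the unit $\eta_Z$). The main obstacle I anticipate is justifying the exactness of $L$ as a morphism of prederivators when the target $\E$ need not a priori be stable: one needs $L$ to preserve the cofiber sequence $\fib(\eta_{\Sigma X})\to \Sigma X \to RL\Sigma X$ in order to conclude that $L$ kills the fiber. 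Extension-closedness of $\E$ is what ultimately singles out, among all reflective sub-derivators, exactly those that yield genuine $t$-structures, matching Lurie's analogous statement in the $(\infty,1)$-setting.
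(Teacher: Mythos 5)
Your direction $(1)\Rightarrow(2),(3)$ tracks the paper: Corollary~\ref{lift_t_struc} and Proposition~\ref{lift_tstructure_general} do all the work, as you say, and your bijectivity argument (pinning the sub-derivator down by its base via (Der.2) and Proposition~\ref{lift_tstructure_general}) is also the paper's argument. The genuine gap is in $(2)\Rightarrow(1)$, precisely where you flag an obstacle but do not resolve it. The paper does not attempt a from-scratch verification of the $t$-structure axioms; it cites \cite[Prop.\,3.21]{Moritz}, which is the derivator version of the Keller--Vossieck result that a coreflective, extension-closed, $\Sigma$-closed full subcategory of a triangulated category is automatically an aisle. Your substitute --- ``apply $L$ to the fiber triangle of $\eta_{\Sigma X}$ and use exactness of $L$'' --- does not go through as stated, and not merely for the cosmetic reason that $\E$ might not be stable. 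The real problem is that $L$ is a \emph{left} adjoint morphism of pointed derivators, so it preserves \emph{cofibers}, not fibers; applying $L$ to the cofiber of $\eta_{\Sigma X}$ does give $L(\cofib\,\eta_{\Sigma X})=0$, hence $\Sigma F\in\ker L$ where $F=\fib(\eta_{\Sigma X})$. But what axiom ($t$-S.3) requires is $F\in\ker L$, and $\ker L={}^\perp\E(\bbone)$ is only closed under $\Sigma$ (this follows from $\Omega$-closure of $\E$, as you note for ($t$-S.2)); you have no a priori closure of $\ker L$ under $\Omega$, since $\Sigma\E(\bbone)$ need not be contained in $\E(\bbone)$. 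So ``$\Sigma F\in\ker L$'' does not give ``$F\in\ker L$''. (A minor slip in the same place: the triangle identity gives that $L\eta_{\Sigma X}$ is an isomorphism, being one-sided inverse to the isomorphism $\epsilon_{L\Sigma X}$, not that it equals $\id$.)

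The missing idea is exactly the Keller--Vossieck argument, which uses extension-closedness in an essential, not cosmetic, way: given a test map $g\colon F\to Y$ with $Y\in\E(\bbone)$, one forms the triangle over the connecting composite $\Omega RL\Sigma X\to F\to Y$, observes that its cone $W$ is an extension of objects of $\E(\bbone)$ hence lies in $\E(\bbone)$, then uses the universal property of the unit to see that the resulting map $\Sigma X\to W$ factors through $\eta_{\Sigma X}$, which forces the connecting composite to vanish, and finally deduces $g=0$ from the long exact Hom-sequence and the universal property again. Your closing sentence correctly identifies extension-closedness as the distinguishing hypothesis, but the mechanism you propose (``exactness of $L$'') bypasses it rather than using it, and that is where the proof breaks.
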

\begin{proof}
By Proposition \ref{lift_tstructure_general}, a $t$-structure $\t=(\D(\bbone)^{\leq0},\D(\bbone)^{\geq0})$ on $\D(\bbone)$ can be lifted to $t$-structures $\t_I=(\D(I)^{\leq0},\D(I)^{\geq0})$ in $\D(I)$, for any $I$ in $\Dia$. Then, the derivator $\D^{\le 0}\colon \Dia^\op\to \CAT$, such that $\D^{\le 0}(I) \coloneqq \D(I)^{\le 0}$, is coreflective in $\D$ and, similarly, the derivator $\D^{\ge 0}\colon \Dia^\op\to \CAT$, such that $\D^{\ge 0}(I) \coloneqq \D(I)^{\ge 0}$, is reflective in $\D$ (see Corollary \ref{lift_t_struc}). 

On the other hand, given an extension-closed coreflective sub-derivator $\sU\to \D$, it is clear that $\sU(\bbone)$ is an aisle in $\D(\bbone)$ (use~\cite[Prop.\,3.21]{Moritz}). By Proposition \ref{lift_tstructure_general}, we can define a coreflective sub-derivator $\sU'$ of $\D$ letting
\[
\sU'(I) \coloneqq \{\mathscr X\in \D(I):\mathscr X_i\in \sU(\bbone),\forall i\in I\}.
\]
Thus, $\sV(\bbone) \coloneqq \Sigma^{-1}(\sU(\bbone)^{\perp})$ is a co-aisle and, again by Proposition \ref{lift_tstructure_general}, we can define a reflective sub-derivator $\sV$ of $\D$ letting
\[
\sV(I) \coloneqq \{\mathscr X\in \D(I):\mathscr X_i\in \sV(\bbone),\forall i\in I\}.
\]
Now, for any $I\in \Dia$, we have two t-structures: $(\sU(I),\Sigma^{-1}(\sU(I)^{\perp}))$ and $(\sU'(I),\sV(I))$. But clearly, $\sU(I)\subseteq \sU'(I)$ and $\Sigma^{-1}(\sU(I)^{\perp})\subseteq \sV(I)$, which implies that these two $t$-structures coincide. In particular, $\sU=\sU'$.
Hence, $\sU$ is completely determined by $\sU(\bbone)$, so the assignment $\sU\mapsto (\sU(\bbone),\Sigma^{-1}(\sU(\bbone)^{\perp}))$ is a bijection between the classes described in parts (3) and (1) of the statement. The bijection between (2) and (1) is proved similarly.
\end{proof}

Due to the above theorem, from now on, we will indifferently denote a $t$-structure $\t$ on a stable derivator $\D$, either as aisle and coaisle $(\D(\bbone)^{\leq0}, \D(\bbone)^{\geq0})$ in $\D(\bbone)$, or as a pair of coreflective and reflective subderivators $(\D^{\leq0},\D^{\geq0})$ of $\D$.

\subsection{Epivalence of certain diagram functors}\label{subs_epiv}

The following lemma is a refinement of ~\cite[Thm.\,3.1]{SSV} in the case for the special small category $\N$. We will just give a sketch of the proof since similar statements have already appeared in~\cite{tderiv} (for finite ordinals) and in the appendix of~\cite{KN}. 

\begin{lem}\label{lem_epivalence_N_op}
The diagram functor
\[
\dia_{\N}\colon \D(\N)\to \D(\bbone)^{\N}\quad \text{(resp., $\dia_{\N^{\op}}\colon \D(\N^{\op})\to \D(\bbone)^{\N^{\op}}$)}
\]
is full and essentially surjective. Furthermore, given $\mathscr X \in \D(\N)$ (resp., $\D(\N^{\op})$), there is a triangle
\begin{align*}
\bigoplus_{n\in \N}(n+1)_!\mathscr X_n\to \bigoplus_{n\in \N}n_!\mathscr X_n\to \mathscr X\to \Sigma \bigoplus_{n\in \N}(n+1)_!\mathscr X_n\\
\left (\text{resp., $\mathscr Y\to \prod_{n\in \N}n_*\mathscr Y_n\to \prod_{n\in \N}(n+1)_*\mathscr Y_n\to \Sigma \mathscr Y$}\right).
\end{align*}
Finally, given a second object  $\mathscr Y\in \D(\N)$ the diagram functor induces an isomorphism
\begin{align*}
\D(\N)(\mathscr X,\mathscr Y)\cong \D(\bbone)^{\N}(\dia_{\N}\mathscr X,\dia_{\N}\mathscr Y)\\
(\text{resp., $\D(\N^{\op})(\mathscr X,\mathscr Y)\cong \D(\bbone)^{\N^{\op}}(\dia_{\N^{\op}}\mathscr X,\dia_{\N^{\op}}\mathscr Y)$})
\end{align*}
provided the following map is surjective (where $\mathcal D:=\D(\bbone)$):
\[\begin{matrix}
\prod_{n\in\N}\mathcal D(\Sigma \mathscr X_n,\mathscr Y_n)&\to& \prod_{n\in\N}\mathcal D(\Sigma \mathscr X_n,\mathscr Y_{n+1})\\ (\phi_n)_\N&\mapsto& (d_{\mathscr Y}^n\phi_n-\phi_{n+1}\Sigma d_{\mathscr X}^n)_\N
\end{matrix}\]
\[\begin{pmatrix}
\prod_{n\in\N}\mathcal D(\Sigma \mathscr X_n,\mathscr Y_n)&\to& \prod_{n\in\N}\mathcal D(\Sigma \mathscr X_{n+1},\mathscr Y_{n})\\ (\phi_n)_\N&\mapsto& (d_{\mathscr Y}^{n+1}\phi_{n+1}-\phi_{n}\Sigma d_{\mathscr X}^{n+1})_\N
\end{pmatrix}.\]
This happens, for example, when the Toda condition $\D(\bbone)(\Sigma \mathscr X_n,\mathscr Y_{n+1})=0$ (resp., $\D(\bbone)(\Sigma \mathscr X_{n+1},\mathscr Y_{n})=0$), is verified for all $n\in \N$.
\end{lem}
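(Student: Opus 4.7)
The plan is to prove the distinguished triangle first and then derive essential surjectivity, fullness, and the hom-set isomorphism from it as formal consequences.

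First I would construct the morphism
\[
\phi \colon \bigoplus_{n\in \N}(n+1)_!\mathscr X_n \to \bigoplus_{n\in \N} n_!\mathscr X_n.
\]
For each $n$, the edge $n\to n+1$ is a $2$-cell between the functors $n,(n+1)\colon\bbone\to\N$, and taking mates yields a canonical natural transformation $(n+1)_!\to n_!$; combined with the structure map $d^n\colon\mathscr X_n\to \mathscr X_{n+1}$, the component of $\phi$ on the $n$-th summand should be the difference of the induced map $(n+1)_!\mathscr X_n\to n_!\mathscr X_n$ and the map $(n+1)_!d^n\colon (n+1)_!\mathscr X_n\to (n+1)_!\mathscr X_{n+1}$. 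I would then identify the cofiber of $\phi$ with $\mathscr X$ using (Der.$2$): evaluating at each $k\in\N$, the sequence reads $\bigoplus_{n< k}\mathscr X_n\to\bigoplus_{n\leq k}\mathscr X_n\to \mathscr X_k$, which one checks is the telescoping triangle built from $d^0,\dots,d^{k-1}$.

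For essential surjectivity, given an incoherent diagram $X\colon \N\to\D(\bbone)$, every component of the would-be $\phi$ has the form $n_! A\to m_! B$, which by the adjunction $n_!\dashv n^*$ is determined by a single morphism in $\D(\bbone)$; thus one can assemble the analogue $\phi_X$ directly and take its cofiber in $\D(\N)$, producing a coherent lift of $X$ by the same pointwise verification. For the final claim, I would apply $\D(\N)(-,\mathscr Y)$ to the triangle and use the adjunctions to obtain the exact sequence
\[
\prod_n \D(\bbone)(\Sigma\mathscr X_n,\mathscr Y_n)\xrightarrow{\delta}\prod_n \D(\bbone)(\Sigma\mathscr X_n,\mathscr Y_{n+1})\to \D(\N)(\mathscr X,\mathscr Y)\to\prod_n \D(\bbone)(\mathscr X_n,\mathscr Y_n)\xrightarrow{\partial}\prod_n \D(\bbone)(\mathscr X_n,\mathscr Y_{n+1}),
\]
in which a direct inspection of the construction of $\phi$ shows that $\partial$ acts by $(\phi_n)\mapsto(d^n_\mathscr Y\phi_n-\phi_{n+1}\,d^n_\mathscr X)$ and $\delta$ is its suspension-shifted analogue. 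Since $\ker\partial=\D(\bbone)^\N(\dia_\N\mathscr X,\dia_\N\mathscr Y)$, one obtains a canonical surjection $\D(\N)(\mathscr X,\mathscr Y)\twoheadrightarrow\D(\bbone)^\N(\dia_\N\mathscr X,\dia_\N\mathscr Y)$, which already gives fullness of $\dia_\N$, and this surjection is an isomorphism exactly when $\delta$ is surjective, hence in particular under the Toda vanishing $\D(\bbone)(\Sigma\mathscr X_n,\mathscr Y_{n+1})=0$. The statement for $\N^{\op}$ follows by replacing left adjoints, coproducts and cofibers throughout by right adjoints, products and fibers.

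The main obstacle is Step~1: although the adjunctions $n_!\dashv n^*$ reduce each component of $\phi$ to a prescription in $\D(\bbone)$, organising these into a single coherent morphism of $\N$-shaped diagrams, with the right signs and with the canonical transformations $(n+1)_!\Rightarrow n_!$ interacting correctly with the counits, is the real technical input. Once $\phi$ is constructed and the pointwise identification of the cofiber is carried out, everything else is a direct consequence of adjointness and the long exact sequence.
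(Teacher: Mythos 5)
Your proposal follows essentially the same route as the paper's own (sketched) proof: the same Milnor-type presentation $\bigoplus_{n}(n+1)_!\mathscr X_n\to\bigoplus_{n}n_!\mathscr X_n\to\mathscr X$ whose components are the mate $(n+1)_!\to n_!$ of the $2$-cell $n\Rightarrow n+1$ together with $(n+1)_!d^n$, the same pointwise verification of the cofiber to get essential surjectivity from incoherent data, the same long exact sequence obtained by applying $\D(\N)(-,\mathscr Y)$ and the adjunctions $n_!\dashv n^*$ to deduce fullness and the injectivity criterion (hence the Toda condition), and the same dualization with products and right Kan extensions for $\N^{\op}$. The only caveat — shared by the paper's sketch — is that identifying the cofiber with $\mathscr X$ (and its underlying diagram with $X$, transition maps included) requires first assembling a comparison map from the counits $n_!\mathscr X_n\to\mathscr X$ before (Der.$2$) can be invoked, which is precisely the level of detail the paper also leaves to the reader.
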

\begin{proof}[Sketch of the proof.]
Let $X\in \D(\bbone)^{\N}$. For any $n\in\N$, consider the following maps
\[
\alpha_n\colon (n+1)_!X_n\to n_{!}X_n \qquad \beta_n\colon (n+1)_!X_n\to (n+1)_{!}X_{(n+1)},
\]
where $\alpha_n$ is the image of the identity via the following series of natural isomorphism $\D(\bbone)(X_n,X_n)\cong \D(\bbone)(X_n,(n+1)^*n_!X_n) \cong\D(\N)((n+1)_!X_n, n_{!}X_n)$, while $\beta_n$ is induced by the unique natural transformation $b_n\colon (\bbone\overset{n}\to \N)\Rightarrow(\bbone\overset{n+1}\to \N)$ (hence, $\beta_n=(n+1)_{!}b_n^*$). Then define a morphism in $\D(\N)$
\[
\Phi_X\colon \bigoplus_{n\in\N} (n+1)_!X_n\to \bigoplus_{n\in\N} n_{!}X_n
\]
where the component of $\Phi_X$ relative to $(n+1)_!X_n$ is exactly 
\[
(\beta_n,-\alpha_n)^t\colon (n+1)_!X_n\to n_{!}X_n\oplus (n+1)_{!}X_{(n+1)}.
\]
Letting $\mathscr X$ be the cone of $\Phi_X$, one can verify that $\dia_\N(\mathscr X)\cong X$. Furthermore, given $\mathscr Y\in \D(\N)$, we can find a presentation of $\mathscr Y$ as the one above and, applying the functor $(-,\mathscr Z):=\D(\N)(-,\mathscr Z)$ we get a long exact sequence
\[
\xymatrix@C=16pt@R=2pt{
\cdots\ar[r]&\prod_{n\in\N} \D(\bbone)(\Sigma \mathscr Y_n,\mathscr Z_n)\ar[r]^(.45){(*)}&\prod_{n\in\N} \D(\bbone)(\Sigma \mathscr Y_n,\mathscr Z_{n+1})\ar[r]&(\mathscr Y,\mathscr Z)\ar[r]&\\
\ar[r]&\prod_{n\in\N} \D(\bbone)(\mathscr Y_n,\mathscr Z_n)\ar[r]^(.45){(**)}&\prod_{i\in\N} \D(\bbone)(\mathscr Y_n,\mathscr Z_{n+1})\ar[r]&\cdots
}
\]
where the kernel of $(**)$ is $\D(\bbone)^{\N}(\dia_{\N}\mathscr Y,\dia_{\N}\mathscr Z)$. This shows that the map $(\mathscr Y,\mathscr Z)\to \D(\bbone)^{\N}(\dia_{\N}\mathscr Y,\dia_{\N}\mathscr Z)$ is always surjective and that, when the map $(*)$ is surjective (so for example when $\D(\bbone)(\Sigma \mathscr Y_n,\mathscr Z_{n+1})=0$ for all $n\in\N$), then it is also injective.
\\
To prove the statement for $\N^{\op}$ one uses the same argument with a ``dual" resolution. Indeed, given $Y\in \D(\bbone)^{\N^{\op}}$ one can construct a map
\[
\Psi_Y\colon \prod_{n\in \N}n_*Y_n\to \prod_{n\in \N}(n+1)_*Y_n.
\]
Then, calling $\mathscr Y$ the fiber of the above map, $\dia_{\N^{\op}}(\mathscr Y)\cong Y$. Furthermore, given $\mathscr X\in \D(\N^{\op})$, we can find a presentation of $\mathscr Y$ as the one above. Applying the functor $(\mathscr Z,-):=\D(\N^{\op})(\mathscr Z,-)$ we get a long exact sequence with which we can conclude analogously to the first half of the proof.
\end{proof}

\subsection{$t$-structures on $\D(\N)$ and $\D(\N^{\op})$}\label{subs_other_t}
Let $\t=(\D^{\leq0},\D^{\geq0})$ be a $t$-structure on $\D$. By Proposition \ref{lift_tstructure_general}, $\t_\N=(\D^{\leq0}(\N),\D^{\geq0}(\N))$ (resp., $\t_{\N^{\op}}=(\D^{\leq0}(\N^{\op}),\D^{\geq0}(\N^{\op}))$) is a $t$-structure on the triangulated category $\D(\N)$ (resp., $\D(\N^{\op})$). In the following lemmas we introduce a different kind of $t$-structure on the same triangulated categories:

\begin{lem}\label{stair_t_structure_op}
Consider the following full subcategory of $\D(\N^{\op})$:
\begin{align*}
D_\tow^{\leq 0}&:=\{\mathscr X\in \D(\N^{\op}):\mathscr X_n\in \D^{\leq -n}(\bbone)\},\\
D_\tow^{\geq 0}&:=\{\mathscr X\in \D(\N^{\op}):\mathscr X_n\in \D^{\geq -n}(\bbone)\}.
\end{align*}
Then, $\t_{\tow}:=(D_\tow^{\leq0},D_\tow^{\geq 0})$ is a $t$-structure in $\D(\N^{\op})$. 
\end{lem}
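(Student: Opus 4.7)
The axiom ($t$-S.2) is immediate: if $\mathscr X\in D_\tow^{\leq -1}$, then $\mathscr X_n\in \D^{\leq -n-1}(\bbone)\subseteq \D^{\leq -n}(\bbone)$ for each $n$, so $\mathscr X\in D_\tow^{\leq 0}$. Closure under direct summands in both classes follows componentwise from the analogous property of $\t$.

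For ($t$-S.1), take $\mathscr X\in D_\tow^{\leq 0}$ and $\mathscr Y\in D_\tow^{\geq 1}$. I plan to apply the Toda-type criterion in Lemma \ref{lem_epivalence_N_op} for $\N^{\op}$. Since $\mathscr X_{n+1}\in \D^{\leq -n-1}(\bbone)$ we have $\Sigma \mathscr X_{n+1}\in \D^{\leq -n-2}(\bbone)$, while $\mathscr Y_n\in \D^{\geq 1-n}(\bbone)$; because $-n-2<1-n$, axiom ($t$-S.1) for $\t$ gives $\D(\bbone)(\Sigma \mathscr X_{n+1},\mathscr Y_n)=0$ for every $n\in\N$. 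Hence Lemma \ref{lem_epivalence_N_op} yields
\[
\D(\N^{\op})(\mathscr X,\mathscr Y)\cong \D(\bbone)^{\N^{\op}}(\dia_{\N^{\op}}\mathscr X,\dia_{\N^{\op}}\mathscr Y),
\]
and the right-hand side is a subgroup of $\prod_{n}\D(\bbone)(\mathscr X_n,\mathscr Y_n)$, whose factors all vanish.

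The main work is in ($t$-S.3). Given $\mathscr Z\in \D(\N^{\op})$, I will first build an incoherent diagram $\bar Z\in\D(\bbone)^{\N^{\op}}$ by setting $\bar Z_n:=\tau^{\leq -n}\mathscr Z_n$. The transition maps $\bar Z_{m+1}\to\bar Z_m$ are obtained from the composition $\tau^{\leq -m-1}\mathscr Z_{m+1}\to\mathscr Z_{m+1}\to \mathscr Z_m$ using that $\tau^{\leq -m-1}\mathscr Z_{m+1}\in\D^{\leq -m-1}\subseteq\D^{\leq -m}$ and the adjunction that defines $\tau^{\leq -m}$; functoriality and uniqueness of these factorizations guarantees that $\bar Z$ is a well-defined diagram. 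Invoking the essential surjectivity in Lemma \ref{lem_epivalence_N_op}, I lift $\bar Z$ to a coherent object $\mathscr Z^{\leq 0}\in D_\tow^{\leq 0}$, and using fullness I lift the componentwise counits $\tau^{\leq -n}\mathscr Z_n\to\mathscr Z_n$ to a coherent morphism $f\colon \mathscr Z^{\leq 0}\to\mathscr Z$. Because $\D$ is strong and stable, $f$ extends to a triangle
\[
\mathscr Z^{\leq 0}\xrightarrow{\,f\,}\mathscr Z\to \mathscr W\to \Sigma \mathscr Z^{\leq 0}
\]
in $\D(\N^{\op})$. Since evaluation at each $n$ is exact (equivalently, restriction along $n\colon \bbone\to\N^{\op}$ is triangulated), one obtains a pointwise triangle whose first map is the truncation map for $\t$, so $\mathscr W_n\cong \tau^{\geq 1-n}\mathscr Z_n\in \D^{\geq 1-n}(\bbone)$; that is, $\mathscr W\in D_\tow^{\geq 1}$, as required.

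The step I expect to be the main subtlety is the construction of $f$: fullness gives a lift but not a canonical one, so one must verify that the cone is independent of the choice. This is not really an obstacle, because the cone of a morphism is determined up to (non-canonical) isomorphism and the pointwise computation above makes no reference to the specific lift; still, it is the only place where extra care is needed. The construction of $\mathscr Z^{\geq 1}$ could alternatively be carried out directly in a symmetric manner, but extracting it as the cone of $f$ avoids a second appeal to essential surjectivity and keeps the argument compact.
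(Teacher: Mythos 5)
Your argument follows the paper's proof essentially step for step: closure and ($t$-S.2) are checked pointwise, ($t$-S.1) is reduced via the Toda criterion of Lemma \ref{lem_epivalence_N_op} for $\N^{\op}$, and ($t$-S.3) is handled by assembling the truncations $\tau^{\leq -n}\mathscr Z_n$ into an incoherent diagram, lifting both it and the counit maps via essential surjectivity and fullness of $\dia_{\N^{\op}}$, then reading off the cone pointwise. Incidentally, your Toda condition $\D(\bbone)(\Sigma\mathscr X_{n+1},\mathscr Y_n)=0$ is the one actually required for $\N^{\op}$ by Lemma \ref{lem_epivalence_N_op}; the paper's text verifies the $\N$-shaped variant $\D(\bbone)(\Sigma\mathscr X_n,\mathscr Y_{n+1})=0$, which also happens to hold, so the conclusion is unaffected but your bookkeeping is the more accurate one.
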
 
\begin{proof}
It is clear that $D_\tow^{\leq0}$ and $D_\tow^{\geq 0}$ have the required closure properties. Let now $\mathscr X\in D_\tow^{\leq0}$ (i.e., $\mathscr X_n\in \D(\bbone)^{\leq-n}$, for all $n\in\N$) and $\mathscr Y\in D_\tow^{\geq 1}$ (i.e., $\mathscr Y_n\in \D(\bbone)^{\geq-n+1}$, for all $n\in \N$). Then, $\D(\bbone)(\Sigma \mathscr X_{n},\mathscr Y_{n+1})=0$ since $\Sigma \mathscr X_n\in \Sigma \D(\bbone)^{\leq-n}=\D(\bbone)^{\leq-n-1}$ and $\mathscr Y_{n+1}\in \D(\bbone)^{\geq-n}$ so, by Lemma \ref{lem_epivalence_N_op}, 
\[
\D(\N^{\op})(\mathscr X,\mathscr Y)\cong\D(\bbone)^{\N^{\op}}(\dia_{\N^{\op}}\mathscr X,\dia_{\N^{\op}}\mathscr Y).
\] 
On the other hand, 
\[
\D(\bbone)^{\N^{\op}}(\dia_{\N^{\op}}\mathscr X,\dia_{\N^{\op}}\mathscr Y)\subseteq \prod_{i\in \N}\D(\bbone)(\mathscr X_n,\mathscr Y_n)=0,
\] 
since $\mathscr X_n\in \D(\bbone)^{\leq-n}$ and $\mathscr Y_n\in \D(\bbone)^{\geq-n+1}$. This shows that $\D(\N^{\op})(\mathscr X,\mathscr Y)=0$.
Finally, let $\mathscr X\in \D(\N^{\op})$ and consider its diagram $X:=\dia_{\N^{\op}}\mathscr X\in \D(\bbone)^{\N^{\op}}$. Then there is an object $X^{\leq 0}\in \D(\bbone)^{\N^{\op}}$ and a morphism $\phi\colon X^{\leq0}\to X$ with the property that, for any $n\in\N$, $X^{\leq0}(n)\in \D^{\leq -n}(\bbone)$ and the cone of $\phi_n$ belongs to $\D^{\geq -n+1}(\bbone)$. Using that $\dia_{\N^{\op}}$ is full and essentially surjective, one can lift $\phi$ to a morphism $\Phi\colon \mathscr X^{\leq0}\to \mathscr X$. Completing this morphism to a triangle
\[
\mathscr X^{\leq 0}\to \mathscr X\to \mathscr Y\to \Sigma \mathscr X^{\leq 0}
\]
we obtain that, by construction, $\mathscr X^{\leq 0}\in D_\tow^{\leq0}$ and $\mathscr Y\in D_\tow^{\geq1}$.
\end{proof}

We omit the proof of the following lemma since it is completely dual to that of the above lemma.

\begin{lem}\label{stair_t_structure}
Consider the following full subcategory of $\D(\N)$:
\begin{align*}
D_\tel^{\leq 0}&:=\{\mathscr X\in \D(\N):\mathscr X_n\in \D^{\leq n}(\bbone)\},\\
D_\tel^{\geq 0}&:=\{\mathscr X\in \D(\N):\mathscr X_n\in \D^{\geq n}(\bbone)\}.
\end{align*}
Then, $\t_{\tel}:=(D_\tel^{\leq0},D_\tel^{\geq 0})$ is a $t$-structure in $\D(\N)$. 
\end{lem}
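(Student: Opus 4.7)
The plan is to mirror the argument of Lemma \ref{stair_t_structure_op}, reversing the direction of the indexing category from $\N^{\op}$ to $\N$ and the sign conventions on the aisle/coaisle indices. Since both $D_\tel^{\leq 0}$ and $D_\tel^{\geq 0}$ are defined by pointwise membership in the aisles and coaisles of $\t$, closure under direct summands and the inclusion axiom (t-S.2) are immediate from the corresponding properties of the $t$-structure on $\D(\bbone)$.

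For the orthogonality axiom (t-S.1), I would take $\mathscr X \in D_\tel^{\leq 0}$ and $\mathscr Y \in D_\tel^{\geq 1}$, so that $\mathscr X_n \in \D^{\leq n}(\bbone)$ and $\mathscr Y_n \in \D^{\geq n+1}(\bbone)$ for each $n \in \N$. Then $\Sigma \mathscr X_n \in \D^{\leq n-1}(\bbone)$ while $\mathscr Y_{n+1} \in \D^{\geq n+2}(\bbone)$, and hence the Toda condition $\D(\bbone)(\Sigma \mathscr X_n, \mathscr Y_{n+1}) = 0$ of Lemma \ref{lem_epivalence_N_op} is verified for all $n$. That lemma then yields the isomorphism $\D(\N)(\mathscr X, \mathscr Y) \cong \D(\bbone)^{\N}(\dia_\N \mathscr X, \dia_\N \mathscr Y)$, and the right-hand side embeds into $\prod_{n\in\N} \D(\bbone)(\mathscr X_n, \mathscr Y_n)$, which vanishes factor by factor by (t-S.1) applied to $\t$ at each $n$.

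For the truncation axiom (t-S.3), given $\mathscr X \in \D(\N)$, I would construct the truncation triangle by first building a pointwise-truncated incoherent diagram $X^{\leq \bullet}\colon \N \to \D(\bbone)$ with $X^{\leq \bullet}(n) := (\mathscr X_n)^{\leq n}$, using the adjunction $((-)^{\leq n+1}, \mathrm{incl})$ to canonically factor each composite $(\mathscr X_n)^{\leq n} \to \mathscr X_n \to \mathscr X_{n+1}$ through $(\mathscr X_{n+1})^{\leq n+1}$ (which is possible since $(\mathscr X_n)^{\leq n} \in \D^{\leq n}(\bbone) \subseteq \D^{\leq n+1}(\bbone)$) and thereby obtain the transition morphisms. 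Uniqueness of this factorization gives functoriality in $n$, and the pointwise truncation morphisms assemble into a natural transformation $X^{\leq \bullet} \to X := \dia_\N \mathscr X$ whose cofiber at $n$ lies in $\D^{\geq n+1}(\bbone)$. Using fullness and essential surjectivity of $\dia_\N$ (Lemma \ref{lem_epivalence_N_op}), I can then lift $X^{\leq \bullet}$ to a coherent $\mathscr X^{\leq 0} \in \D(\N)$ together with a morphism $\Phi \colon \mathscr X^{\leq 0} \to \mathscr X$, and its cone produces the desired triangle $\mathscr X^{\leq 0} \to \mathscr X \to \mathscr Y \to \Sigma \mathscr X^{\leq 0}$, with $\mathscr X^{\leq 0} \in D_\tel^{\leq 0}$ and $\mathscr Y \in D_\tel^{\geq 1}$ by pointwise inspection. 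The main obstacle, exactly as in Lemma \ref{stair_t_structure_op}, is precisely this lifting step: $\dia_\N$ is only an epivalence rather than an equivalence, and one must invoke Lemma \ref{lem_epivalence_N_op} both to produce $\mathscr X^{\leq 0}$ and to lift the map to $\mathscr X$, relying on the Toda-type vanishing that in this range is automatic from the same aisle/coaisle inequalities used for (t-S.1).
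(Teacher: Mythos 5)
Your proof is correct and is precisely the dualization of the paper's proof of Lemma \ref{stair_t_structure_op}, which is exactly what the paper intends when it omits this proof as ``completely dual.'' The verification of the Toda vanishing with the reversed signs, the pointwise orthogonality check, and the construction of the incoherent truncation diagram followed by lifting through the epivalence $\dia_\N$ all match the paper's intended argument step for step.
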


\newpage
\section{The bounded realization functor}\label{Sec:Beilinson}

The construction of realization functors (at least in the bounded context) is classically done using the filtered derived category and, more generally, $f$-categories, see~\cite{BBD,Beilinson,Jorge_e_Chrisostomos}. For a given derivator $\D$, one can define a ``bounded filtered prederivator" $\FD$ such that $\FD(\bbone)$ has a natural $f$-structure, whose core is equivalent to $\D(\bbone)$. As a consequence, any $t$-structure on $\D(\bbone)$ can be lifted to a compatible $t$-structure on $\FD(\bbone)$. In this section we give a direct and detailed proof of this lifting property for $t$-structures from $\D$ to $\FD$ in the setting of stable derivators. Moreover, we give a construction of a morphism of prederivators
\[
\real^b_\t\colon \sfD_\A^b\to \D,
\]
where $\A$ is the heart of a given $t$-structure on $\D$. In the final part of the section we show that this morphism is equivalent to the one we obtain via $f$-categories.

\medskip\noindent
\textbf{Setting for Section \ref{Sec:Beilinson}}.
Fix through this section a category of diagrams such that $\N\in\Dia$, a strong and stable derivator $\D\colon \Dia^\op\to \CAT$, a $t$-structure $\t=(\D^{\leq0},\D^{\geq0})$ on $\D$, and let $\A=\D^{\leq0}(\bbone)\cap \D^{\geq0}(\bbone)$ be its heart. Just for the final Proposition \ref{real_is_tot_prop}, we will need to assume that $\N\in\Dia$.

\subsection{The bounded filtered prederivator $\FD$}
Let us start with the following definition:

\begin{defn}\label{filtered_der_cat}
Given $n\in\Z$, let $[n\to n+1]\colon \bbtwo\to \Z$ be the functor sending $0\mapsto n$ and $1\mapsto n+1$. Define
\[
\xymatrix{
\gr_n\colon \D^\Z\ar[rr]^(.58){[n\to n+1]^*}&&\D^\bbtwo\ar[rr]^{\mathrm{cone}}&& \D.
}
\]
Given $ \mathscr X\in \D^\Z(I)$
\begin{itemize}
\item $\mathscr X$ is  {\bf bounded above} if there exists $a\in\Z$ such that $\mathscr X_{a-n}=0$ for all $n\in \N$. If $\mathscr X$ is bounded above, we let 
\[
a(\mathscr X):=\sup\{a\in\Z:\mathscr X_{a-n-1}=0,\ \forall n\in \N\};
\]
\item $\mathscr X$ is {\bf bounded below} if there exists $b\in\Z$ such that $\gr_{b+n}\mathscr X=0$ for all $i\in \N$. If $\mathscr X$ is bounded below, we let 
\[
b(\mathscr X):=\inf\{b\in\Z:\gr_{b+n}\mathscr X=0,\ \forall n\in \N\};
\]
\item $\mathscr X$ is  {\bf bounded} if it is both bounded above and below.  If $\mathscr X$ is bounded, we define its {\bf length} $\ell(\mathscr X)=-1$ if $\mathscr X=0$ and otherwise
$$\ell(\mathscr X)=|b(\mathscr X)-a(\mathscr X)|\,.$$
\end{itemize} 
We define the {\bf bounded filtered prederivator} 
\[
\FD\colon \Dia^{\op}\to \CAT,
\] 
as a full sub-prederivator of $\D^\Z$, where $\FD(I)$ is the category of bounded objects in $\D^\Z(I)$.
\end{defn}

In the following remark we try to give some motivation and intuition for the above definition:
\begin{rmk}
Let $\mathscr X\in \D(\Z)$ and consider its underlying diagram:
\[
\dia_\Z(\mathscr X):\xymatrix@C=20pt{\cdots\ar[r]&\mathscr X_{-2}\ar[r]^{d_{-2}}&\mathscr X_{-1}\ar[r]^{d^{-1}}&\mathscr X_{0}\ar[r]^{d^{0}}&\mathscr X_1\ar[r]^{d^1}&\mathscr X_2\ar[r]^{d^2}&\cdots}
\]
The object $\mathscr X$ is then bounded above if $\mathscr X_{k}=0$ for all $k<<0$ and it is bounded below if $d^k$ is an isomorphism for all $k>>0$. In particular, when $\mathscr X$ is bounded, we can interpret it as the following (coherent) finite filtration of the objects $\mathscr X_{b(\mathscr X)}$:
\[
0=\mathscr X_{a(\mathscr X)}\to \mathscr X_{a(\mathscr X)+1}\to \mathscr X_{a(\mathscr X)+2}\to \cdots\to \mathscr X_{b(\mathscr X)-1}\to \mathscr X_{b(\mathscr X)}.
\]
Hence, the motivation for calling $\FD$ the bounded filtered derivator of $\D$ is that its objects are exactly the coherent finite filtrations of the objects in $\D$. Another motivation is the following: if we take a Grothendieck category $\G$ and the associated stable derivator $\sfD_\G$, the base of the corresponding filtered derivator $\mathbb F^b\sfD_\G$ is (equivalent to) what is usually called the filtered derived category of $\G$.

A final remark is that one would expect that the definition of ``bounded above" object of $\D(\Z)$ is what we call here ``bounded below", and viceversa. The reason for this counterintuitive choice of words will be clarified in the proof of Proposition \ref{heart_equiv_prop}.
\end{rmk}

Notice that, in general $\FD$ is not a derivator but, for any $I\in \Dia$, $\FD(I)$ is a full triangulated subcategory of $\D^\Z(I)$. In particular, it makes sense to consider triangles and to speak about $t$-structures in $\FD(I)$ for some $I\in \Dia$. The following result follows by Lemma \ref{lem_epivalence_N_op}.

\begin{cor}\label{lift_Z}
For any $I\in\Dia$, the functor
\[
\dia_{\Z}\colon \FD(I)\to \D(I)^{(b,\Z)}
\]
is full and essentially surjective, where $\D(I)^{(b,\Z)}$ is the full subcategory of $\D(I)^{\Z}$ spanned by those $X\colon \Z\to \D(I)$ such that there exists $a\leq b\in\Z$ for which $X_{a-i}=0$ and $X_{b+i}\to X_{b+i+1}$ is an isomorphism, for all $i\in \N$. Furthermore, given $\mathscr Y \in \FD(I)$, there is a triangle
\[
\mathscr Y\to \prod_{i\leq b(\mathscr Y)}i_*\mathscr Y_i\to \prod_{i\leq b(\mathscr Y)}(i-1)_*\mathscr Y_i\to \Sigma \mathscr Y
\]
Finally, given a second object  $\mathscr X\in \FD(I)$, the diagram functor induces an isomorphism
\[
\FD(I)(\mathscr X,\mathscr Y)\cong \D(I)^{\Z}(\dia_{\Z}\mathscr X,\dia_{\Z}\mathscr Y)
\]
provided the following map is surjective:
\begin{align*}
\prod_{i\leq b}\D(I)(\Sigma \mathscr X_i,\mathscr Y_i)&\to \prod_{i\leq b}\D(I)(\Sigma \mathscr X_{i-1},\mathscr Y_{i})\\
(\phi_i)_{i\leq b}&\mapsto (d_{\mathscr Y}^{i-1}\phi_{i-1}-\phi_{i}\Sigma d_{\mathscr X}^{i-1})_{i\leq b},
\end{align*}
where $b:=\max\{b(\mathscr X),b(\mathscr Y)\}$.
\end{cor}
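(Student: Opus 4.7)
The Corollary is declared to follow from Lemma \ref{lem_epivalence_N_op}, and my plan is to make this reduction explicit via the identification of bounded filtered objects with eventually-zero diagrams on $\N^{\op}$. Since $\D^I$ is itself a strong stable derivator for any $I\in\Dia$, I may apply Lemma \ref{lem_epivalence_N_op} directly to $\D^I$; to lighten notation I will treat the case $I=\bbone$, the general case being identical.

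For each $b\in\Z$, consider the inclusion of posets $\iota_b\colon\Z_{\leq b}\hookrightarrow\Z$. This functor has a right adjoint $r_b\colon\Z\to\Z_{\leq b}$ given by $r_b(j)=\min(j,b)$. By the standard fact that $u_!=v^*$ whenever $u\dashv v$, we have $(\iota_b)_!=r_b^*$, so $(\iota_b)_!$ is concretely the operation of extending a diagram constantly on the right. On the full subcategory $\FD^{\leq b}\subseteq\FD(\bbone)$ of objects $\mathscr Y$ with $b(\mathscr Y)\leq b$, the counit $(\iota_b)_!\iota_b^*\mathscr Y\to\mathscr Y$ is pointwise the identity for $j\leq b$ and the transition map $\mathscr Y_b\to\mathscr Y_j$ for $j>b$; the latter is an isomorphism by the bounded-below hypothesis on $\mathscr Y$, so by (Der.2) the counit is an isomorphism. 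Thus $\iota_b^*$ identifies $\FD^{\leq b}$ with the full subcategory of $\D(\Z_{\leq b})$ of diagrams that are zero for sufficiently small indices, and via the isomorphism of categories $\Z_{\leq b}\cong\N^{\op}$, $n\mapsto b-n$, with the corresponding subcategory of $\D(\N^{\op})$.

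Under this identification, the three assertions of the Corollary follow from the three corresponding assertions of Lemma \ref{lem_epivalence_N_op} applied to $\D(\N^{\op})$: essential surjectivity and fullness of $\dia_\Z$ correspond to those of $\dia_{\N^{\op}}$; the coresolution triangle corresponds to the coresolution triangle for $\N^{\op}$; and the Hom-isomorphism under the displayed surjectivity condition translates directly to the Toda condition of the lemma under the change of indexing $n=b-i$. The coresolution triangle, originally living in $\D(\N^{\op})\cong\D(\Z_{\leq b})$, is transported to $\D(\Z)$ via the exact functor $(\iota_b)_!$; since $\mathscr Y$ is bounded above, only finitely many factors in the products are nonzero, and $(\iota_b)_!$ commutes with these finite products in the stable setting.

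The main technical subtlety will be to carefully match the right Kan extensions $i_*$ appearing in the Corollary's triangle with the pushed-forward terms $(\iota_b)_!\,i_*^{\Z_{\leq b}}(-)$: for $i<b$ these coincide with $i_*^\Z(-)$, while at the boundary $i=b$ the pushed-forward term is the constant diagram at $\mathscr Y_b$ on $\Z$, reflecting the fact that the triangle is most naturally interpreted within the filtered subcategory $\FD\subseteq\D^\Z$ where extension past $b$ is by isomorphisms rather than by zeros.
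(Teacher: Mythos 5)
Your proof is correct and takes essentially the same route as the paper, which declares the corollary to ``follow by Lemma~\ref{lem_epivalence_N_op}'' without spelling out the reduction. You correctly supply that reduction via $\Z_{\leq b}\cong\N^{\op}$, the identification $(\iota_b)_!\cong r_b^*$, and the observation that the counit is an isomorphism on bounded-below objects; you also rightly flag that the boundary term $i=b$ in the coresolution triangle must be read with constant (rather than zero) extension past $b$, i.e.\ inside $\FD$, for the triangle to be literally a triangle in $\D^{\Z}$ (a small point your proposal handles correctly, though note that the Corollary's displayed hypothesis is the surjectivity condition of the Lemma, of which the Toda vanishing is only a special case).
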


\begin{lem}\label{hom=hocolim}
Given $I\in\Dia$ and $\mathscr X,\,\mathscr Y\in \FD(I)$ such that $a(\mathscr X)\geq b(\mathscr Y)$, then 
\[
\FD(I)(\mathscr X,\mathscr Y)=\D(I)(\hocolim_\Z(\mathscr X),\hocolim_\Z(\mathscr Y)).
\]
\end{lem}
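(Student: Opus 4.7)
The plan is to exploit the adjunction $\hocolim_\Z\dashv(\pt_\Z)^*$ together with a support‐vanishing argument. By adjunction,
\[
\D(I)(\hocolim_\Z\mathscr X,\hocolim_\Z\mathscr Y)\;\cong\;\D^{\Z}(I)(\mathscr X,(\pt_\Z)^*\hocolim_\Z\mathscr Y),
\]
and since $\FD$ is a full sub-prederivator of $\D^{\Z}$, the lemma reduces to showing that the unit $\eta\colon\mathscr Y\to(\pt_\Z)^*\hocolim_\Z\mathscr Y$ induces an isomorphism $\D^{\Z}(I)(\mathscr X,\mathscr Y)\xrightarrow{\sim}\D^{\Z}(I)(\mathscr X,(\pt_\Z)^*\hocolim_\Z\mathscr Y)$. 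Writing $\mathscr F$ for the fiber of $\eta$ in $\D^{\Z}(I)$, this is equivalent to the vanishing
\[
\D^{\Z}(I)(\mathscr X,\mathscr F)\;=\;\D^{\Z}(I)(\mathscr X,\Sigma\mathscr F)\;=\;0.
\]

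The first step is to identify, for any bounded $\mathscr Z\in\FD(I)$ and any integer $b\geq b(\mathscr Z)$, the $b$-th component of its unit as an isomorphism $\mathscr Z_b\xrightarrow{\sim}\hocolim_\Z\mathscr Z$. Two ingredients enter: first, the inclusion $\iota\colon\Z_{\geq b}\hookrightarrow\Z$ is homotopy cofinal, because each slice $k/\iota$ is a nonempty directed and therefore contractible poset, giving $\hocolim_\Z\mathscr Z\cong\hocolim_{\Z_{\geq b}}\iota^*\mathscr Z$; second, by the definition of $b(\mathscr Z)$ all structure morphisms of $\iota^*\mathscr Z$ are invertible, so a standard argument (or, concretely, applying $(\pt_{\Z_{\geq b}})_!$ to the telescope presentation of Lemma~\ref{lem_epivalence_N_op}) computes the homotopy colimit of this essentially constant diagram over the contractible poset $\Z_{\geq b}$ as $\mathscr Z_b$. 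Applied to $\mathscr Y$, this shows that $\mathscr F_k=0$ whenever $k\geq b(\mathscr Y)$.

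Setting $N:=b(\mathscr Y)-1$ and $i\colon\Z_{\leq N}\hookrightarrow\Z$, the pointwise formula for $i_*$ (the slices $k/i$ are empty for $k>N$, so $(i_*G)_k=0$ there by pointedness, while for $k\leq N$ the slice has $k$ as initial object, giving $(i_*G)_k=G_k$) and (Der.$2$) show that the natural map $\mathscr F\to i_*i^*\mathscr F$ is an isomorphism. Moreover, the hypothesis $a(\mathscr X)\geq b(\mathscr Y)$ forces $\mathscr X_k=0$ for all $k<a(\mathscr X)$, hence for all $k\leq N$, so $i^*\mathscr X=0$. The adjunction $i^*\dashv i_*$ then gives
\[
\D^{\Z}(I)(\mathscr X,\mathscr F)\;\cong\;\D^{\Z}(I)(\mathscr X,i_*i^*\mathscr F)\;\cong\;\D^{\Z_{\leq N}}(I)(i^*\mathscr X,i^*\mathscr F)\;=\;0,
\]
and the same argument applied to $\Sigma\mathscr F$ (whose support is still contained in $\Z_{\leq N}$) completes the proof. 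The main technical difficulty lies in the colimit identification $\hocolim_\Z\mathscr Z\cong\mathscr Z_{b(\mathscr Z)}$: because $\Z_{\geq b}$ has no terminal object, this cannot be obtained by mere evaluation at a vertex and must be deduced by combining cofinality with the homotopy-contractibility of $\Z_{\geq b}$ (equivalently, via the explicit $\N$-indexed resolution of Lemma~\ref{lem_epivalence_N_op}).
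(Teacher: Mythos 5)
Your proof is correct but follows a genuinely different route from the paper's. The paper argues ``from below'': it first uses the presentation of Corollary~\ref{lift_Z} to show that, under the hypothesis $a(\mathscr X)\geq b(\mathscr Y)$, the relevant Toda-type obstruction map $\prod_{i\le b}\D(I)(\Sigma\mathscr X_i,\mathscr Y_i)\to\prod_{i\le b}\D(I)(\Sigma\mathscr X_{i-1},\mathscr Y_i)$ is surjective (solving the recursion from $i=b(\mathscr X)$ downward, using that $d_{\mathscr Y}^{i-1}$ is invertible above $b(\mathscr Y)$ and everything vanishes below $a(\mathscr X)$), so $\dia_\Z$ is fully faithful on this pair; it then identifies the incoherent hom-set $\D(I)^\Z(\dia_\Z\mathscr X,\dia_\Z\mathscr Y)$ by hand with $\D(I)(\mathscr X_{b(\mathscr X)},\mathscr Y_{b(\mathscr X)})$ and finally observes $\hocolim_\Z\mathscr X\cong\mathscr X_{b(\mathscr X)}$, $\hocolim_\Z\mathscr Y\cong\mathscr Y_{b(\mathscr X)}$. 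You instead argue ``from above'': starting from the adjunction $\hocolim_\Z\dashv(\pt_\Z)^*$, you reduce to the vanishing of $\D^\Z(I)(\mathscr X,\mathscr F)$ and $\D^\Z(I)(\mathscr X,\Sigma\mathscr F)$ for $\mathscr F$ the fiber of the unit, and get this by a support argument: $\mathscr F$ is concentrated on $\Z_{\le N}$ with $N=b(\mathscr Y)-1$, so $\mathscr F\cong i_*i^*\mathscr F$ for $i\colon\Z_{\le N}\hookrightarrow\Z$, and then $\D^\Z(I)(\mathscr X,i_*i^*\mathscr F)\cong\D^{\Z_{\le N}}(I)(i^*\mathscr X,i^*\mathscr F)=0$ because $i^*\mathscr X=0$. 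Both routes hinge on the same input $\hocolim_\Z\mathscr Z\cong\mathscr Z_b$ for $b\geq b(\mathscr Z)$ (the paper takes this as ``clear'', you sketch the cofinality argument), but where the paper needs the surjectivity of the Toda map to transfer between coherent and incoherent morphisms, you sidestep incoherent diagrams entirely by localizing away a closed subdiagram. Your approach is conceptually cleaner and generalizes more readily to other index categories; the paper's is more elementary in that it only uses the explicit resolution of Lemma~\ref{lem_epivalence_N_op} and avoids any appeal to the pointwise formula for $i_*$ along a cosieve.
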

\begin{proof}
It is easy to show that the map 
\begin{align*}
\prod_{i\leq b(\mathscr X)}\D(I)(\Sigma \mathscr X_i,\mathscr Y_i)&\to \prod_{i\leq b(\mathscr X)}\D(I)(\Sigma \mathscr X_{i-1},\mathscr Y_{i})
\end{align*}
is surjective, so that $\FD(I)(\mathscr X,\mathscr Y)\cong\D(I)^\Z(\dia_\Z \mathscr X,\dia_\Z \mathscr Y)$.
Consider now the underlying diagrams:
\[
\xymatrix@R=15pt@C=20pt{
\cdots \ar[r]&0\ar[r]\ar[d]&\mathscr X_{a(\mathscr X)}\ar[r]\ar[d]&\mathscr X_{a(\mathscr X)+1}\ar[r]\ar[d]&\cdots\ar[r]&\mathscr X_{b(\mathscr X)}\ar[d]\ar[r]^\cong&\cdots\\
\cdots \ar[r]&\mathscr Y_{{a(\mathscr X)}-1}\ar[r]&\mathscr Y_{a(\mathscr X)}\ar[r]^{\cong}&\mathscr Y_{{a(\mathscr X)}+1}\ar[r]^{\cong}&\cdots\ar[r]^{\cong}&\mathscr Y_{b(\mathscr X)}\ar[r]^\cong&\cdots\\
}
\]
Clearly, $\hocolim_\Z \mathscr X\cong \mathscr X_{b(\mathscr X)}$, $\hocolim_\Z \mathscr Y\cong \mathscr Y_{b(\mathscr Y)}\cong \mathscr Y_{b(\mathscr X)}$ and 
\[
\D(I)^\Z(\dia_\Z\mathscr  X,\dia_\Z \mathscr Y)\cong \D(I)(\mathscr X_{b(\mathscr X)},\mathscr Y_{b(\mathscr X)}).\qedhere
\]
\end{proof}

\subsection{The Beilinson $t$-structure}

\begin{prop}\label{heart_is_Chb}
Consider the following two classes of objects in $\FD(\bbone)$:
\begin{align*}
\FD^{\leq0}(\bbone)&=\{\mathscr X\in \FD(\bbone):\gr_n\mathscr X\in \D^{\leq -n}(\bbone),\, \forall n\},\\
\FD^{\geq0}(\bbone)&=\{\mathscr X\in \FD(\bbone):\gr_n\mathscr X\in \D^{\geq -n}(\bbone),\, \forall n\}.
\end{align*}
Then $\t_{B}:=(\FD^{\leq0}(\bbone),\FD^{\geq0}(\bbone))$ is a $t$-structure on $\FD(\bbone)$.
\end{prop}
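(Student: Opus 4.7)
To prove that $\t_{B}:=(\FD^{\leq0}(\bbone),\FD^{\geq0}(\bbone))$ is a $t$-structure on $\FD(\bbone)$, I will verify closure under direct summands together with the three axioms. The preliminaries are formal: each functor $\gr_n\colon\FD(\bbone)\to\D(\bbone)$ is triangulated (being the composition of the triangulated functors $[n\to n+1]^{*}$ and $\cone$), hence preserves direct summands and commutes with $\Sigma$. Combined with the facts that $\D^{\leq-n}(\bbone)$ and $\D^{\geq-n}(\bbone)$ are closed under summands and that $\Sigma\D^{\leq-n}(\bbone)=\D^{\leq-n-1}(\bbone)\subseteq\D^{\leq-n}(\bbone)$, this immediately yields both closure of $\FD^{\leq0}(\bbone)$ and $\FD^{\geq0}(\bbone)$ under direct summands and axiom ($t$-S.2).

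For ($t$-S.1), let $\mathscr X\in\FD^{\leq0}(\bbone)$ and $\mathscr Y\in\FD^{\geq1}(\bbone)$; I argue by induction on $\ell(\mathscr X)$. The case $\mathscr X=0$ is trivial. When $\ell(\mathscr X)=0$, a direct inspection of graded pieces identifies $\mathscr X$ with $(k+1)_!X$, where $(k+1)\colon\bbone\to\Z$ picks out $k+1$ and $X=\gr_k\mathscr X\in\D^{\leq-k}(\bbone)$. The adjunction $(k+1)_!\dashv(k+1)^{*}$ gives
\[
\FD(\bbone)(\mathscr X,\mathscr Y)=\D(\bbone)(X,\mathscr Y_{k+1}).
\]
An induction on $i\geq a(\mathscr Y)$, using the triangles $\mathscr Y_{i-1}\to\mathscr Y_i\to\gr_{i-1}\mathscr Y$ together with $\gr_{i-1}\mathscr Y\in\D^{\geq2-i}(\bbone)$ and the closure of $\D^{\geq p}(\bbone)$ under extensions, shows $\mathscr Y_i\in\D^{\geq2-i}(\bbone)$; in particular $\mathscr Y_{k+1}\in\D^{\geq1-k}(\bbone)$, so ($t$-S.1) for $\t$ forces the above hom to vanish. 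For the inductive step, let $k_0$ be the smallest index with $\gr_{k_0}\mathscr X\neq0$. The functor $s_{k_0+1}\colon\Z\to\Z$, $i\mapsto\min(i,k_0+1)$, comes with a canonical natural transformation to $\id_\Z$, inducing a morphism $s_{k_0+1}^{*}\mathscr X\to\mathscr X$ whose cone $\sigma^{>k_0}\mathscr X$ is again a bounded filtration. A grade-piece computation shows $s_{k_0+1}^{*}\mathscr X$ is concentrated at $k_0$ with value $\gr_{k_0}\mathscr X$, while $\sigma^{>k_0}\mathscr X\in\FD^{\leq0}(\bbone)$ has length strictly smaller than $\ell(\mathscr X)$; applying $\FD(\bbone)(-,\mathscr Y)$ to the resulting triangle and invoking the base case plus the inductive hypothesis completes ($t$-S.1).

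For ($t$-S.3), I construct truncation triangles by induction on $\ell(\mathscr X)$. In the concentrated case $\mathscr X=(k+1)_!X$, applying the triangulated functor $(k+1)_!$ to the $\t$-truncation $X^{\leq-k}\to X\to X^{\geq1-k}$ in $\D(\bbone)$ yields a triangle that is visibly a $\t_B$-truncation of $\mathscr X$. For the general case, I decompose $\mathscr X$ via the triangle $s_{k_0+1}^{*}\mathscr X\to\mathscr X\to\sigma^{>k_0}\mathscr X$ above and invoke the standard fact that, once ($t$-S.1) and ($t$-S.2) are in place, the class of objects admitting a truncation is closed under extensions (proved by the usual two-step octahedron glueing).

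The principal obstacle will be this final glueing step in ($t$-S.3): given truncations of $A$ and $B$ fitting in a triangle $A\to\mathscr X\to B$, producing a truncation of $\mathscr X$ requires a careful double-octahedron construction, and it is precisely here that the orthogonality proved in ($t$-S.1) is essential. A secondary technical point is verifying that $s_{k_0+1}^{*}\mathscr X$ and $\sigma^{>k_0}\mathscr X$ genuinely sit in $\FD(\bbone)$ (i.e., remain bounded) and that their graded pieces are as advertised, so that the induction on length truly terminates.
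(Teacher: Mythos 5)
Your verification of closure under summands, of ($t$-S.2), of ($t$-S.1), and of the length-zero case of ($t$-S.3) is essentially the paper's argument (your variant for ($t$-S.1), bounding $\mathscr Y_{k+1}\in\D^{\geq 1-k}(\bbone)$ directly instead of also reducing $\mathscr Y$ to length zero, is fine). The gap is the inductive step of ($t$-S.3). The ``standard fact'' you invoke --- that once ($t$-S.1) and ($t$-S.2) hold, the class of objects admitting a truncation triangle is closed under extensions --- is false in this generality, and it is exactly where the real work of the proposition lies. Your two-step octahedron reduces the gluing to the following situation: given $U\in\FD^{\leq0}(\bbone)$, $V\in\FD^{\geq1}(\bbone)$ and a map $\epsilon\colon U\to\Sigma V$, truncate the cocone of $\epsilon$. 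The obstruction lives in $\FD(\bbone)(U,\Sigma V)$, and $\Sigma V$ only lies in $\FD^{\geq0}(\bbone)$, so ($t$-S.1) says nothing about it. That no formal argument can close this is shown by a small example: in the bounded derived category $\mathcal T$ of representations of the quiver $1\to 2$, let $\mathcal X=\add\{\Sigma^nS_1:n\geq0\}$ and $\mathcal Y=\add\{\Sigma^{-m}S_2:m\geq0\}$. Then $\mathcal X$ is closed under $\Sigma$, extensions and summands, $\mathcal Y$ under $\Sigma^{-1}$, extensions and summands, and $\mathcal T(\mathcal X,\mathcal Y)=0$; moreover $\mathcal T(\mathcal Y,\Sigma\mathcal X)=0$, so the objects admitting a ``truncation'' $X\to C\to Y$ are exactly the direct sums $X\oplus Y$. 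Hence the indecomposable projective $P_1$, which is an extension of $S_1\in\mathcal X$ by $S_2\in\mathcal Y$, admits no truncation although both ends do: truncatable objects need not be extension-closed under exactly the hypotheses you have available at that point.

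What rescues the argument in the filtered setting is positional information that your induction does not record, and this is how the paper proceeds: it strengthens the inductive hypothesis, requiring in addition that $\hocolim_\Z\mathscr X^{\leq0}\in\D^{\leq-a(\mathscr X)}(\bbone)$ and $\hocolim_\Z\mathscr X^{\geq1}\in\D^{\geq-b(\mathscr X)+1}(\bbone)$. Since the length-zero piece sits at filtration spot $a(\mathscr X)$ while the shorter quotient (your $\sigma^{>k_0}\mathscr X$, the paper's $\mathscr X'$) is supported in spots $\geq a(\mathscr X)+1$, Lemma \ref{hom=hocolim} converts the two groups $\D(\Z)\bigl(\Sigma^{-1}(\mathscr X'^{\leq0}),(a_!\mathscr X_a)^{\geq1}\bigr)$ and $\D(\Z)\bigl(\mathscr X'^{\leq0},(a_!\mathscr X_a)^{\geq1}\bigr)$ into hom-sets between homotopy colimits, which vanish by the recorded bounds; note that the first group involves a negative shift and is precisely of the type ($t$-S.1) cannot kill. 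These vanishings give a unique fill-in to a morphism of truncation triangles (\cite[Prop.\,1.1.9]{BBD}), which is then completed to a $3\times3$ diagram via \cite{neeman1991some}, producing the truncation of $\mathscr X$; equivalently, in your two-octahedron language, they show that the wrong-way connecting map $\epsilon$ vanishes for your specific pieces, so the problematic cocone splits. To repair your proof you must add these $\hocolim_\Z$-bounds (and the support bounds on the truncation pieces) to the inductive statement and check them in the base case and in the gluing step; as written, the final step of ($t$-S.3) does not follow.
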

\begin{proof}
The closure properties of $\FD^{\leq0}(\bbone)$ and $\FD^{\geq0}(\bbone)$ follow by the fact that all the $\gr_n$ are triangulated functors. 
Now let $\mathscr X\in \FD^{\leq0}(\bbone)$, $\mathscr Y\in \FD^{\geq1}(\bbone)$, and let us show that $\D(\Z)(\mathscr X,\mathscr Y)=0$. 
Consider the distinguished triangle 
\[
a(\mathscr X)_!\mathscr X_{a(\mathscr X)}\to \mathscr X\to \mathscr X'\to\Sigma a(\mathscr X)_!\mathscr X_{a(\mathscr X)},
\]
where $\ell(a(\mathscr X)_!\mathscr X_{a(\mathscr X)})=0$, $\ell(\mathscr X')\leq\ell(\mathscr X)-1$, and $a(\mathscr X)_!\mathscr X_{a(\mathscr X)},\, \mathscr X'\in \FD^{\leq0}(\bbone)$. The following exact sequence
\[
\cdots \to\D(\Z)(\mathscr X',\mathscr Y)\to \D(\Z)(\mathscr X,\mathscr Y)\to \D(\Z)(a_!\mathscr X_a,\mathscr Y)\to \cdots
\]
shows that $\D(\Z)(\mathscr X,\mathscr Y)=0$ provided $\D(\Z)(\mathscr X',\mathscr Y)=0=\D(\Z)(a_!\mathscr X_a,\mathscr Y)$. We can use this trick to reduce (by induction on $\ell(\mathscr X)$) to the case when $\ell(\mathscr X)=0$. Similarly, 
there is a distinguished triangle $a(\mathscr Y)_!\mathscr Y_{a(\mathscr Y)}\to \mathscr Y\to \mathscr Y'\to \Sigma a(\mathscr Y)_!\mathscr Y_{a(\mathscr Y)}$, where $\ell(a(\mathscr Y)_!\mathscr Y_{a(\mathscr Y)})=0$, $\ell(\mathscr Y')\leq\ell(\mathscr Y)-1$, and $a(\mathscr Y)_!\mathscr Y_{a(\mathscr Y)},\, \mathscr Y'\in D^{\geq0}_b$. 
Similarly to what we did for $\mathscr X$, we can reduce (by induction on $\ell(\mathscr Y)$) to the case when $\ell(\mathscr Y)=0$. Hence, suppose that $\ell(\mathscr X)=\ell(\mathscr Y)=0$ and consider the following cases:\\
$\bullet$ if $a(\mathscr X)\geq a(\mathscr Y)$ then, by adjunction,
\begin{align*}
\D(\Z)(\mathscr X,\mathscr Y)&\cong \D(\Z)(a(\mathscr X)_!\mathscr X_{a(\mathscr X)},a(\mathscr Y)_!\mathscr Y_{a(\mathscr Y)})\\
&\cong \D(\bbone)(\mathscr X_{a(\mathscr X)},a(\mathscr X)^*a(\mathscr Y)_!\mathscr Y_{a(\mathscr Y)})\\
&\cong \D(\bbone)(\mathscr X_{a(\mathscr X)},\mathscr Y_{a(\mathscr Y)})=0
\end{align*}
where the last equality holds since $\mathscr X_{a(\mathscr X)}\cong\gr_{a(\mathscr X)-1}\mathscr X\in \D^{\leq -a(\mathscr X)+1}(\bbone)$, while $\mathscr Y_{a(\mathscr Y)}\cong \gr_{a(\mathscr Y)-1}\mathscr Y\in \D^{\geq -a(\mathscr Y)+2}(\bbone)\subseteq \D^{\geq -a(\mathscr X)+2}(\bbone)$;\\
$\bullet$ if $a(\mathscr X)<a(\mathscr Y)$ then, by adjunction,
\begin{align*}
\D(\Z)(\mathscr X,\mathscr Y)&\cong \D(\Z)(a(\mathscr X)_!\mathscr X_{a(\mathscr X)},a(\mathscr Y)_!\mathscr Y_{a(\mathscr Y)})\\
&\cong \D(\bbone)(\mathscr X_{a(\mathscr X)},a(\mathscr X)^*a(\mathscr Y)_!\mathscr Y_{a(\mathscr Y)})\\
&\cong \D(\bbone)(\mathscr X_{a(\mathscr X)},0)=0.
\end{align*}

Given $\mathscr X\in \FD(\bbone)$, we need to show that there exists a triangle 
\[
\mathscr X^{\leq 0}\to \mathscr X\to \mathscr X^{\geq 1}\to \Sigma(\mathscr X^{\leq0})
\]
such that $\mathscr X^{\leq0}\in \FD^{\leq0}(\bbone)$ and $\mathscr X^{\geq1}\in \FD^{\geq1}(\bbone)$. If $\ell(\mathscr X)=-1$ then we can take everything to be $0$. Otherwise, we proceed by induction on $\ell(\mathscr X)\in \N$ to find  such $\mathscr X^{\leq 0}$ and $\mathscr X^{\geq 1}$ with the extra conditions that $\hocolim_\N \mathscr X^{\leq0}\in \D^{\leq -a(\mathscr X)}(\bbone)$ and $\hocolim_\N \mathscr X^{\geq1}\in \D^{\geq -b(\mathscr X)+1}(\bbone)$. Indeed:\\
$\bullet$ If $\ell(\mathscr X)=0$, say $\mathscr X=a_!\mathscr X_a$, then it is enough to take $\mathscr X^{\leq0}:=a_!(\mathscr X_a^{\leq -a})$ and $\mathscr X^{\geq1}:=a_!(\mathscr X_a^{\geq -a+1})$ with the obvious maps.\\ 
$\bullet$ If $\ell(\mathscr X)>0$ and $a:=a(\mathscr X)$, then there is a distinguished triangle in $\FD(\bbone)$, $a_!\mathscr X_a\to \mathscr X\to \mathscr X'\to \Sigma a_!\mathscr X_a$ and we can consider the following diagram
\[
\xymatrix@R=15pt{
\Sigma^{-1}(\mathscr X'^{\leq 0})\ar@{.>}[r]\ar[d]&(a_!\mathscr X_a)^{\leq0}\ar[d]\ar@{.>}[r]&\mathscr A\ar@{.>}[d]\ar@{.>}[r]&\mathscr X'^{\leq 0}\ar[d]\\
\Sigma^{-1}\mathscr X'\ar[r]\ar[d]&a_!\mathscr X_a\ar[d]\ar[r]&\mathscr X\ar@{.>}[d]\ar[r]&\mathscr X'\ar[d]\\
\Sigma^{-1}(\mathscr X'^{\geq 1})\ar@{.>}[r]\ar[d]&(a_!\mathscr X_a)^{\geq1}\ar@{.>}[r]\ar[d]&\mathscr B\ar@{.>}[r]\ar@{.>}[d]&\mathscr X'^{\geq 1}\ar[d]\\
\mathscr X'^{\leq 0}\ar@{.>}[r]&\Sigma ((a_!\mathscr X_a)^{\leq0})\ar@{.>}[r]&\Sigma \mathscr A\ar@{.>}[r]&\Sigma(\mathscr X'^{\leq 0})
}
\]
where all rows and columns are triangles, and all the small squares commute, but the bottom right square that anti-commutes. This diagram is constructed as follows:
first we should complete the following diagram to a morphism of triangles
\[
\xymatrix@R=15pt{
\Sigma^{-1}(\mathscr X'^{\leq 0})\ar[r]\ar@{.>}[d]&\Sigma^{-1}\mathscr X'\ar[d]\ar[r]&\Sigma^{-1}(\mathscr X'^{\geq 1})\ar@{.>}[d]\ar[r]&\mathscr X'^{\leq 0}\ar@{.>}[d]\\
(a_!\mathscr X_a)^{\leq0}\ar[r]&a_!\mathscr X_a\ar[r]&(a_!\mathscr X_a)^{\geq1}\ar[r]&\Sigma ((a_!\mathscr X_a)^{\leq0})
}
\]
where the solid map is given by a rotation of $a_!\mathscr X_a\to \mathscr X\to \mathscr X'\to \Sigma a_!\mathscr X_a$. In fact, using~\cite[Prop.\,1.1.9]{BBD}, the following two vanishing conditions imply that the above solid diagram can be completed in a unique way to a morphism of triangles:
\begin{itemize}
\item using Lemma \ref{hom=hocolim}, we get that $\D(\Z)(\Sigma^{-1}(\mathscr X'^{\leq 0}),(a_!\mathscr X_a)^{\geq1})$ is isomorphic to $\D(\bbone)(\hocolim_\Z\Sigma^{-1}(\mathscr X'^{\leq 0}),\hocolim_\Z (a_!\mathscr X_a)^{\geq1})$ which is trivial since $\hocolim_\Z\Sigma^{-1} (\mathscr X'^{\leq 0})\in \D^{\leq -a(\mathscr X')+1}(\bbone)\subseteq  \D^{\leq -a(\mathscr X)}(\bbone)$, while $\hocolim_\Z (a_!\mathscr X_a)^{\geq1}=\mathscr X_a^{\geq -a(\mathscr X)+1}\in \D^{\geq -a(\mathscr X)+1}(\bbone)$;
\item using Lemma \ref{hom=hocolim}, we get that $\D(\Z)((\mathscr X')^{\leq 0},(a_!\mathscr X_a)^{\geq1})$ is isomorphic to $\D(\bbone)(\hocolim_\Z (\mathscr X')^{\leq 0},\hocolim_\Z (a_!\mathscr X_a)^{\geq1})=0$, which is trivial since $\hocolim_\Z (\mathscr X')^{\leq 0}\in \D^{\leq -a(\mathscr X')}(\bbone)\subseteq \D^{\leq -a(\mathscr X)-1}(\bbone)$, whereas $\hocolim_\Z a_!\mathscr X_a^{\geq1}=\mathscr X_a^{\geq -a(\mathscr X)+1}\in \D^{\geq -a(\mathscr X)+1}(\bbone)$.
\end{itemize}
 Since there exists a unique way to complete the diagram to a morphism of triangles, this morphism of triangles can be completed to the desired $3\times 3$ diagram (for this use~\cite[Thms.\,1.8 and 2.3]{neeman1991some}, these results say respectively, in the language of that paper, that any commutative square can be completed to a ``good" morphism of triangles and that any such morphism gives rise to a $3\times 3$ diagram as above. Since we have proved that there is a unique way to complete the above diagram to a morphism of triangles, this unique way is the ``good" one).
Now, letting $\mathscr X^{\leq0}:=\mathscr A$ and $\mathscr X^{\geq1}:=\mathscr B$, it is not difficult to verify that they satisfy the desired properties.
Hence, $\t_B$ is a $t$-structure on $\FD(\bbone)$.
\end{proof}

\begin{defn}
The $t$-structure $\t_B:=(\FD^{\leq0},\FD^{\geq0})$ on $\FD$ described in the above proposition is said to be the {\bf Beilinson $t$-structure} induced by $\t$.
\end{defn}

After the following technical lemma we will show that the heart of the Beilinson $t$-structure is precisely the category of bounded complexes over the heart of the original $t$-structure $\t$.

\begin{lem}\label{full_implies_faith}
Let $F\colon \A\to \A'$ be an exact functor between Abelian categories. If $F$ reflects $0$-objects (i.e., $F(X)=0$ implies $X=0$), then $F$ is faithful.
\end{lem}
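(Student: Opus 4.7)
The plan is to reduce the faithfulness statement to the zero-reflecting hypothesis via the image factorization, using exactness at every step.

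First I would take a morphism $f\colon X\to Y$ in $\A$ with $F(f)=0$, and factor it canonically as $f = i\circ p$, where $p\colon X\twoheadrightarrow \mathrm{Im}(f)$ is the epic part and $i\colon \mathrm{Im}(f)\hookrightarrow Y$ is the monic part (which exists since $\A$ is Abelian). The goal will be to conclude that $\mathrm{Im}(f)=0$ and therefore $f=0$.

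Next, I would apply $F$. Since $F$ is exact, it preserves images (one way to see this: $\mathrm{Im}$ is the kernel of the cokernel, and $F$ preserves both kernels and cokernels by exactness), so $F(\mathrm{Im}(f))\cong \mathrm{Im}(F(f))$. By hypothesis $F(f)=0$, so $\mathrm{Im}(F(f))=0$, hence $F(\mathrm{Im}(f))=0$. Now the hypothesis that $F$ reflects zero objects gives $\mathrm{Im}(f)=0$, and therefore $f=0$, as required.

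There is no genuine obstacle here; the only point worth being careful about is to invoke exactness in the right form (that $F$ commutes with images in Abelian categories), which follows since an exact functor preserves both kernels and cokernels. If one preferred a slightly more hands-on argument, the factorization $F(f)=F(i)\circ F(p)$ with $F(p)$ epic and $F(i)$ monic forces $F(i)=0$, whence $F(\mathrm{Im}(f))=0$ and the same conclusion follows.
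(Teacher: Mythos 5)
Your proof is correct and follows essentially the same route as the paper: factor $f$ as an epimorphism onto its (co)image followed by a monomorphism, use exactness of $F$ to see that $F$ of the image vanishes, and conclude by zero-reflection. In fact your ``more hands-on'' variant at the end is precisely the paper's argument, phrased with $A/\Ker(\phi)$ in place of $\mathrm{Im}(f)$.
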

\begin{proof}
Consider a morphism $\phi\colon A\to B$ in $\A$ such that $F(\phi)=0$. Let $\pi\colon A\to A/\ker(\phi)$ be the obvious projection and let $\bar\phi\colon A/\ker(\phi)\to B$ be the unique morphism such that $\bar\phi\pi=\phi$. Notice that $\pi$ is an epimorphism and $\bar\phi$ is a monomorphism so, by the exactness of $F$, $F(\pi)$ is an epimorphism and $F(\bar \phi)$ is a monomorphism. One can now conclude as follows: $0=F(\phi)=F(\bar\phi\pi)=F(\bar\phi)F(\pi)$ implies that $F(\bar\phi)$ is $0$ as we can cancel the epimorphism $F(\pi)$, but $F(\bar\phi)$ is also a monomorphism, so $F(A/\Ker(\phi))=0$, which means that $A=\Ker(\phi)$, since $F$ reflects $0$-objects.
\end{proof}

Given an Abelian category $\A$, we denote by $\Ch^b(\A)$ the {\bf category of bounded complexes} over $\A$. For a non-trivial complex $0\neq X^\bullet\in\Ch^b(\A)$, we let:
\begin{enumerate}
\item[--] $b(X^{\bullet}):=\max\{n\in\Z:X^{-n}\neq 0\}$,   $a(X^{\bullet}):=\min\{n\in\Z:X^{-n}\neq 0\}$;
\item[--] $\ell(X^{\bullet}):=|a(X^{\bullet})-b(X^{\bullet})|$.
\end{enumerate}
Let also $\ell(\mathscr X)=-1$ if $\mathscr X=0$.

\begin{prop}\label{heart_equiv_prop}
Let $\A$ be the heart of $\t$ and $\H_{B}\subseteq \FD(\bbone)$  the heart of the Beilinson $t$-structure $\t_B$. Then, there is an equivalence 
\[
\xymatrix{
F\colon \H_{B}\ar[rr]^\cong&&\Ch^b(\A)
}
\]
that satisfies the following properties for any $\mathscr X\in \H_B$:
\begin{enumerate}
\item if $\ell(\mathscr X)=0$, then $F\mathscr X$ is a complex concentrated in degree $-a+1$, whose unique non-zero component is $\Sigma^{a-1}\gr_{a-1}\mathscr X$;
\item if $\ell(\mathscr X)=1$, then $F\mathscr X$ is a complex concentrated in degrees $-a+1$ and $-a$, whose non-zero part is the map $\Sigma^{-a}\gr_a\mathscr X\to \Sigma^{-a+1}\gr_{a-1}\mathscr X$;
\item  in general, $\ell(F\mathscr X)=\ell(\mathscr X)$. 
\end{enumerate}
\end{prop}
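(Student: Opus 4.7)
The plan is to construct $F$ directly via the graded functors $\gr_n$. For $\mathscr X\in\H_B$ the condition $\gr_n\mathscr X\in\D^{\leq-n}(\bbone)\cap\D^{\geq-n}(\bbone)=\Sigma^n\A$ means that $\Sigma^{-n}\gr_n\mathscr X$ canonically lies in $\A$, so I set $(F\mathscr X)^{-n}:=\Sigma^{-n}\gr_n\mathscr X$. The differential $d^{-n}$ will be $\Sigma^{-n}$ of the composition
\[
\gr_n\mathscr X\xrightarrow{\partial_n}\Sigma\mathscr X_n\xrightarrow{\Sigma q_{n-1}}\Sigma\gr_{n-1}\mathscr X,
\]
where $\partial_n$ is the connecting map of the triangle $\mathscr X_n\to\mathscr X_{n+1}\to\gr_n\mathscr X\overset{+}{\to}$ and $q_{n-1}$ is the projection from the triangle $\mathscr X_{n-1}\to\mathscr X_n\to\gr_{n-1}\mathscr X\overset{+}{\to}$. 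The identity $d\circ d=0$ is immediate because the two middle maps of the resulting four-fold composition are consecutive maps in a distinguished triangle.

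Since each $\gr_n$ is triangulated and the connecting morphisms are natural, $F$ is a well-defined exact functor between Abelian categories. Properties (1)--(3) are then straightforward verifications: in the length $0$ case only $\gr_{a-1}\mathscr X$ is non-zero; in the length $1$ case the unique differential is precisely the map assembled from the triangle $a_!\mathscr X_a\to\mathscr X\to(a+1)_!\gr_a\mathscr X\overset{+}{\to}$ realising $\mathscr X$ as an extension of two length-$0$ objects; and more generally the non-vanishing graded pieces of $\mathscr X$ sit in the window $[a(\mathscr X)-1,b(\mathscr X)-1]$, giving the desired length identity.

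For essential surjectivity I will induct on $\ell(C^\bullet)$ for $C^\bullet\in\Ch^b(\A)$. The base cases $\ell\le0$ are handled directly: take $\mathscr X=0$, respectively $\mathscr X=a_!\Sigma^{a-1}C^{-a+1}$ viewed as an object of $\FD(\bbone)$ via the obvious embedding $\A\subseteq\D(\bbone)$. For the inductive step, write $C^\bullet$ as an extension of a shorter complex by its lowest-degree term, lift the shorter complex to some $\mathscr X'\in\H_B$ by induction, and then use Corollary \ref{lift_Z} together with Proposition \ref{heart_is_Chb} to realise the remaining differential as the cocartesian data for an extension of $\mathscr X'$ producing the desired $\mathscr X$, which by construction satisfies $F\mathscr X\cong C^\bullet$. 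Faithfulness then follows from Lemma \ref{full_implies_faith}: $F$ is exact and reflects zero objects, since if every $\gr_n\mathscr X=0$ then the defining triangles $\mathscr X_n\to\mathscr X_{n+1}\to\gr_n\mathscr X\overset{+}{\to}$ together with boundedness force $\mathscr X=0$ by induction on length.

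Fullness is the main obstacle. Given a morphism of complexes $\varphi\colon F\mathscr X\to F\mathscr Y$, I must lift it to a morphism $\mathscr X\to\mathscr Y$ in $\FD(\bbone)$. The plan is to induct on combined length using the explicit hom-set description of Corollary \ref{lift_Z}; the surjectivity condition appearing there should follow from a Toda-type vanishing entirely analogous to the one used in the proof of Proposition \ref{heart_is_Chb}, since the successive graded pieces of $\mathscr X$ and $\mathscr Y$ live in strictly shifted aisles and co-aisles of $\t$. This is where the strongness of $\D$ enters decisively through Corollary \ref{lift_Z}, and the technical difficulty will be to coherently match the inductive lifts with the components of $\varphi$.
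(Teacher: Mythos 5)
Your construction of $F$ is the same as the paper's: $(F\mathscr X)^{-n}=\Sigma^{-n}\gr_n\mathscr X$ with differential built from the connecting map and the projection of the defining triangles (the paper phrases it via an octahedron, but the resulting map $\gr_{n+1}\mathscr X\to\Sigma\mathscr X_{n+1}\to\Sigma\gr_n\mathscr X$ is identical to yours), and your $d\circ d=0$ argument is correct. Exactness via $\gr_n$ being triangulated, reflection of zero objects by induction on the filtration, and faithfulness via Lemma~\ref{full_implies_faith} all match the paper's proof. Your essential-surjectivity strategy (induct on length, realize the remaining differential as extension data) is a slightly different packaging of the paper's direct inductive construction of the underlying diagram $(X(n))_n$ from the data of $C^\bullet$ and its differentials; both routes are viable, though you leave the lifting step to Corollary~\ref{lift_Z} less explicit than the paper does.

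The real gap is in fullness, and you are right to flag it, but the route you propose does not work. For $\mathscr X,\mathscr Y\in\H_B$ the filtration steps $\mathscr X_i$ and $\mathscr Y_i$ are \emph{not} concentrated in a single shift of the heart: starting from $\mathscr X_{a(\mathscr X)}\in\Sigma^{a(\mathscr X)}\A$ and taking cones, one only gets $\mathscr X_i\in\D^{\leq -a(\mathscr X)}(\bbone)\cap\D^{\geq -i+1}(\bbone)$. Consequently $\Sigma\mathscr X_{i-1}$ sits in $\D^{\leq -a(\mathscr X)-1}(\bbone)$ while $\mathscr Y_i$ sits in $\D^{\geq -i+1}(\bbone)$, and the Toda-type vanishing $\D(\bbone)(\Sigma\mathscr X_{i-1},\mathscr Y_i)=0$ needed for Corollary~\ref{lift_Z} fails as soon as $i\geq a(\mathscr X)+2$, i.e.\ for any object of length $\geq 2$. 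The paper avoids this by not invoking Corollary~\ref{lift_Z} for fullness at all: it re-uses the filtration $(\mathscr X'(n))_n$ from the zero-reflection step to exhibit $\mathscr X$ and $\mathscr Y$ as iterated extensions of the length-$0$ objects $(n+1)_!\gr_n\mathscr X$ and $(n+1)_!\gr_n\mathscr Y$, applies $(n+1)_!\Sigma^n(-)$ to each component $\phi_n$ of the map of complexes, and then glues these lifts along the extension triangles by induction. If you want to complete your proof you should replace the Toda-vanishing plan with this dévissage-style gluing along the filtration.
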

\begin{proof}
We start constructing a functor $F\colon \H_{B}\to \Ch^b(\A)$
as follows: to an object $\mathscr X\in \H_{B}$ we associate the complex $F\mathscr X^\bullet$, where $F\mathscr X^{-n}=\Sigma^{-n}\gr_{n}\mathscr X$ and with the differential $d^{-n-1}_{F\mathscr X}\colon F\mathscr X^{-n-1}\to F\mathscr X^{-n}$, such that $\Sigma^{n+1}d_{F\mathscr X}^{-n-1}$ fits in the following octahedron:
\[
\xymatrix@R=15pt{
\mathscr X_n\ar@{=}[d]\ar[r]& \mathscr X_{n+1}\ar[r]\ar[d]& \gr_n\mathscr X\ar[d]\ar[r] &\Sigma \mathscr X_n\ar@{=}[d]\\
\mathscr X_n\ar[r]&\mathscr  X_{n+2}\ar[r]\ar[d]& \mathscr X_{[n,n+2]}\ar[d]\ar[r] &\Sigma \mathscr X_n\\
&\gr_{n+1}\mathscr X\ar[d]\ar@{=}[r]&\gr_{n+1} \mathscr X\ar[d]^{\Sigma^{n+1}d^{-n-1}_{F\mathscr X}}\\
&\Sigma \mathscr X_{n+1}\ar[r]&\Sigma \gr_n\mathscr X
}
\]
With this definition, the properties (1--3) in the statement are trivially verified, let us prove that $F$ is an equivalence:\\
$\bullet$ {\bf $F$ is exact}. Consider a short exact sequence $0\to \mathscr X\to \mathscr Y\to \mathscr Z\to 0$ in $\H_{B}$.
The short exact sequences in $\H_{B}$ are the triangles of $\FD(\bbone)$ that happen to lie in $\H_{B}$, in particular, there exists a map $\mathscr Z\to \Sigma \mathscr X$ such that $\mathscr X\to \mathscr Y\to \mathscr Z\to \Sigma \mathscr X$ is a triangle. We have to prove that $0\to F\mathscr X\to F\mathscr Y\to F\mathscr Z\to 0$ is a short exact sequence in $\Ch^b(\A)$, but this means exactly that $0\to F\mathscr X^n\to F\mathscr Y^n\to F\mathscr Z^n\to 0$ is a short exact sequence in $\A$ for any $n\in\Z$. This means that there is a triangle $\gr_n\mathscr X\to \gr_n\mathscr Y\to \gr_n\mathscr Z\to \Sigma \gr_n\mathscr X$ in $\D(\bbone)$, which is true since $\gr_n$ is a triangulated functor.\\
$\bullet$ {\bf $F$ reflects $0$-objects}. Let $\mathscr X\in \FD(\bbone)$ and let us define a sequence of objects $(\mathscr X'(n))_{n\in\N}$ in $\FD(\bbone)$. Indeed, let $a:=a(\mathscr X)$ and 
\begin{enumerate}
\item[$(*)$] define $\mathscr X'(0)$ to be the cone of the counit $a_!\mathscr X_a\to \mathscr X$ (here $\mathscr X_a\cong\gr_{a-1}\mathscr X$);
\item[$(*)$] notice that $\mathscr X'(0)_a=0$ and $\mathscr X'(0)_{a+1}=\gr_a\mathscr X$, so there is a natural map $(a+1)_!\gr_a(\mathscr X)\to \mathscr X'(0)$, and we define $\mathscr X'(1)$ to be the cone of this map;
\item[$(*)$] we define $\mathscr X'(n+1)$ as the cone of the counit $(n+a+1)_!\gr_{n+a}\mathscr X\to \mathscr X'(n)$. 
\end{enumerate}
One can show that $\mathscr X'(n)=0$ for any $n> \ell(\mathscr X)$. This filtration shows that $\mathscr X$ belongs in the smallest triangulated subcategory of $\FD(\bbone)$ that contains the objects of the form $(n+1)_!\gr_n(\mathscr X)$. Thus, if $\gr_n\mathscr X=0$ for all $n\in\Z$ (i.e., if $F\mathscr X=0$ in $\Ch^b(\A)$), then $\mathscr X=0$.\\
$\bullet$ {\bf $F$ is fully faithful}. By Lemma \ref{full_implies_faith} and the properties of $F$ that we have already verified, it is enough to show that $F$ is full. Indeed, let $\mathscr X,\, \mathscr Y\in \FD(\bbone)$ and let $\phi\in \Ch^b(\A)(F\mathscr X,F\mathscr Y)$. As in the proof that $F$ reflects $0$-objects, one can construct $\mathscr X$ and $\mathscr Y$ as extensions of shifts of objects of the form $(n+1)_!\gr_n\mathscr X$ and $(n+1)_!\gr_n\mathscr Y$. Of course the map $\phi$ induces maps $(n+1)_!(\Sigma^n\phi_n)\colon (n+1)_!\gr_n\mathscr X\to (n+1)_!\gr_n\mathscr Y$, for any $n\in\Z$, and these maps can be glued together to construct a new map $\psi\colon \mathscr X\to \mathscr Y$ such that $F(\psi)=\phi$.\\
$\bullet$ {\bf $F$ is essentially surjective}. Given a complex $C^\bullet\in \Ch^b(\A)$, say
\[
\xymatrix@C=25pt{\cdots\ar[r]&0\ar[r]&C^{-b}\ar[r]^{d^{-b}}&C^{-b+1}\ar[r]^(.65){d^{-b+1}}&\cdots \ar[r]^{d^{-a-1}}&C^{-a}\ar[r]&0\ar[r]&\cdots},
\]
we have to construct and object $\mathscr X\in \H_{B}$ such that $F\mathscr X\cong C^\bullet$. Suppose, for simplicity, that $a=0$, the general case can be handled analogously by shifting the indices. So our complex becomes:
\[
\xymatrix@C=25pt{\cdots\ar[r]&0\ar[r]&C^{-b}\ar[r]^{d^{-b}}&C^{-b+1}\ar[r]^(.65){d^{-b+1}}&\cdots \ar[r]^{d^{-1}}&C^{0}\ar[r]&0\ar[r]&\cdots}.
\]
We construct a diagram in $\D(\bbone)^{\Z}$
\[
\xymatrix@R=0pt{
\cdots\ar[r]&0\ar[r]&X(0)\ar[r]^{x_0}&X(1)\ar[r]^{x_1}&\cdots\ar[r]&X(b+1)\ar[r]&0\ar[r]&\cdots}
\]
(one can successively lift it to a coherent diagram $\mathscr X$ in $\FD(\bbone)$, using Corollary \ref{lift_Z}). We define:
\begin{enumerate}
\item[$(*)$] for $n\leq 0$, we let $X(n)=0$; 
\item[$(*)$] $X(1):=C^0\in \D^{\geq0}(\bbone)$ and let $\xymatrix@C=12pt{
X(0)\ar[rr]^{x_0=0}&& X(1)\ar[rr]^{\id_{C^0}=y_0}&& C^{0}\ar[rr]^{z_0:= d^{0}}&& 0}$;
\item[$(*)$] $X(2)\in \D^{\geq-1}$ and $x_1\colon X(1)\to X(2)$ are defined as part of a triangle:
\[
\xymatrix@C=20pt{X(1)\ar[r]^{x_1}& X(2)\ar[r]^{y_1}& \Sigma C^{-1}\ar[rrr]^{z_1:=\Sigma d^{-1}}&&& \Sigma X(1)}
\]
so that $\Sigma y_0\circ z_{1}=\id_{C^0}\circ\Sigma d^{-1} =\Sigma d^{-1}$;
\item[$(*)$] then we proceed inductively. Suppose that we have already constructed two triangles with $X(i)\in \D^{\geq-i+1}(\bbone)$ (so that $X(i+1)\in \D^{\geq-i}(\bbone)$ and $X(i+2)\in \D^{\geq-i-1}(\bbone)$)
\[
\xymatrix@R=0pt{
X(i)\ar[r]^{x_i}& X(i+1)\ar[r]^{y_i}& \Sigma^{i}C^{-i}\ar[r]^{z_i}& \Sigma X(i)\\
X(i+1)\ar[r]^{x_{i+1}}& X(i+2)\ar[r]^{y_{i+1}}& \Sigma^{i+1}C^{-i-1}\ar[r]^{z_{i+1}}& \Sigma X(i+1)
}
\]
and such that $\Sigma y_i\circ z_{i+1}=\Sigma^{i+1}d^{-i-1}$.
We define $X(i+3)\in \D^{\geq-i-2}$ and $x_{i+2}\colon X(i+2)\to X(i+3)$ as parts of a triangle
\[
\xymatrix@R=0pt{
X(i+2)\ar[r]^{x_{i+2}}& X(i+3)\ar[r]^{y_{i+2}}& \Sigma^{i+2}C^{-i-2}\ar[r]^{z_{i+2}}& \Sigma X(i+2)
}
\]
Notice first that $\Sigma y_i\circ z_{i+1}\circ \Sigma^{i+1}d^{-i-2}=\Sigma^{i+1}(d^{-i-1}d^{-i-2})=0$, so that we have a commutative diagram
\[
\xymatrix{
0\ar[r]\ar[d]&\Sigma^{i+1}C^{-i-2}\ar@{=}[r]\ar[d]&\Sigma^{i+1}C^{-i-2}\ar[r]\ar[d]|{z_{i+1} \Sigma^{i+1}d^{-i-2}}&0\ar[d]\\
\Sigma^{i}C^{-i}\ar[r]^{z_i}&\Sigma X(i)\ar[r]^{\Sigma x_i}& \Sigma X(i+1)\ar[r]^{\Sigma y_i}&\Sigma^{i+1}C^{-i}.}
\]
Now,  $\Sigma^{i+1}C^{-i-2}\in \D^{\leq -i-1}(\bbone)$ and $\Sigma X(i)\in \D^{\geq -i}(\bbone)$, so 
\[
\D(\bbone)(\Sigma^{i+1}C^{-i-2},\Sigma X(i))=0,
\] 
showing that $z_{i+1} \Sigma^{i+1}d^{-i-2}=0$. Hence, we can define $z_{i+2}$ in order to make the following diagram commute
\[
\xymatrix{
0\ar[r]\ar[d]&\Sigma^{i+1}C^{-i-2}\ar@{=}[r]\ar[d]|{\Sigma^{-1}z_{i+2}}&\Sigma^{i+1}C^{-i-2}\ar[r]\ar[d]|{\Sigma^{i+1}d^{-i-2}}&0\ar[d]\\
X(i+1)\ar[r]^{x_{i+1}}& X(i+2)\ar[r]^{y_{i+1}}& \Sigma^{i+1} C^{-i-1}\ar[r]^{z_{i+1} }&\Sigma X(i+1).}
\]
We define $X(i+3)$, $x_{i+2}$ and $y_{i+2}$ completing $z_{i+2}$ to a triangle.\qedhere
\end{enumerate}
\end{proof}

\subsection{Totalization of bounded complexes}
\label{sec:ex-real}

If $n>1$, we define the following morphism of derivators
\[
\Tr_n\colon\Ch^{\ordn}_\A\to (\Ch^{\ordn-\bbone}_\A)^{\bbtwo}
\]
such that, given $I\in \Dia$ and a complex $X\in \Ch^{\ordn}_\A(I)$, 
\[
(\Tr_nX\colon X^{<n-1}\to \Omega^{n-2} X^{n-1}):=
\begin{pmatrix}\xymatrix@C=10pt@R=7pt{
\vdots\ar[d]&\vdots\ar[d]\\
0\ar[d]\ar[r]&0\ar[d]\\
X^0\ar[d]\ar[r]&0\ar[d]\\
\vdots\ar[d]&\vdots\ar[d]\\
X^{n-2}\ar[r]\ar[d]&X^{n-1}\ar[d]\\
0\ar[r]\ar[d]&0\ar[d]\\
\vdots&\vdots
}\end{pmatrix}
\]
We can now introduce the main definition of this subsection:
\begin{defn}
Given $n\in \N_{>0}$ we define, inductively, a morphism of prederivators, called {\bf totalization morphism},
\[
\Tot_\t^n \colon \Ch_\A^{\ordn}\to \D
\]
$\bullet$ for $n=1$, $\Ch_\A^{\bbone}=y(\A)=\D^{\heartsuit}$ so we let $\Tot^1_\t$ be the inclusion $\D^{\heartsuit}\to \D$;
\\
$\bullet$ in general we define $\Tot_\t^{n+1}$ to be the following composition
\[
\Tot_\t^{n+1}:=\mathrm{fib}\circ(\Tot_\t^{n})^{\bbtwo}\circ \Tr_n \colon\Ch_\A^{\ordnp}\to(\Ch_\A^{\ordn})^{\bbtwo}\to  \D^\bbtwo \to \D.
\]
\end{defn}

In the following lemma we collect some natural properties of the totalization morphisms:

\begin{lem}\label{basic_prop_tot_n}
Given $n\in\N_{>0}$, the following statements hold true:
\begin{enumerate}
\item $\Tot_\t^n$ sends short exact sequences of complexes to triangles;
\item $\Tot_\t^{n}\Omega^{n-1}A\cong \Omega^{n-1}A$ for any $A\in\A$ (where the $\Omega^{n-1}A$ on the left hand side is the complex concentrated in degree $n-1$, with $A$ as the unique non-zero component);
\item given $X\in \Ch^{\ordn}(\A)\subseteq \Ch^{\ordnp}(\A)$, $\Tot_\t^{n+1}X\cong \Tot_\t^n X$;
\item given $X\in \Ch^{\ordn+\bbone}(\A)$, there is a triangle in $\D(\bbone)$:
\[
\xymatrix@C=10pt{
\Tot_\t^{n+1}X\ar[r]&\Tot_\t^{n}X^{<n}\ar[r]&\Omega^{n-1}X^n\ar[r]&\Sigma\Tot_\t^{n+1}X;
}
\]
\item given $X\in \Ch^{\ordn}(\A)$, $\Tot_\t^{n+1}\Omega X\cong \Omega\Tot_\t^nX$.
\end{enumerate}
\end{lem}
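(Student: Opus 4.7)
The plan is to establish all five parts by unfolding the recursive definition $\Tot_\t^{n+1}=\fib\circ(\Tot_\t^n)^\bbtwo\circ\Tr_{n+1}$, inducting on $n$ where necessary. Parts (3) and (4) are essentially tautological: for (3), any $X\in\Ch_\A^{\ordn}(I)\subseteq\Ch_\A^{\ordnp}(I)$ satisfies $X^n=0$ and $X^{<n}=X$, hence $\Tr_{n+1}(X)=(X\to 0)$, and so $\fib(\Tot_\t^n X\to 0)\cong\Tot_\t^n X$; for (4), the fiber sequence of the morphism $(\Tot_\t^n)^\bbtwo\Tr_{n+1}X=(\Tot_\t^n X^{<n}\to\Tot_\t^n(\Omega^{n-1}X^n))$ is already the desired triangle, once (2) is used to identify the codomain with $\Omega^{n-1}X^n$.

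Next I would prove (2) by induction on $n$; the base case $n=1$ is immediate since $\Tot_\t^1$ is the inclusion $\D^\heartsuit\to\D$. For the inductive step, the complex $\Omega^n A$ satisfies $(\Omega^n A)^{<n}=0$ and $(\Omega^n A)^n=A$, so $\Tr_{n+1}(\Omega^n A)=(0\to\Omega^{n-1}A)$; by the inductive hypothesis, $(\Tot_\t^n)^\bbtwo$ sends this to the morphism $(0\to\Omega^{n-1}A)$ in $\D^\bbtwo$, whose fiber is $\Omega^n A$.

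For (5) I would again induct on $n$, the case $n=1$ being a special instance of (2). For the step, apply (4) to $\Omega X\in\Ch_\A^{\ordnp}$; noting that $(\Omega X)^n=X^{n-1}$ and $(\Omega X)^{<n}=\Omega(X^{<n-1})$, one obtains the triangle
\[
\Tot_\t^{n+1}\Omega X\to\Tot_\t^n\Omega(X^{<n-1})\to\Omega^{n-1}X^{n-1}\to\Sigma\Tot_\t^{n+1}\Omega X.
\]
By the inductive hypothesis, $\Tot_\t^n\Omega(X^{<n-1})\cong\Omega\Tot_\t^{n-1}(X^{<n-1})$. On the other hand, applying the triangulated autoequivalence $\Omega$ to the triangle of (4) for $X$ yields
\[
\Omega\Tot_\t^n X\to\Omega\Tot_\t^{n-1}(X^{<n-1})\to\Omega^{n-1}X^{n-1}\to\Sigma\Omega\Tot_\t^n X.
\]
Once one checks that the two comparison maps $\Omega\Tot_\t^{n-1}(X^{<n-1})\to\Omega^{n-1}X^{n-1}$ agree — both being induced by the differential $X^{n-2}\to X^{n-1}$ of the original complex $X$ — the uniqueness of fibers gives $\Tot_\t^{n+1}\Omega X\cong\Omega\Tot_\t^n X$.

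Finally, (1) proceeds by induction on $n$. The base case $n=1$ is precisely the content of the proof of Corollary \ref{the_heart_is_Abelian}: a short exact sequence in $\A$ arises from a distinguished triangle in $\D(\bbone)$. For the inductive step, $\Tr_{n+1}$ is exact between the abelian categories in question (it is essentially a projection onto complementary degrees), so it sends a short exact sequence in $\Ch_\A^{\ordnp}$ to one in $(\Ch_\A^{\ordn})^\bbtwo$; the inductive hypothesis applied componentwise promotes this via $(\Tot_\t^n)^\bbtwo$ to a triangle in the stable derivator $\D^\bbtwo$, and the triangulated functor $\fib\colon\D^\bbtwo\to\D$ then produces the required triangle in $\D$. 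The main subtlety throughout is the naturality check in step (5), where one must be explicit about the morphisms — not merely the objects — produced by the recursive construction, to ensure that the two triangles used in the comparison truly share the same connecting map.
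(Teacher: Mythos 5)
Your proof is correct and follows essentially the same route as the paper's: the same induction structure, the same appeals to exactness of $\Tr$, triangulatedness of $\fib$, and uniqueness of fibers for part (5). The only local difference is in part (4), where you read the triangle off directly from the recursive definition $\Tot_\t^{n+1}=\fib\circ(\Tot_\t^n)^\bbtwo\circ\Tr_{n+1}$ together with part (2), whereas the paper derives it by applying parts (1)--(3) to the short exact sequence $0\to\Omega^nX^n\to X\to X^{<n}\to 0$ and rotating; both are equally valid and produce the same triangle.
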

\begin{proof}
(1) The proof goes by induction on $n$. For $n=1$, short exact sequences in $y(\A)\subseteq \D$ are clearly sent to triangles in $\D$ by the inclusion. On the other hand, $\Tot_\t^{n+1}$ is, by definition, a composition $\mathrm{fib}\circ(\Tot_\t^{n})^{\bbtwo}\circ \Tr_n$. Now, $\Tr_n$ is an exact functor, $(\Tot_\t^{n})^{\bbtwo}$ sends short exact sequences to triangles by inductive hypothesis, and $\mathrm{fib}\colon \D^{\bbtwo}\to \D$ is an exact morphism of stable derivators (so it is, component-wise, a triangulated functor).

(2) For $n=1$ the statement is clear. Suppose we have already proved our statement for some $n>0$, then the statement boils down to the usual fact that the fiber of the morphism $0\to \Omega^{n-1}A$ in the triangulated category $\D(\bbone)$ is isomorphic to $\Omega^nA$.

(3) This boils down to the usual fact that, in the triangulated category $\D(\bbone)$, the fiber of the morphism $\Tot_\t^n X\to 0$ is isomorphic to $\Tot_\t^n X$.

(4) By parts (2) and (3), $\Tot_\t^{n+1}X^{<n}\cong \Tot_\t^n X^{<n}$, while $\Tot_\t^{n+1}\Omega^{n}X^n\cong \Omega^{n}X^n$. Consider now the following short exact sequence in $\Ch^{\ordnp}(\A)$:
\[
\xymatrix{
0\ar[r]& \Omega^nX^n\ar[r]& X\ar[r]&X^{<n}\ar[r]&0
}
\]
Applying $\Tot_\t^{n+1}$ we get (a rotation of) the triangle in the statement.

(5) For $n=1$ the statement reduces to part (2). If $n>1$, let $Y:=\Omega X\in \Ch^{\ordnp}(\A)$ and consider the triangle of part (4),
\[
\xymatrix@C=10pt{
\Tot_\t^{n+1}Y\ar[r]&\Tot_\t^{n}Y^{<n}\ar[r]&\Omega^{n-1}Y^n\ar[r]&\Sigma\Tot_\t^{n+1}Y.
}
\]
Notice that $Y^{<n}=\Omega X^{<n-1}$ and $Y^n=X^{n-1}$. Hence there is a diagram, which is commutative by inductive hypothesis, whose first line is the above triangle while the second line is the triangle given by part (4) to which we have applied $\Omega$:
\[
\xymatrix@C=10pt{
\Tot_\t^{n+1}\Omega X\ar[r]&\Tot_\t^{n}\Omega X^{<n-1}\ar[r]\ar[d]|{\cong}&\Omega^{n-1}X^{n-1}\ar[r]\ar[d]|{\cong}&\Sigma\Tot_\t^{n+1}\Omega X\\
\Omega\Tot_\t^{n} X\ar[r]&\Omega\Tot_\t^{n-1} X^{<n-1}\ar[r]&\Omega^{n-1}X^{n-1}\ar[r]&\Tot_\t^{n+1} X.
}
\]
This clearly implies that  $\Tot_\t^{n+1}\Omega X\cong\Omega\Tot_\t^{n} X$.
\end{proof}

Before proceeding further we want to show how the totalization morphisms can be used to detect the exactness of a bounded complex: 

\begin{prop}\label{description_partial_tots}
Given $X\in\Ch^{\ordn}(\A)$, $\Tot_\t^n X\in \D^{\leq n-1}(\bbone)\cap \D^{\geq 0}(\bbone)$. Furthermore, the following are equivalent:
\begin{enumerate}
\item  $X$ is an exact complex;
\item  $\Tot_\t^{k+1}(\tau^{\leq k}X)=0$ for all $k< n$.
\end{enumerate}
\end{prop}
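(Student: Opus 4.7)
For the first assertion, I would induct on $n$. The base $n=1$ is immediate because $\Tot_\t^1 X = X\in\A = \D^{\leq 0}(\bbone)\cap\D^{\geq 0}(\bbone)$. For the inductive step, given $Y\in\Ch^{\ordn+\bbone}(\A)$, Lemma~\ref{basic_prop_tot_n}(4) supplies the triangle
\[
\Tot_\t^{n+1}Y\to\Tot_\t^n Y^{<n}\to\Omega^{n-1}Y^n\to\Sigma\Tot_\t^{n+1}Y
\]
in $\D(\bbone)$. By the inductive hypothesis $\Tot_\t^n Y^{<n}\in\D^{\leq n-1}\cap\D^{\geq 0}$, while $\Omega^{n-1}Y^n=\Sigma^{-(n-1)}Y^n\in\D^{\leq n-1}\cap\D^{\geq n-1}$. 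Rotating the triangle appropriately and using that aisles and coaisles are closed under extensions yields $\Tot_\t^{n+1}Y\in\D^{\leq n}\cap\D^{\geq 0}$.

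For the equivalence $(1)\Leftrightarrow(2)$, the strategy is to reduce it to the following strengthening, proved jointly by induction on $m$:
\[
(\star)\quad\text{for every }Y\in\Ch^{\ordm}(\A),\ Y\text{ is exact as a complex}\iff\Tot_\t^m Y=0.
\]
Granting $(\star)$, since $\tau^{\leq k}X\in\Ch^{\ordk+\bbone}(\A)$ and since $\tau^{\leq n-1}X=X$ whenever $X\in\Ch^{\ordn}(\A)$ (because $d^{n-1}\colon X^{n-1}\to 0$ is the zero map, so $\ker d^{n-1}=X^{n-1}$), a direct inspection of smart truncations shows that $X$ is exact if and only if every $\tau^{\leq k}X$ is exact for $k<n$; then $(\star)$ converts this into condition $(2)$.

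The claim $(\star)$ is proved by induction on $m$. The base $m=1$ is trivial. For the inductive step, starting from $X\in\Ch^{\ordn+\bbone}(\A)$, apply $\Tot_\t^{n+1}$ to the short exact sequence of complexes
\[
0\to\tau^{\leq n-1}X\to X\to Q\to 0,
\]
where $Q$ is the two-term complex $\mathrm{im}(d^{n-1})\hookrightarrow X^n$ concentrated in degrees $n-1,n$. Using Lemma~\ref{basic_prop_tot_n}(1) and (3) one obtains a triangle
\[
\Tot_\t^n\tau^{\leq n-1}X\to\Tot_\t^{n+1}X\to\Tot_\t^{n+1}Q\to\Sigma\Tot_\t^n\tau^{\leq n-1}X.
\]
Unwinding Lemma~\ref{basic_prop_tot_n}(2,4) for $Q$, together with the triangle in $\D(\bbone)$ arising from the short exact sequence $0\to\mathrm{im}(d^{n-1})\to X^n\to H^n(X)\to 0$, identifies $\Tot_\t^{n+1}Q\cong\Omega^n H^n(X)$. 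If $X$ is exact, then $H^n(X)=0$ and $\tau^{\leq n-1}X$ is also exact, so the inductive hypothesis forces $\Tot_\t^n\tau^{\leq n-1}X=0$ and hence $\Tot_\t^{n+1}X=0$. Conversely, if $\Tot_\t^{n+1}X=0$, the triangle yields an isomorphism $\Omega^n H^n(X)\cong\Sigma\Tot_\t^n\tau^{\leq n-1}X$; the left-hand side lies in $\D^{\leq n}\cap\D^{\geq n}$ while the right-hand side lies in $\D^{\leq n-2}$ by the first part of the proposition, and $\D^{\leq n-2}\cap\D^{\geq n}=0$ by the orthogonality of aisle and coaisle in $(t$-$S.1)$. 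Both sides therefore vanish, which gives $H^n(X)=0$ and, via the inductive hypothesis applied to $\tau^{\leq n-1}X$, exactness of $\tau^{\leq n-1}X$; these together imply $X$ is exact.

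The main technical subtlety is the identification $\Tot_\t^{n+1}Q\cong\Omega^n H^n(X)$: one must carefully unwind the recursive definition $\Tot_\t^{n+1}=\mathrm{fib}\circ(\Tot_\t^n)^{\bbtwo}\circ\Tr_{n+1}$ to recognise that, when applied to $Q$, the relevant map becomes $\Sigma^{-(n-1)}$ of the inclusion $\mathrm{im}(d^{n-1})\hookrightarrow X^n$, whose cofibre is $\Sigma^{-(n-1)}H^n(X)$.
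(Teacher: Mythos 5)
Your argument is correct, and your proof of the first assertion is exactly the paper's: induction on $n$ using the triangle of Lemma~\ref{basic_prop_tot_n}(4), the fact that $\Omega^{n}Y^{n}\in\D^{\leq n}(\bbone)\cap\D^{\geq n}(\bbone)$, and extension-closure of aisle and coaisle. For the equivalence (1)$\Leftrightarrow$(2) you take a genuinely different route. The paper inducts directly on the equivalence: it treats $n=1,2$ by hand, splits exactness of $X$ into exactness of $\tau^{\leq n-2}X$ and of $\tau^{\geq n-1}X$, and then glues the resulting vanishing conditions (the step it calls ``clearly equivalent to (2)''). You instead isolate and prove the sharper claim $(\star)$ that $\Tot_\t^{m}$ alone detects exactness ($Y$ exact iff $\Tot_\t^{m}Y=0$), via the short exact sequence $0\to\tau^{\leq n-1}X\to X\to Q\to 0$, the identification $\Tot_\t^{n+1}Q\cong\Omega^{n}H^{n}(X)$, and, for the converse, the amplitude bound from the first assertion combined with $\D^{\leq n-2}(\bbone)\cap\D^{\geq n}(\bbone)=0$; condition (2) then follows by applying $(\star)$ to each $\tau^{\leq k}X$ and noting $\tau^{\leq n-1}X=X$. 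Your key identification is sound: the recursion reduces $\Tot_\t^{n+1}Q$ to the fibre of $\Omega^{n-1}$ applied to the monomorphism $\mathrm{im}(d^{n-1})\hookrightarrow X^{n}$, and a monomorphism in the heart has its cokernel as cone, giving $\Omega^{n}H^{n}(X)$; as you note, one must check that this map is indeed the one produced by $\Tr_{n+1}$ together with the isomorphism of Lemma~\ref{basic_prop_tot_n}(2), which holds by construction. As for what each approach buys: the paper's induction is shorter and never needs the first assertion or aisle--coaisle orthogonality inside the equivalence argument, while your route yields the formally stronger statement that the single vanishing $\Tot_\t^{n}X=0$ (condition (2) for $k=n-1$ alone) already characterizes exactness -- something not an immediate consequence of the proposition as stated -- and it makes explicit the gluing step that the paper leaves terse.
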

\begin{proof}
The first statement follows by induction using the above lemma. Indeed, if $n=1$, then $\Tot_\t^1X\in \A=\D^{\heartsuit}(\bbone)$. For $n>1$, let $m:=n-1$,  there is a triangle $\Omega^mX^m\to \Tot_\t^{n}X\to \Tot_\t^mX^{<m}\to\Omega^{m-1}X^m$ and, by inductive hypothesis,  $\Tot_\t^mX^{<m}\in \D^{\leq m-1}(\bbone)\cap \D^{\geq0}(\bbone)\subseteq \D^{\leq n-1}(\bbone)\cap \D^{\geq0}(\bbone)$, while $\Omega^mX^m\in \D^{\leq m}(\bbone)\cap \D^{\geq m}(\bbone)\subseteq \D^{\leq n-1}(\bbone)\cap \D^{\geq 0}(\bbone)$. Since aisle and co-aisle are extension-closed, we get that $\Tot_\t^n X\in \D^{\leq n-1}(\bbone)\cap \D^{\geq 0}(\bbone)$.

Let us now show the equivalence of the conditions (1) and (2); we proceed by induction on $n$. Indeed, if $n=1$, then $X$ is exact if and only if $X=0$, if and only if $\Tot_\t^1X\cong X^0=0$. If $n=2$, $X$ is exact if and only if $d^0$ is an isomorphism, if and only if $\Tot_\t^2(X)=\mathrm{fib}(d^0)=0$.\\ 
For $n>2$, $X$ is an exact complex if and only if $\tau^{\leq n-2}X$ and $\tau^{\geq n-1}X$ are exact. Using the inductive hypothesis we get:
\begin{itemize}
\item $\tau^{\leq n-2}X$ is exact iff $\Tot_\t^{k+1}(\tau^{\leq k}X)=0$ for all $k<n-1$;
\item $\tau^{\geq n-1}X$ is exact iff $\Tot_\t^{n}(\tau^{\geq n-1}X)=\Tot_\t^{n}(\tau^{\leq n-1}\tau^{\geq n-1}X)=0$.
\end{itemize}
These two conditions together are clearly equivalent to (2).
\end{proof}

We have constructed a commutative diagram (up to natural iso):
\[
\xymatrix@R=6pt{
\Ch^{\bbone}_\A\ar@/^0.8pc/[rrrddd]^{\Tot_\t^1}\ar[dd]\\
\\
\Ch^{\bbtwo}_\A\ar[rrrd]^{\Tot_\t^2}\ar[dd]|{\vdots}\\
&&&\D\\
\Ch^{\ordn}_\A\ar@/_0.8pc/[rrru]^{\Tot_\t^n }\ar[d]\\
\vdots
}
\]
The above diagram describes an object of $(\PDer_\Dia)^\N(F,\Delta_\D)$, where $F(n)=\Ch^{\ordn}_\A$. By the universal property of pseudo-colimits, this corresponds to a unique object in $\PDer_\Dia(\pcolim F,\D)$, that is, a morphism of derivators
\[
\Tot_\t^{(b,+)}\colon \Ch_\A^{(b,+)}\cong\pcolim F\to \D.
\]
Similarly, using Lemma \ref{basic_prop_tot_n}, one shows that the following diagram is commutative up to natural isomorphisms:
\[
\xymatrix@R=10pt{
\Ch^{(b,+)}_\A\ar@/^1.5pc/[rrrddd]|{\Tot_\t^{(b,+)}}\ar[dd]_\Omega\\
\\
\Ch^{(b,+)}_\A\ar[rrrd]|{\Sigma\Tot_\t^{(b,+)}}\ar[dd]|{\vdots}\\
&&&\D\\
\Ch^{(b,+)}_\A\ar@/_1.2pc/[rrru]|{\Sigma^{n-1}\Tot_\t^{(b,+)}}\ar[d]\\
\vdots
}
\]
Hence, using that $\Ch_\A^{b}$ is a sequential pseudo-colimit of copies of $\Ch^{(b,+)}_\A$, we can uniquely extend $\Tot_\t^{(b,+)}$ to a  morphism 
\[
\Tot_\t^b\colon \Ch_\A^{b}\to \D.
\]

\begin{prop}\label{qi_to_iso}
Consider the above morphism $\Tot_\t^b\colon \Ch^{b}_\A\to \D$. For any $I\in \Dia$ and $X\in \Ch^{b}_\A(I)$ we have the following properties:
\begin{enumerate}
\item $\Tot_\t^b(\Omega X)\cong \Omega\Tot_\t^b(X)$ and $\Tot_\t^b(\Sigma X)\cong \Sigma\Tot_\t^b(X)$;
\item $\Tot_\t^b\colon \Ch^{b}_\A(I)\to \D(I)$ short exact sequences to triangles;
\item $\Tot_\t^bX=0$, provided $X$ is an exact complex.
\end{enumerate}
\end{prop}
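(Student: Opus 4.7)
The plan is to exploit the explicit presentation of $\Tot_\t^b$ as the universal extension of the partial totalizations $\Tot_\t^n$ through the two pseudo-colimit decompositions $\Ch_\A^{(b,+)} \cong \pcolim_n \Ch_\A^{\ordn}$ and $\Ch_\A^b \cong \pcolim_n F(n)$, where $F(n) = \Ch_\A^{(b,+)}$ with transitions $\Omega$ and cocone legs $\Sigma^n \Tot_\t^{(b,+)}$. Concretely, for $X \in \Ch_\A^b(I)$ and any sufficiently large $k$, the shifted complex $\Omega^k X$ lies in $\Ch_\A^{(b,+)}(I)$, and for sufficiently large $m$ also in $\Ch_\A^{\mathbb{m}}(I)$; by construction,
\[
\Tot_\t^b(X) \cong \Sigma^k \Tot_\t^{(b,+)}(\Omega^k X) \cong \Sigma^k \Tot_\t^m(\Omega^k X),
\]
where the second isomorphism uses Lemma \ref{basic_prop_tot_n}(3). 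This formula will be the workhorse of all three parts.

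For (1), Lemma \ref{basic_prop_tot_n}(5) globalises to a natural isomorphism $\Tot_\t^{(b,+)} \Omega \cong \Omega \Tot_\t^{(b,+)}$, which is exactly the compatibility of the legs $\Sigma^n \Tot_\t^{(b,+)}$ with the transitions $\Omega$ needed to define the cocone. Representing $\Omega X$ as $(\Omega^k X, k-1)$ when $X$ is represented by $(\Omega^k X, k)$, one reads off $\Tot_\t^b(\Omega X) \cong \Sigma^{k-1}\Tot_\t^{(b,+)}(\Omega^k X) \cong \Omega \Tot_\t^b(X)$; the assertion for $\Sigma$ follows because $\Sigma$ and $\Omega$ are mutually inverse both on $\Ch_\A$ and on $\D$. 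For (2), given a short exact sequence $0 \to X \to Y \to Z \to 0$ in $\Ch_\A^b(I) = \Ch^b(\A^I)$, pick a common $k$ so that all three simultaneous shifts land in $\Ch_\A^{\mathbb{m}}(I)$; since $\Omega^k$ acts componentwise, the shifted sequence remains short exact, so Lemma \ref{basic_prop_tot_n}(1) produces a triangle in $\D(I)$. Applying the triangulated autoequivalence $\Sigma^k$ and invoking (1) yields a triangle isomorphic to the image of the original sequence under $\Tot_\t^b$.

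Part (3) is the crux. If $X \in \Ch^b(\A^I)$ is exact, then so is $\Omega^k X \in \Ch^{\mathbb{m}}(\A^I)$, since degree shifts preserve exactness. Proposition \ref{description_partial_tots}, applied at level $I$, then gives $\Tot_\t^m(\Omega^k X) = 0$, whence $\Tot_\t^b(X) \cong \Sigma^k \Tot_\t^m(\Omega^k X) = 0$. The principal technical point---and the only step I expect to require real care---is that all the level-$\bbone$ results invoked (Lemma \ref{basic_prop_tot_n} and Proposition \ref{description_partial_tots}) transfer verbatim to an arbitrary level $I$; this is guaranteed because their proofs use only the triangulated structure of $\D(I)$ and the identification of the lifted $t$-structure $\t_I$ and of its heart with $\A^I$, both provided by Proposition \ref{lift_tstructure_general}.
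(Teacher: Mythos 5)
Your proof is correct and takes essentially the same route as the paper: the paper's one-line argument, which reduces everything to the identity $\Tot^b_\t(X,n)\cong\Tot^n_\t(X)$ and then cites Proposition~\ref{description_partial_tots}, is precisely your reduction $\Tot^b_\t(X)\cong\Sigma^k\Tot^m_\t(\Omega^k X)$ made explicit via the two pseudo-colimit presentations, with parts (1) and (2) handled through Lemma~\ref{basic_prop_tot_n} exactly as the paper's construction intends. Your care in noting that the relevant level-$\bbone$ statements transfer to level $I$ via Proposition~\ref{lift_tstructure_general} is a detail the paper leaves implicit but is worth spelling out.
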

\begin{proof}
One can check that, given $(X,n)\in \Ch^{(b,+)}_\A(I)$, then $\Tot^b_\t(X,n)\cong \Tot_\t^n (X)$, so the statement follows by Proposition \ref{description_partial_tots}.  
\end{proof}

\subsection{The bounded realization functor}

Consider the stable derivator $\sfD_\A^b\colon \Cat^\op\to \CAT$,  such that $\sfD_\A^b(I)=\sfD^b(\A^I)$ is the bounded derived category of the Abelian category $\A^I$. In the previous section we have constructed a morphism $\Tot_\t^b\colon \Ch^b_\A\to \D$ that sends short exact sequences to triangles and that vanishes on exact complexes. Such a morphism factors uniquely through the quotient  $\Ch^b_\A\to \sfD_\A^b$ giving us an exact (and $t$-exact) morphism of derivators, called ({\bf bounded}) {\bf realization functor},
\[
\real^b_\t \colon \sfD_\A^b\to \D
\] 
extending the inclusion $y(\A)\to \D$.

\begin{thm}
The bounded totalization morphism $\Tot_\t^b\colon \Ch^b_\A\to \D$ factors uniquely through the quotient $\Ch^b_\A\to \sfD^b_\A$, giving rise to an exact and $t$-exact morphism of derivators
\[
\real_\t^b\colon \sfD^b_\A\to \D,
\]
such that $\real_\t^b\restriction_{y(\A)}$ is naturally isomorphic to the  inclusion $y(\A)=\D^{\heartsuit}_\t\to \D$.
\end{thm}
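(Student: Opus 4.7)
The plan is to proceed in three main steps: show that $\Tot_\t^b$ inverts quasi-isomorphisms and hence factors through $\sfD^b_\A$ as a triangulated morphism, verify $t$-exactness via smart truncations, and identify the restriction to $y(\A)$. The inversion of quasi-isomorphisms is the crux.

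To see that, for each $I\in\Dia$, the functor $\Tot_\t^b\colon \Ch^b(\A^I)\to \D(I)$ inverts a given quasi-isomorphism $f\colon X\to Y$, I would use that the mapping cone $C(f)$ is an exact complex fitting into a short exact sequence $0\to Y\to C(f)\to \Sigma X\to 0$ whose associated distinguished triangle has connecting morphism (up to sign) $\Sigma f$. By Proposition \ref{qi_to_iso}(2)--(3), the image of this sequence under $\Tot_\t^b$ is a triangle in $\D(I)$ with $\Tot_\t^b(C(f))=0$, so $\Sigma\Tot_\t^b(f)$ is an isomorphism; since $\Sigma$ is invertible on $\D$, so is $\Tot_\t^b(f)$. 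The universal property of the Verdier quotient $\Ch^b(\A^I)\to \sfD^b(\A^I)$ then produces, at each level $I$, a unique factorization $(\real_\t^b)_I\colon \sfD^b(\A^I)\to \D(I)$ of $\Tot_\t^b$. This factor is triangulated: it commutes with $\Sigma$ by Proposition \ref{qi_to_iso}(1), and every distinguished triangle in $\sfD^b(\A^I)$ is isomorphic to the image of a short exact sequence of complexes, which is sent to a triangle by Proposition \ref{qi_to_iso}(2). To assemble the $(\real_\t^b)_I$ into a morphism of prederivators I would invoke that for any $u\colon J\to I$ in $\Dia$ the functor $u^*\colon \A^I\to \A^J$ is exact and hence preserves quasi-isomorphisms, so $\Ch^b_\A\to \sfD^b_\A$ is already a morphism of prederivators; combined with the fact that $\Tot_\t^b$ is a morphism of prederivators and with the uniqueness of the factorization, the pseudo-naturality of $\real_\t^b$ follows.

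For $t$-exactness I would use smart truncations, treating the two halves symmetrically. Suppose $X\in \sfD^b(\A^I)$ satisfies $H^i(X)=0$ for all $i>0$: then the canonical map $\tau^{\leq 0}X\to X$ is a quasi-isomorphism, so $\real_\t^b(X)\cong \Tot_\t^b(\tau^{\leq 0}X)$, and $\tau^{\leq 0}X$ is concentrated in degrees $-m,\dots,0$ for some $m\geq 0$. Then $\Omega^m(\tau^{\leq 0}X)$ is concentrated in degrees $0,\dots,m$, and Proposition \ref{description_partial_tots} (with $n=m+1$) gives $\Tot_\t^{m+1}(\Omega^m\tau^{\leq 0}X)\in \D(I)^{\leq m}\cap \D(I)^{\geq 0}$; combining with the commutation $\Tot_\t^b\circ\Sigma^m\cong\Sigma^m\circ\Tot_\t^b$ of Proposition \ref{qi_to_iso}(1), I obtain
\[
\real_\t^b(X)\cong \Sigma^m\,\Tot_\t^{m+1}(\Omega^m\tau^{\leq 0}X)\in \Sigma^m\bigl(\D(I)^{\leq m}\cap \D(I)^{\geq 0}\bigr)\subseteq \D(I)^{\leq 0}.
\]
Dually, if $H^i(X)=0$ for all $i<0$, the quasi-isomorphism $X\to \tau^{\geq 0}X$ replaces $X$ by a complex already concentrated in degrees $0,\dots,m$, and Proposition \ref{description_partial_tots} directly yields $\real_\t^b(X)\in \D(I)^{\geq 0}$.

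The main anticipated obstacle is the coherence of the levelwise factorizations into a strict morphism of prederivators rather than a mere family of functors, but this should be routine given that both $\Tot_\t^b$ and the Verdier quotient are strict morphisms of prederivators and the uniqueness of the factorization is preserved by $u^*$. Finally, the identification of $\real_\t^b\restriction_{y(\A)}$ with the inclusion $y(\A)=\D^{\heartsuit}\hookrightarrow \D$ is immediate: by construction $\Tot_\t^1$ is this inclusion, and Lemma \ref{basic_prop_tot_n}(3) ensures that $\Tot_\t^b$ agrees with $\Tot_\t^1$ on complexes concentrated in degree $0$.
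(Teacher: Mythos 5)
The paper does not actually give a detailed proof of this theorem: it is stated as an immediate consequence of Proposition \ref{qi_to_iso} (with $t$-exactness coming from Proposition \ref{description_partial_tots}), and your proposal fills in the implicit details along essentially those lines. Your $t$-exactness argument via smart truncations and the $\Omega^m$/$\Sigma^m$ bookkeeping is correct, as is the identification of $\real_\t^b\restriction_{y(\A)}$ with the inclusion via Lemma \ref{basic_prop_tot_n}(3), and the assembly of the level-wise factorizations into a strict morphism of prederivators using the strict universal property of localization together with the exactness of each $u^*$.

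There is, however, a small but genuine gap in the quasi-isomorphism inversion step. You pass to the short exact sequence $0\to Y\to C(f)\to \Sigma X\to 0$, apply Proposition \ref{qi_to_iso}(2)--(3) to get a triangle with vanishing middle term, and conclude that the connecting morphism --- which you identify with $\pm\Sigma\Tot_\t^b(f)$ --- is an isomorphism. But Proposition \ref{qi_to_iso}(2) only says that $\Tot_\t^b$ sends short exact sequences to \emph{some} triangle; nothing in its statement (or in the inductive construction of Lemma \ref{basic_prop_tot_n}(1)) pins down the resulting connecting morphism, and at this stage $\Tot_\t^b$ has not yet been shown to descend to a triangulated functor on $\sfD^b_\A$, so the usual derived-category identification of the connecting map with $\pm\Sigma f$ is not available for its image under $\Tot_\t^b$. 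A clean repair that uses only what Proposition \ref{qi_to_iso} actually provides: factor $f$ as $X\hookrightarrow \mathrm{Cyl}(f)\twoheadrightarrow Y$ through the mapping cylinder. The first map is a monomorphism with exact cokernel $C(f)$; the second is an epimorphism whose kernel is exact (the projection is a chain homotopy equivalence, hence the long exact homology sequence of the degreewise-split short exact sequence forces the kernel to be acyclic). Applying $\Tot_\t^b$, these two maps appear as the \emph{first} and \emph{second} arrows of triangles whose outer term vanishes by Proposition \ref{qi_to_iso}(3); hence both are isomorphisms, and so is $\Tot_\t^b(f)$. No identification of connecting morphisms is needed.
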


We conclude this section about bounded realization functor showing that, when $\N\in\Dia$ and so it makes sense to consider the bounded filtered prederivator $\FD$,  the above construction of $\real_\t^b$ via totalization of complexes coincides with the realization functor constructed via the Beilinson $t$-structure on $\FD$.

\begin{prop}\label{real_is_tot_prop}
Suppose that $\N\in\Dia$. Then, given $\mathscr X\in \H_{B}$ (the heart of the Beilinson $t$-structure in $\FD$), there is a natural isomorphism 
\[
\Tot_\t^b(F(\mathscr X))\cong \hocolim_\Z\mathscr X,
\] 
where $F\colon \H_{B}\to \Ch^b(\A)$ is the equivalence of Proposition \ref{heart_equiv_prop}.
\end{prop}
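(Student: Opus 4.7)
I will prove the natural isomorphism by induction on the length $\ell(\mathscr X)$, using the exactness of both sides together with the explicit formula $\hocolim_{\Z}\mathscr X\cong\mathscr X_{b(\mathscr X)}$ recalled in the proof of Lemma \ref{hom=hocolim}.

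For the base case $\ell(\mathscr X)=0$, set $a=a(\mathscr X)$ so $\mathscr X\cong a_!\mathscr X_a$. The condition $\mathscr X\in\H_{B}$ forces $\mathscr X_a\cong\gr_{a-1}\mathscr X\in\D^{\leq -a+1}(\bbone)\cap\D^{\geq -a+1}(\bbone)=\Sigma^{a-1}\A$, so $\mathscr X_a\cong\Sigma^{a-1}A$ for some $A\in\A$. By Proposition \ref{heart_equiv_prop}(1), $F\mathscr X$ is a complex concentrated in degree $-a+1$ with unique nonzero component $A$; iterating Proposition \ref{qi_to_iso}(1) together with the fact that $\Tot_\t^1$ is the inclusion $\D^{\heartsuit}\hookrightarrow\D$ yields $\Tot_\t^b(F\mathscr X)\cong\Sigma^{a-1}A$. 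On the other hand, the formula from Lemma \ref{hom=hocolim} gives $\hocolim_{\Z}\mathscr X\cong\mathscr X_{b(\mathscr X)}=\mathscr X_a\cong\Sigma^{a-1}A$, and the two agree.

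For the inductive step, take $\ell(\mathscr X)\geq 1$. The argument in the proof of Proposition \ref{heart_is_Chb} provides a distinguished triangle $a_!\mathscr X_a\to\mathscr X\to\mathscr X'\to\Sigma a_!\mathscr X_a$ in $\FD(\bbone)$ with $\ell(a_!\mathscr X_a)=0$ and $\ell(\mathscr X')\leq\ell(\mathscr X)-1$; applying the triangulated functors $\gr_n$ and using $\mathscr X\in\H_{B}$ one checks that all three terms lie in $\H_{B}$, so this is a short exact sequence in the heart. Applying $F$ (exact by Proposition \ref{heart_equiv_prop}) and then $\Tot_\t^b$ (which sends short exact sequences to triangles by Proposition \ref{qi_to_iso}(2)) produces a distinguished triangle in $\D(\bbone)$; applying the triangulated functor $\hocolim_{\Z}=(\pt_{\Z})_!$ to the original triangle produces a second distinguished triangle. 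The inductive hypothesis and the base case identify the first two vertices of the two triangles, and the standard triangulated five-lemma then yields $\Tot_\t^b(F\mathscr X)\cong\hocolim_{\Z}\mathscr X$.

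\textbf{Main obstacle.} The five-lemma argument requires an actual morphism of triangles, not just isomorphisms at each vertex, so the real work lies in constructing a natural transformation $\Phi\colon\Tot_\t^b\circ F\Rightarrow\hocolim_{\Z}\restriction_{\H_{B}}$ prior to the induction. I plan to build $\Phi$ by unfolding the recursive definition of $\Tot_\t^n$ as iterated fibers and matching it with the parallel iterated-extension presentation of $\mathscr X\in\H_{B}$ obtained by repeatedly splitting off the bottom piece $a_!\mathscr X_a$; since $\hocolim_{\Z}$ is a triangulated functor, these splittings are preserved, producing a canonical morphism $\Phi_{\mathscr X}$ that is visibly natural in $\mathscr X$ and compatible with the short exact sequences used in the induction above. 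With $\Phi$ in hand, the induction upgrades it to a natural isomorphism and completes the proof.
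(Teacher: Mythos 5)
Your argument follows the same strategy as the paper's — induction on $\ell(\mathscr X)$, using exactness of $F$, $\Tot_\t^b$, and $\hocolim_\Z$ — but with a different inductive decomposition. The paper splits the complex $F\mathscr X$ at the top via the smart truncation, using the short exact sequence $0\to\tau^{\leq-b+2}F\mathscr X\to F\mathscr X\to B\to 0$ in $\Ch^b(\A)$, whose first term has length at most $1$ (this is why the paper treats $\ell=0$ and $\ell=1$ as separate base cases). You instead split $\mathscr X$ itself at the bottom by peeling off $a_!\mathscr X_a$ from the d\'evissage in the proof of Proposition \ref{heart_is_Chb}, which only needs the base case $\ell=0$. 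Both decompositions work, and your verification that all three terms lie in $\H_B$ is correct.

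The genuine issue is the one you flag yourself. To run the ``five-lemma'' you need the diagram of triangles to commute, i.e.\ you need a morphism of triangles, not just isomorphisms on two vertices. You correctly identify this as the main obstacle and propose to construct a natural transformation $\Phi\colon\Tot_\t^b\circ F\Rightarrow\hocolim_\Z\restriction_{\H_B}$ by matching the iterated-fiber description of $\Tot_\t^b$ against the iterated-extension presentation of $\mathscr X$, but you only sketch the plan and never actually build $\Phi$; the crux — that the differentials $\Sigma^{-n-1}\gr_{n+1}\mathscr X\to\Sigma^{-n}\gr_n\mathscr X$ used in $F$ (defined by the octahedron in Proposition \ref{heart_equiv_prop}) are precisely the connecting maps that appear in the filtration $\mathscr X_{a}\to\mathscr X_{a+1}\to\cdots\to\mathscr X_{b}\cong\hocolim_\Z\mathscr X$ — is left unverified. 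Until $\Phi$ is actually constructed, the induction cannot close, so the proof as written is incomplete. Worth noting: the paper's proof of this proposition has the same lacuna and does not even flag it, so you have at least correctly located the real content of the statement.
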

\begin{proof}
We proceed by induction on $\ell(\mathscr X)\geq -1$.\\ 
If $\ell(\mathscr X)=-1$, that is, $\mathscr X=0$ then the statement is trivial.\\ 
If $\ell(\mathscr X)=0$, then  $\hocolim_\Z\mathscr X\cong \mathscr X_{a}\cong \gr_{a-1}\mathscr X$, where $a:=a(\mathscr X)$
while $\Tot_\t^b(F\mathscr X)\cong \Sigma^{-a+1}(\Sigma^{a-1}\gr_{a-1}\mathscr X)\cong \hocolim_\Z\mathscr X$ (see Proposition \ref{heart_equiv_prop} (1)) as desired.\\ 
If $\ell(\mathscr X)= 1$ then $\hocolim_\N\mathscr X\cong \mathscr X_{a+1}$, while $\Tot_\t^bF\mathscr X$ is the fiber of the map $\gr_a\mathscr X\to \Sigma\gr_{a-1}\mathscr X$ (see Proposition \ref{heart_equiv_prop}); the triangle $\mathscr X_a(=\gr_{a-1}\mathscr X)\to \mathscr X_{a+1}\to \gr_a\mathscr X\to \Sigma\mathscr X_a$ shows that such a fiber is exactly $\mathscr X_{a+1}$.
\\
For $n>1$, let $a:=a(\mathscr X)$, $b:=b(\mathscr X)$, consider $F\mathscr X\in \Ch^{b}(\A)$ and the following short exact sequence in $\Ch^b(\A)$:
\[
0\to A:=\tau^{\leq-b+2}F\mathscr X\to  F\mathscr X\to B\to0.
\]
Then, there are triangles: $\Tot_\t^bA\to \Tot_\t^bF\mathscr X\to \Tot_\t^bB\to\Sigma \Tot_\t^bA$ and 
\[
\hocolim_\Z F^{-1}A\to \hocolim_\Z\mathscr X\to \hocolim_\Z F^{-1}B\to\Sigma \hocolim_\Z F^{-1}A.
\]
Since $\ell(F^{-1}A)\leq 1$ and $\ell(F^{-1}B)<\ell(\mathscr X)$, there are isomorphisms $\Tot_\t^bA\cong \hocolim_\Z F^{-1}A$ and $ \Tot_\t^bB\cong\hocolim_\Z F^{-1}B$ by inductive hypothesis. Hence $\Tot_\t^bF\mathscr X\cong \hocolim_\Z\mathscr X$.
\end{proof}

Remember that $\t$ is said to be {\bf cosmashing} (resp., {\bf smashing}) if its aisle is closed under products (resp., if its coaisle is closed under coproducts). It is well-known that for a cosmashing $t$-structure $\t$, the heart has exact products, while for a smashing $t$-structure the heart has exact coproducts (this can be either verified by hand or it can be deduced from Corollary \ref{lift_t_struc} and Lemma \ref{loc_is_der}).

\begin{cor}\label{real_b_commutes_prod_coprod}
Let $\{X_i\}_{i\in I}$ be a uniformly bounded family of complexes in $\Ch^b(\A)$, that is, there exist $b\leq a\in \Z$ such that $X_i^n=0$ for all $i\in I$ and for all $n< b$ or $n>a$. The following statements hold true:
\begin{enumerate}
\item if $\t$ is smashing, then $\real_\t^b\left(\coprod_iX_i\right)\cong\coprod_i\real_\t^b X_i$, where the coproduct on the left-hand side is taken in $\sfD^b(\A)$, while the one on the right-hand side is taken in $\D(\bbone)$;
\item if $\t$ is cosmashing, then $\real_\t^b\left(\prod_iX_i\right)\cong\prod_i\real_\t^b X_i$, where the product on the left-hand side is taken in $\sfD^b(\A)$, while the one on the right-hand side is taken in $\D(\bbone)$.
\end{enumerate}
\end{cor}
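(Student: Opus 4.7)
The plan is to reduce the statement to an inductive claim about the partial totalizations $\Tot_\t^n\colon \Ch^{\ordn}_\A \to \D$, then exploit their recursive definition $\Tot_\t^{n+1}=\fib\circ (\Tot_\t^n)^{\bbtwo}\circ \Tr_n$. Since $\{X_i\}_{i\in I}$ is uniformly bounded in degrees between $b$ and $a$, after applying the shift $\Sigma^{-b}$---which commutes with co/products by Proposition \ref{qi_to_iso}(1)---I may assume all $X_i$ lie in $\Ch^{\ordn}_\A(\bbone)$ for $n:=a-b+1$. Coproducts (resp.\ products) in $\Ch^b(\A)$ are computed componentwise, and the smashing (resp.\ cosmashing) hypothesis guarantees that $\A$ has exact coproducts (resp.\ products), so the uniform bound is preserved and $\coprod_i X_i$ (resp.\ $\prod_i X_i$) still lies in $\Ch^{\ordn}_\A(\bbone)$. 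Because $\real_\t^b$ factors $\Tot_\t^b$, and $\Tot_\t^b$ agrees with $\Tot_\t^n$ on $\Ch^{\ordn}_\A$ by construction, it suffices to verify that $\Tot_\t^n$ commutes with such uniformly bounded co/products.

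For the coproduct statement under the smashing hypothesis, I would induct on $n$. For the base case $n=1$, $\Tot_\t^1$ is the inclusion $\D^\heartsuit\hookrightarrow \D$, and the claim reduces to saying that coproducts of heart objects computed in $\A$ coincide with their coproducts in $\D(\bbone)$. This is exactly what smashing delivers: the aisle $\D^{\leq 0}$ is automatically closed under coproducts, and the coaisle $\D^{\geq 0}$ is closed under coproducts by hypothesis, so the $\D(\bbone)$-coproduct of a family of heart objects already lies in $\A$. For the inductive step, each ingredient of $\Tot_\t^{n+1}=\fib\circ (\Tot_\t^n)^{\bbtwo}\circ \Tr_n$ preserves coproducts: $\Tr_n$ is built from naive truncations and the cochain-degree shift $\Omega$, which act componentwise on complexes and so preserve coproducts; $(\Tot_\t^n)^{\bbtwo}$ preserves uniformly bounded coproducts by the inductive hypothesis applied at shape $\bbtwo$; and the fiber functor $\fib\colon \D^\bbtwo\to \D$ commutes with coproducts, since a coproduct of distinguished triangles in $\D(\bbone)$ is again distinguished, forcing $\coprod_i \fib(f_i)\cong \fib(\coprod_i f_i)$.

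The dual statement for cosmashing $t$-structures and products follows by running the same induction, using that fibers (equivalently, in the stable setting, cofibers) commute with products for the same triangulated reason: a product of distinguished triangles is distinguished, and the shift $\Sigma$ commutes with products. I expect the main---if mild---obstacle to be nailing down the base case, namely that the co/smashing assumption on $\t$ genuinely forces co/products inside $\A$ to match those of $\D(\bbone)$; this is exactly the remark the authors make just before the corollary, and once it is cleanly in place, the inductive step is a routine unpacking of the recursive definition of $\Tot_\t^n$ together with the standard fact that fibers and cofibers in triangulated categories with co/products interact well with these co/products.
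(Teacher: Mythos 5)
Your argument is correct, but it is genuinely different from the one in the paper. The paper does not touch the inductive construction of $\Tot_\t^n$ at all: it works through the Beilinson $t$-structure on the filtered prederivator $\FD$, using Proposition \ref{heart_equiv_prop} and Proposition \ref{real_is_tot_prop} to write $\real_\t^b\cong\hocolim_\Z\circ F^{-1}$, observing that if $\t$ is smashing then so is $\t_B$ (each $\gr_n$ commutes with coproducts), so coproducts in $\H_B$ agree with those in $\FD(\bbone)$, and finally using that on a uniformly bounded family the functor $\hocolim_\Z$ is just evaluation $b^*$, which preserves co/products. Your route instead unwinds $\Tot_\t^{n+1}=\fib\circ(\Tot_\t^n)^{\bbtwo}\circ\Tr_n$ and inducts on the length, with the base case handled by the fact that for a smashing (resp.\ cosmashing) $t$-structure the $\D(\bbone)$-coproduct (resp.\ product) of heart objects stays in the heart, and the inductive step by the fact that in a stable derivator co/products of distinguished triangles are distinguished, so $\fib$ commutes with them. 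What the paper's proof buys is brevity and a clean conceptual statement (it is three lines once the Beilinson machinery is in place), at the cost of needing $\FD$, hence $\Z$-shaped diagrams, and the equivalence $F$; what yours buys is independence from the filtered prederivator and the Beilinson $t$-structure, using only stability and finite shapes, so it works with weaker assumptions on $\Dia$ and is closer to first principles.

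Two points of bookkeeping you should make explicit. First, the exactness of coproducts (resp.\ products) in $\A$ is not needed to preserve the uniform bound, as you write, but to guarantee that the degreewise co/product of complexes represents the co/product in $\sfD^b(\A)$, which is what legitimizes replacing the co/product of the statement by the chain-level one before applying $\Tot_\t^b$; the paper states this explicitly. Second, since the inductive step invokes the hypothesis "at shape $\bbtwo$", the induction must be formulated for $\Tot_\t^n$ evaluated at an arbitrary $I\in\Dia$ (with the lifted $t$-structure $\t_I$, which is again smashing resp.\ cosmashing because the evaluations $i^*$ preserve co/products), and the identification $\coprod_i\fib(f_i)\cong\fib(\coprod_if_i)$ should be checked on underlying diagrams using (Der.2) and the fact that evaluations preserve co/products. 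Both are routine, and with them your proof goes through.
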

\begin{proof}
(1) Let us start noticing that, if $\t$ is smashing, so is $\t_B$ (use that each functor of the form $\gr_n$ ($n\in\Z$) commutes with coproducts). Hence, coproducts are the same in $\H_B$ and in $\FD(\bbone)$. Furthermore, since coproducts in $\A$ are exact, the coproduct $\coprod_iX_i$ in $\Ch^b(\A)$ also represents a coproduct in $\sfD^b(\A)$. Consider the equivalence $F\colon \H_B\to \Ch^b(\A)$. Then, 
\begin{align*}
\real_\t^b\left(\coprod_iX_i\right)&=\hocolim_\Z F^{-1}\left(\coprod_iX_i\right)=b^*F^{-1}\left(\coprod_iX_i\right)\\
&=\coprod_ib^*F^{-1}X_i=\coprod_i\hocolim_\Z F^{-1}X_i=\coprod_i\real_\t^b X_i.
\end{align*}

(2) is completely analogous. 
\end{proof}

\subsection{Effa\c{c}ability of $t$-structures}

In this last subsection we give some classical conditions on $\t$ for $\real_\t^b$ to be fully faithful. 

\begin{defn}
We say that $\t=(\D^{\leq0},\D^{\geq0})$ is {\bf effa\c{c}able} provided the following condition holds for all $I\in\Dia$:
\begin{itemize}
\item given objects $X$ and $Y$ in $\A^I$, $n>0$ and  $\phi\colon X\to \Sigma^nY$ in $\D(I)$, there is
an object $Z$ in $\A^I$ and an epimorphism $\psi\colon Z\to X$ in $\A^I$, such that $\phi\psi = 0$.
\end{itemize}
Dually, $\t$ is said to be {\bf co-effa\c{c}able} provided the following condition holds for all $I\in \Dia$:
\begin{itemize}
\item given objects $X$ and $Y$ in $\A^I$, $n >0$ and  $\phi\colon X\to \Sigma^nY$ in $\D(I)$, there is
an object $Z$ in $\A^I$ and a monomorphism $\psi\colon Y\to Z$ in $\A$, such that $\Sigma^n(\psi) \phi= 0$.
\end{itemize}
\end{defn}

\begin{prop}\label{effacable}
The following conditions are equivalent:
\begin{enumerate}
\item[\rm (1)] $\t$ is effa\c{c}able;
\item[\rm (2)] $\t$ is co-effa\c{c}able;
\item[\rm (3)] $\real_\t^b$ is fully faithful.
\end{enumerate}
If the above equivalent conditions hold true, then the essential image of $\real_\t^b$ is exactly $\D^b_\t:=\bigcup_{n\in\N}\D^{\leq n}\cap \D^{\geq-n}$, so that $\real_\t^b$ induces an equivalence
\[
\real_\t^b\colon \sfD^b_\A\overset{\cong}{\longrightarrow} \D^b_\t.
\] 
\end{prop}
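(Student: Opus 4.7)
The plan is to work level-by-level, since both (co-)effaçability and fully faithfulness of $\real_\t^b$ are conditions parametrised by $I\in\Dia$; so I fix $I$ and prove the equivalence of (1), (2), (3) for the triangulated functor $\real_\t^{b,I}\colon \sfD^b(\A^I)\to \D(I)$. The structure will be a cycle $(1)\Rightarrow(3)\Rightarrow(1)$ and, dually, $(2)\Rightarrow(3)\Rightarrow(2)$, handling the essential image separately at the end.

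For the main implication $(1)\Rightarrow(3)$, I would first reduce (via a two-variable Five Lemma argument and Lemma \ref{devissage_of_der}(1), which says every object of $\sfD^b(\A^I)$ lies in $\mathrm{thick}(\A^I)$) to checking bijectivity of
\[
\sfD^b(\A^I)(X,\Sigma^n Y)\longrightarrow \D(I)(X,\Sigma^n Y)
\]
only for $X,Y\in\A^I$ and $n\in\Z$. For $n<0$ both sides vanish (the right-hand side by axiom ($t$-S.1) applied to $\t_I$ from Proposition \ref{lift_tstructure_general}), and for $n=0$ both equal $\A^I(X,Y)$. For $n\geq1$ I would induct on $n$: given $\phi\in\D(I)(X,\Sigma^n Y)$, effaçability supplies an epimorphism $\psi\colon Z\twoheadrightarrow X$ in $\A^I$ with $\phi\psi=0$, so setting $K:=\ker\psi$, the triangle $K\to Z\to X\xrightarrow{\delta}\Sigma K$ forces $\phi=(\Sigma\phi')\circ\delta$ for some $\phi'\colon K\to \Sigma^{n-1}Y$, which by the inductive hypothesis lifts to $\sfD^b(\A^I)$. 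Injectivity is handled by a parallel diagram chase: an element $\bar\phi\in\sfD^b(\A^I)(X,\Sigma^n Y)$ going to zero in $\D(I)$ is represented by a Yoneda extension factoring through the connecting map of some $0\to C\to Z_0\to X\to 0$, and one corrects by an element of $\sfD^b(Z_0,\Sigma^{n-1}Y)$ supplied (via the inductive hypothesis) by the corresponding vanishing image in $\D(I)$, concluding by the exactness of the two Hom long exact sequences linked by $\real_\t^{b,I}$.

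For $(3)\Rightarrow(1)$, given $\phi\in\D(I)(X,\Sigma^n Y)$ with $X,Y\in\A^I$ and $n>0$, fully faithfulness identifies $\phi$ with a class in $\mathrm{Ext}^n_{\A^I}(X,Y)$ represented by a Yoneda extension $0\to Y\to Z_{n-1}\to\cdots\to Z_0\to X\to 0$. Setting $K_0:=\ker(Z_0\to X)$, the image of this class in the derived category factors as $X\xrightarrow{\delta}\Sigma K_0\to \Sigma^n Y$, where $\delta$ is the connecting map of $0\to K_0\to Z_0\to X\to 0$; since the composition $Z_0\to X\xrightarrow{\delta}\Sigma K_0$ vanishes in any triangulated category, the epimorphism $Z_0\twoheadrightarrow X$ witnesses effaçability, giving (1). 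The implication $(3)\Rightarrow(2)$ is entirely dual, using the monomorphism $Y\hookrightarrow Z_{n-1}$ at the other end of a Yoneda representative, and $(2)\Rightarrow(3)$ runs the induction of the previous paragraph with cofibre/cokernel replacing fibre/kernel.

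For the essential image: $\real_\t^b$ is $t$-exact by construction (Proposition \ref{description_partial_tots} applied to shifts of bounded complexes shows that $\real_\t^b(X)\in\D^{\leq b}(I)\cap\D^{\geq a}(I)$ for $X\in\Ch^b(\A^I)$ supported in $[a,b]$), so its essential image at each level lies in $\D^b_\t(I)$. Conversely, the essential image is a full triangulated subcategory of $\D(I)$ containing $\A^I$, hence contains $\mathrm{thick}_{\D(I)}(\A^I)$; iterating the truncation triangles of $\t_I$ writes any $Y\in\D^b_\t(I)$ as an iterated extension of shifts of its $\t_I$-cohomology objects (which lie in $\A^I$), giving the reverse inclusion. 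Combined with fully faithfulness, this shows $\real_\t^b$ restricts to an equivalence $\sfD^b_\A\to\D^b_\t$. The main obstacle will be the injectivity half of the inductive step in $(1)\Rightarrow(3)$: one must perform a careful diagram chase in two linked long exact sequences and invoke the inductive hypothesis not only on the original pair $(X,Y)$ but also on the auxiliary objects $K,Z$ produced by effaçability, verifying that the correction needed lands in the appropriate term.
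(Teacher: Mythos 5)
Your proposal is correct, and it is in all essentials the standard argument of \cite[Prop.\,3.1.16]{BBD} and \cite[Thm.\,3.10]{Jorge_e_Chrisostomos}, which is exactly what the paper cites in lieu of a proof for the equivalence of (1), (2), (3): the d\'evissage reduction to $X,Y\in\A^I$, the vanishing of both sides for $n<0$ (by ($t$-S.1)) and agreement for $n=0$, the induction on $n\ge 1$ using an effa\c{c}ing epimorphism $Z\twoheadrightarrow X$ to factor any $\phi\colon X\to\Sigma^nY$ through $\Sigma K$ with $K=\ker\psi$, and the Yoneda-representative argument for (3)$\Rightarrow$(1),(2). Your treatment of the essential image (which the paper's proof does not address) is also sound: the containment $\mathrm{ess.im}(\real_\t^b)\subseteq\D^b_\t$ follows from Proposition \ref{description_partial_tots} together with compatibility of $\real_\t^b$ with $\Omega$, and the reverse inclusion from iterated $\t_I$-truncation triangles, which only needs that the essential image of a fully faithful exact functor is closed under triangles and shifts (so the invocation of $\mathrm{thick}$ is slightly stronger than required but harmless).
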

\begin{proof}
The equivalence of conditions (1), (2) and (3) is proved in~\cite[Prop.\,3.1.16]{BBD}, see also~\cite[Thm.\,3.10]{Jorge_e_Chrisostomos}, where some parts of the proof are explained in better detail.
\end{proof}

\newpage
\section{Half-bounded realization functor}\label{Sec:half}

In this section we introduce the notion of left and right $\t$-complete derivator, analogously to what has been done in the setting of triangulated categories by Neeman~\cite{Neeman_left_complete} and, in the setting of $\infty$-categories, by Lurie~\cite[Chapter 2]{Lurie_higher_algebra}. We use these notions to extend the bounded realization functor $\real_\t^b\colon \sfD_\A^b\to \D$ to the ``half-bounded" realization functors
\[
\real_\t^-\colon \sfD^-_\A\to \widehat \D\quad\text{and}\quad\real_\t^+\colon \sfD^+_\A\to \widecheck \D,
\]
where $\widehat \D$ and $\widecheck \D$ are the left and right $\t$-completion of $\D$, respectively, while $\sfD^-_\A$ and $\sfD^+_\A$ are the natural prederivators enhancing the right- and left-bounded derived category of $\A$, respectively. We also give sufficient conditions on $\t$ for $\real_\t^-$ and $\real_\t^+$ to commute with co/products and to be fully faithful.

\medskip\noindent
\textbf{Setting for Section \ref{Sec:half}}.
Fix throughout this section a category of diagrams $\Dia$ such that $\N\in\Dia$, a strong and stable derivator $\D\colon \Dia^\op\to \CAT$, a $t$-structure $\t=(\D^{\leq 0},\D^{\geq0})$ on $\D$, and let $\A = \D^\heartsuit(\bbone)$ be its heart. 

\subsection{Left and right $\t$-completeness}\label{derived_complete_subsection}

Let us start with the following definition:

\begin{defn}\label{def_co_completion}
The {\bf left $\t$-completion} $\widehat \D$(resp., the {\bf right $\t$-completion} $\widecheck \D$)  of $\D$ is the sub-prederivator 
\[
\widehat \D\colon \Dia^{\op}\to \CAT\qquad \text{(resp., $\widecheck \D\colon \Dia^{\op}\to \CAT$)}
\]
of $\D^{\N^{\op}}$ (resp., of $\D^{\N}$) such that, for any $I\in \Dia$, $\widehat \D(I)\subseteq \D^{\N^{\op}}(I)$ (resp., $\widecheck \D(I)\subseteq \D^{\N}(I)$) is the full subcategory of those objects $\mathscr X$ such that
\begin{itemize}
\item $\mathscr X_n\in \D^{\geq -n}(I)$ (resp., $\mathscr X_n\in \D^{\leq n}(I)$) for all $n\in\N$;
\item the canonical map $\mathscr X_{n}\to \mathscr X_{n-1}$ (resp., $\mathscr X_n\to \mathscr X_{n+1}$) induces an isomorphism $(\mathscr X_n)^{\geq -n+1}\to \mathscr X_{n-1}$ (resp., $\mathscr X_n\to (\mathscr X_{n+1})^{\leq n}$) in $\D(I)$.
\end{itemize}
\end{defn}

In what follows we are going to give conditions under which $\widehat \D$ is a reflective localization and $\widecheck \D$ is a coreflective colocalization of $\D$.

\begin{lem}\label{tow_and_tel}
The restriction $\holim_{\N^{\op}}\colon \widehat\D\to \D$ (resp., $\hocolim_{\N}\colon \widecheck \D\to \D$) has a left (resp., right) adjoint $\tow\colon \D\to \widehat\D$ (resp., $\tel\colon \D\to \widecheck\D$). 
\end{lem}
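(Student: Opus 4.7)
The plan is to realize $\tow$ as a truncation functor associated to the $t$-structure $\t_\tow$ from Lemma \ref{stair_t_structure_op}, applied level-wise to the shifted derivators $\D^I$; the dual construction for $\tel$ will use Lemma \ref{stair_t_structure} in exactly the same way, so I will describe only the tower case. Concretely, for each $I\in\Dia$ the shifted derivator $\D^I$ is strong and stable with base $\D(I)$, and the lifted $t$-structure $\t_I$ (via Proposition \ref{lift_tstructure_general}) lets me apply Lemma \ref{stair_t_structure_op} to obtain a $t$-structure $\t^I_\tow = (D^{\leq0}_{\tow,I},D^{\geq0}_{\tow,I})$ on $\D^I(\N^{\op})=\D(\N^{\op}\times I)$ whose coaisle consists of those $\mathscr X$ with $\mathscr X_n\in \D^{\geq-n}(I)$ for all $n$. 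Writing $\pi_I\colon\N^{\op}\times I\to I$ for the projection, I define
\[
\tow_I(\mathscr X):= \bigl(\pi_I^*\mathscr X\bigr)^{\geq 0}_{\t^I_\tow}.
\]

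The first check is that $\tow_I(\mathscr X)$ lies in $\widehat\D(I)$ and not merely in $D^{\geq0}_{\tow,I}$. By Example \ref{description_constant_diagram}, the underlying diagram of $\pi_I^*\mathscr X$ is the constant tower at $\mathscr X$; evaluating the $\t^I_\tow$-truncation triangle at $n\in\N^{\op}$ produces a triangle in $\D(I)$ whose outer terms lie in $\D^{\leq-n-1}(I)$ and $\D^{\geq-n}(I)$ respectively, so uniqueness of $\t_I$-truncation triangles gives $\tow_I(\mathscr X)_n\cong \mathscr X^{\geq-n}$, with the transitions $\tow_I(\mathscr X)_n\to \tow_I(\mathscr X)_{n-1}$ being the canonical truncation maps. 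In particular, the fiber of each transition lies in $\D^{\leq-n}(I)$, which is precisely the condition $(\tow_I(\mathscr X)_n)^{\geq-n+1}\cong \tow_I(\mathscr X)_{n-1}$ defining $\widehat\D(I)$. The adjunction then unfolds as a chain of natural isomorphisms: for $\mathscr Y\in \widehat\D(I)\subseteq D^{\geq0}_{\tow,I}$,
\[
\widehat\D(I)(\tow_I\mathscr X,\mathscr Y)=\D(\N^{\op}\times I)\bigl(\tow_I\mathscr X,\mathscr Y\bigr)\cong \D(\N^{\op}\times I)\bigl(\pi_I^*\mathscr X,\mathscr Y\bigr)\cong \D(I)(\mathscr X,\holim_{\N^{\op}}\mathscr Y),
\]
where the middle isomorphism is the adjunction unit for the $t$-structure $\t^I_\tow$ (applicable because $\mathscr Y$ lies in the coaisle), and the last is the standard $(\pi_I^*,(\pi_I)_*)$-adjunction with $(\pi_I)_*=\holim_{\N^{\op}}$.

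Finally, to assemble the $\tow_I$ into a morphism of prederivators $\tow\colon \D\to\widehat\D$, I verify naturality in $I$: for $u\colon J\to I$ the base-change $(\id_{\N^{\op}}\times u)^*$ is triangulated and preserves the aisle/coaisle of $\t^{(-)}_\tow$ pointwise, so it commutes with the truncation $(-)^{\geq0}_{\t^{(-)}_\tow}$; it also trivially commutes with $\pi^*_{(-)}$, and so with $\tow_{(-)}$. The main technical point is the pointwise identification in Step 2: truncations in $\t^I_\tow$ are not computed pointwise in general, but for the specific input $\pi_I^*\mathscr X$ the pointwise evaluations of the truncation triangle satisfy the correct support conditions, so uniqueness of $\t_I$-truncation forces $\tow_I(\mathscr X)_n\cong \mathscr X^{\geq-n}$. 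Once this is established, everything else follows formally, and the dual argument using Lemma \ref{stair_t_structure} gives the coreflection $\tel\colon \D\to\widecheck\D$ right adjoint to $\hocolim_\N$.
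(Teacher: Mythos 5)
Your proof is correct and takes essentially the same route as the paper: both realize $\tow$ as $\pi^*$ followed by the coaisle reflection of the $\t_\tow$ $t$-structure from Lemma~\ref{stair_t_structure_op}, and both identify $\tow(\mathscr X)_n\cong\mathscr X^{\geq-n}$ to see the image lands in $\widehat\D$ rather than merely in $D_\tow^{\geq 0}$. Your version spells out the levelwise construction and the compatibility with restriction functors more explicitly, whereas the paper phrases the same argument as a composition of two adjunctions of morphisms of prederivators (using the co/reflection formalism from Corollary~\ref{lift_t_struc} implicitly), but there is no substantive difference in the argument.
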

\begin{proof}
We give a complete proof for the left completion, the argument for the right completion is completely dual. Consider first the adjoint pair, where $\pt\colon \N^{\op}\to \bbone$ is the obvious functor,
\[
\pt^*\colon \D\rightleftarrows \D^{\N^{\op}}\colon \holim_{\N^{\op}}.
\]
In the notation of Lemma \ref{stair_t_structure_op}, the inclusion $\iota_{\tow}\colon \D_\tow^{\geq 0}\to \D^{\N^{\op}}$ has a left adjoint $\tau^{\geq 0}_\tow$, so we obtain a second pair of adjoints:
\[
\tau^{\geq 0}_\tow\colon \D^{\N^{\op}}\rightleftarrows \D_\tow^{\geq0}\colon \iota_{\tow}.
\]
Putting together the above two adjunctions, we obtain an adjunction 
\[
\D\rightleftarrows \D_\tow^{\geq0},
\] 
where the right adjoint is a restriction of the homotopy limit. It is enough to notice that the essential image of the left adjoint is contained in $\widehat \D$.
\end{proof}

\begin{lem}\label{bounded_in_completion}
Let $\D^b_\t:=\bigcup_{n\in\N}\D^{\leq n}\cap \D^{\geq-n}$; the restriction $\tow\colon \D_\t^b\to \widehat\D$ is fully faithful. Furthermore, consider the full sub-prederivator $\widehat\D_\t^b\subseteq \widehat\D$ such that, given $I\in \Dia$, an object $\mathscr X\in \widehat\D(I)$ belongs to $\widehat\D_\t^b(I)$, if and only if 
\begin{enumerate}
\item $\mathscr X_0\in \D^b_\t$;
\item there exists $n\in\N$, such that $\mathscr X_{k+1}\to \mathscr X_k$ is an iso for all $k\geq n$.
\end{enumerate}
Then, $\widehat \D^b_\t$ is exactly the essential image $\tow(\D_\t^b)$.
\end{lem}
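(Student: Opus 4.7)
The plan is to deduce both assertions from the adjunction $\tow \dashv \holim_{\N^{\op}}$ of Lemma~\ref{tow_and_tel}, working levelwise in each $\D(I)$ via Proposition~\ref{lift_tstructure_general}. Full faithfulness of $\tow$ on $\D^b_\t$ amounts to showing that the unit $X \to \holim_{\N^{\op}}\tow(X)$ is invertible for every $X \in \D^b_\t(I)$. Since $X \in \D^{\geq -k}(I)$ for some $k$, one has $\tow(X)_n = X^{\geq -n} = X$ with identity structure maps for all $n \geq k$; the tower is therefore eventually constant at $X$, and a direct analysis of the fiber sequence $\holim \mathscr Y \to \prod_n \mathscr Y_n \xrightarrow{1-\mathrm{shift}} \prod_n \mathscr Y_n$ coming from the $\N^{\op}$-variant of Lemma~\ref{lem_epivalence_N_op} shows that the homotopy limit of an eventually constant tower coincides with its stable value, yielding $\holim_{\N^{\op}}\tow(X) \cong X$.

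For the essential image, I would verify both inclusions directly. For $\tow(\D^b_\t) \subseteq \widehat\D^b_\t$, given $X \in \D^b_\t(I)$ with $X \in \D^{\leq m}(I) \cap \D^{\geq -k}(I)$, condition~(2) holds with stabilization index $k$; for condition~(1), the truncation triangle $X^{\leq -1} \to X \to X^{\geq 0}$ exhibits $\tow(X)_0 = X^{\geq 0}$ as an extension of $X \in \D^{\leq m}$ and $\Sigma X^{\leq -1} \in \D^{\leq 0}$, so $X^{\geq 0} \in \D^{\leq \max(m,0)} \cap \D^{\geq 0} \subseteq \D^b_\t$. Conversely, for $\mathscr X \in \widehat\D^b_\t(I)$ with stabilization index $n_0$, set $X := \mathscr X_{n_0}$: the lower bound $X \in \D^{\geq -n_0}$ is built into the definition of $\widehat\D$, and the defining isomorphism $(\mathscr X_n)^{\geq -n+1} \cong \mathscr X_{n-1}$ identifies $\mathrm{fib}(\mathscr X_n \to \mathscr X_{n-1})$ with $(\mathscr X_n)^{\leq -n} \in \D^{\leq 0}$; induction from $\mathscr X_0 \in \D^{\leq m}$ (which I may take with $m \geq 0$) then propagates the upper bound to every $\mathscr X_n$ via extension-closure of $\D^{\leq m}$.

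Having placed $X := \mathscr X_{n_0}$ in $\D^b_\t(I)$, I would produce the comparison isomorphism $\tow(X) \cong \mathscr X$ as follows. Pointwise, $\tow(X)_n \cong \mathscr X_n$: for $n \geq n_0$ by condition~(2) together with $X \in \D^{\geq -n_0} \subseteq \D^{\geq -n}$, and for $n < n_0$ by iterating the defining isomorphism $(\mathscr X_k)^{\geq -k+1} \cong \mathscr X_{k-1}$ of $\widehat\D$ down from $k = n_0$ to $k = n+1$, yielding $(\mathscr X_{n_0})^{\geq -n} \cong \mathscr X_n$. To promote this to a coherent iso in $\D(I \times \N^{\op})$, I would use the fullness clause of Lemma~\ref{lem_epivalence_N_op} to lift the pointwise iso to some morphism $\tow(X) \to \mathscr X$, which is then an iso by (Der.$2$) since all its pointwise components are. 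The main technical obstacle I anticipate is the eventually-constant-tower lemma underlying the full faithfulness step: the natural inclusion $\{\geq n_0\} \hookrightarrow \N^{\op}$ is \emph{not} initial in the cofinality sense (the relevant slices are empty below $n_0$), so one cannot cite a cofinality theorem and must instead work directly with the fiber presentation, exploiting that the $(1-\mathrm{shift})$-map becomes invertible beyond index $k$.
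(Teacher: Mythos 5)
Your proposal follows essentially the same strategy as the paper: reduce via the adjunction $\tow\dashv\holim_{\N^{\op}}$ to the fact that the homotopy limit of an eventually constant tower is its stable value, then check both inclusions for the essential image. However, your closing paragraph contains two concrete errors. First, the inclusion $\iota\colon\{n : n \geq n_0\}\hookrightarrow\N^{\op}$ \emph{is} homotopy initial: for each $j\in\N^{\op}$ the relevant slice $\iota/j$ is the poset $\{s : s\geq\max(n_0,j)\}$, which has $\max(n_0,j)$ as a terminal object and hence a contractible nerve. What you appear to have computed are the slices $j/\iota$, which are indeed empty for $j<n_0$; but those govern homotopy \emph{finality} (colimits), not initiality (limits). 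Second, the $(1-\mathrm{shift})$ map is \emph{not} invertible on the stabilized block $\prod_{n\geq k}Y$: already for the constant tower $\pt^*Y$, its fiber on that block is $Y$, diagonally embedded, which is exactly what you are trying to compute, so invoking invertibility there would be circular. The correct first observation makes the second fallback unnecessary anyway: initiality of $\iota$ gives $\holim_{\N^{\op}}\tow(X)\cong\holim_{\{n\geq n_0\}}\iota^*\tow(X)\cong X$ directly (the last step because $\pt\colon\{n\geq n_0\}\to\bbone$ is also homotopy initial, $\{n\geq n_0\}$ having a terminal object in $\N^{\op}$).

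The rest of the argument is sound. One small caveat on the final lifting step: to promote the levelwise isomorphisms $\tow(\mathscr X_{n_0})_n\cong\mathscr X_n$ to a coherent one via the fullness clause of Lemma~\ref{lem_epivalence_N_op}, you should first check they assemble into a morphism of underlying $\N^{\op}$-diagrams, i.e., are compatible with the tower structure maps on both sides. The paper's own proof avoids this by using the adjunction counit $\tow\holim_{\N^{\op}}\mathscr X\to\mathscr X$, which is coherent by construction and then reduces to the same levelwise verification together with (Der.$2$).
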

\begin{proof}
It it clear that, given $\mathscr Y\in \D_\t^b(I)$, $\tow(\mathscr Y)$ satisfies the conditions (1) and (2). Furthermore, $\holim_{\N^{\op}}\tow (\mathscr Y)\cong (\tow\mathscr Y)_n\cong \mathscr Y$ (for $n$ as in condition (2) in the statement). So the unit of the adjunction 
\[
\tow: \D_\t^b\rightleftarrows \widehat\D_\t^b:\holim_{\N^{\op}}
\]
is a natural equivalence. To show that also the counit is an equivalence, let $\mathscr X\in  \widehat\D_\t^b(I)$. Then, for $n$ as in condition (2), $\holim_{\N^{\op}}\mathscr X=\mathscr X_n$ and, clearly, $\mathscr X_k=(\mathscr X_n)^{\geq-k}$, for all $k\in\N$. Hence, $\mathscr X\cong \tow\holim_{\N^{\op}}(\mathscr X)$.
\end{proof}

By a dual argument, we can embed $\D_\t^b$ as a full sub-prederivator $\widecheck \D_\t^b$ of $\widecheck \D$.

\begin{defn}
We say that $\t$ is {\bf (countably) $k$-cosmashing} for a non-negative integer $k$ if, for any  (countable) family $\{ X_\alpha\}$ of objects in $\D^{\leq 0}(\bbone)$, the product $\prod  X_\alpha$ belongs to $\D^{\leq k}(\bbone)$. Dually, $\t$ is {\bf (countably) $k$-smashing} if, for any  (countable) family $\{ X_\alpha\}$ of objects in $\D^{\geq 0}(\bbone)$, the coproduct $\coprod X_\alpha$  belongs to $\D^{\geq -k}(\bbone)$.
\end{defn}

In particular, a (countably) $0$-cosmashing $t$-structure is a $t$-structure whose right truncation functor commutes with (countable) products. Similarly, a (countably) $0$-smashing $t$-structure is a $t$-structure whose left truncation functor commutes with (countable) coproducts.

\begin{lem}\label{describe_truncated_product}
Suppose that $\t$ is (countably) $k$-cosmashing and consider a (countable) family $\{X_i\}_{i \in I}$ in $\D(\bbone)$, then
\begin{enumerate}
\item[\rm (1)] $(\prod_{I}X_i)^{\geq-n}\cong (\prod_I X_i^{\geq-n-k})^{\geq-n}$, 
where the product on the left is taken in $\D(\bbone)$, while the product on the right can be taken equivalently in $\D(\bbone)$ or in $\D^+(\bbone):=\bigcup_{n\in\N}\D^{\geq-n}$, since the coaisle $\D^{\geq-n-k}$ is closed under taking products, so $\prod_IX_i^{\geq-n-k}\in \D^{\geq-n-k}(\bbone)$;
\item[\rm (2)] $(\coprod_{I}X_i)^{\geq-n}\cong (\coprod_IX_i^{\geq-n})^{\geq-n}$, where the coproduct on the right can be taken in $\D^+(\bbone)$.
\end{enumerate}
\end{lem}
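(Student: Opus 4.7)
The plan is to reduce both parts to Lemma~\ref{general_iso_truncations} by applying the appropriate (co)product to the truncation triangles attached to each $X_i$, and then bounding the truncation degree of the extreme terms.

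For part (1), I would start with the truncation triangle
\[
X_i^{\leq -n-k-1} \to X_i \to X_i^{\geq -n-k} \to \Sigma X_i^{\leq -n-k-1}
\]
for each $i \in I$. Since $\D$ is stable, products in $\D(\bbone)$ (computed as right homotopy Kan extensions along the projection from the discrete set $I$ to $\bbone$) preserve distinguished triangles, so one obtains a triangle
\[
\prod_I X_i^{\leq -n-k-1} \to \prod_I X_i \to \prod_I X_i^{\geq -n-k} \to \Sigma \prod_I X_i^{\leq -n-k-1}.
\]
The $k$-cosmashing hypothesis, shifted in the obvious way (if all $Y_\alpha \in \D^{\leq m}(\bbone)$, then $\prod_\alpha Y_\alpha \in \D^{\leq m+k}(\bbone)$), ensures $\prod_I X_i^{\leq -n-k-1} \in \D^{\leq -n-1}(\bbone)$. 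Shifting the whole triangle by $\Sigma^n$ places this leftmost term in $\D^{\leq -1}(\bbone)$, so Lemma~\ref{general_iso_truncations} gives the required isomorphism of $(-)^{\geq -n}$-truncations after unshifting, using the compatibility $\Sigma^n((-)^{\geq -n}) \cong (\Sigma^n(-))^{\geq 0}$. The parenthetical observation that $\prod_I X_i^{\geq -n-k} \in \D^{\geq -n-k}(\bbone)$ is routine, since every coaisle is closed under products (being the right orthogonal to a shift of the aisle).

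For part (2), the argument is completely dual, and in fact requires no cosmashing hypothesis. Applying $\coprod_I$ to the truncation triangles
\[
X_i^{\leq -n-1} \to X_i \to X_i^{\geq -n} \to \Sigma X_i^{\leq -n-1}
\]
produces a distinguished triangle whose leftmost term $\coprod_I X_i^{\leq -n-1}$ lies automatically in $\D^{\leq -n-1}(\bbone)$, since the aisle is always closed under coproducts: if all $Y_i \in \D^{\leq -n-1}(\bbone)$ and $Z \in \D^{\geq -n}(\bbone)$, then $\D(\bbone)(\coprod_I Y_i, Z) \cong \prod_I \D(\bbone)(Y_i, Z) = 0$. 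Lemma~\ref{general_iso_truncations} then yields the desired isomorphism. The side comment that the coproduct on the right may equivalently be computed in $\D^+(\bbone)$ is immediate: applying the outer $(-)^{\geq -n}$ truncation erases any difference between the coproduct taken in $\D(\bbone)$ and a (possibly different) coproduct taken in $\D^+(\bbone)$, when the latter exists.

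The whole proof amounts to triangulated-category bookkeeping; the only non-formal input is the use of $k$-cosmashing in part (1) to bound the truncation level of $\prod_I X_i^{\leq -n-k-1}$, a hypothesis which was tailor-made for precisely this purpose. Accordingly, no serious obstacle is anticipated.
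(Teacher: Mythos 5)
Your proof is correct and, for part (1), matches the paper's argument step by step: form the truncation triangles $X_i^{\leq -n-k-1}\to X_i\to X_i^{\geq -n-k}\to\Sigma X_i^{\leq -n-k-1}$, apply the product, bound the leftmost term in $\D^{\leq -n-1}(\bbone)$ via $k$-cosmashing, and invoke Lemma~\ref{general_iso_truncations} after a $\Sigma^n$-shift. For part (2) the paper takes a shorter route, observing directly that $\tau^{\geq -n}$ is a left adjoint to the inclusion of the coaisle and hence preserves coproducts, so that $\tau^{\geq -n}\coprod_I X_i \cong \tau^{\geq -n}\coprod_I \tau^{\geq -n}X_i$ with no need to bound anything; you instead mirror the structure of part (1) and apply Lemma~\ref{general_iso_truncations} to the coproduct of truncation triangles, using that the aisle is always closed under coproducts. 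Both arguments are correct and of comparable length; the paper's is slightly more conceptual (a pure adjointness observation), while yours is more uniform with part (1) and makes the ``no hypothesis needed'' point explicit. Either is acceptable.
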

\begin{proof}
For each $i\in I$, consider the following approximation triangle 
\[
X_i^{<-n-k}\to X_i\to X_i^{\geq-n-k}\to \Sigma X_i^{<-n-k}.
\]
Taking the product of the above triangles, we get 
\[
\prod_IX_i^{<-n-k}\to\prod_I X_i\to \prod_IX_i^{\geq-n-k}\to \prod_I\Sigma X_i^{<-n-k}.
\]
Since $\t$ is $k$-cosmashing, $\prod_IX_i^{<-n-k}\in \D^{\leq-n}(\bbone)$, so Lemma \ref{general_iso_truncations} applies to show that $(\prod_I X_i)^{\geq-n}\cong(\prod_IX_i^{\geq-n-k})^{\geq-n}$. The second statement is easier and follows by the fact that $(-)^{\geq-n}$, being a left adjoint, commutes with coproducts (the coproduct in $\D^{\geq-n}$ is $\tau^{\geq-n}\coprod$, where $\coprod$ here denotes the coproduct in $\D(\bbone)$).
\end{proof}

If, in the above lemma, we start with a family $\{X_i\}_I$ in $\D^{-}(\bbone)$ which is uniformly right-bounded, that is, for which there exists $n\in \N$ such that $X_i^{\geq n}=0$ for all $i$, then its product exists in $\D^-(\bbone)$ (in fact $ \prod_iX_i\cong\prod_iX_i^{<n}\cong(\prod_iX_i)^{<n}\in \D^{<n}(\bbone)\subseteq \D^-(\bbone)$). Furthermore, the same argument of the lemma proves that
\[
\left(\prod_{I}X_i\right)^{\geq-n}\cong \left(\prod_I X_i^{\geq-n-k}\right)^{\geq-n},
\] 
where the product on the right can be taken in $\D^b(\bbone)$. Similarly, 
\[
\left(\coprod_{I}X_i\right)^{\geq-n}\cong \left(\coprod_IX_i^{\geq-n}\right)^{\geq-n}.
\]

\begin{lem}
Suppose that $\t$ is (countably) $k$-(co)smashing for some $k\in\N$ and let $I\in\Dia$, then $\t_I$ has the same property.
\end{lem}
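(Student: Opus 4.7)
The plan is to exploit the pointwise nature of the lifted $t$-structure $\t_I$ from Proposition \ref{lift_tstructure_general}, which characterizes membership in $\D(I)^{\leq n}$ (resp., $\D(I)^{\geq-n}$) by evaluating at every object $i \in I$. Given a (countable) family $\{\mathscr X_\alpha\}$ of objects of $\D(I)^{\leq 0}$, the goal reduces to showing that $i^*\bigl(\prod_\alpha \mathscr X_\alpha\bigr) \in \D(\bbone)^{\leq k}$ for every $i \in I$.

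The key ingredient is that each evaluation functor $i^*\colon \D(I) \to \D(\bbone)$ admits both a left adjoint $i_!$ and a right adjoint $i_*$ by axiom (Der.$3$), hence preserves all small limits and colimits that exist. In particular, the canonical comparison map $i^*\bigl(\prod_\alpha \mathscr X_\alpha\bigr) \to \prod_\alpha (\mathscr X_\alpha)_i$ is an isomorphism in $\D(\bbone)$. Since each $(\mathscr X_\alpha)_i$ lies in $\D(\bbone)^{\leq 0}$ by hypothesis on $\mathscr X_\alpha$, the assumption that $\t$ is (countably) $k$-cosmashing yields $\prod_\alpha (\mathscr X_\alpha)_i \in \D(\bbone)^{\leq k}$. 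Combined with the pointwise characterization of $\t_I$, this gives $\prod_\alpha \mathscr X_\alpha \in \D(I)^{\leq k}$, establishing the $k$-cosmashing property for $\t_I$. The $k$-smashing case is strictly dual, using that coproducts are likewise computed pointwise.

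The only technical point worth mentioning is the existence of the relevant (countable) products and coproducts in $\D(I)$: for a (countable) set $\Lambda$ viewed as a discrete category, one has $\Lambda \times I = \coprod_\Lambda I$, so axiom (Der.$1$) identifies $\D(\Lambda \times I)$ with $\prod_\Lambda \D(I)$, and the product (resp., coproduct) functor $\prod_\Lambda \colon \D(I)^\Lambda \to \D(I)$ is realized as $(\pt_\Lambda \times \id_I)_*$ (resp., $(\pt_\Lambda \times \id_I)_!$), which exists by (Der.$3$). I do not foresee any genuine obstacle: the result is a clean consequence of the pointwise nature of the lifted $t$-structure together with the exactness properties of evaluation functors in a derivator, so the proof should fit in a few lines.
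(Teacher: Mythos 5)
Your proposal is correct and takes essentially the same approach as the paper: reduce to the pointwise characterization of $\t_I$ from Proposition \ref{lift_tstructure_general}, then use that $i^*$, having both a left adjoint $i_!$ and a right adjoint $i_*$, commutes with products and coproducts. Your added remark on the existence of the relevant (co)products in $\D(I)$ via (Der.$1$) and (Der.$3$) is a harmless extra detail that the paper leaves implicit.
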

\begin{proof}
Given a (countable) family $\{\mathscr X_\alpha\}$ of objects in $\D^{\leq 0}(I)$, consider the product $\mathscr X:=\prod \mathscr X_\alpha$ in $\D(I)$. Then, $\mathscr X$ belongs to $\D^{\leq k}(I)$ if and only if $i^*(\mathscr X)\in \D^{\leq k}(\bbone)$ for all $i\in I$, but this is true since $i^*\left(\prod \mathscr X_\alpha\right)=\prod i^*(\mathscr X_\alpha)\in \D^{\leq k}(\bbone)$ since $\t$ is (countably) $k$-cosmashing (here we used that, since $i^*$ is both a left and a right adjoint, it commutes with products and coproducts). A similar argument can be used in the smashing case. 
\end{proof}

\begin{lem}\label{lemma_for_completeness}
Suppose that $\t$ is countably $k$-cosmashing for some $k\in\N$ and let $\mathscr X\in \widehat\D(\bbone)$; for any $n\in\N$, there is a triangle
\[
\xymatrix{
 K_n\ar[r]& \holim_{\N^{\op}}\mathscr X\ar[r]& \mathscr X_n\ar[r]& K_n
}
\]
with $ K_n\in \D^{\leq-n-1}(\bbone)$.
\end{lem}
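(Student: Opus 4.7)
The plan is to identify $K_n$ with the homotopy limit of an explicit truncation tower and then to control its position in the $t$-structure via the Milnor-type resolution from Lemma~\ref{lem_epivalence_N_op}, combined with the truncated-product formula of Lemma~\ref{describe_truncated_product}.

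First I would reduce to a tail. Since the inclusion $\N^{\op}_{\geq n}\hookrightarrow \N^{\op}$ is initial (every slice is a nonempty, totally ordered poset), $\holim_{\N^{\op}}\mathscr X$ is computed on the restricted tower $\mathscr X|_{m\geq n}$. On this tail the canonical maps $\mathscr X_m\to \mathscr X_n$ assemble into a morphism of towers $\mathscr X|_{m\geq n}\to \pt^{*}\mathscr X_n$; let $F\in\D(\N^{\op}_{\geq n})$ be its fibre. Applying the triangulated functor $\holim$ and using $\holim(\pt^{*}\mathscr X_n)\cong \mathscr X_n$, I obtain the triangle
\[
\holim F\longrightarrow \holim_{\N^{\op}}\mathscr X\longrightarrow \mathscr X_n\longrightarrow \Sigma\holim F,
\]
and set $K_n:=\holim F$. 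Iterating the defining condition of $\widehat\D$ gives $(\mathscr X_m)^{\geq -n}\cong \mathscr X_n$ for every $m\geq n$, realised by the canonical transition, so the truncation triangle identifies $F_m\cong (\mathscr X_m)^{\leq -n-1}\in \D^{\leq -n-1}$ for $m>n$, with $F_n=0$.

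It remains to prove $K_n\in \D^{\leq -n-1}$; this is the main obstacle. By Lemma~\ref{lem_epivalence_N_op}, $F$ fits into the Milnor-type triangle
\[
K_n\longrightarrow \prod_{m\geq n}F_m\longrightarrow \prod_{m\geq n}F_m\longrightarrow\Sigma K_n,
\]
so it is enough to control the $t$-cohomology of the product. Countable $k$-cosmashing by itself yields only the coarse bound $\prod F_m\in \D^{\leq -n-1+k}$, which falls short by $k$ units. The refined estimate uses Lemma~\ref{describe_truncated_product}(1) to rewrite $(\prod_{m}F_m)^{\geq i}\cong (\prod_{m}(F_m)^{\geq i-k})^{\geq i}$, reducing the computation of the upper truncation of the product to products of the uniformly bounded objects $(F_m)^{\geq i-k}$. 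Combining this identification with a chase through the long exact cohomology sequence attached to the Milnor triangle, and exploiting the tower structure of $F$ to control the comparison map on the relevant cohomology groups, should force $H^{i}(K_n)=0$ for every $i\geq -n$, giving $K_n\in \D^{\leq -n-1}$ as required.
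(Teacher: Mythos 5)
Your reduction is sound and close in spirit to the paper's: restricting to the tail, mapping to the constant diagram at $\mathscr X_n$, identifying the fibre levelwise with $(\mathscr X_m)^{\leq -n-1}$, and recognising $K_n$ as the homotopy limit of that tower, to be attacked through the Milnor-type triangle of Lemma \ref{lem_epivalence_N_op}. The problem is the decisive step, which you leave as ``a chase through the long exact cohomology sequence \dots should force $H^i(K_n)=0$''. Two things go wrong there. First, with only countable $k$-cosmashing the functors $H^i$ (equivalently the truncations) do not commute with the countable products occurring in the Milnor triangle, so the degreewise $\lim$/$\lim^1$-type control you are implicitly invoking is not available: Lemma \ref{describe_truncated_product} merely rewrites the truncation of the product as a truncation of another product, and gives no handle on injectivity or surjectivity of $H^i(\phi)$ for the shift-type map $\phi\colon\prod_m F_m\to\prod_m F_m$; controlling that map is exactly as hard as the original statement. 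Second, and independently, even if you had $H^i(K_n)=0$ for all $i\geq -n$, this would not yield $K_n\in\D^{\leq -n-1}$: that implication requires left non-degeneracy of $\t$, which is not a hypothesis of the lemma and cannot be assumed here, since the lemma is used to build the reflection onto $\widehat\D$ before non-degeneracy ever enters (it appears only in Proposition \ref{complete_proposition}).

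The missing idea is a re-indexing that absorbs the $k$-unit loss. The paper factors the canonical map $\pi_n\colon\holim_{\N^{\op}}\mathscr X\to\mathscr X_n$ through $\pi_{n+k+1}$ and applies the octahedral axiom: this exhibits $K_n$ as an extension of $(\mathscr X_{n+k+1})^{\leq -n-1}$, which already lies in $\D^{\leq -n-1}(\bbone)$, and of $K_{n+k+1}$, which is the homotopy limit of the tower beyond stage $n+k+1$, whose terms lie in $\D^{\leq -n-k-2}(\bbone)$. For that deeper tower the Milnor triangle suffices at once: the countable product lies in $\D^{\leq -n-2}(\bbone)$ by countable $k$-cosmashing, hence its fibre lies in $\D^{\leq -n-1}(\bbone)$, and extension-closedness of the aisle finishes the argument. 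If you want to keep your setup, replace the constant target $\pt^*\mathscr X_n$ by $\pt^*\mathscr X_{n+k+1}$ and then add the extension step; comparing directly with $\mathscr X_n$ leaves the product only in $\D^{\leq -n-1+k}(\bbone)$, and no cohomology bookkeeping with the stated hypotheses closes that gap.
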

\begin{proof}
Given $n_1\leq n_2\in \N$, denote by $d_{n_1,n_2}\colon \mathscr X_{n_2}\to \mathscr X_{n_1}$ the map in the underlying diagram of $\mathscr X$. Notice that, for any $n\in \N$, there is a natural morphism
\[
\pi_n\colon \holim_{\N^{\op}}\mathscr X\to \mathscr X_n,
\]
obtained applying the functor $n^*$ to the counit $\pt^*_{\N^{\op}}\holim_{\N^{\op}}\mathscr X\to \mathscr X$ and, by construction, $\pi_{n_1}=d_{n_1,n_2}\pi_{n_2}$. Now, given $n\in\N$, we apply the octahedral axiom to the composition $\pi_{n}=d_{n,n+k+1}\pi_{n+k+1}$, obtaining the following diagram whose rows and columns are all triangles:
\[
\xymatrix@C=15pt@R=15pt{
&  (\mathscr X_{n+k+1})^{\leq -n-1}\ar[d]\ar@{=}[r]& (\mathscr X_{n+k+1})^{\leq -n-1}\ar[d]\\
\holim_{\N^{\op}} \mathscr X\ar[r]\ar@{=}[d]&\mathscr X_{n+k+1}\ar[r]\ar[d]&\Sigma  K_{n+k+1}\ar[d]\ar[r]&\Sigma \holim_{\N^{\op}} \mathscr X\ar@{=}[d]\\
\holim_{\N^{\op}} \mathscr X\ar[r]&\mathscr X_n\ar[d]\ar[r]&\Sigma  K_n\ar[d]\ar[r]&\Sigma \holim_{\N^{\op}}\mathscr  X\\
&\Sigma(\mathscr X_{n+k+1})^{\leq -n-1}\ar@{=}[r]&\Sigma (\mathscr X_{n+k+1})^{\leq -n-1}
}
\]
To conclude we should prove that $ K_n\in \D^{\leq-n-1}(\bbone)$ and, for that, it is enough to verify that $ K_{n+k+1}\in \D^{\leq-n-1}(\bbone)$. Consider now the obvious inclusion $\iota_{n+k+1}\colon (\ordn+\ordk+\bbone)^{\op}\to \N^{\op}$, let $\mathscr X_{(\leq n+k+1)}:=(\iota_{n+k+1})_*(\iota_{n+k+1})^*\mathscr X$, consider the unit $\Phi_{\mathscr X}\colon \mathscr X\to \mathscr X_{(\leq n+k+1)}$ and complete it to a triangle in $\D(\N^{\op})$:
\[
 {\mathscr K}\to \mathscr X\to \mathscr X_{(\leq n+k+1)}\to\Sigma {\mathscr K}
\]
Applying $\holim_{\N^{\op}}$ we get the following triangle 
\[
\holim_{\N^{\op}} {\mathscr K}\to \mathscr X\to \mathscr X_{n+k+1}\to\Sigma\holim_{\N^{\op}}  {\mathscr K},
\] 
in fact, $(\mathscr X_{(\leq n+k+1)})_h=\mathscr X_h$ provided $h\leq n+k$, while $(\mathscr X_{(\leq n+k+1)})_h=\mathscr X_{n+k+1}$, if $h\geq n+k+1$. We obtain an isomorphism $\holim_{\N^{\op}} {\mathscr K}\cong  K_{n+k+1}$, from which we deduce that there is a triangle of the form:
\[
 K_{n+k+1}\to \prod_{h\in \N}  {\mathscr K}_h\to \prod_{h\in \N} {\mathscr K}_h\to \Sigma  K_{n+k+1}
\]
Furthermore, for any $h\in\N$, there is a distinguished triangle $ {\mathscr K}_h\to \mathscr X_h\to (\mathscr X_{(\leq n+k+1)})_h\to  {\mathscr K}_h$, so that $ {\mathscr K}_h=0$ if $h\leq n+k+1$, and $ {\mathscr K}_h\cong \mathscr X_h^{\leq -n-k-2}$ otherwise. 
Since $\t$ is countably $k$-cosmashing, $\prod_{h\in \N}  {\mathscr K}_h\in \D^{\leq -n-2}(I)$ and so $K_{n+k+1}\in \D^{\leq -n-1}(\bbone)$, as desired.
\end{proof}

\begin{thm}
If $\t$ is countably $k$-cosmashing for some $k\in\N$, then $\widehat \D$ is a reflective localization of $\D$ and so it is itself a pointed, strong derivator. Furthermore, under these hypotheses, $\widehat\D$ is stable and, letting
\begin{align*}
\widehat\D^{\leq0}(\bbone)&:=\{\mathscr X\in \widehat\D(\bbone):\mathscr X_n\in\D^{\leq0}(\bbone),\ \forall n\in\N\}\quad\text{and}\\
\widehat\D^{\geq0}(\bbone)&:=\{\mathscr X\in \widehat\D(\bbone):\mathscr X_n\in\D^{\geq0}(\bbone),\ \forall n\in\N\},
\end{align*}
$\widehat\t:=(\widehat\D^{\leq0}(\bbone),\widehat\D^{\geq0}(\bbone))$ is a $t$-structure on $\widehat\D(\bbone)$.
\end{thm}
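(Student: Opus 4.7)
My strategy is to first establish the reflection $(\tow,\holim_{\N^{\op}})\colon \D \rightleftarrows \widehat\D$, then deduce the derivator structure and stability from Lemmas \ref{loc_is_der} and \ref{stability_of_loc}, and finally check the three $t$-structure axioms directly on $\widehat\D(\bbone)$. The adjunction itself is already provided by Lemma \ref{tow_and_tel}, so the first task reduces to verifying that the counit $\tow\circ\holim_{\N^{\op}} \Rightarrow \id_{\widehat\D}$ is invertible. For $\mathscr X \in \widehat\D(I)$ and $n \in \N$, the $n$-th component of this counit is the canonical map $(\holim_{\N^{\op}}\mathscr X)^{\geq -n} \to \mathscr X_n$; under the countably $k$-cosmashing hypothesis, Lemma \ref{lemma_for_completeness} supplies a triangle $K_n \to \holim_{\N^{\op}}\mathscr X \to \mathscr X_n \to \Sigma K_n$ with $K_n \in \D^{\leq -n-1}(I)$, and Lemma \ref{general_iso_truncations} then yields the required isomorphism $(\holim_{\N^{\op}}\mathscr X)^{\geq -n} \cong (\mathscr X_n)^{\geq -n} = \mathscr X_n$. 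Consequently, Lemma \ref{loc_is_der} makes $\widehat\D$ into a pointed, strong derivator.

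For stability I turn to Lemma \ref{stability_of_loc}: the identity $\holim_{\N^{\op}}\bar\Omega \cong \Omega\holim_{\N^{\op}}$, combined with fully faithfulness of $\holim_{\N^{\op}}$, already forces $\bar\Omega$ to be fully faithful, so only essential surjectivity is at stake. Given $\mathscr X \in \widehat\D(\bbone)$, the candidate preimage is $\mathscr Y := \tow(\Sigma\holim_{\N^{\op}}\mathscr X)$. Using that in a stable derivator $\Sigma$ commutes with homotopy limits together with the identity $(\Sigma X)^{\geq -n} \cong \Sigma X^{\geq -n+1}$, one verifies that $\Sigma\holim_{\N^{\op}}\mathscr X$ remains in the essential image of $\holim_{\N^{\op}}$; this gives $\holim_{\N^{\op}}\bar\Omega\mathscr Y \cong \Omega\Sigma\holim_{\N^{\op}}\mathscr X \cong \holim_{\N^{\op}}\mathscr X$, and fully faithfulness of $\holim_{\N^{\op}}$ yields $\bar\Omega\mathscr Y \cong \mathscr X$.

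Finally I verify the three axioms for $\widehat\t$. Axiom (t-S.1) follows from the Toda-condition part of Lemma \ref{lem_epivalence_N_op}: for $\mathscr X \in \widehat\D^{\leq 0}(\bbone)$ and $\mathscr Y \in \widehat\D^{\geq 1}(\bbone)$ one has $\Sigma\mathscr X_{n+1} \in \D^{\leq -1}(\bbone)$ and $\mathscr Y_n \in \D^{\geq 1}(\bbone)$, so $\D(\bbone)(\Sigma\mathscr X_{n+1},\mathscr Y_n) = 0$, and the lemma identifies $\widehat\D(\bbone)(\mathscr X,\mathscr Y)$ with a subgroup of $\prod_n\D(\bbone)(\mathscr X_n,\mathscr Y_n) = 0$. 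Axiom (t-S.2) reduces to the pointwise computation $(\bar\Sigma\mathscr X)_n \cong \Sigma\mathscr X_{n-1}$ for $n \geq 1$, together with $(\bar\Sigma\mathscr X)_0 \cong \Sigma(\mathscr X_0)^{\geq 1} = 0$ when $\mathscr X \in \widehat\D^{\leq 0}(\bbone)$. For the approximation axiom (t-S.3), I view $\mathscr X$ as an object of $\D(\N^{\op})$, apply the lifted $t$-structure $\t_{\N^{\op}}$ of Proposition \ref{lift_tstructure_general} to produce $\mathscr A_n := (\mathscr X_n)^{\leq 0}$ and $\mathscr B_n := (\mathscr X_n)^{\geq 1}$, and use the commutativity of smart truncations to confirm $\mathscr A,\mathscr B \in \widehat\D(\bbone)$.

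The main obstacle is to deduce, from the $\D(\N^{\op})$-approximation triangle of $\mathscr X$, the existence of an approximation triangle for $\widehat\t$ in the stable structure of $\widehat\D$: the pointwise $\Sigma$ of $\D(\N^{\op})$ does not preserve $\widehat\D$, so the two triangulated structures on the full subcategory $\widehat\D(\bbone) \subset \D(\N^{\op})$ are a priori different. The resolution is to push the $\D(\N^{\op})$-triangle forward through the exact functor $\holim_{\N^{\op}}$, observe that the resulting triangle in $\D(\bbone)$ has first term in $\D^{\leq 0}(\bbone)$ and third term in $\D^{\geq 1}(\bbone)$ (using that aisles are closed under fibers and coaisles under limits), and then pull it back to $\widehat\D(\bbone)$ via the correspondence between triangles in $\widehat\D$ and triangles in $\D(\bbone)$ with $\t$-complete terms supplied by the reflection.
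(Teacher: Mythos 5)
Your proof is correct and follows essentially the same route as the paper's: invertibility of the counit via Lemma \ref{lemma_for_completeness}, derivator structure via Lemma \ref{loc_is_der}, stability via an explicit $\widehat\Omega$-preimage, and the $t$-structure axioms checked componentwise via the epivalence Lemma \ref{lem_epivalence_N_op}. Your index bookkeeping is in fact more careful than the paper's text: from $\widehat\Sigma=\tow\,\Sigma\,\holim_{\N^{\op}}$ and $(\Sigma Y)^{\geq-n}\cong\Sigma(Y^{\geq-n+1})$ one obtains $(\widehat\Sigma\mathscr X)_n\cong\Sigma\mathscr X_{n-1}$ for $n\geq1$, exactly as you write; the paper's line $n^*\widehat\Sigma\mathscr X=n^*\sigma^*\Sigma_{\N^{\op}}\mathscr X=\Sigma\mathscr X_{n+1}$ has the shift running the wrong way, and in fact $\sigma^*\Sigma_{\N^{\op}}\mathscr X$ does not generally lie in $\widehat\D(\bbone)$ (its zeroth component $\Sigma\mathscr X_1$ is only forced into $\D^{\geq-2}(\bbone)$, not $\D^{\geq0}(\bbone)$). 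Your preimage candidate $\tow(\Sigma\holim_{\N^{\op}}\mathscr X)$ is the correct repair, and your $(t$-S.$3)$ truncation $\mathscr X^{\geq 1}$ via $\t_{\N^{\op}}$ is isomorphic to the paper's constant object $0_*(\mathscr X_0)^{\geq1}$.

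Two details warrant more care. First, you assert without argument that $\Sigma\holim_{\N^{\op}}\mathscr X$ lies in the essential image of $\holim_{\N^{\op}}$. That essential image is the class of $X$ with $X\cong\holim_nX^{\geq-n}$, and closure under $\Sigma$ follows from $\holim_n(\Sigma X)^{\geq-n}\cong\holim_n\Sigma(X^{\geq-n+1})\cong\Sigma\holim_nX^{\geq-n+1}\cong\Sigma\holim_nX^{\geq-n}\cong\Sigma X$, using $\Sigma\holim_{\N^{\op}}\cong\holim_{\N^{\op}}\Sigma_{\N^{\op}}$ (valid since $\D$ is stable) and cofinality of the reindexing; this deserves a line. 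Second, the parenthetical ``aisles are closed under fibers'' is not true: the fiber of a map between aisle objects only lands in $\D^{\leq1}(\bbone)$. To see $\holim_{\N^{\op}}\mathscr A\in\D^{\leq0}(\bbone)$ when every $\mathscr A_n\in\D^{\leq0}(\bbone)$, instead use the triangle $K_0\to\holim_{\N^{\op}}\mathscr A\to\mathscr A_0\to\Sigma K_0$ with $K_0\in\D^{\leq-1}(\bbone)$ supplied by Lemma \ref{lemma_for_completeness} and extension-closure of the aisle. With these two patches your argument is complete.
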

\begin{proof}
Denote by $\varepsilon\colon \tow \circ \holim_{\N^{\op}}\to \id_{\widehat \D}$  the counit of the adjunction $(\tow,\holim_{\N^{\op}})$. Given $I\in \Dia$, we have to show that $\varepsilon_I$ is a natural isomorphism. Indeed, given $\mathscr X\in \widehat\D(I)$ and $\varepsilon_{\mathscr X}\colon \tow(\holim_{\N^{\op}} \mathscr X)\to\mathscr X$, we have to check that $n^*(\varepsilon_{\mathscr X})$ is an isomorphism for all $n\in\N$, that is, the natural map $\pi_n\colon \holim_{\N^{\op}} \mathscr X\to \mathscr X_n$ induces an isomorphism $(\holim_{\N^{\op}} \mathscr X)^{\geq -n}\to \mathscr X_n$, which is clear by Lemma \ref{lemma_for_completeness}.

Denote by $(\Sigma, \Omega)$ (resp., $(\widehat \Sigma,\widehat \Omega)$, $(\Sigma_{\N^{\op}}, \Omega_{\N^{\op}})$) the suspension-loop adjunction in $\D(\bbone)$ (resp., $\widehat\D(\bbone)$, $\D({\N^{\op}})$). To show that $\widehat\D$ is stable we have to show that $\widehat \Omega$ is an equivalence and we know, by Lemma \ref{stability_of_loc}, that it is fully faithful. Let $\sigma\colon \N^{\op}\to \N^{\op}$ be the map $n\mapsto n+1$; we claim that $\sigma^*\Sigma_{\N^{\op}}\mathscr X\in \widehat\D(\bbone)$ and $\widehat \Omega \sigma^*\Sigma_{\N^{\op}}\mathscr X\cong \mathscr X$. In fact,
\begin{align*}
\widehat \Omega \sigma^*\Sigma_{\N^{\op}}\mathscr X&\cong \tow\, \Omega\, \holim_{\N^{\op}}\sigma^*\Sigma_{\N^{\op}}\mathscr X\\
&\cong \tow\, \Omega\, \holim_{\N^{\op}}\Sigma_{\N^{\op}}\mathscr X\\
&\cong \tow\, \Omega\, \Sigma\, \holim_{\N^{\op}}\mathscr X\\
&\cong \tow\, \holim_{\N^{\op}}\mathscr X\\
&\cong\mathscr  X.
\end{align*}
where $\holim_{\N^{\op}}\sigma^*\cong \holim_{\N^{\op}}$ by the dual of~\cite[Prop.\,1.24]{Moritz}, using that $\sigma$ is a left adjoint, while $\holim_{\N^{\op}}\Sigma_{\N^{\op}}\cong \Sigma\, \holim_{\N^{\op}}$ since left exact (e.g., right adjoint) morphisms of pointed derivators commute with suspensions. 

Finally, it is clear that $\widehat\D^{\leq0}(\bbone)$ is extension closed and that, given $\mathscr X\in \widehat\D^{\leq0}(\bbone)$ and $n\in\N$, $n^*\widehat\Sigma \mathscr X=n^*\sigma^*\Sigma_{\N^{\op}}\mathscr X=(n+1)^*\Sigma_{\N^{\op}}\mathscr X=\Sigma\mathscr X_{n+1}\in \D^{\leq-1}(\bbone)\subseteq \D^{\leq 0}(\bbone)$, so $\widehat\Sigma \widehat\D^{\leq0}(\bbone)\subseteq \widehat\D^{\leq0}(\bbone)$. Furthermore, let $\mathscr X\in \widehat\D^{\leq0}(\bbone)$ and $\mathscr Y\in \widehat\D^{\geq1}(\bbone)$, then
\begin{align*}
\widehat\D(\bbone)(\mathscr X,\mathscr Y)&=\D(\N^{\op})(\mathscr X,\mathscr Y)\\
&=\D(\bbone)^{\N^{\op}}(\dia_{\N^{\op}}\mathscr X,\dia_{\N^{\op}}\mathscr Y)\\
&=0
\end{align*}
where the second equality holds by Lemma \ref{lem_epivalence_N_op}.
Let now $\mathscr X\in \widehat \D(\bbone)$. Notice that the object $\mathscr X^{\geq1}:=0_*(\mathscr X_0)^{\geq1}\in \D(\N^{\op})$ still belongs to $\widehat\D(\bbone)$ and it has a canonical morphism $\mathscr X\to \mathscr X^{\geq 1}$ (just compose the unit $\mathscr X\to 0_*\mathscr X_0$ with the image under $0_*$ of the morphism $\mathscr X_0\to \mathscr X_0^{\geq1}$); we are finished if we can prove that the fiber of this morphism is in $\widehat\D^{\leq0}(\bbone)$, but this is clear since, for any $n\in\N$, $(\mathscr X_n)^{\geq1}\cong (\mathscr X_0)^{\geq1}$. 
\end{proof}

\begin{defn}
In the notation of Proposition \ref{tow_and_tel}, we say that $\D$ is {\bf left $\t$-complete} if the adjunction 
\[
\tow\colon \D\rightleftarrows \widehat\D\colon \holim_{\N^{\op}}
\]
is an equivalence. Dually, $\D$ is {\bf right $\t$-complete} if the adjunction
\[
\hocolim_{\N}\colon \widecheck\D\rightleftarrows \D\colon \tel
\] 
is an equivalence.
\end{defn}

Notice that the above definition is slightly stronger than that given by Neeman in~\cite{Neeman_left_complete}, in that Neeman requires the unit $\id_{\D(\bbone)}\to \holim_{\N^{\op}} \tow$ to be an iso, but does not include that the corresponding counit is an iso in the definition of left-completeness. Furthermore, in case $\t$ is countably $k$-cosmashing for some $k\in\N$, so that $\widehat \D$ is stable, then $\tow\colon \D\rightleftarrows \widehat\D\colon \holim_{\N^{\op}}$ is $t$-exact, that is, $\tow(\D^{\leq0})\subseteq \widehat \D^{\leq0}$, $\tow(\D^{\geq0})\subseteq \widehat \D^{\geq0}$ and so, whenever $\t$ is also a left-complete, we can deduce:  $\holim_{\N^{\op}}(\widehat\D^{\leq0})\subseteq  \D^{\leq0}$ and $\holim_{\N^{\op}}(\widehat\D^{\geq0})\subseteq  \D^{\geq0}$. Dual observations can be done about right-completeness.

\begin{prop}\label{complete_proposition}
Given $k\in \N$, suppose $\t$ is countably $k$-cosmashing. The following are equivalent:
\begin{enumerate}
\item[\rm (1)] $\D$ is left $\t$-complete;
\item[\rm (2)] $\t$ is left non-degenerate.
\end{enumerate}
\end{prop}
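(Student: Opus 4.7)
The plan is to leverage the reflection established in the preceding theorem. Since $\t$ is countably $k$-cosmashing, that theorem already shows the counit $\tow\circ\holim_{\N^{\op}}\Rightarrow\id_{\widehat\D}$ is invertible, so the adjunction $(\tow,\holim_{\N^{\op}})$ is an equivalence if and only if its unit $\eta\colon\id_\D\Rightarrow\holim_{\N^{\op}}\circ\tow$ is a natural isomorphism. Thus left $\t$-completeness amounts to verifying that $\eta_X$ is invertible for every $I\in\Dia$ and every $X\in\D(I)$, and my first step is to identify the fiber of $\eta_X$.

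Fix $X\in\D(I)$ and let $F$ be the fiber of $\eta_X\colon X\to\holim_{\N^{\op}}\tow(X)$. Applying the (pointwise version of) Lemma \ref{lemma_for_completeness} to $\mathscr X:=\tow(X)$ gives, for each $n\in\N$, a triangle
\[
K_n\to\holim_{\N^{\op}}\tow(X)\xrightarrow{\pi_n} X^{\geq -n}\to\Sigma K_n
\]
with $K_n\in\D^{\leq -n-1}(I)$. By construction of $\tow$, the composition $\pi_n\circ\eta_X$ coincides with the canonical right-truncation $X\to X^{\geq -n}$, whose fiber is $X^{\leq -n-1}$. The octahedral axiom applied to the factorization $X\to\holim_{\N^{\op}}\tow(X)\to X^{\geq -n}$ then yields a triangle $F\to X^{\leq -n-1}\to K_n\to\Sigma F$; since $\D^{\leq -n-1}(I)$ is closed under extensions, I conclude that $F\in\D^{\leq -n-1}(I)$ for every $n$, and hence $F\in\bigcap_{n\in\N}\D^{\leq -n}(I)$.

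The equivalence now follows formally. For (2)$\Rightarrow$(1), left non-degeneracy of $\t$ means $\bigcap_{n}\D^{\leq -n}(\bbone)=0$, and by Proposition \ref{lift_tstructure_general} together with axiom (Der.2) the same vanishing lifts to every $\D(I)$ via pointwise evaluation; hence $F=0$ and $\eta_X$ is invertible. For the converse (1)$\Rightarrow$(2), given $X\in\bigcap_{n}\D^{\leq -n}(\bbone)$ we have $X^{\geq -n}=0$ for all $n$, so $\tow(X)=0$ in $\widehat\D(\bbone)$, forcing $\holim_{\N^{\op}}\tow(X)=0$; the assumption that $\eta_X$ is an isomorphism then forces $X=0$.

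The main obstacle is the identification of $F$ in the second paragraph: one must combine Lemma \ref{lemma_for_completeness} (whose proof crucially uses the countably $k$-cosmashing hypothesis to control products of truncation fibers) with the compatibility of $\pi_n\circ\eta_X$ with the canonical truncation, and then feed these into the octahedral axiom. Once this identification is in hand, the translation into left non-degeneracy is purely a matter of unwinding the definitions and invoking the lifting of the $t$-structure from $\D(\bbone)$ to each $\D(I)$.
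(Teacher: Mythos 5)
Your proof is correct, and its skeleton is the same as the paper's: both directions reduce to showing that the fiber of the unit $\eta_X\colon X\to\holim_{\N^{\op}}\tow(X)$ lies in $\bigcap_{n}\D^{\leq -n}$, whence it vanishes under left non-degeneracy. Where you differ is in the mechanism used to bound that fiber. The paper observes that the counit of $(\tow,\holim_{\N^{\op}})$ is already known to be invertible (from the preceding theorem), so the triangle identity forces $\tow(\eta_X)$ to be an isomorphism; reading off components, $\eta_X$ induces isomorphisms $X^{\geq -n}\to\bigl(\holim_{\N^{\op}}\tow(X)\bigr)^{\geq -n}$, and completing $\eta_X$ to a triangle gives the vanishing of the fiber directly. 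You instead re-apply Lemma \ref{lemma_for_completeness} to $\tow(X)$, identify $\pi_n\circ\eta_X$ with the truncation $X\to X^{\geq -n}$, and invoke the octahedral axiom to express the fiber $F$ as an extension involving $X^{\leq -n-1}$ and $K_n$. Both routes ultimately trace back to Lemma \ref{lemma_for_completeness} (which is where the countably $k$-cosmashing hypothesis enters); the paper's version is slightly more economical because it recycles the already-proved invertibility of the counit, while yours makes the octahedral bookkeeping explicit.

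One small slip worth fixing: from the triangle $F\to X^{\leq -n-1}\to K_n\to\Sigma F$, rotation gives $\Sigma^{-1}K_n\to F\to X^{\leq -n-1}$, and $\Sigma^{-1}K_n\in\Sigma^{-1}\D^{\leq -n-1}=\D^{\leq -n}$, not $\D^{\leq -n-1}$. So the extension-closure argument yields $F\in\D^{\leq -n}$, one degree weaker than you claim. Since this holds for every $n$, the conclusion $F\in\bigcap_n\D^{\leq -n}$ is unaffected, but the intermediate bound should be stated as $\D^{\leq -n}$.
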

\begin{proof}
We prove first that (1) implies (2). Indeed, if $\t$ is left complete, then any object $X\in \D(\bbone)$ fits into a triangle $X\to \prod_{n\in\N} X^{\geq -n}\to \prod_{n\in\N} X^{\geq -n}\to \Sigma X$. Furthermore, given $X\in \bigcap_{n\in\N} \D^{\leq -n}(\bbone)$, clearly $X^{\geq -n}=0$ for all $n\in \N$ and so, by the above triangle, also $X=0$. 
\\
To show that (2) implies (1), denote by $\eta\colon \id_{\D(\bbone)}\to \holim_{\N^{\op}} \tow$ the unit of the adjunction $(\tow,\holim_{\N^{\op}})$, consider $X\in \D(\bbone)$ and let us show that $\eta_X\colon X\to \holim_{\N^{\op}}(\tow (X))$ is an isomorphism. We have already proved that $\tow(\eta_X)$ is an isomorphism, so that $\eta_X$ induces isomorphisms $X^{\geq-n}\to (\holim_{\N^{\op}}(\tow (X)))^{\geq-n}$ for all $n\in\N$. Thus, if we complete $\eta_X$ to a triangle
\[
K\to X\to \holim_{\N^{\op}}(\tow (X))\to \Sigma K,
\]
we get that $K^{\geq-n}=0$, for all $n\in\N$. As a consequence, $K\in \bigcap_{n\in\N}\D^{\leq -n}(\bbone)$, so $K=0$ by (2).
\end{proof}

\subsection{The half-bounded filtered prederivators}

Let us start with the following definition:

\begin{defn}\label{half_filtered_der_cat}
Given $i,\, j\in \N$, consider the following functors:
\begin{align*}
(i,\Z)\colon \bbone\times \Z&\to \N^{\op}\times \Z & (j,\Z)\colon \bbone\times \Z&\to \N\times \Z\\
(0,n)&\mapsto (j,n) & (0,n)&\mapsto (j,n).
\end{align*} 
(Notice that we are using the same notation for different functors, but the meaning of this notation will always be clear from the context).
Given $ \mathscr X\in \D^{\N^\op\times\Z}(I)$ and  $ \mathscr Y\in \D^{\N\times\Z}(I)$, we let $\mathscr X_{(i,\Z)}:=(i,\Z)^*\mathscr X\in \D^{\Z}(I)$ and $\mathscr Y_{(j,\Z)}:=(j,\Z)^*\mathscr Y\in \D^{\Z}(I)$.  We define the {\bf right-bounded} (resp., {\bf left-bounded}) {\bf filtered prederivator} 
\[
\FDp\colon \Dia^{\op}\to \CAT \qquad (\text{resp.,}\ \FDm\colon \Dia^{\op}\to \CAT )
\] 
as a full sub-prederivator of $\D^{\N^\op\times\Z}$ (resp., of $\D^{\N\times\Z}$) such that, given $I\in\Dia$, $\FDp(I)$ (resp., $\FDm(I)$) is the full category  spanned by those $\mathscr X$ such that, for any $i\in \N$, the object $\mathscr X_{(i,\Z)}$ belongs to $\FD(I)$. 
\end{defn}

As for $\FD$, in general, $\FDp$ and $\FDm$ are not derivators but, for any $I\in \Dia$, $\FDp(I)$ and $\FDm$ are full triangulated subcategories of $\D^{\N^\op\times\Z}(I)$ and $\D^{\N\times \Z}(I)$, respectively. In particular, it makes sense to consider triangles and to speak about $t$-structures in $\FDp(I)$ and $\FDm(I)$, for some $I\in \Dia$. 

\begin{cor}\label{lift_NZ}
Given $I\in\Dia$, the functor $\dia_{\N^\op}\colon \D^{\N^\op\times \Z}(I)\to (\D^{\Z}(I))^{\N^\op}$ induces a functor
\[
\dia_{\N^\op}\colon \FDp(I)\to (\FD(I))^{\N^\op}
\]
which is full and essentially surjective. Furthermore, given $\mathscr X,\,\mathscr Y \in \FDp(I)$, the diagram functor induces an isomorphism
\[
\FDp(I)(\mathscr X,\mathscr Y)\cong \FD(I)^{\N^\op}(\dia_{\N^\op}\mathscr X,\dia_{\N^\op}\mathscr Y)
\]
provided $\FD(I)(\Sigma \mathscr X_{(i+1,\Z)},\mathscr Y_{(i,\Z)})=0$ for all $i\in\N$.
\end{cor}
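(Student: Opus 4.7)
The plan is to reduce the corollary to a direct application of Lemma~\ref{lem_epivalence_N_op}, invoked not for the original derivator $\D$ but for the shifted strong stable derivator $\D^{\Z\times I}$. First I would spell out the canonical identifications
\[
\D^{\Z\times I}(\N^\op)\;=\;\D(\N^\op\times \Z\times I)\;=\;\D^{\N^\op\times \Z}(I),
\qquad
\D^{\Z\times I}(\bbone)^{\N^\op}\;=\;\D^{\Z}(I)^{\N^\op},
\]
under which the functor produced by Lemma~\ref{lem_epivalence_N_op} becomes precisely the diagram functor $\dia_{\N^\op}\colon \D^{\N^\op\times \Z}(I)\to \D^{\Z}(I)^{\N^\op}$. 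The lemma immediately yields that this functor is full and essentially surjective, and that for any pair of objects $\mathscr X,\mathscr Y$ the induced map on homs is bijective as soon as the Toda-type vanishing $\D^{\Z}(I)(\Sigma \mathscr X_{(i+1,\Z)},\mathscr Y_{(i,\Z)})=0$ holds for all $i\in\N$.

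Next I would cut down to $\FDp$. By construction $\FDp(I)$ is the full subcategory of $\D^{\N^\op\times \Z}(I)$ whose objects have each column $\mathscr X_{(i,\Z)}$ in $\FD(I)\subseteq \D^{\Z}(I)$; since $\FD(I)$ is itself full in $\D^{\Z}(I)$, the restriction of $\dia_{\N^\op}$ to $\FDp(I)$ lands in $\FD(I)^{\N^\op}$ and fullness is inherited from the ambient statement. For essential surjectivity, I would take a diagram $Y\colon \N^\op\to \FD(I)$, view it as an object of $\D^{\Z}(I)^{\N^\op}$, and lift it through Lemma~\ref{lem_epivalence_N_op} to a coherent $\mathscr X\in \D^{\N^\op\times \Z}(I)$; each column of $\mathscr X$ is isomorphic to $Y(i)\in \FD(I)$, so $\mathscr X$ automatically lies in $\FDp(I)$. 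The hom formula then transfers verbatim, because the stated vanishing inside $\FD(I)$ coincides, again by fullness, with the corresponding vanishing inside $\D^{\Z}(I)$ that the lemma requires.

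The only point requiring any care is ensuring that $\D^{\Z\times I}$ really is a strong stable derivator on a category of diagrams large enough to contain $\N^\op$, which follows from the standing assumptions that $\N\in\Dia$ (together with $\Z$ and products being available, since they are needed already to make sense of $\FD$), and from the routine fact that strong stability is preserved under shifting. I do not anticipate any substantive obstacle: the whole statement is a routine transfer of the $\N^\op$ half of Lemma~\ref{lem_epivalence_N_op} along the full embedding $\FDp\hookrightarrow \D^{\N^\op\times\Z}$, and no new geometric or combinatorial input is needed.
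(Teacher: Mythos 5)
Your proposal is correct and matches the paper's own argument: both apply Lemma~\ref{lem_epivalence_N_op} to the appropriately shifted derivator to obtain fullness and essential surjectivity of $\dia_{\N^\op}$ on the ambient categories, then restrict along the full embeddings $\FDp(I)\hookrightarrow \D^{\N^\op\times\Z}(I)$ and $\FD(I)\hookrightarrow \D^\Z(I)$, and finally transfer the Toda-type vanishing condition verbatim. If anything, you are slightly more explicit than the paper in naming the shift as $\D^{\Z\times I}$ (rather than the paper's abbreviated $\D^\Z$) and in checking that the lift of a diagram valued in $\FD(I)$ lands in $\FDp(I)$, but the route is the same.
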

\begin{proof}
Applying Lemma \ref{lem_epivalence_N_op} to the shifted stable derivator $\D^\Z$, one can see that the functor $\dia_{\N^\op}\colon \D^{\N^\op\times \Z}(I)=(\D^\Z)^{\N^\op}(I)\to \D^\Z(I)^{\N^\op}$ is full and essentially surjective. Furthermore, the essentially image of the restriction of $\dia_{\N^\op}$ to $\FDp(I)$ is, by construction, exactly $(\FD(I))^{\N^\op}$. This proves the first part of the statement.

To verify the second part, let $\mathscr X,\,\mathscr Y \in \FDp(I)\subseteq  \D^{{\N^\op}\times \Z}(I)$, then, again by Lemma \ref{lem_epivalence_N_op}, the diagram functor induces an isomorphism
\begin{align*}
\FDp(I)(\mathscr X,\mathscr Y)&=\D^{\N^\op\times \Z}(I)(\mathscr X,\mathscr Y)\\
&\cong \D^\Z(I)^{\N^\op}(\dia_{\N^\op}\mathscr X,\dia_{\N^\op}\mathscr Y)\\
&= (\FD(I))^{\N^\op}(\dia_{\N^\op}\mathscr X,\dia_{\N^\op}\mathscr Y)
\end{align*}
if $\FD(I)(\Sigma \mathscr X_{(i+1,\Z)},\mathscr Y_{(i,\Z)})=\D^\Z(I)(\Sigma \mathscr X_{(i+1,\Z)},\mathscr Y_{(i,\Z)})=0$ for all $i\in\N$. 
\end{proof}

\begin{prop}\label{heart_is_Ch-}
Consider the following two classes of objects in $\FDp(\bbone)$:
\begin{align*}
\FDp^{\leq0}(\bbone)&=\{\mathscr X\in \FDp(\bbone):\mathscr X_{(i,\Z)}\in \FD^{\leq 0}(\bbone),\, \forall i\in \N\},\\
\FDp^{\geq0}(\bbone)&=\{\mathscr X\in \FDp(\bbone):\mathscr X_{(i,\Z)}\in \FD^{\geq 0}(\bbone),\, \forall i\in\N\}.
\end{align*}
Then $\t^-_{B}:=(\FDp^{\leq0}(\bbone),\FDp^{\geq0}(\bbone))$ is a $t$-structure on $\FDp(\bbone)$ whose heart is equivalent to $\Ch^b(\A)^{\N^\op}$.
\end{prop}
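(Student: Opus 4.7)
The plan is to lift the Beilinson $t$-structure of Proposition \ref{heart_is_Chb} from $\FD(\bbone)$ termwise along $\N^\op$ to $\FDp(\bbone)$, mirroring the strategy used for $\t_\tow$ in Lemma \ref{stair_t_structure_op}. The two key ingredients will be the already-established $t$-structure axioms for $\t_B$ on $\FD(\bbone)$, together with the fullness and essential surjectivity of $\dia_{\N^\op}\colon\FDp(\bbone)\to\FD(\bbone)^{\N^\op}$ from Corollary \ref{lift_NZ}.

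The closure properties of $\FDp^{\leq 0}(\bbone)$ under $\Sigma^{-1}$ and of $\FDp^{\geq 0}(\bbone)$ under $\Sigma$ are inherited pointwise from $\FD^{\leq 0}(\bbone)$ and $\FD^{\geq 0}(\bbone)$, since evaluation at $(i,\Z)$ is exact. For orthogonality, given $\mathscr X\in\FDp^{\leq 0}(\bbone)$ and $\mathscr Y\in\FDp^{\geq 1}(\bbone)$, I observe that $\Sigma\mathscr X_{(i+1,\Z)}\in\FD^{\leq -1}(\bbone)$ and $\mathscr Y_{(i,\Z)}\in\FD^{\geq 1}(\bbone)$, so the Toda condition of Corollary \ref{lift_NZ} is automatic. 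This yields
\[
\FDp(\bbone)(\mathscr X,\mathscr Y)\cong\FD(\bbone)^{\N^\op}(\dia_{\N^\op}\mathscr X,\dia_{\N^\op}\mathscr Y)\hookrightarrow\prod_{i\in\N}\FD(\bbone)(\mathscr X_{(i,\Z)},\mathscr Y_{(i,\Z)}),
\]
and the product vanishes by the orthogonality axiom for $\t_B$.

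For the truncation triangles, following Lemma \ref{stair_t_structure_op}, I would take $\mathscr X\in\FDp(\bbone)$ and form the pointwise Beilinson truncation triangles $\mathscr X^{\leq 0}_{(i,\Z)}\to\mathscr X_{(i,\Z)}\to\mathscr X^{\geq 1}_{(i,\Z)}\to$ in $\FD(\bbone)$, which assemble via functoriality into a morphism in $\FD(\bbone)^{\N^\op}$. By Corollary \ref{lift_NZ}, its domain lifts to a coherent $\mathscr X^{\leq 0}\in\FDp^{\leq 0}(\bbone)$ and the morphism itself lifts to $\Phi\colon\mathscr X^{\leq 0}\to\mathscr X$ in $\FDp(\bbone)$. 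Completing $\Phi$ to a triangle in $\D(\N^\op\times\Z)$ and evaluating at each $i\in\N$ will yield a triangle whose third term is (non-canonically) isomorphic to $\mathscr X^{\geq 1}_{(i,\Z)}\in\FD^{\geq 1}(\bbone)$, so the cone belongs to $\FDp^{\geq 1}(\bbone)$.

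The heart $\H_B^-:=\FDp^{\leq 0}(\bbone)\cap\FDp^{\geq 0}(\bbone)$ consists of those $\mathscr X$ with each $\mathscr X_{(i,\Z)}\in\H_B$, so $\dia_{\N^\op}$ restricts to a functor $\H_B^-\to\H_B^{\N^\op}$. For $\mathscr X,\mathscr Y\in\H_B^-$ the Toda condition again holds, since $\Sigma\mathscr X_{(i+1,\Z)}\in\FD^{\leq -1}(\bbone)$ and $\mathscr Y_{(i,\Z)}\in\FD^{\geq 0}(\bbone)$, so Corollary \ref{lift_NZ} yields full faithfulness; essential surjectivity follows from that of $\dia_{\N^\op}\colon\FDp(\bbone)\to\FD(\bbone)^{\N^\op}$, since the heart condition on $\Z$-slices is preserved by any such lift. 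Composing with the equivalence $F\colon\H_B\to\Ch^b(\A)$ of Proposition \ref{heart_equiv_prop} will produce $\H_B^-\cong\Ch^b(\A)^{\N^\op}$. The hard part will be the truncation step: one must ensure that the cone constructed in the ambient $\D(\N^\op\times\Z)$ actually lands in the full subcategory $\FDp(\bbone)$ and moreover in its coaisle, a subtlety that is resolved by identifying each $(i,\Z)$-evaluation of the cone with the corresponding Beilinson truncation cone via exactness of the restriction functors.
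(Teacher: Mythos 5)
Your proof takes essentially the same route as the paper's: verify the closure properties pointwise, use Corollary~\ref{lift_NZ} together with the automatic Toda condition to establish orthogonality, lift the pointwise Beilinson truncation morphism to a coherent morphism $\Phi\colon\mathscr X^{\leq 0}\to\mathscr X$ and cone it off, and finally identify the heart with $\H_B^{\N^\op}$ via $\dia_{\N^\op}$ (again using the Toda condition for faithfulness) before applying Proposition~\ref{heart_equiv_prop}. The paper compresses the truncation step to ``it is now easy to show,'' whereas you make explicit the point that one must check the cone lands in $\FDp^{\geq 1}(\bbone)$, observing that the triangulated functors $(i,\Z)^*$ send the coherent truncation triangle to the $i$-th pointwise truncation triangle; this is exactly the justification the paper leaves implicit, so your argument is a correct and slightly more detailed rendition of the same proof.
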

\begin{proof}
Let us verify the axioms of a $t$-structure for $\t^-_B$. Indeed, the classes $\FDp^{\leq0}(\bbone)$ and $\FDp^{\geq0}(\bbone)$ have the desired closure properties since all the $(i,\Z)^*$ are triangulated functors, and since $\FD^{\leq 0}(\bbone)$ and  $\FD^{\geq 0}(\bbone)$ are an aisle and a coaisle, respectively. Furthermore, given $\mathscr X\in \FDp^{\leq0}(\bbone)$ and $\mathscr Y\in \FDp^{\geq 1}(\bbone)$, then, by Corollary \ref{lift_NZ}, 
\[
\FDp(I)(\mathscr X,\mathscr Y)= \FD(I)^{\N^\op}(\dia_{\N^\op}\mathscr X,\dia_{\N^\op}\mathscr Y),
\]
in fact, $\FD(I)(\Sigma \mathscr X_{(i+1,\Z)},\mathscr Y_{(i,\Z)})=0$ for all $i\in\N$, since $\Sigma \mathscr X_{(i+1,\Z)}\in\FD^{\leq-1}$ and $\mathscr Y_{(i,\Z)}\in \FD^{\geq1}$. Furthermore,
\[
\FD(I)^{\N^\op}(\dia_{\N^\op}\mathscr X,\dia_{\N^\op}\mathscr Y)\subseteq \prod_{i\in\N}\FD(I)(\mathscr X_{(i,\Z)},\mathscr Y_{(i,\Z)})=0,
\]
since $\mathscr X_{(i,\Z)}\in\FD^{\leq 0}$ and $\mathscr Y_{(i,\Z)}\in \FD^{\geq1}$. This shows that $\FDp^{\leq0}(\bbone)$ is left orthogonal to $\FDp^{\geq1}(\bbone)$. Let now $\mathscr Z\in \FDp(I)$ and consider $\dia_{\N^\op}\mathscr Z\in \FD(I)^{\N^\op}$. Then, there is a map $\phi\colon X\to \dia_{\N^\op}\mathscr Z$ in $\FD(I)^{\N^\op}$ such that, for any $i\in \N$, $\phi_i\colon X(i)\to \mathscr Z_{(i,\Z)}$ is the left $\t_B^b$ truncation of $\mathscr Z_{(i,\Z)}$ in $\FD(I)$. By Corollary \ref{lift_NZ}, there is a morphism $\Phi\colon \mathscr X\to \mathscr Z$ such that $\dia_{\N^\op}(\Phi)=\phi$. It is now easy to show that the following diagram, obtained by completing $\Phi$ to a triangle:
\[
\mathscr X\overset{\Phi}\longrightarrow \mathscr Z\to \mathscr Y\to \Sigma \mathscr X,
\] 
is a truncation triangle of $\mathscr Z$ with respect to $\t_B^-$. This shows that $\t_B^-$ is a $t$-structure. 

To conclude we should verify that $\FDp^{\leq0}(\bbone)\cap \FDp^{\geq0}(\bbone)\cong \Ch^b(\A)^{\N^\op}$. By Proposition \ref{heart_equiv_prop}, $\FD^{\leq0}(\bbone)\cap \FD^{\geq0}(\bbone)\cong \Ch^b(\A)$, so clearly the functor $\dia_{\N^\op}$ induces an essentially surjective and full functor
\[
\FDp^{\leq0}(\bbone)\cap \FDp^{\geq0}(\bbone)\to \Ch^b(\A)^{\N^\op}.
\]
To show that this functor is  faithful notice that, given $\mathscr X,\, \mathscr Y\in \FDp^{\leq0}(\bbone)\cap \FDp^{\geq0}(\bbone)$, then $\Sigma \mathscr X_{(i+1,\Z)}\in\FD^{\leq-1}$ and $\mathscr Y_{(i,\Z)}\in \FD^{\geq0}$, so that 
\[\FD(\bbone)(\Sigma \mathscr X_{(i+1,\Z)},\mathscr Y_{(i,\Z)})=0\] for all $i\in\N$, showing that faithfulness follows by Corollary \ref{lift_NZ}.
\end{proof}

The above corollary and proposition can be easily dualized to obtain analogous results for $\FDm$. 

\begin{defn}
The $t$-structure $\t^-_B:=(\FDp^{\leq0},\FDp^{\geq0})$ on $\FDp$ (resp., $\t^+_B:=(\FDm^{\leq0},\FDm^{\geq0})$ on $\FDm$) described in the above proposition is said to be the {\bf right-bounded} (resp., {\bf left-bounded}) {\bf Beilinson $t$-structure} induced by $\t$. 
\end{defn}

\subsection{The half-bounded totalization functors}\label{half_bounded_subs}
Consider the full sub-precategory $\Tow_\A$ of $(\Ch_\A^+)^{\N^\op}$ where, for any $I\in \Dia$, $\Tow_\A(I)$ is spanned by those $X\colon \N^\op\to \Ch^+(\A^I)$ such that $X(n)\in \Ch^{\geq -n}(\A^I)$, for any $n\in\N$. Notice that, without specific hypotheses on $\A$, we do now have a model category on $\Tow(\A^I)$ (like in Subsection \ref{complete_subsection_tower_tel_complexes}). On the other hand, we can still consider the following adjunction of prederivators
\[
\tow\colon \Ch_\A\rightleftarrows \Tow_\A\colon \lim_{\N^\op}.
\]
If we start with a bounded above complex $X\in \Ch^-(\A^I)$, then the associated tower $\tow(X)$ actually belongs to $(\Ch^b(\A^I))^{\N^\op}$. Hence, letting $\Tow^-_\A(I):= (\Ch^b_\A(I))^{\N^\op}\cap \Tow_\A(I)$, we can restrict the above adjunction:
\[
\tow^-\colon \Ch^-_\A\rightleftarrows \Tow^-_\A\colon \lim_{\N^\op}.
\]
Dually, we obtain an adjunction
\[
\colim_{\N}\colon \Tel^+_\A\rightleftarrows \Ch^+_\A\colon \tel^+.
\]

\begin{thm}\label{heart_is_ch+}
Let $\t^-_B=(\FDp^{\leq0},\FDp^{\geq0})$ and $\t^+_B=(\FDm^{\leq0},\FDm^{\geq0})$ be the half-bounded Beilinson $t$-structures on $\FDp$ and $\FDm$, respectively. Consider the following  morphisms of prederivators:
\[
\xymatrix@R=0pt{
\Tot^-_\t\colon\Ch^-_\A\ar[rr]^(.4){\tow^-}&& (\Ch^b_\A)^{\N^\op}\subseteq \FDp\subseteq \D^{\N^\op\times \Z}\ar[rr]^(.7){\hocolim_{\Z}}&& \D^{\N^\op}.
\\
\Tot^+_\t\colon\Ch^+_\A\ar[rr]^(.4){\tel^+}&& (\Ch^b_\A)^{\N}\subseteq \FDm\subseteq \D^{\N\times \Z}\ar[rr]^(.7){\hocolim_{\Z}}&& \D^{\N}.
}
\]
where the inclusions $(\Ch^b_\A)^{\N^\op}\subseteq \FDp$ and $(\Ch^b_\A)^{\N}\subseteq \FDm$ are possible because $(\Ch^b_\A)^{\N^\op}$ and $(\Ch^b_\A)^{\N}$ are equivalent to the heart of $\t^-_B$ and $\t^+_B$, respectively, by Theorem \ref{heart_is_Chb}.
Then the following statements hold:
\begin{enumerate}
\item $\Tot^-_\t$ takes values in $\widehat \D\subseteq \D^{\N^\op}$, while $\Tot^+_\t$ takes values in $\widecheck \D\subseteq \D^{\N}$;
\item given a quasi-isomorphism $\phi$ in $\Ch^-({\A^{I}})$ (resp., $\Ch^+({\A^{I}})$) for some $I\in\Dia$, then $\Tot_\t^-\phi$ (resp., $\Tot_\t^+\phi$) is an isomorphism.
\end{enumerate} 
\end{thm}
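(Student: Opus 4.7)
The plan is to focus on $\Tot^-_\t$ and to obtain the statement for $\Tot^+_\t$ by a completely dual argument. For $X \in \Ch^-_\A(I)$ and $n \in \N$, unfolding the composition gives $\Tot^-_\t(X)_n = \hocolim_\Z F^{-1}(\tau^{\geq -n} X)$, which by Proposition \ref{real_is_tot_prop} coincides with $\Tot^b_\t(\tau^{\geq -n} X) = \real^b_\t([\tau^{\geq -n} X])$, where $[-]$ denotes the image in $\sfD^b(\A^I)$. The two conditions defining $\widehat \D(I)$ in Definition \ref{def_co_completion} can then be verified directly from the $t$-exactness and triangulatedness of the bounded realization functor. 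Since $\tau^{\geq -n} X \in \sfD^{\geq -n}(\A^I)$ and $\real^b_\t$ is $t$-exact, we immediately get $\Tot^-_\t(X)_n \in \D^{\geq -n}(I)$, taking care of the first bullet.

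For the second bullet, I would use the canonical triangle in $\sfD^b(\A^I)$
\[
K_n \to \tau^{\geq -n} X \to \tau^{\geq -n+1} X \to \Sigma K_n,
\]
where $K_n$ is quasi-isomorphic to the complex concentrated in degree $-n$ with value $H^{-n}(X)$, and so lies in $\sfD^{\leq -n}(\A^I)\cap \sfD^{\geq -n}(\A^I)$. Applying the exact morphism $\real^b_\t$ gives a triangle in $\D(I)$ whose first term belongs to $\D^{\leq -n}(I)$ by $t$-exactness. Lemma \ref{general_iso_truncations}, applied after the obvious shift, then yields
\[
(\Tot^-_\t(X)_n)^{\geq -n+1}\cong (\Tot^-_\t(X)_{n-1})^{\geq -n+1},
\]
and since $\Tot^-_\t(X)_{n-1}\in \D^{\geq -n+1}(I)$ again by $t$-exactness, the right-hand side is just $\Tot^-_\t(X)_{n-1}$, which is precisely the compatibility required in Definition \ref{def_co_completion}.

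For part (2), suppose $\phi$ is a quasi-isomorphism in $\Ch^-_\A(I)$. Then each truncation $\tau^{\geq -n}\phi$ is a quasi-isomorphism in $\Ch^b_\A(I)$, so $n^*(\Tot^-_\t\phi)\cong \real^b_\t([\tau^{\geq -n}\phi])$ is an isomorphism in $\D(I)$, since $\real^b_\t$ factors through the bounded derived category. Axiom (Der.$2$) applied to the shifted derivator $\D^{\N^{\op}}$ then lifts this pointwise statement to an isomorphism in $\D^{\N^{\op}}(I)$, and hence in $\widehat \D(I)$. The main delicate point I expect is the clean identification of the fiber $K_n$ with a single cohomology sheaf in the derived category (the naive kernel of $\tau^{\geq -n} X \to \tau^{\geq -n+1} X$ is a two-term complex and must be replaced by its quasi-isomorphic class $H^{-n}(X)$), together with the bookkeeping between chain-complex and derivator conventions for $\Sigma$ used in Section~\ref{Sec:Beilinson}; everything else reduces to a direct assembly of $t$-exactness of $\real^b_\t$ with the elementary properties of smart truncation.
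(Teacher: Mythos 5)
Your proposal is correct and follows the same route as the paper: identify $n^*\Tot^-_\t X$ with $\Tot^b_\t(\tau^{\geq -n}X)$ via Proposition~\ref{real_is_tot_prop} and then check the two conditions of Definition~\ref{def_co_completion}, and for part~(2) reduce to the fact that $\real^b_\t$ factors through $\sfD^b$ and invoke (Der.2). Where the paper condenses the verification of the second condition of Definition~\ref{def_co_completion} into ``by the already verified properties of $\Tot^b_\t$'', you usefully make it explicit via the triangle $H^{-n}(X)[n]\to\tau^{\geq -n}X\to\tau^{\geq -n+1}X\to\Sigma$, $t$-exactness of $\real^b_\t$, and a shifted application of Lemma~\ref{general_iso_truncations}; this is exactly the content the paper leaves implicit.
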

\begin{proof}
We prove our statements just in the right-bounded case since the left-bounded case is completely dual.

(1) Let $I\in \Dia$ and $X\in \Ch^-(\A^I)$. Given $n\in\N$,
\[
(\hocolim_{\Z}\tow^-X)_n\cong \hocolim_{\Z}(\tow^-X)_n\cong \hocolim_{\Z}(\tau^{\geq -n}X),
\]
by~\cite[Prop.\,2.6]{Moritz}. Hence, the underlying diagram of $\Tot_\t^-X$ is given by
\[
\Tot_\t^b\tau^{\geq0}X\leftarrow \Tot_\t^b\tau^{\geq-1}X\leftarrow \cdots\leftarrow \Tot_\t^b\tau^{\geq- n}X\leftarrow\cdots,
\]
so that $\Tot_\t^-X\in\widehat \D(I)$ by the already verified properties of $\Tot_\t^b$. 

(2) Let $I\in\Dia$ and $\phi\colon X\to Y$ be a quasi-isomorphism in $\Ch^-(\A^I)$. 
We have to verify that $(\hocolim_{\Z}\tow^-\phi)_n$ is an isomorphism for any $n\in\N$. By~\cite[Prop.\,2.6]{Moritz}, 
\[
(\hocolim_{\Z}\tow^-\phi)_n\cong \hocolim_{\Z}(\tow^-\phi)_n\cong\hocolim_{\Z}\tau^{\geq -n}\phi .
\]
To conclude recall that we know by Proposition \ref{real_is_tot_prop} that $\hocolim_{\Z}\tau^{\geq -n}\phi=\Tot_\t^b(\tau^{\geq -n}\phi)$, which is an iso since $\tau^{\geq -n}\phi$ is a quasi-isomorphism.
\end{proof}

Suppose now that the Abelian category $\A$ has enough injectives and it is (Ab.$4^*$)-$k$ for some $k\in\N$. Then, we have seen in Subsection \ref{complete_subsection_tower_tel_complexes} that is makes sense to consider the prederivator $\sfD_\A$ where, for any $I\in \Dia$, 
\[
\sfD_\A(I)=\sfD(\A^{I})=\Ch(\A^{I})[\W_I^{-1}]
\]
where $\W_I$ is the class of quasi-isomorphisms in $\Ch(\A^{I})$. Consider now the following sub-prederivator $\sfD_\A^-\subseteq \sfD_\A$
\[
\sfD_\A^-\colon \Dia^{\op}\to \CAT
\]
such that $\sfD_\A^-(I)$ is spanned by those complexes $X$ such that $H^n(X)=0$ for all $n>>0$. One can prove that, for any $I\in \Dia$, 
\[
\sfD^-_\A(I)=\sfD^-(\A^{I})=\Ch^-(\A^{I})[(\W^-_I)^{-1}]
\]
where $\W^-_I$ is the class of quasi-isomorphisms in $\Ch^-(\A^{I})$ (for this see, for example,~\cite[\href{http://stacks.math.columbia.edu/tag/05RW}{Tag 05RW}]{stacks-project}). Hence, a consequence of Theorem \ref{heart_is_ch+} is that the corestriction $\Tot_\t^-\colon \Ch^-_\A\to \widehat\D$ factors uniquely through the quotient $\Ch^-_\A\to \sfD^-_\A$, giving rise to a morphism of prederivators 
\[
\real_\t^-\colon \sfD^-_\A\to \widehat \D.
\]
Applying a dual argument to the case when $\A$ has enough projectives and it is (Ab.$4$)-$k$ for some $k\in\N$, we obtain the following morphism of prederivators 
\[
\real_\t^+\colon \sfD^+_\A\to \widehat \D.
\]

\medskip
To conclude this subsection let us explain how the half-bounded realization functors can be considered as extensions of the bounded realization functor. Indeed, in the situation of Theorem \ref{heart_is_ch+}, consider the following restrictions to bounded complexes
\[
\xymatrix@R=0pt{
\Tot^-_\t\colon\Ch^b_\A\ar[rr]^(.4){\tow^-}&& (\Ch^b_\A)^{\N^\op}\subseteq \FDp\subseteq \D^{\N^\op\times \Z}\ar[rr]^(.7){\hocolim_{\Z}}&& \widehat\D.
\\
\Tot^+_\t\colon\Ch^b_\A\ar[rr]^(.4){\tel^+}&& (\Ch^b_\A)^{\N}\subseteq \FDm\subseteq \D^{\N\times \Z}\ar[rr]^(.7){\hocolim_{\Z}}&& \widecheck\D.
}
\]
It is not difficult to show that these restrictions of $\Tot^-_\t$ and $\Tot^+_\t$ take values in $\widehat\D^b_\t\cong \D^b_\t$ and $\widecheck\D^b_\t\cong \D^b_\t$, respectively (see Lemma \ref{bounded_in_completion}). Identifying $\D^b_\t$ with $\widehat\D^b_\t$ (resp., $\widecheck\D^b_\t$), one can show that the restriction of $\Tot_\t^-$ (resp., $\Tot^+_\t$) to bounded complexes is conjugated to $\Tot_\t^b$.

\subsection{Conditions on $\real_\t^-$ and $\real_\t^+$ to be fully faithful}
We have seen in the the previous subsection that, whenever the heart $\A$ has enough injectives and it is (Ab.$4^*$)-$k$ for some $k\in\N$, then we can construct a suitable morphism of prederivators $\real_\t^-\colon \sfD^-_\A\to \widehat \D$ so, composing it with $\holim_{\N^\op}\colon  \widehat \D\to \D$ we get a morphism of prederivators $\sfD^-_\A\to \D$. In the following proposition we give some general conditions to make this morphism fully faithful. 

\begin{prop}\label{prop_fully_minus}
Suppose that $\t$ satisfies the following conditions:
\begin{enumerate}
\item $\t$ is (co)effa\c cable;
\item $\t$ is left non-degenerate;
\item $\t$ is $k$-cosmashing for some $k\in\N$;
\item $\t$ is ($0$-)smashing;
\item the heart $\A$ of $\t$ has enough injectives.
\end{enumerate} 
Then, there is an exact and fully faithful morphism of prederivators
\[
\real_\t^-\colon \sfD^-_\A\to \D
\] 
that commutes with products and coproducts of uniformly right-bounded families of objects. Furthermore, the essential image of $\real_\t^-$ is $\D^-:=\bigcup_{n\in\N}\D^{\leq n}$.
\end{prop}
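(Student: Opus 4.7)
The morphism $\real_\t^-\colon \sfD_\A^-\to \D$ is defined as the composition of the morphism $\sfD_\A^-\to \widehat\D$ from Subsection \ref{half_bounded_subs} with $\holim_{\N^{\op}}\colon \widehat\D\to \D$. Conditions (2) and (3), via Proposition \ref{complete_proposition}, ensure that $\D$ is left $\t$-complete, so $\holim_{\N^{\op}}$ is an equivalence of stable derivators; both factors are exact (the first by its construction via triangulated totalizations, the second as a right adjoint between stable derivators). The explicit formula $\real_\t^-(X)\cong \holim_n \real_\t^b(\tau^{\geq -n}X)$ reduces to $\real_\t^b(X)$ when $X$ is bounded (the tower being eventually constant), and makes the preservation of products and coproducts of uniformly right-bounded families an immediate consequence of Corollary \ref{real_b_commutes_prod_coprod}: the truncation towers of such families are levelwise uniformly bounded, reducing the question to the bounded case where conditions (3) and (4) are in force.

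For fully faithfulness, fix $I\in\Dia$, $X,Y\in \sfD_\A^-(I)$, and set $Y_n:=\real_\t^b(\tau^{\geq -n}Y)$. I argue in two stages. When $X\in \sfD_\A^b(I)$ with $X\in \sfD^{\geq -m}_\A(I)$, the Milnor exact sequence
\[
0\to \lim\nolimits^{1}\D(I)(\real_\t^b(X),\Sigma^{-1}Y_n)\to \D(I)(\real_\t^b(X),\real_\t^-(Y))\to \lim_n \D(I)(\real_\t^b(X),Y_n)\to 0
\]
has eventually constant inverse system (for $n\geq m$, using $t$-exactness of $\real_\t^b$ and the identity $\sfD_\A^-(I)(X,Y)\cong \sfD_\A^-(I)(X,\tau^{\geq -n}Y)$ once $\tau^{\leq -n-1}Y$ lies sufficiently far below $X$), so the $\lim\nolimits^{1}$-term vanishes; Proposition \ref{effacable} (enabled by condition (1)) identifies the middle term with $\sfD_\A^-(I)(X,Y)$. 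In the general case, present $X$ as a homotopy colimit of its bounded naive upper-truncations $X^{>-n}$ via the Milnor triangle
\[
\coprod_n X^{>-n}\to \coprod_n X^{>-n}\to X\to \Sigma\coprod_n X^{>-n};
\]
condition (4) together with an extension of Corollary \ref{real_b_commutes_prod_coprod} to uniformly right-bounded coproducts implies that $\real_\t^-$ sends this to a triangle of the same form in $\D(I)$. Applying $\sfD_\A^-(I)(-,Y)$ and $\D(I)(\real_\t^-(-),\real_\t^-(Y))$ and invoking the five lemma reduces the general statement to the bounded-$X$ case.

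For the essential image: $\real_\t^-(\sfD_\A^-)\subseteq \D^-$ follows from the displayed formula together with Lemma \ref{describe_truncated_product} (controlling the coaisle-level of products of truncations via (3)). Conversely, for $Z\in \D(\bbone)^{\leq k}$ the tower $(Z^{\geq -n})_n$ has terms in $\D^b_\t\cong \sfD_\A^b(\bbone)$ by Proposition \ref{effacable}; the already-established fully faithfulness of $\real_\t^b$ lifts this tower coherently to an object of $\Tow_\A^-$, hence to a right-bounded complex $W$ with $\real_\t^-(W)\cong Z$ by left $\t$-completeness. The main obstacle throughout is the combined control of $\lim\nolimits^{1}$-vanishing together with the coherent lifting of tower-valued data along $\real_\t^b$; these are precisely the points at which all four hypotheses (1)--(4) must be deployed simultaneously, effa\c cability to make $\real_\t^b$ fully faithful, $k$-cosmashing to tame products of truncations, smashing to handle Milnor presentations of complexes unbounded below, and left non-degeneracy to invoke left $\t$-completeness and identify $\widehat\D$ with $\D$.
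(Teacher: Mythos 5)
Your proof is correct in its overall logic, but the route to full faithfulness differs genuinely from the paper's. The paper resolves the \emph{codomain} $Y$ in a single pass: it uses the Milnor triangle $Y\to\prod_\N Y_n\to\prod_\N Y_n\to\Sigma Y$ with $Y_n=\tau^{\geq-n}Y$, applies $\sfD_\A^-(I)(X,-)$ and $\D(I)(\real_\t^-X,-)$ to get a ladder of long exact sequences, reduces by the five lemma to the maps $\sfD_\A^-(I)(X,Y_n)\to\D(I)(\real_\t^-X,\real_\t^bY_n)$, and then observes that since $Y_n\in\sfD^{\geq-n}_\A$ the source can be replaced by $\sfD_\A^-(I)(\tau^{\geq-n}X,Y_n)$ (and similarly on the target), after which full faithfulness of $\real_\t^b$ finishes the job — both $X$ and $Y$ become bounded simultaneously. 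Your argument instead resolves the \emph{domain} $X$ in two stages: first a Milnor $\lim^1$-sequence in the target against the tower $(\real_\t^b\tau^{\geq-n}Y)_n$, with $\lim^1$-vanishing by eventual constancy, proving the case of bounded $X$; then a Milnor colimit presentation $\coprod X^{>-n}\to\coprod X^{>-n}\to X$ (using smashing and Lemma \ref{devissage_of_der}) and the five lemma to reduce general $X$ to the bounded case. Both are sound; the paper's version is somewhat more economical since it never needs to treat bounded $X$ separately, and it avoids explicit $\lim^1$-bookkeeping (the eventual-constancy you use is exactly what the paper's final truncation step encodes). Your treatment of products/coproducts matches the paper exactly, via Corollary \ref{real_b_commutes_prod_coprod} and the chains of isomorphisms for $\tau^{\geq-n}\real_\t^-\prod X_i$ and $\tau^{\geq-n}\real_\t^-\coprod X_i$.

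Two small points that deserve more care. First, the essential image argument (which the paper's visible proof text does not actually carry out) contains a gap in the phrase ``lifts this tower coherently'': the tower $\tow(Z)=(Z^{\geq-n})_n$ is not uniformly bounded, so it does \emph{not} lie in $\D^b_\t(\N^{\op})$, and one cannot simply apply the equivalence $\real_\t^b\colon\sfD_\A^b\to\D^b_\t$ at level $\N^{\op}$. The fix is to lift the \emph{incoherent} tower object-by-object and morphism-by-morphism via full faithfulness of $\real_\t^b$ at level $\bbone$, and then produce a coherent lift in $\sfD_\A(\N^{\op})$ using the epivalence of $\dia_{\N^{\op}}$ from Lemma \ref{lem_epivalence_N_op}, checking that the result lands in $\widehat{\sfD_\A}(\bbone)$ and hence, by left $\t_\A$-completeness of $\sfD_\A$, corresponds to an object $W\in\sfD_\A^-(\bbone)$. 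Second, ``for $Z\in\D(\bbone)^{\leq k}$'' should read ``for $Z\in\D(\bbone)^{\leq m}$ for some $m\in\N$'' — the $k$ in (3) is the cosmashing constant and is unrelated to the boundedness level of $Z$.
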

\begin{proof}
%
%
%
%
%
%
With the properties we have already verified, it is not difficult to show that, essentially by construction, for any $n\in\N$, there is a commutative diagram as follows:
\[
\xymatrix{
\sfD^-(\A)\ar[r]^{\real_\t^-}\ar[d]_{\tau^{\geq-n}}&\D(\bbone)\ar[d]^{\tau^{\geq-n}}\\
\sfD^b(\A)\ar[r]|\cong^{\real^b_\t}&\D^b(\bbone)\\
}
\]
where the truncation on the left is the truncation with respect to $\t_\A$, while the truncation on the right is taken with respect to $\t$.
Suppose now that we have a uniformly right-bounded family $\{X_i\}_I\subseteq \sfD^-(\A)$. We are going to prove that
\[
\real_\t^-\prod_IX_i\cong \prod_I\real_\t^-X_i\quad \text{and}\quad \real_\t^-\coprod_IX_i\cong \coprod_I\real_\t^-X_i.
\]
We start with products. Since, by (the dual of) Proposition \ref{complete_proposition}, we know that the hypotheses (2) and (3) on $\t$ imply that  $\tow:\D\rightleftarrows \widehat\D :\holim_{\N^\op}$ is an equivalence, it is enough to show that $\tau^{\geq-n}\real_\t^-\prod_IX_i\cong \tau^{\geq-n}\prod_I\real_\t^-X_i$, for all $n$:
\begin{align*}
\tau^{\geq-n}\real_\t^-\prod_IX_i&\cong\real_\t^b\tau^{\geq-n}\prod_IX_i&\qquad&\text{constr.\ of $\real_\t^-$;}\\
&\cong\real_\t^b\tau^{\geq-n}\prod_I\tau^{\geq-n-k}X_i&&\text{Lem.\ref{describe_truncated_product};}\\
&\cong\tau^{\geq-n}\real_\t^b\prod_I\tau^{\geq-n-k}X_i&&\text{$\real_\t^b$ is $t$-exact;}\\
&\cong\tau^{\geq-n}\prod_I\real_\t^b\tau^{\geq-n-k}X_i&&\text{$\real_\t^b$ comm. with $\prod$;}\\
&\cong\tau^{\geq-n}\prod_I\tau^{\geq-n-k}\real_\t^-X_i&&\text{constr.\ of $\real_\t^-$;}\\
&\cong\tau^{\geq-n}\prod_I\real_\t^-X_i&&\text{Lem.\ref{describe_truncated_product}}.
\end{align*}
Similarly, for coproducts, we have that
\begin{align*}
\tau^{\geq-n}\real_\t^-\coprod_IX_i&\cong\real_\t^b\tau^{\geq-n}\coprod_IX_i&\quad&\text{constr.\ of $\real_\t^-$;}\\
&\cong\real_\t^b\tau^{\geq-n}\coprod_I\tau^{\geq-n}X_i&&\text{Lem.\ref{describe_truncated_product};}\\
&\cong\tau^{\geq-n}\coprod_I\real_\t^b\tau^{\geq-n}X_i&&\text{$\real_\t^b$ comm. with $\coprod$;}\\
&\cong\tau^{\geq-n}\coprod_I\real_\t^-X_i&&\text{constr.\ of $\real_\t^-$ and Lem.\ref{describe_truncated_product}.}
\end{align*}
%
We have already mentioned that $\real_\t^-$ is an exact morphism triangulated pre-derivators and that $\real_\t^-\restriction_{\Der^b_\A}\cong \real_\t^b$ is an equivalence $\Der_\A^b\to \D^b$. Furthermore, by the above discussion, $\real_\t^-$ commutes with products and coproducts of uniformly right-bounded families of objects. We can now prove that $\real_\t^-$ is fully faithful: given $X$ and $Y\in \Der^-(\A)$, there is a triangle
\[
Y\to \prod_\N Y_n\to\prod_\N Y_n\to \Sigma Y,
\] 
where $\{Y_n\}_{\N}\subseteq \Der^b(\A)$ and this family is uniformly right-bounded (to find this sequence use the equivalence $\tow:\Der(\A)\leftrightarrows \widehat\Der(\A):\holim_{\N^{\op}}$). Then, there is a commutative diagram with exact columns
\[
\xymatrix@R=15pt{
\vdots\ar[d]&\vdots\ar[d]\\
\prod_\N\Der(\A)(\Sigma X, Y_n)\ar[r]\ar[d]&\prod_\N\D(\bbone)(\Sigma \real_\t^-X, \real_\t^bY_n)\ar[d]\\
\prod_\N\Der(\A)(\Sigma X, Y_n)\ar[r]\ar[d]&\prod_\N\D(\bbone)(\Sigma \real_\t^-X, \real_\t^bY_n)\ar[d]\\
\Der(\A)(X,Y)\ar[r]\ar[d]&\D(\bbone)(\real_\t^-X,\real_\t^-Y)\ar[d]\\
\prod_\N\Der(\A)(X, Y_n)\ar[r]\ar[d]&\prod_\N\D(\bbone)(\real_\t^-X, \real_\t^bY_n)\ar[d]\\
\prod_\N\Der(\A)(X, Y_n)\ar[r]\ar[d]&\prod_\N\D(\bbone)(\real_\t^-X, \real_\t^bY_n)\ar[d]\\
\vdots&\vdots}
\]
So we are reduced to verify that $\Der(\A)(\Sigma X, Y_n)\to\D(\bbone)(\Sigma \real_\t^-X, \real_\t^bY_n)$ and $\Der(\A)(X, Y_n)\to\D(\bbone)( \real_\t^-X, \real_\t^bY_n)$ are isomorphisms. But in fact, this is true because, if $Y_n\in\Der^{\geq-n}(\A)$, then we have the following natural isomorphisms
\begin{itemize}
\item $\Der(\A)(\Sigma X, Y_n)\cong \Der(\A)(\Sigma \tau^{\geq-n+1}X, Y_n)$;
\item $\Der(\A)( X, Y_n)\cong \Der(\A)( \tau^{\geq-n}X, Y_n)$;
\item $\D(\bbone)(\Sigma \real_\t^-X, \real_\t^bY_n)\cong \D(\bbone)(\Sigma \real_\t^b\tau^{\geq-n+1}X, \real_\t^bY_n)$;
\item $\D(\bbone)(\real_\t^-X, \real_\t^bY_n)\cong \D(\bbone)(\real_\t^b\tau^{\geq-n}X, \real_\t^bY_n)$.
\end{itemize} 
Hence, the desired isomorphisms follow by the exactness and full faithfulness of $\real_\t^b$.
\end{proof}

Of course, a dual statement gives conditions on $\real_\t^+\colon \sfD^+_\A\to \D$ to be fully faithful.

\newpage
\section{The unbounded realization functor}\label{unbounded_section}

In this section we introduce the notion of two-sided $\t$-complete derivator and we use this notion  to extend the half-bounded realization functors $\real_\t^-\colon \D_\A^-\to \widehat\D$ and $\real_\t^+\colon \D_\A^+\to \widecheck\D$, to the unbounded realization functor
\[
\real_\t\colon \sfD_\A\to \D^\diamond,
\]
where $\D^\diamond$ is the two-sided $\t$-completion of $\D$. When $\t$ is smashing, $k$-cosmashing for some $k\in\N$, non-degenerate (left and right) and (co)effa\c cable, then we can show that  $\real_\t$ commutes with products and coproducts, and it is fully faithful. 

\medskip\noindent
\textbf{Setting for Section \ref{unbounded_section}}.
Fix throughout this section a category of diagrams $\Dia$ such that $\N\in\Dia$, a strong and stable derivator $\D\colon \Dia^\op\to \CAT$, a $t$-structure $\t=(\D^{\leq 0},\D^{\geq0})$ on $\D$, and let $\A = \D^\heartsuit(\bbone)$ be its heart.

\subsection{Bicomplete $t$-structures}
Let us start with the following definition that, somehow, puts together the notion of left and right completion of Definition \ref{def_co_completion}:

\begin{defn}\label{def_bi_completion}
We define the {\bf $\t$-bicompletion} $\D^\diamond$ of $\D$ to be the sub-prederivator 
\[
\D^\diamond\colon \Dia^{\op}\to \CAT
\]
of $\D^{ \N^{\op}\times \N}$ such that, for any $I\in \Cat$, $\D^\diamond(I)\subseteq \D^{ \N^{\op}\times \N}(I)$ is the full subcategory of those objects $\mathscr X\in\D^\diamond(I)$ such that, for all $n,\,m\in\N$,
\begin{enumerate}
\item[\rm (1)] $\mathscr X_{n,m}\in \D^{\geq -n}(I)$;
\item[\rm (2)] $\mathscr X_{n,m}\in \D^{\leq m}(I)$;
\item[\rm (3)] the  map $\mathscr X_{n,m}\to \mathscr X_{n-1,m}$ induces an iso $(\mathscr X_{n,m})^{\geq -n+1}\to \mathscr X_{n-1,m}$;
\item[\rm (4)] the  map $\mathscr X_{n,m}\to \mathscr X_{n,m+1}$ induces an iso $\mathscr X_{n,m}\to (\mathscr X_{n,m+1})^{\leq m}$.
\end{enumerate}
\end{defn}
Suppose now that $\t=(\D^{\leq0},\D^{\geq0})$ is a countably $k$-smashing and countably $k$-cosmashing $t$-structure. Hence, both the left $\t$-completion $\widehat \D$ and the right $\t$-completion $\widecheck \D$ are stable derivators, each endowed with a natural $t$-structure, which is again countably $k$-smashing and countably $k$-cosmashing. Hence, it actually makes sense to consider the stable derivator $\widecheck{\widehat\D}$, the right $\widehat\t$-completion of the left $\t$-completion, and $\widehat{\widecheck\D}$, the left $\widecheck \t$-completion of the right $\t$-completion. It is not difficult to check that $\widehat{\widecheck\D}\cong\D^\diamond\cong \widecheck{\widehat\D}$.

\begin{prop}\label{bicomplete_der}
If $\t=(\D^{\leq0},\D^{\geq0})$ is a countably $k$-smashing and countably $k$-cosmashing $t$-structure for some $k\in\N$, the following diagram commutes up to a natural transformation:
\[
\xymatrix{
&&\widehat\D\ar[drr]^{\tel_{\widehat\t}}\\
\D\ar[urr]^{\tow_\t}\ar[drr]_{\tel_\t}&&&&\D^{\diamond}\\
&&\widecheck \D\ar[urr]_{\tow_{\widecheck \t}}
}
\]
Furthermore, if $\t$ is non-degenerate, then the composition $\tow_{\widecheck \t}\tel_\t\cong\tel_{\widehat \t}\tow_\t$ induces an equivalence of derivators $\D\cong \D^\diamond$. 
\end{prop}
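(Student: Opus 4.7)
The plan is first to verify the commutativity of the diagram up to natural isomorphism, and then to argue that each leg is itself an equivalence under the non-degeneracy hypothesis.

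For commutativity, I would unpack both compositions pointwise. For $X \in \D(I)$, tracing through Definitions \ref{def_co_completion} and \ref{def_bi_completion} shows that $\tel_{\widehat\t}\tow_\t(X) \in \D^\diamond(I)$ has $(n,m)$-component canonically isomorphic to $(X^{\geq -n})^{\leq m}$, while $\tow_{\widecheck\t}\tel_\t(X)$ has $(n,m)$-component $(X^{\leq m})^{\geq -n}$. The classical commutation of the truncation functors of a $t$-structure, $\tau^{\leq m}\tau^{\geq -n} \cong \tau^{\geq -n}\tau^{\leq m}$, which follows by applying the octahedral axiom (compare Lemma \ref{general_iso_truncations}) to the two truncation triangles of $X$, yields the required natural isomorphism at the level of the base. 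Naturality in $I \in \Dia$ follows because $u^*$ preserves aisle, co-aisle and hence truncations by Proposition \ref{lift_tstructure_general}.

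For the equivalence statement, I would factor the composite through two successive equivalences. Since $\t$ is countably $k$-cosmashing and (left) non-degenerate, Proposition \ref{complete_proposition} gives that $\tow_\t\colon \D \to \widehat\D$ is an equivalence of derivators. It then remains to check that $\widehat\t$ on $\widehat\D$ is countably $k$-smashing and right non-degenerate, so that the dual of Proposition \ref{complete_proposition} applies. Right non-degeneracy is immediate: if $\mathscr X \in \bigcap_{n \in \N}\widehat\D^{\geq n}(\bbone)$ then every level $\mathscr X_m$ lies in $\bigcap_n \D^{\geq n}(\bbone)$, which is $0$ by right non-degeneracy of $\t$. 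The $k$-smashing property transfers via the equivalence $\tow_\t$ together with the $k$-smashing property of $\t$ and Lemma \ref{describe_truncated_product}. Applying the dual of Proposition \ref{complete_proposition} thus yields that $\tel_{\widehat\t}\colon \widehat\D \to \widecheck{\widehat\D}$ is an equivalence. After identifying $\widecheck{\widehat\D}$ with $\D^\diamond$, the composite $\tel_{\widehat\t}\tow_\t$ is an equivalence $\D \to \D^\diamond$, and by the first paragraph it agrees with $\tow_{\widecheck\t}\tel_\t$.

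The identification $\widecheck{\widehat\D} \cong \D^\diamond \cong \widehat{\widecheck\D}$ is essentially a matter of unpacking definitions: an object of $\widecheck{\widehat\D}$ is a diagram $\N \to \widehat\D$ satisfying the right coaisle condition, which upon evaluation at each $n \in \N^{\op}$ gives precisely the four conditions in Definition \ref{def_bi_completion}. The main obstacle, and the step requiring the most care, is the verification that $\widehat\t$ inherits the countably $k$-smashing property from $\t$, because coproducts in $\widehat\D$ need not be computed componentwise in $\D^{\N^\op}$ but through the reflection; one has to show that reflection preserves enough of the coaisle to keep the $k$-smashing estimate intact.
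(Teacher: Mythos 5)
Your proposal is correct and follows essentially the same route as the paper: the commutativity of the square is the commutation of left and right truncations for a $t$-structure (checked levelwise, as in the paper), and the equivalence $\D\cong\D^\diamond$ is obtained from Proposition \ref{complete_proposition} and its dual, applied first to $\t$ and then to the induced $t$-structure on the completion. The only difference is cosmetic: the paper asserts in the discussion preceding the proposition that $\widehat\t$ and $\widecheck\t$ are again countably $k$-smashing and $k$-cosmashing, whereas you recover the needed hypotheses (non-degeneracy and the smashing estimate for $\widehat\t$) through the $t$-exact equivalence $\tow_\t$, which is a legitimate and equally short verification.
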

\begin{proof}
The proof boils down to the usual fact that left and right truncations with respect to a given $t$-structure commute with each other. For the equivalence $\D\cong \D^\diamond$ use Proposition \ref{complete_proposition} and its dual.
\end{proof}

\subsection{The (unbounded) filtered prederivator $\FDu$}

Let us start with the following definition:

\begin{defn}\label{half_filtered_der_cat}
Given $i\in \N$, let $(i,\N^\op,\Z)\colon \bbone\times \N^\op\times \Z\to \N\times \N^{\op}\times \Z$ be the inclusion such that $(0,n,m)\mapsto (i,n,m)$. Given $ \mathscr X\in \D^{\N\times \N^\op\times\Z}(I)$ we let $\mathscr X_{(i,\N^\op,\Z)}:=(i,\N^\op,\Z)^*\mathscr X\in \D^{\N^{\op}\times\Z}(I)$. We define the {\bf unbounded filtered prederivator} 
\[
\FDu\colon \Dia^{\op}\to \CAT,
\] 
as a full sub-prederivator of $\D^{\N\times \N^\op\times\Z}$ such that, given $I\in\Dia$, $\FDu(I)$ is the full subcategory of $\D^{\N\times\N^\op\times\Z}(I)$ spanned by those $\mathscr X$ such that, for any $i\in \N$, the object $\mathscr X_{(i,\N^\op,\Z)}$ belongs to $\FDp(I)$. 
\end{defn}

As for $\FD$, $\FDp$, and $\FDm$, in general, $\FDu$ is not a derivator but, for any $I\in \Dia$, $\FDu(I)$ is a full triangulated subcategory of $\D^{\N\times\N^\op\times\Z}(I)$. In particular, it makes sense to consider triangles and to speak about $t$-structures in $\FDu(I)$ for some $I\in \Dia$. 

\begin{cor}\label{lift_NNZ}
Given $I\in\Dia$, $\dia_{\N}\colon \D^{\N\times\N^\op\times \Z}(I)\to (\D^{\N^{\op}\times\Z}(I))^{\N}$ induces a functor
\[
\dia_{\N}\colon \FDu(I)\to (\FDp(I))^{\N}
\]
which is full and essentially surjective. Furthermore, given $\mathscr X,\,\mathscr Y \in \FDu(I)$, the diagram functor induces an isomorphism
\[
\FDu(I)(\mathscr X,\mathscr Y)\cong \FDp(I)^{\N}(\dia_{\N}\mathscr X,\dia_{\N}\mathscr Y)
\]
provided $\FDp(I)(\Sigma \mathscr X_{(i,\N^\op,\Z)},\mathscr Y_{(i+1,\N^\op,\Z)})=0$ for all $i\in\N$.
\end{cor}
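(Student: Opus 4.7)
The strategy is to reduce Corollary \ref{lift_NNZ} to Lemma \ref{lem_epivalence_N_op} applied to the shifted stable derivator $\D^{\N^{\op}\times\Z}$, in essentially the same way that Corollary \ref{lift_NZ} was reduced to Lemma \ref{lem_epivalence_N_op} applied to $\D^{\Z}$. The key observation is that, even though $\FDp$ is merely a full sub-prederivator of $\D^{\N^{\op}\times\Z}$ and not itself a stable derivator, what we really need is the stability of the ambient $\D^{\N^{\op}\times\Z}$, together with the fact that the subcategory inclusions $\FDp(I)\hookrightarrow \D^{\N^{\op}\times\Z}(I)$ and $\FDu(I)\hookrightarrow \D^{\N\times\N^{\op}\times\Z}(I)$ are full.

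First, I would apply Lemma \ref{lem_epivalence_N_op} with $\D$ replaced by the shifted stable derivator $\D^{\N^{\op}\times\Z}$. This yields that the diagram functor
\[
\dia_{\N}\colon \D^{\N\times\N^{\op}\times\Z}(I)=(\D^{\N^{\op}\times\Z})^{\N}(I)\to \D^{\N^{\op}\times\Z}(I)^{\N}
\]
is full and essentially surjective. Second, I would restrict to $\FDu(I)$: by the very definition of $\FDu$, an object $\mathscr X\in \D^{\N\times\N^{\op}\times\Z}(I)$ lies in $\FDu(I)$ if and only if $\mathscr X_{(i,\N^{\op},\Z)}\in\FDp(I)$ for every $i\in\N$, i.e.\ if and only if $\dia_{\N}(\mathscr X)$ takes values in $\FDp(I)\subseteq \D^{\N^{\op}\times\Z}(I)$. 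Hence the restriction lands in $(\FDp(I))^{\N}$, and essential surjectivity is immediate: given a diagram in $(\FDp(I))^{\N}$, lift it to $\D^{\N\times\N^{\op}\times\Z}(I)$ using the already established essential surjectivity, and the lift automatically lies in $\FDu(I)$ by the pointwise criterion. Fullness of the restricted functor follows from fullness of the unrestricted one combined with fullness of the inclusions $\FDp(I)\hookrightarrow\D^{\N^{\op}\times\Z}(I)$ and $\FDu(I)\hookrightarrow\D^{\N\times\N^{\op}\times\Z}(I)$.

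For the Hom-set statement, given $\mathscr X,\mathscr Y\in\FDu(I)$, the fullness of the inclusion gives $\FDu(I)(\mathscr X,\mathscr Y)=\D^{\N\times\N^{\op}\times\Z}(I)(\mathscr X,\mathscr Y)$, and similarly $\FDp(I)^{\N}(\dia_{\N}\mathscr X,\dia_{\N}\mathscr Y)=\D^{\N^{\op}\times\Z}(I)^{\N}(\dia_{\N}\mathscr X,\dia_{\N}\mathscr Y)$. By the second part of Lemma \ref{lem_epivalence_N_op} (applied again to $\D^{\N^{\op}\times\Z}$), the canonical map
\[
\D^{\N\times\N^{\op}\times\Z}(I)(\mathscr X,\mathscr Y)\to \D^{\N^{\op}\times\Z}(I)^{\N}(\dia_{\N}\mathscr X,\dia_{\N}\mathscr Y)
\]
is an isomorphism as soon as the Toda vanishing condition $\D^{\N^{\op}\times\Z}(I)(\Sigma \mathscr X_{(i,\N^{\op},\Z)},\mathscr Y_{(i+1,\N^{\op},\Z)})=0$ holds for all $i\in\N$, which by fullness of $\FDp(I)\hookrightarrow\D^{\N^{\op}\times\Z}(I)$ is precisely the stated hypothesis.

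I do not anticipate any real obstacle: the entire argument is a mechanical transcription of the proof of Corollary \ref{lift_NZ} with $\Z$ replaced by $\N^{\op}\times\Z$. The only point that deserves a brief mention is that one does not need $\FDp$ to be stable (indeed it is not even a derivator); it is enough that the ambient derivator $\D^{\N^{\op}\times\Z}$ is stable and that $\FDp(I)$ and $\FDu(I)$ are defined as full subcategories cut out by a pointwise condition that is preserved by the diagram functor in both directions.
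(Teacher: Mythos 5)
Your proof is correct and follows essentially the same route as the paper's: both apply Lemma \ref{lem_epivalence_N_op} to the shifted stable derivator $\D^{\N^{\op}\times\Z}$ and then restrict to $\FDu(I)$ using that the inclusions are full and that membership in $\FDu$ is a pointwise condition detected by $\dia_\N$. Your write-up is a bit more explicit about why the restriction lands in $(\FDp(I))^{\N}$ and why essential surjectivity and fullness persist, but this is exactly the content the paper compresses into the phrase ``by construction.''
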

\begin{proof}
Applying Lemma \ref{lem_epivalence_N_op} to the shifted stable derivator $\D^{\N^{\op}\times\Z}$, one can see that the functor $\dia_{\N}\colon \D^{\N\times\N^\op\times \Z}(I)=(\D^{\N^\op\times\Z})^{\N}(I)\to \D^{\N^\op\times\Z}(I)^{\N}$ is full and essentially surjective. Furthermore, the essentially image of the restriction of $\dia_{\N}$ to $\FDu(I)$ is, by construction, exactly $(\FDp(I))^{\N}$. This proves the first part of the statement.

To verify the second part, let $\mathscr X,\,\mathscr Y \in \FDu(I)\subseteq  \D^{\N\times{\N^\op}\times \Z}(I)$, then, again by Lemma \ref{lem_epivalence_N_op}, the diagram functor induces an isomorphism
\begin{align*}
\FDu(I)(\mathscr X,\mathscr Y)&=\D^{\N\times\N^\op\times \Z}(I)(\mathscr X,\mathscr Y)\\
&\cong \D^{\N^{\op}\times\Z}(I)^{\N}(\dia_{\N}\mathscr X,\dia_{\N}\mathscr Y)\\
&= (\FDp(I))^{\N}(\dia_{\N}\mathscr X,\dia_{\N}\mathscr Y)
\end{align*}
if $\FDp(I)(\Sigma \mathscr X_{(i,\N^\op,\Z)},\mathscr Y_{(i+1,\N^\op,\Z)})=\D^{\N^{\op}\times\Z}(I)(\Sigma \mathscr X_{(i,\N^{\op},\Z)},\mathscr Y_{(i+1,\N^{\op},\Z)})=0$ for all $i\in\N$. 
\end{proof}

\begin{prop}\label{heart_is_ChbNN}
Consider the following two classes of objects in $\FDu(\bbone)$:
\begin{align*}
\FDu^{\leq0}(\bbone)&=\{\mathscr X\in \FDu(\bbone):\mathscr X_{(i,\N^{\op},\Z)}\in \FDp^{\leq 0}(\bbone),\, \forall i\in \N\},\\
\FDu^{\geq0}(\bbone)&=\{\mathscr X\in \FDu(\bbone):\mathscr X_{(i,\N^{\op},\Z)}\in \FDp^{\geq 0}(\bbone),\, \forall i\in\N\}.
\end{align*}
Then $\t_{B}:=(\FDu^{\leq0}(\bbone),\FDu^{\geq0}(\bbone))$ is a $t$-structure on $\FDu(\bbone)$ whose heart is equivalent to $\Ch^b(\A)^{\N\times\N^\op}$.
\end{prop}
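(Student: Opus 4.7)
The plan is to bootstrap from Proposition \ref{heart_is_Ch-} by regarding $\FDu$ as built from $\FDp$ via the extra $\N$-coordinate, mimicking the proof strategy that produced $\t_B^-$ on $\FDp$ from $\t_B$ on $\FD$. Throughout, the key technical tool will be Corollary \ref{lift_NNZ}, which plays exactly the role that Corollary \ref{lift_NZ} played in the proof of Proposition \ref{heart_is_Ch-}.

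First, I would check the axioms (t-S.1) and (t-S.2). Closure of $\FDu^{\leq 0}(\bbone)$ and $\FDu^{\geq 0}(\bbone)$ under the required extensions and suspension/loop shifts is immediate, because each evaluation $(i,\N^\op,\Z)^*$ is a triangulated functor and $(\FDp^{\leq 0}(\bbone),\FDp^{\geq 0}(\bbone))$ is a $t$-structure by Proposition \ref{heart_is_Ch-}. For orthogonality, given $\mathscr X\in\FDu^{\leq 0}(\bbone)$ and $\mathscr Y\in\FDu^{\geq 1}(\bbone)$, for each $i\in\N$ we have $\Sigma\mathscr X_{(i,\N^\op,\Z)}\in\FDp^{\leq -1}(\bbone)$ and $\mathscr Y_{(i+1,\N^\op,\Z)}\in\FDp^{\geq 1}(\bbone)$, hence $\FDp(\bbone)(\Sigma\mathscr X_{(i,\N^\op,\Z)},\mathscr Y_{(i+1,\N^\op,\Z)})=0$. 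Corollary \ref{lift_NNZ} then yields $\FDu(\bbone)(\mathscr X,\mathscr Y)\cong \FDp(\bbone)^{\N}(\dia_{\N}\mathscr X,\dia_{\N}\mathscr Y)$, and the latter is contained in $\prod_{i\in\N}\FDp(\bbone)(\mathscr X_{(i,\N^\op,\Z)},\mathscr Y_{(i,\N^\op,\Z)})$, which vanishes by Proposition \ref{heart_is_Ch-}.

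Next, for axiom (t-S.3), given $\mathscr Z\in\FDu(\bbone)$, I would apply the truncation of $\t_B^-$ slicewise: form $\phi\colon X\to \dia_{\N}\mathscr Z$ in $\FDp(\bbone)^{\N}$ whose $i$-th component $\phi_i\colon X(i)\to \mathscr Z_{(i,\N^\op,\Z)}$ is the $\t_B^-$-truncation; here the naturality on $\N$ is automatic from the functoriality of truncations in $\FDp(\bbone)$. By Corollary \ref{lift_NNZ}, lift $\phi$ to $\Phi\colon\mathscr X\to\mathscr Z$ in $\FDu(\bbone)$ and complete to a triangle $\mathscr X\to\mathscr Z\to\mathscr Y\to\Sigma\mathscr X$. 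Applying each $(i,\N^\op,\Z)^*$ (a triangulated functor) recovers the $\t_B^-$-truncation triangle of $\mathscr Z_{(i,\N^\op,\Z)}$, whence $\mathscr X\in\FDu^{\leq 0}(\bbone)$ and $\mathscr Y\in\FDu^{\geq 1}(\bbone)$.

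Finally, for the identification of the heart, the diagram functor $\dia_{\N}$ induces a functor
\[
\FDu^{\leq 0}(\bbone)\cap\FDu^{\geq 0}(\bbone)\longrightarrow (\FDp^{\leq 0}(\bbone)\cap \FDp^{\geq 0}(\bbone))^{\N}\cong (\Ch^b(\A)^{\N^\op})^{\N}=\Ch^b(\A)^{\N\times\N^\op},
\]
which is essentially surjective and full by Corollary \ref{lift_NNZ} (since $\dia_{\N}\colon\FDu(I)\to\FDp(I)^{\N}$ already is). Faithfulness follows from the same Toda-vanishing argument: for $\mathscr X,\mathscr Y$ in the heart we have $\Sigma\mathscr X_{(i,\N^\op,\Z)}\in\FDp^{\leq -1}$ and $\mathscr Y_{(i+1,\N^\op,\Z)}\in\FDp^{\geq 0}$, so the hypothesis of Corollary \ref{lift_NNZ} is satisfied. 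The main obstacle I anticipate is purely bookkeeping: making sure that the slicewise lift in the truncation step is compatible with the structural maps in the $\N$-direction so that $\Phi$ exists \emph{in $\FDu(\bbone)$} and not merely in $\D^{\N\times\N^\op\times\Z}(\bbone)$; this is handled by Corollary \ref{lift_NNZ} together with the observation that membership in $\FDu$ is tested slicewise.
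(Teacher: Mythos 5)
Your proposal is correct and is essentially the paper's own proof: the paper simply declares the argument ``completely analogous'' to the one for the right-bounded Beilinson $t$-structure (Proposition \ref{heart_is_Ch-}), with Corollary \ref{lift_NNZ} replacing Corollary \ref{lift_NZ}, which is exactly the slicewise truncation-lifting and Toda-vanishing argument you spell out.
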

\begin{proof}
The proof is completely analogous to that of Proposition \ref{heart_is_Chb}, just using Corollary \ref{lift_NNZ} instead of Corollary \ref{lift_NZ}.
\end{proof}

\begin{defn}
The $t$-structure $\t_B:=(\FDu^{\leq0},\FDu^{\geq0})$ on $\FDu$ described in the above proposition is said to be the {\bf unbounded Beilinson $t$-structure} induced by $\t$. 
\end{defn}

\subsection{The unbounded realization functor}

\begin{thm}\label{heart_is_ch}
Let $\t_B=(\FDu^{\leq0},\FDu^{\geq0})$ be the Beilinson $t$-structure in $\FDu$ induced by $\t$. Define a morphism of prederivators
\[
\xymatrix{
\Tot_\t\colon\Ch_\A\ar[rr]^(.35){\widehat\tel\,\tow}&& (\Ch^b_\A)^{\N\times\N^{\op}}\subseteq \FDu\subseteq \D^{\N\times \N^\op\times \Z}\ar[rr]^(.65){\hocolim_{\Z}}&& \D^{\N\times\N^{\op}}.
}
\]
Then the following statements hold true:
\begin{enumerate}
\item $\Tot_\t$ takes values in $ \D^\diamond$;
\item given a quasi-iso $\phi$ in $\Ch(\A^{I})$ for some $I\in\Cat$,  $\Tot_\t\phi$ is an iso.
\end{enumerate} 
\end{thm}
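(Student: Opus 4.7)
My plan is to mimic the proof of Theorem \ref{heart_is_ch+}, now tracking the bi-indexed bookkeeping needed for the $\t$-bicompletion. Fix $I\in\Dia$ and $X\in \Ch_\A(I)$. By construction of $\widehat\tel\tow$, the $(n,m)$-component of $\widehat\tel\tow X\in (\Ch^b_\A(I))^{\N\times \N^{\op}}$ is the doubly-truncated complex $\tau^{\leq m}\tau^{\geq -n}X\in \Ch^b(\A^I)$. The first step is to use \cite[Prop.\,2.6]{Moritz} (the evaluation functor commutes with $\hocolim_\Z$, since the $\Z$-direction is independent from $\N\times \N^{\op}$) to identify
\[
(\Tot_\t X)_{n,m}\cong \hocolim_\Z(\tau^{\leq m}\tau^{\geq -n}X)\cong \Tot_\t^b(\tau^{\leq m}\tau^{\geq -n}X),
\]
where the second isomorphism is Proposition \ref{real_is_tot_prop} applied to a bounded complex.

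For statement (1), axioms (1) and (2) of Definition \ref{def_bi_completion} would follow immediately from the $t$-exactness of $\Tot_\t^b$ (Proposition \ref{description_partial_tots}), which places $(\Tot_\t X)_{n,m}$ in $\D^{\leq m}(I)\cap \D^{\geq -n}(I)$. For axiom (3), the natural complex map $\tau^{\leq m}\tau^{\geq -n}X\to \tau^{\leq m}\tau^{\geq -n+1}X$ has fiber quasi-isomorphic to $H^{-n}(X)$ placed in cochain degree $-n$; applying $\Tot_\t^b$ (which sends quasi-isomorphisms to isomorphisms and short exact sequences to triangles by Proposition \ref{qi_to_iso}) yields a fiber in $\D^{\leq -n}(I)\cap\D^{\geq -n}(I)$, in particular in $\D^{\leq -n}(I)$, so the induced map becomes an isomorphism after the truncation $(-)^{\geq -n+1}$. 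Axiom (4) follows dually: the cofiber of $\tau^{\leq m}\tau^{\geq -n}X\to \tau^{\leq m+1}\tau^{\geq -n}X$ is concentrated (up to quasi-isomorphism) in cochain degree $m+1$, hence lies in $\D^{\geq m+1}(I)$ after realization, and is therefore killed by $(-)^{\leq m}$.

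For statement (2), any pointwise quasi-isomorphism $\phi\colon X\to Y$ in $\Ch(\A^I)$ restricts to a pointwise quasi-isomorphism $\tau^{\leq m}\tau^{\geq -n}\phi$ of bounded complexes for every $(n,m)$. Since $\Tot_\t^b$ carries quasi-isomorphisms to isomorphisms by Proposition \ref{qi_to_iso}, each $(\Tot_\t\phi)_{n,m}$ would be an isomorphism in $\D(I)$, and axiom (Der.$2$) applied to the shifted derivator $\D^{\N\times\N^{\op}}$ would then give that $\Tot_\t\phi$ is itself an isomorphism in $\D^{\N\times\N^{\op}}(I)$, hence in $\D^{\diamond}(I)$.

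The principal technical obstacle is the verification of the compatibility axioms (3) and (4): one must simultaneously interlace the two smart-truncation directions of the bi-tower with the $t$-exactness of $\real_\t^b$ and keep the correct cohomological degrees of the fibers and cofibers. Everything else is formal, relying on the universal property of pseudo-colimits and on the identification of the unbounded Beilinson heart with $\Ch^b(\A)^{\N\times\N^{\op}}$ provided by Proposition \ref{heart_is_ChbNN}, together with the interchange between left- and right-completions encoded in Proposition \ref{bicomplete_der}.
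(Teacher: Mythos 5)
Your proof is correct and takes essentially the same approach as the paper: identify each $(n,m)$-component of $\Tot_\t X$ with $\Tot_\t^b$ applied to a doubly smart-truncated bounded complex via \cite[Prop.\,2.6]{Moritz} and Proposition~\ref{real_is_tot_prop}, then verify the axioms of $\D^\diamond$ using the $t$-exactness and exactness properties of $\Tot_\t^b$, and for (2) invoke (Der.2) pointwise. You actually supply more detail than the paper on axioms (3) and (4) of Definition~\ref{def_bi_completion} (the cohomology-of-the-(co)fiber argument, which one concludes via Lemma~\ref{general_iso_truncations}), where the paper simply appeals to ``the already verified properties of $\Tot_\t^b$''.
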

\begin{proof}
(1) Let $I\in \Cat$ and $X\in \Ch(\A^I)$. By~\cite[Prop.\,2.6]{Moritz}, for any $(n,m)\in\N\times \N^\op$,
\begin{align*}
(\hocolim_{\Z}\widehat\tel\,\tow X)_{(n,m)}&\cong \hocolim_{\Z}(\widehat\tel\,\tow X)_{(n,m)} \\
&\cong\hocolim_{\Z}(\tau^{\geq-m}\tau^{\leq n}X),
\end{align*}
That is, the underlying diagram of $\Tot_\t X$ is given by
\[
\xymatrix@C=15pt@R=20pt{
\Tot_\t^b\tau^{\geq0}\tau^{\leq0}X\ar[r]& \Tot_\t^b\tau^{\geq0}\tau^{\leq1}X\ar[r]& \Tot_\t^b\tau^{\geq0}\tau^{\leq 2}X\ar[r]&\cdots\\
\Tot_\t^b\tau^{\geq-1}\tau^{\leq0}X\ar[u]\ar[r]& \Tot_\t^b\tau^{\geq-1}\tau^{\leq1}X\ar[r]\ar[u]& \Tot_\t^b\tau^{\geq-1}\tau^{\leq 2}X\ar[r]\ar[u]&\cdots\\
\Tot_\t^b\tau^{\geq-2}\tau^{\leq0}X\ar[u]\ar[r]& \Tot_\t^b\tau^{\geq-2}\tau^{\leq1}X\ar[r]\ar[u]& \Tot_\t^b\tau^{\geq-2}\tau^{\leq 2}X\ar[r]\ar[u]&\cdots\\
\vdots\ar[u]& \vdots\ar[u]& \vdots\ar[u]&
}
\]
so that $\Tot_\t X\in \D^{\diamond}(I)$ by the already verified properties of $\Tot_\t^b$. 

(2) Let $I\in\Cat$ and $\phi\colon X\to Y$ be a quasi-isomorphism in $\Ch(\A^I)$. 
We have to verify that $(\hocolim_{\Z}\widehat\tel\,\tow \phi)_{(n,m)}$ is an isomorphism for any $(n,m)\in\N\times \N^\op$. By~\cite[Proposition 2.6]{Moritz}, 
\[
(\hocolim_{\Z}\widehat\tel\,\tow \phi)_{(n,m)}\cong \hocolim_{\Z}(\widehat\tel\,\tow \phi)_{(n,m)}\cong\hocolim_{\Z}\tau^{\geq-m}\tau^{\leq n}\phi .
\]
To conclude recall that we know by Proposition \ref{real_is_tot_prop} that 
\[
\hocolim_{\Z}\tau^{\geq-m}\tau^{\leq n}\phi=\Tot_\t^b(\tau^{\geq-m}\tau^{\leq n}\phi),
\]
which is an iso since $\tau^{\geq-m}\tau^{\leq n}\phi$ is a quasi-isomorphism.
\end{proof}

Suppose now that the Abelian category $\A$ has enough injectives and it is (Ab.$4^*$)-$k$ for some $k\in\N$ or, alternatively, that the Abelian category $\A$ has enough projectives and it is (Ab.$4$)-$k$. In both cases, we have seen in Subsection \ref{complete_subsection_tower_tel_complexes} that it makes sense to consider the prederivator $\sfD_\A$ where, for any $I\in \Dia$, 
\[
\sfD_\A(I)=\sfD(\A^{I})=\Ch(\A^{I})[\W_I^{-1}]
\]
where $\W_I$ is the class of quasi-isomorphisms in $\Ch(\A^{I})$.
Hence, a consequence of Theorem \ref{heart_is_ch} is that the corestriction $\Tot_\t\colon \Ch_\A\to \D^{\diamond}$ factors uniquely through the quotient $\Ch_\A\to \sfD_\A$, giving rise to a morphism of prederivators 
\[
\real_\t\colon \sfD_\A\to \D^{\diamond}.
\]

\subsection{Conditions on $\real_\t$ to be fully faithful}
We have seen in the the previous subsection that, under suitable hypotheses on $\A$, we can construct a morphism of prederivators $\real_\t\colon \sfD_\A\to  \D^{\diamond}$ so, composing  with $\holim_{\N^\op}\hocolim_\N\colon  \D^{\diamond}\to \widehat\D\to \D$ we get a morphism of prederivators $\sfD_\A\to \D$. In the following proposition we give some general conditions to make this morphism fully faithful:

\begin{thm}\label{unbounded_ff_real}
Suppose that $\t$ satisfies the following conditions:
\begin{enumerate}
\item $\t$ is (co)effa\c cable;
\item $\t$ is non-degenerate (right and left);
\item $\t$ is $k$-cosmashing for some $k\in\N$;
\item $\t$ is ($0$-)smashing;
\item the heart $\A$ of $\t$ has enough injectives.
\end{enumerate} 
Then, $\real_\t\colon \sfD_\A\to \D$ commutes with products and coproducts, and it is fully faithful. Dually, $\real_\t\colon \sfD_\A\to \D$ commutes with products and coproducts, and it is fully faithful provided the following hypotheses hold:
\begin{enumerate}
\item[(1')] $\t$ is (co)effa\c cable;
\item[(2')] $\t$ is non-degenerate (right and left);
\item[(3')] $\t$ is $k$-smashing for some $k\in\N$;
\item[(4')] $\t$ is ($0$-)cosmashing;
\item[(5')] the heart $\A$ of $\t$ has enough projectives.
\end{enumerate} 
\end{thm}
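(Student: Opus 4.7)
We treat the first list of hypotheses (the second is dual). The strategy is to extend the argument of Proposition~\ref{prop_fully_minus} from the right-bounded to the unbounded setting, via a telescope decomposition combined with the equivalence $\D\simeq\D^\diamond$ of Proposition~\ref{bicomplete_der} (whose hypotheses (2)--(4) hold).

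Under this equivalence, $\real_\t$ becomes a morphism $\sfD_\A\to\D$ whose stagewise behaviour can be read off from its construction in Theorem~\ref{heart_is_ch} as the double totalization $\hocolim_\Z\circ\,\widehat\tel\circ\tow$. Combining this with the analogous constructions of $\real_\t^-$ and $\real_\t^b$, one obtains the basic compatibility
\[
\tau^{\geq -m}\tau^{\leq n}\real_\t X \;\cong\; \real_\t^b\bigl(\tau^{\geq -m}\tau^{\leq n}X\bigr),
\]
from which $t$-exactness of $\real_\t$ and the fact that $\real_\t$ restricts to $\real_\t^-$ on $\sfD_\A^-$ follow. Commutation with arbitrary products and coproducts is then proved exactly as in Proposition~\ref{prop_fully_minus}, applied stagewise: Lemma~\ref{describe_truncated_product} (and its $0$-smashing analogue guaranteed by~(4)) reduces every $\tau^{\geq-m}\tau^{\leq n}$-stage to a uniformly bounded (co)product, on which Corollary~\ref{real_b_commutes_prod_coprod} applies; stagewise identity promotes to identity in $\D(\bbone)$ since $\D\simeq\D^\diamond$.

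For full faithfulness, given $X,Y\in\sfD(\A^I)$, I would use the telescope triangle
\[
\coprod_{n\in\N}\tau^{\leq n}X\longrightarrow \coprod_{n\in\N}\tau^{\leq n}X\longrightarrow X\longrightarrow\Sigma\!\coprod_{n\in\N}\tau^{\leq n}X,
\]
provided by right non-degeneracy of $\t_\A$ together with exact coproducts in $\A$ (both implied by our hypotheses), with each $\tau^{\leq n}X\in\sfD^-(\A^I)$. Applying $\real_\t$ and invoking coproduct commutation yields the analogous triangle in $\D(I)$. The Milnor-type long exact sequences obtained by Homming with $Y$ and $\real_\t Y$, respectively, can then be matched term-by-term: since $\tau^{\leq n}X\in\sfD^{\leq n}$, one has $\sfD(\A^I)(\tau^{\leq n}X,Y)\cong \sfD(\A^I)(\tau^{\leq n}X,\tau^{\leq n}Y)$, an identification that is compatible with $\real_\t$ thanks to the compatibility square above, and Proposition~\ref{prop_fully_minus} applied to the right-bounded pair $(\tau^{\leq n}X,\tau^{\leq n}Y)$ furnishes the required bijection. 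The main obstacle is precisely this term-by-term matching of the two Milnor sequences, which forces us to set up carefully both the compatibility with truncations and the co/product commutation before attempting full faithfulness.
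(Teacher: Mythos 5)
Your proof is correct and takes essentially the same approach as the paper: reduce via truncation and $\t$-completeness to the right-bounded case, apply Lemma~\ref{describe_truncated_product} to commute truncations past (co)products, and establish full faithfulness via a Milnor telescope triangle matched term-by-term against its image under $\real_\t$, which is precisely the promised dual of the argument in Proposition~\ref{prop_fully_minus}. The only cosmetic difference is that the paper works stagewise with $\tau^{\leq n}$ and $\D\simeq\widecheck\D$ rather than the double truncation $\tau^{\geq -m}\tau^{\leq n}$ and $\D\simeq\D^\diamond$, but the two routes are equivalent.
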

\begin{proof}
We give an argument just for the first half of the statement since the proof of the rest is completely dual. Consider the functor  $\real_\t\colon \sfD(\A)\to\D(\bbone)$ and remember that, essentially by construction, for all $n\in\N$, there is a commutative diagram as follows:
\[
\xymatrix{
\sfD(\A)\ar[r]^{\real_\t}\ar[d]_{\tau^{\leq n}}&\D(\bbone)\ar[d]^{\tau^{\leq n}}\\
\sfD^-(\A)\ar[r]|\cong^{\real^-_\t}&\D^-(\bbone)\\
}
\]
where the truncation on the left is taken with respect to $\t_\A$, while the truncation on the right is done with respect to $\t$.
Suppose that we have a family $\{X_i\}_I\subseteq \sfD(\A)$. We are going to prove that
\[
\real_\t\prod_IX_i\cong \prod_I\real_\t X_i\quad \text{and}\quad \real_\t \coprod_IX_i\cong \coprod_I\real_\t X_i.
\]
We start with products. Since we already know that $\tel:\D\leftrightarrows \widecheck\D :\hocolim_{\N}$ is an equivalence, it is enough to show that $\tau^{\leq n}\real_\t\prod_IX_i\cong \tau^{\leq n}\prod_I\real_\t X_i$, for all $n$:
\begin{align*}
\tau^{\leq n}\real_\t\prod_IX_i&\cong\real_\t^-\tau^{\leq n}\prod_IX_i&\qquad&\text{constr.\ of $\real_\t^-$;}\\
&\cong\real_\t^-\tau^{\leq n}\prod_I\tau^{\leq n}X_i&&\text{Lem.\ref{describe_truncated_product};}\\
&\cong\tau^{\leq n}\prod_I\real_\t^-\tau^{\leq n}X_i&&\text{previous section;}\\
&\cong\tau^{\leq n}\prod_I\tau^{\leq n}\real_\t X_i&&\text{constr.\ of $\real_\t$;}\\
&\cong\tau^{\leq n}\prod_I\real_\t X_i&&\text{Lem.\ref{describe_truncated_product}.}
\end{align*}
Similarly, for coproducts, we have that
\begin{align*}
\tau^{\leq n}\real_\t\coprod_IX_i&\cong\real_\t^-\tau^{\leq n}\coprod_IX_i&\quad&\text{constr.\ of $\real_\t^-$;}\\
&\cong\real_\t^-\tau^{\leq n}\coprod_I\tau^{\leq n+k}X_i&&\text{Lem.\ref{describe_truncated_product};}\\
&\cong\tau^{\leq n}\real_\t^-\coprod_I\tau^{\leq n+k}X_i&&\text{$\real_\t^-$ is $t$-exact;}\\
&\cong\tau^{\leq n}\coprod_I\real_\t^-\tau^{\leq n+k}X_i&&\text{previous section;}\\
&\cong\tau^{\leq n}\coprod_I\real_\t X_i&&\text{constr.\ of $\real_\t^-$ and Lem.\ref{describe_truncated_product}.}
\end{align*}
One concludes that, under our hypotheses, $\real_\t$ is fully faithful using a technique similar to (the dual of) the argument in the proof of Theorem \ref{prop_fully_minus}.
\end{proof}

\newpage
\section{Applications to co/tilting equivalences}

In this final section we list some applications of the general constructions developed in the rest of the paper to co/tilting equivalence. In this way we recover some important results from~\cite{rickard1991derived,stovicek2014derived,NSZ,Jorge_e_Chrisostomos}. Let us start recalling some definitions from~\cite{NSZ,Jorge_e_Chrisostomos}:
\begin{defn}
A set $\S$ in a triangulated category $\mathcal D$ is said to be
\begin{itemize}
\item {\bf silting} if $(\S^{\perp_{>0}},\S^{\perp_{<0}})$ is a $t$-structure in $\mathcal D$ and $\S\subseteq  \S^{\perp_{>0}}$;
\item {\bf cosilting} if $({}^{\perp_{<0}}\S,{}^{\perp_{>0}}\S)$ is a $t$-structure in $\mathcal D$ and $\S\subseteq {}^{\perp_{>0}}\S$;
\item {\bf tilting} if it is silting and $\Add(\S)\subseteq \S^{\perp_{\neq0}}$;
\item {\bf cotilting} if it is cosilting and $\Prod(\S)\subseteq {}^{\perp_{\neq0}}\S$;
\item {\bf classical tilting} if it is tilting and any $S\in \S$ is compact.
\end{itemize}
\end{defn}

We collect in the following lemma some basic properties of co/tilting objects from~\cite{NSZ,Jorge_e_Chrisostomos}:
\begin{lem}\label{lem_for_ff_co_tilting_from_lit}
Suppose that $\mathcal D$ has products and coproducts, and let $\S\subseteq \mathcal D$ be a set. Then,
\begin{enumerate}
\item if $\S$ is tilting,  the associated $t$-structure is (co)effa\c{c}able, cosmashing, non-degenerate and $\S$ is a set of generators of $\mathcal D$. Furthermore,  products are exact in the heart and $\S$ is a set of projective generators of the heart;
\item if $\S$ is cotilting,  the associated $t$-structure is (co)effa\c{c}able, smashing, non-degenerate and $\S$ is a set of cogenerators of $\mathcal D$. Furthermore,  coproducts are exact in the heart and $\S$ is a set of injective cogenerators of the heart;
\item if $\S$ is a classical tilting set, then the associated $t$-structure is smashing and its heart is equivalent to a category of modules over a ring with enough idempotents. If, furthermore, $\S=\{S\}$ consists of a single object, then the heart is equivalent to $\Mod(\End_{\mathcal D}(S))$. 
\end{enumerate}
\end{lem}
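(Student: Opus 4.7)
The plan is to verify each clause by assembling the ingredients from the definitions of co/tilting and invoking standard arguments already worked out in the cited references. Since the statements are parallel (parts (1) and (2) are Verdier duals of each other), I would prove (1) in detail and leave (2) to a dual argument.

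First, for part (1), fix a tilting set $\S \subseteq \mathcal D$ with associated $t$-structure $\t_\S = (\S^{\perp_{>0}}, \S^{\perp_{<0}})$ and heart $\H$. The fact that $\S$ is a set of generators follows directly from $\S^{\perp_{\Z}} = 0$, which is a consequence of $\t_\S$ being a $t$-structure (an object left-orthogonal to itself in all degrees must be zero). For \emph{cosmashing}, observe that
\[
\S^{\perp_{>0}} = \bigcap_{S \in \S,\, i > 0} \ker \mathcal D(S, \Sigma^i -),
\]
and each functor $\mathcal D(S,\Sigma^i -)$ commutes with products, so the aisle is closed under products. This in turn implies that products are exact in $\H$ (by Corollary \ref{lift_t_struc} applied to the trivial shape, or by a direct check using the truncation functors). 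The assertion that each $S\in\S$ is projective in $\H$ reduces to $\mathrm{Ext}^n_{\H}(S, H) \cong \mathcal D(S,\Sigma^n H) = 0$ for all $H \in \H$, $n > 0$, which follows from $\mathrm{Add}(\S) \subseteq \S^{\perp_{\neq 0}}$ combined with $H \in \S^{\perp_{<0}}$. Projectivity plus generation yields that $\S$ is a set of projective generators of $\H$, and (co)effaçability then follows standardly: given $\phi\colon X \to \Sigma^n Y$ with $X, Y \in \H^I$ and $n > 0$, one chooses an epimorphism $\psi\colon Z \to X$ with $Z \in \mathrm{Add}(\S)^I$, and $\phi\psi = 0$ because $\mathcal D(I)(Z, \Sigma^n Y)$ vanishes by the tilting condition lifted to $I$-shaped diagrams via Proposition \ref{lift_tstructure_general}. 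Non-degeneracy follows because $\S$ is a set of generators that lies in the aisle: if $X \in \bigcap_n \D^{\leq -n}$ then $\mathcal D(S,\Sigma^k X) = 0$ for all $k \in \Z$ and $S\in\S$, forcing $X = 0$; and dually, if $X \in \bigcap_n \D^{\geq n}$ then $\mathcal D(S,X) = 0$ for all $S\in\S$ which again gives $X = 0$ after shifting.

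Part (2) is obtained by applying the same arguments to $\mathcal D^{\op}$: a cotilting set in $\mathcal D$ is a tilting set in $\mathcal D^{\op}$, so smashing replaces cosmashing, coproducts become exact in $\H$, and $\S$ becomes a set of injective cogenerators. All assertions transport verbatim.

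For part (3), with $\S$ classical tilting, the extra input is compactness. Smashing of $\t_\S$ follows from
\[
\S^{\perp_{<0}} = \bigcap_{S\in\S,\, i < 0}\ker \mathcal D(S,\Sigma^i -),
\]
together with the fact that $\mathcal D(S,-)$ commutes with coproducts when $S$ is compact. Hence the coaisle is closed under coproducts. Identification of the heart with a module category over a ring with enough idempotents $R = \bigoplus_{S,S'\in\S}\mathcal D(S,S')$ is a Gabriel--Popescu-style result: the functor $\H(\S,-)\colon \H \to \mathrm{Mod}(R)$ is exact (since each $S$ is projective in $\H$), faithful and conservative (since $\S$ generates $\mathcal D$ and a fortiori $\H$), and commutes with all small (co)products, and one checks essential surjectivity using the exactness properties and compactness of $\S$; this is the content of \cite[Sec.~5]{NSZ} and \cite[Thm.~3.6]{Jorge_e_Chrisostomos}. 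When $\S = \{S\}$, the ring $R$ reduces to $\End_{\mathcal D}(S)$.

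The main obstacle in carrying out this plan is the last step of part (3), namely the explicit identification of the heart with $\mathrm{Mod}(R)$: effaçability and exactness of the relevant functors are formal, but essential surjectivity requires a careful inductive construction of preimages using the truncation triangles and the compactness hypothesis. All remaining points are book-keeping verifications that can be safely delegated to \cite{NSZ, Jorge_e_Chrisostomos}.
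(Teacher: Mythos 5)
Your proposal is correct in substance, but it takes a different route from the paper: the paper's entire proof of this lemma is a citation (parts (1) and (2) are quoted from \cite[Prop.\,4.3, Lem.\,4.5 and Prop.\,5.1]{Jorge_e_Chrisostomos}, part (3) from \cite[Coro.\,4.3]{NSZ}), whereas you reconstruct most of the arguments directly and only delegate the identification of the heart with a module category in (3). Your direct arguments are the standard ones and they do work: $\S^{\perp_{\Z}}=0$ because such an object lies in $\D^{\leq 0}\cap\D^{\geq 1}$ and hence has zero identity; the aisle $\S^{\perp_{>0}}$ is closed under products since each $\mathcal D(S,\Sigma^i-)$ preserves products; non-degeneracy follows from generation exactly as you say. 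What your write-up buys is self-containedness; what the paper's citation buys is that the delicate points are already settled in the references. Two of those delicate points deserve more care in your sketch. First, the projectivity of $S$ in the heart: the vanishing $\mathcal D(S,\Sigma^n H)=0$ for $n>0$ and $H\in\H$ comes from $H$ lying in the \emph{aisle} $\S^{\perp_{>0}}$, not from $H\in\S^{\perp_{<0}}$ as you write; the tilting condition $\Add(\S)\subseteq\S^{\perp_{\neq 0}}$ is instead what places $\S$ and the $\mathcal D$-coproducts of its copies inside the heart, which is exactly what you need both to make sense of ``projective generators of $\H$'' and to run the effa\c{c}ability argument (the epimorphism $Z\to X$ with $Z\in\Add(\S)$ exists because weak generation plus projectivity plus coproducts in $\H$ forces the canonical map from a coproduct of copies of objects of $\S$ to be epi --- a step you compress into ``projectivity plus generation''). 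Second, a framing point: in the lemma $\mathcal D$ is a bare triangulated category, so ``(co)effa\c{c}able'' can only be checked at the base; your appeal to Proposition \ref{lift_tstructure_general} presupposes a derivator, which is not part of the hypotheses here --- the paper handles the diagram-level effa\c{c}ability separately in the proof of Theorem \ref{general_tilt_thm}, using the generators $i_!T$, and you should do the same rather than fold it into this lemma. With those adjustments your argument is a valid, essentially self-contained alternative to the paper's citation proof.
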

\begin{proof}
Parts (1) and (2) follow by~\cite[Prop.\,4.3, Lem.\,4.5 and Prop.\,5.1]{Jorge_e_Chrisostomos}. Part (3) follows by~\cite[Coro.\,4.3]{NSZ}.
%
%
\end{proof}

We will apply the following consequence of Brown representability to prove the essential surjectivity of $\real_\t$ in several cases:

\begin{lem}\label{brown_implies_ess_su}
Let $\mathcal D$ be a triangulated category with coproducts and with a set of compact generators $\S$. Then $\mathcal D$ satisfies the principle of infinite d\'evissage with respect to $\S$.
\end{lem}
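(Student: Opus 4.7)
Let $\mathcal T := \mathrm{Loc}(\S)$ be the smallest localizing subcategory of $\mathcal D$ containing $\S$; by definition $\mathcal T$ is a triangulated subcategory closed under small coproducts, and since it is closed under $\Sigma$ and $\Sigma^{-1}$ it contains every $\Sigma^k S$ for $S\in\S$, $k\in\Z$. The plan is to produce, for an arbitrary $X\in \mathcal D$, an approximation triangle
\[
T\to X\to Y\to \Sigma T,\qquad T\in\mathcal T,\ Y\in\mathcal T^{\perp_0},
\]
and then to show that $\mathcal T^{\perp_0}=0$ under the generation hypothesis, forcing $X\cong T\in\mathcal T$.

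The existence of such a triangle (i.e.\ the fact that the inclusion $\mathcal T\hookrightarrow\mathcal D$ admits a right adjoint, yielding a Bousfield localization $(\mathcal T,\mathcal T^{\perp_0})$) is where Brown representability enters. First I would observe that the elements of $\S$, being compact in $\mathcal D$, remain compact when viewed in $\mathcal T$, because coproducts in $\mathcal T$ are inherited from $\mathcal D$. Hence $\mathcal T$ is itself a compactly generated triangulated category with generating set $\S$, so Brown representability holds for $\mathcal T$ (Neeman's theorem). From this one obtains that the inclusion $\mathcal T\hookrightarrow\mathcal D$ preserves coproducts and admits a right adjoint, equivalently that every $X\in\mathcal D$ sits in an approximation triangle of the form above; this is the standard semi-orthogonal decomposition argument for compactly generated localizing subcategories.

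It remains to identify $\mathcal T^{\perp_0}$. Given $Y\in\mathcal T^{\perp_0}$, by definition $\mathcal D(T,Y)=0$ for every $T\in\mathcal T$. In particular, since $\Sigma^k S\in\mathcal T$ for every $S\in\S$ and every $k\in\Z$, we deduce $\mathcal D(\Sigma^k S, Y)=0$ for all such $S$ and $k$. The hypothesis that $\S$ is a set of generators of $\mathcal D$ then forces $Y=0$. Plugging this back into the approximation triangle $T\to X\to Y\to \Sigma T$ yields $X\cong T\in\mathcal T$, and since $X$ was arbitrary, $\mathcal T=\mathcal D$.

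The only non-formal step is the invocation of Brown representability to get the Bousfield localization; everything else is bookkeeping with the defining properties of $\mathrm{Loc}(\S)$ and of a generating set. If one wishes to avoid citing Brown representability directly, an alternative approach is to build $T$ by an explicit transfinite small-object argument against the set $\{\Sigma^k S : S\in\S,\ k\in\Z\}$ of compact objects, iteratively killing the maps $\Sigma^k S\to \mathrm{cone}(\text{previous stage})\to X$; but the cleanest presentation is via Brown representability applied to the compactly generated $\mathcal T$.
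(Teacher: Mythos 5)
Your proof is correct and follows the same strategy as the paper: use Brown representability (which applies because $\mathrm{Loc}(\S)$ is itself compactly generated by $\S$) to obtain a right adjoint to the inclusion $\mathrm{Loc}(\S)\hookrightarrow\mathcal D$, form the resulting Bousfield localization triangle $T\to X\to Y\to\Sigma T$ with $Y\in\mathrm{Loc}(\S)^{\perp}$, and then observe that the generation hypothesis forces $Y=0$, whence $X\cong T\in\mathrm{Loc}(\S)$. The only difference is that you spell out the intermediate point that $\S$ remains compact and generating inside $\mathrm{Loc}(\S)$, which the paper leaves implicit behind the citation of Brown representability.
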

\begin{proof}
Consider the inclusion $L\colon \mathrm{Loc}(\S)\to \mathcal D$ and notice that this has a right adjoint $R\colon \mathcal D\to \mathrm{Loc}(\S)$ by Brown representability. Let $X\in \mathcal D$, then there is a triangle 
\[
LRX\to X\to C\to \Sigma LRX
\]
with $LRX\in \mathrm{Loc}(\S)$ and $C\in \mathrm{Loc}(\S)^{\perp}$ (see,~\cite[Prop.\,5.3.1]{zbMATH05831327}). Since $\mathrm{Loc}(\S)^{\perp}\subseteq \left(\bigcup_{i\in \Z}\Sigma^i\S\right)^{\perp}=0$, that is, $\S$ is a set of generators, then $X\cong LRX\in \mathrm{Loc}(\S)$. 
\end{proof}

As a first application of our machinery of realization functors we can prove the following expected extension to the equivalence induced by a classical tilting object  to the context of stable derivators: 

\begin{thm}\label{compact_case}
The following are equivalent for a strong and stable derivator $\D\colon \Cat^{\op}\to \CAT$:
\begin{enumerate}
\item there is a classical tilting set $\S\subseteq \D(\bbone)$ (resp., a classical tilting object $T\in \D(\bbone)$);
\item there is a small preadditive category $\C$ and an equivalence of derivators $F\colon\sfD_{\Mod(\C)}\to \D$ (resp., a ring $R$ and an equivalence of derivators $F\colon\sfD_{\Mod(R)}\to \D$).
\end{enumerate} 
\end{thm}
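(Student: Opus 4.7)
The direction $(2)\Rightarrow(1)$ is immediate: if $F\colon \sfD_{\Mod(\C)}\to \D$ is an equivalence of derivators, the image under $F_\bbone$ of the set of representable functors $\{\C(c,-)\}_{c\in\Ob(\C)}$ (a classical tilting set in $\sfD(\Mod(\C))$) is a classical tilting set in $\D(\bbone)$, since compactness and the orthogonality condition $\Add(\S)\subseteq \S^{\perp_{\neq 0}}$ are preserved by exact equivalences of triangulated categories. The single-object variant is identical, with $F_\bbone(R)$ in place of the representables.

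For $(1)\Rightarrow(2)$, let $\S\subseteq \D(\bbone)$ be a classical tilting set and $\t_\S=(\S^{\perp_{>0}},\S^{\perp_{<0}})$ the associated $t$-structure, with heart $\A$. By Lemma~\ref{lem_for_ff_co_tilting_from_lit}(3), $\A\simeq \Mod(\C)$ where $\C$ is the small preadditive category with object set $\S$ and morphisms inherited from $\D(\bbone)$ (in the single-object case, $\A\simeq \Mod(\End_{\D(\bbone)}(T))$). Parts (1) and (3) of the same lemma ensure that $\t_\S$ is (co)effa\c cable, non-degenerate, $0$-smashing and $0$-cosmashing; moreover, $\Mod(\C)$ is a Grothendieck category with enough projectives, hence in particular (Ab.$4$)-$0$. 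The dual of Theorem~\ref{unbounded_ff_real} (with $k=0$) therefore produces a fully faithful, exact morphism of prederivators $\real_{\t_\S}\colon \sfD_\A\to \D$ that commutes with arbitrary products and coproducts. At the base, the essential image of $\real_{\t_\S,\bbone}$ is a triangulated subcategory of $\D(\bbone)$ closed under coproducts and containing $\S$ (as $\real_{\t_\S}$ restricts to the inclusion $\A\hookrightarrow \D(\bbone)$); since $\S$ is a set of compact generators, Lemma~\ref{brown_implies_ess_su} forces this essential image to coincide with $\mathrm{Loc}(\S)=\D(\bbone)$.

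It remains to lift essential surjectivity from level $\bbone$ to an arbitrary $I\in \Cat$. The adjunction $(i_!,i^*)$ attached to each $i\in I$ has $i^*$ preserving coproducts (being a right adjoint to $i_*$), so $\{i_!(S):i\in I,\,S\in \S\}$ is a set of compact objects in $\D(I)$; that it is also a set of generators follows from (Der.$2$) combined with the identification $\D(I)(\Sigma^k i_!(S),X)\cong \D(\bbone)(\Sigma^k S,X_i)$. Being an exact morphism of stable derivators commuting with coproducts, $\real_{\t_\S}$ is compatible with the pointwise formula (L.Der.$4$) for $i_!$, because in the stable setting every homotopy colimit can be built from coproducts and cofibres; thus the essential image of $\real_{\t_\S,I}$ contains every $i_!(S)$, and a final application of Lemma~\ref{brown_implies_ess_su} forces this essential image to exhaust $\D(I)$. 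The main obstacle is precisely this propagation step: it hinges on the commutation of $\real_{\t_\S}$ with the Kan extensions $i_!$ (reduced, via the pointwise formula, to coproducts and cofibres) and on the compact generation of $\D(I)$ by $\{i_!(S)\}$, which is a standard consequence of compact generation at the base together with the derivator axioms.
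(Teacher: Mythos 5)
Your proof is correct and follows the same approach as the paper: form the realization functor $\real_{\t_\S}\colon \sfD_{\Mod(\C_\S)}\to \D$, apply the dual of Theorem~\ref{unbounded_ff_real} (whose hypotheses are supplied by Lemma~\ref{lem_for_ff_co_tilting_from_lit}) for full faithfulness, and invoke Lemma~\ref{brown_implies_ess_su} via compact generation for essential surjectivity. The extra detail you supply on propagating essential surjectivity from level $\bbone$ to every $I$ is a correct elaboration of what the paper treats implicitly, with one small slip: $i^*$ is the \emph{left} adjoint of $i_*$ (and the right adjoint of $i_!$), not the right adjoint of $i_*$, although the conclusion that $i^*$ preserves coproducts is unaffected.
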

\begin{proof}
If we assume (2), then the representable $\C$-modules form a classical tilting set in $\sfD_{\Mod(\C)}(\bbone)$, so $\S:=\{F(\C(-,c)):c\in\C\}$ is a classical tilting set in $\D(\bbone)$. 

On the other hand, if $\S$ is a classical tilting set in $\D(\bbone)$, then let $\t_\S=(\D_\S^{\leq 0},\D_\S^{\geq0})$ be the associated $t$-structure and consider 
\[
F:=\real_\S\colon \sfD_{\Mod(\C_\S)}\to \D,
\] 
where $\C_\S$ is the small preadditive category given by Lemma \ref{lem_for_ff_co_tilting_from_lit} (3). By Theorem \ref{unbounded_ff_real} and Lemma \ref{lem_for_ff_co_tilting_from_lit}, $F$ is fully faithful. Furthermore the image of $F$ is a localizing subcategory containing the set of compact generators  $\S$, hence, by Lemma \ref{brown_implies_ess_su}, $F$ is also essentially surjective.
\end{proof}

Before stating our main applications, let us recall the following definition:

\begin{defn}\cite{fiorot2016classification}
A pair of $t$-structures $(\t_1=(\mathcal D^{\leq0}_1,\mathcal D^{\geq0}_1),\t_2=(\mathcal D^{\leq0}_2,\mathcal D^{\geq0}_2))$ is said to have {\bf shift} $k\in\Z$ and
{\bf gap} $n\in\N$ if $k$ is the maximal number such that $\mathcal D^{\leq k}_2\subseteq \mathcal D^{\leq0}_1$ and $n$ is the minimal number such that $\mathcal D^{\leq -n}_1\subseteq \mathcal D^{\leq k}_2$. In this case we say that the pair $(\t_1,\t_2)$ is of {\bf type} $(n,k)$.

We say that $\t_1$ and $\t_2$ have {\bf finite distance} if there exist $k\in\Z$ and $n\in\N$ for which the pair $(\t_1,\t_2)$ is of {type} $(n,k)$.
\end{defn}

\begin{lem}\label{finite_distance}
Let $(\t_1,\t_2)$ be a pair of $t$-structures  of {type} $(n,k)$ on a triangulated category $\mathcal D$. If $\t_2$ is $h$-smashing (resp., $h$-cosmashing) for a given $h\in\N$, then $\t_1$ is $(n+h)$-smashing (resp., $(n+h)$-cosmashing).
\end{lem}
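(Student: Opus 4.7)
My plan is to translate the hypothesis on the type $(n,k)$ into a chain of inclusions between coaisles (for the smashing case) or aisles (for the cosmashing case), and then exploit the fact that the smashing/cosmashing property is translation-equivariant in the natural way.

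First I would unpack the definition: the type $(n,k)$ condition is the two-sided inclusion $\mathcal D_1^{\leq -n} \subseteq \mathcal D_2^{\leq k} \subseteq \mathcal D_1^{\leq 0}$. Passing to right orthogonals (and using $\mathcal D^{\perp}_{\leq m} = \mathcal D^{\geq m+1}$ together with the usual reindexing), this is equivalent to $\mathcal D_1^{\geq 0} \subseteq \mathcal D_2^{\geq k} \subseteq \mathcal D_1^{\geq -n}$ at the level of coaisles. This is the key sandwich that converts smashing data for $\t_2$ into smashing data for $\t_1$.

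For the smashing half, take $\{X_\alpha\} \subseteq \mathcal D_1^{\geq 0}$. The left inclusion puts these objects in $\mathcal D_2^{\geq k}$. Now I invoke a ``shifted'' form of the $h$-smashing hypothesis: coproducts of a family in $\mathcal D_2^{\geq k}$ lie in $\mathcal D_2^{\geq k-h}$, which follows from the definition of $h$-smashing via the identity $\mathcal D_2^{\geq k} = \Sigma^{-k}\mathcal D_2^{\geq 0}$ and the fact that $\Sigma^{-k}$ commutes with coproducts. Thus $\coprod_\alpha X_\alpha \in \mathcal D_2^{\geq k-h}$. Applying $\Sigma^h$ to the sandwich inclusion $\mathcal D_2^{\geq k} \subseteq \mathcal D_1^{\geq -n}$ gives $\mathcal D_2^{\geq k-h} \subseteq \mathcal D_1^{\geq -n-h}$, so $\coprod_\alpha X_\alpha \in \mathcal D_1^{\geq -(n+h)}$, proving that $\t_1$ is $(n+h)$-smashing.

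The cosmashing assertion is completely dual: start from $\{X_\alpha\} \subseteq \mathcal D_1^{\leq 0}$, use the right inclusion $\mathcal D_2^{\leq k} \subseteq \mathcal D_1^{\leq 0}$ (rewritten as $\mathcal D_1^{\leq 0} \subseteq \mathcal D_2^{\leq \cdot}$ after the appropriate shift coming from the left inclusion $\mathcal D_1^{\leq -n} \subseteq \mathcal D_2^{\leq k}$), and repeat the argument with products in place of coproducts. There is really no obstacle here, the proof is a bookkeeping exercise with shifts; the only thing one has to get right is the translation between the aisle inclusion $\mathcal D_1^{\leq -n} \subseteq \mathcal D_2^{\leq k} \subseteq \mathcal D_1^{\leq 0}$ and the coaisle inclusion $\mathcal D_1^{\geq 0} \subseteq \mathcal D_2^{\geq k} \subseteq \mathcal D_1^{\geq -n}$, which simply comes from orthogonality in a $t$-structure.
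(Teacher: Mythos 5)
Your proposal is correct and follows essentially the same route as the paper: translate the type $(n,k)$ hypothesis into the sandwich $\mathcal D_1^{\geq 0}\subseteq \mathcal D_2^{\geq k}\subseteq \mathcal D_1^{\geq -n}$ (resp.\ $\mathcal D_1^{\leq 0}\subseteq \mathcal D_2^{\leq k+n}$ and $\mathcal D_2^{\leq k}\subseteq \mathcal D_1^{\leq 0}$), apply the shifted $h$-smashing (resp.\ $h$-cosmashing) hypothesis to the co/product, and shift back to land in $\mathcal D_1^{\geq -(n+h)}$ (resp.\ $\mathcal D_1^{\leq n+h}$). The only difference is expository: you make explicit the orthogonality step converting aisle inclusions into coaisle inclusions, which the paper leaves implicit.
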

\begin{proof}
Let $\{D_i\}_i$ be a family of objects in $\mathcal D_1^{\geq0}\subseteq \mathcal D_2^{\geq k}$; if $\t_2$ is $h$-smashing, $\coprod_i D_i\in \mathcal D_2^{\geq k-h}\subseteq \mathcal D_1^{\geq -n-h}$. Similarly, consider a family $\{E_i\}_i$ of objects in $\mathcal D_1^{\leq0}\subseteq \mathcal D_2^{\leq k+n}$; if $\t_2$ is $h$-cosmashing, $\prod_iD_i\in\mathcal D_2^{\leq k+n+h}\subseteq \mathcal D_1^{\leq n+h}$.
\end{proof}

Notice that, by Lemmas \ref{brown_implies_ess_su} and \ref{finite_distance}, when a tilting $t$-structure is at finite distance from a classical tilting $t$-structure, then it is cosmashing and $k$-smashing for some $k\in\N$. Similarly, when a cotilting $t$-structure is at finite distance from a classical tilting $t$-structure, then it is smashing and $k$-cosmashing for some $k\in\N$. This easy observation allows us to apply our machinery of realization functors to such $t$-structures:

\begin{thm}\label{general_tilt_thm}
Let $\D\colon \Cat^{\op}\to \CAT$ be a strong, stable derivator, and suppose that we have a tilting set $\T$ and a classical tilting set $\S$ in $\D(\bbone)$. Let $\t_\T=(\D_\T^{\leq0},\D_\T^{\geq0})$ and $\t_\S=(\D_\S^{\leq0},\D_\S^{\geq0})$ be the induced $t$-structures and suppose that they have finite distance. Letting $\A_\T:=\D_\T^{\leq0}(\bbone)\cap\D_\T^{\geq0}(\bbone)$, there is an equivalence of prederivators
\[
\real_{\t_\T}\colon \sfD_{\A_\T}\to \D
\]
that restricts to equivalences $\real_{\t_\T}^-\colon \sfD_{\A_\T}^-\to \D^{-}_{\t_\T}$, $\real_{\t_\T}^b\colon \sfD_{\A_\T}^b\to \D^{b}_{\t_\T}$, and $y(\A_\T)\to \D^{\heartsuit}$. In particular, $\sfD_{\A_\T}$ is a (strong and stable) derivator.
\end{thm}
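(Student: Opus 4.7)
The plan is to reduce the statement to Theorem \ref{unbounded_ff_real}, which will give fully faithfulness of $\real_{\t_\T}$, and then to promote fully faithfulness to essential surjectivity by a Brown-representability argument fueled by the auxiliary classical tilting set $\S$.

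First I would verify that $\t_\T$ satisfies the dual set of hypotheses (1')--(5') of Theorem \ref{unbounded_ff_real}. By Lemma \ref{lem_for_ff_co_tilting_from_lit}(1), the tilting $t$-structure $\t_\T$ is (co)effa\c cable, non-degenerate, and $0$-cosmashing, and its heart $\A_\T$ has $\T$ as a set of projective generators (in particular, enough projectives). The missing $k$-smashing hypothesis comes from Lemma \ref{lem_for_ff_co_tilting_from_lit}(3), which tells us $\t_\S$ is $0$-smashing, and Lemma \ref{finite_distance}, which then forces $\t_\T$ to be $n$-smashing for the gap $n$ of the pair $(\t_\T,\t_\S)$. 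Applying Theorem \ref{unbounded_ff_real} will yield an exact morphism of prederivators $\real_{\t_\T}\colon \sfD_{\A_\T}\to \D$, fully faithful at every level and commuting with arbitrary products and coproducts; Proposition \ref{effacable} will then produce the bounded equivalence $\real_{\t_\T}^b\colon \sfD_{\A_\T}^b\to \D^b_{\t_\T}$, and the equivalence on hearts is tautological.

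The main work will be to upgrade fully faithfulness to an equivalence at every level $I\in\Cat$. Let $\mathcal{E}_I$ denote the essential image of $\real_{\t_\T,I}$. Since $\real_{\t_\T,I}$ is triangulated and commutes with coproducts, $\mathcal{E}_I$ is a localizing subcategory of $\D(I)$, so by Lemma \ref{brown_implies_ess_su} it is enough to exhibit a set of compact generators of $\D(I)$ inside $\mathcal{E}_I$. For each $i\in I$ and each $S\in\S$, the object $i_!S\in\D(I)$ is compact (because $i^*$, being both a left and a right adjoint, preserves coproducts), and the family $\{i_!S : i\in I,\, S\in\S\}$ generates $\D(I)$ via adjunction, axiom (Der.$2$), and the fact that $\S$ generates $\D(\bbone)$. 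By the finite distance hypothesis, $\S\subseteq \D^b_{\t_\T}(\bbone)$, say $\S\subseteq \D^{\leq m}(\bbone)\cap \D^{\geq -m}(\bbone)$; the pointwise formula $(i_!S)_j\cong\bigoplus_{\alpha\colon i\to j}S$, combined with closure of the aisle under coproducts and the $n$-smashing bound controlling the coaisle under coproducts, then gives $i_!S\in \D^b_{\t_{\T,I}}(I)$. Since $\real_{\t_\T,I}^b$ is an equivalence onto $\D^b_{\t_{\T,I}}(I)$, each $i_!S$ lies in $\mathcal{E}_I$, and Lemma \ref{brown_implies_ess_su} completes the argument.

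With $\real_{\t_\T}$ now an equivalence of prederivators, $\sfD_{\A_\T}$ inherits from $\D$ the structure of a strong, stable derivator, and the restriction $\real_{\t_\T}^-\colon \sfD_{\A_\T}^-\to \D^-_{\t_\T}$ follows automatically by the $t$-exactness of the equivalence (right-bounded complexes correspond to right-bounded objects, and conversely). The main obstacle is precisely the essential surjectivity step: Theorem \ref{unbounded_ff_real} on its own only yields fully faithfulness, and it is the supplementary classical tilting set $\S$, together with the quantitative smashing bound extracted from Lemma \ref{finite_distance}, that is indispensable at every level $I$ for feeding Brown representability with compact generators lying inside the bounded range $\D^b_{\t_{\T,I}}(I)$.
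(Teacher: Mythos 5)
Your high-level architecture matches the paper's: verify the hypotheses of Theorem~\ref{unbounded_ff_real} for $\t_\T$ using Lemma~\ref{lem_for_ff_co_tilting_from_lit} and Lemma~\ref{finite_distance}, conclude full faithfulness of $\real_{\t_\T}$, and then upgrade to essential surjectivity via Lemma~\ref{brown_implies_ess_su} with $\S$ supplying the compact generators inside the bounded range. Your treatment of the essential-surjectivity step is in fact more explicit than the paper's: you correctly identify the compact generators $i_!S$ of $\D(I)$, explain why they are compact, and use the $n$-smashing bound to verify $i_!S\in\D^b_{\t_{\T,I}}(I)$, whereas the paper's own sentence ``thus by Lemma~\ref{brown_implies_ess_su}, $\real_{\t_\T}(\sfD_{\A_\T}(I))=\D(I)$ for all $I$'' leaves this precisely to the reader.

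However, there is a genuine gap at the step where you check hypothesis~(1') of Theorem~\ref{unbounded_ff_real}. The notion of ``(co)effa\c cable'' used in that theorem is the one defined for $t$-structures on a derivator, which quantifies over \emph{all} levels $I\in\Dia$: for all $I$, given $X,Y\in\A^I$, $n>0$, and $\phi\colon X\to\Sigma^n Y$ in $\D(I)$, there must exist an epimorphism $\psi\colon Z\to X$ in $\A^I$ with $\phi\psi=0$. Lemma~\ref{lem_for_ff_co_tilting_from_lit}(1), however, is a statement about a single triangulated category $\mathcal D=\D(\bbone)$ (its proof cites results of Psaroudakis--Vitoria for triangulated categories), so it only delivers effa\c cability at level $\bbone$. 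Your proof cites this lemma as if it settled the derivator-level condition and never addresses $I\neq\bbone$. The paper in fact devotes the bulk of its proof to exactly this verification: after noting that cosmashing, non-degeneracy, $n$-smashing, exactness of products, projective generation, and (Ab.$4$)-$n$ all lift trivially to $\A_\T^I$, it gives an explicit argument that $\t_I$ is effa\c cable, by showing that for any $Y_2\in\A_\T^I$ the projective generators $\{i_!T : i\in I,\ T\in\T\}$ of $\A_\T^I$ satisfy $\D(I)(\coprod_{i,T} i_!T^{(A_{i,T})},\Sigma^n Y_2)\cong\prod_{i,T}\prod_{A_{i,T}}\D(\bbone)(T,\Sigma^n i^*Y_2)=0$ for $n>0$. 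This is not a formal consequence of the level-$\bbone$ case, and without it you cannot legitimately invoke Theorem~\ref{unbounded_ff_real}.
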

\begin{proof}
By Lemma \ref{lem_for_ff_co_tilting_from_lit}, $(\D_\T^{\leq0}(\bbone),\D_\T^{\geq0}(\bbone))$ is (co)effa\c{c}able, cosmashing, non-degenerate and it is $n$-smashing, for some $n\in\N$, by Lemma \ref{finite_distance}. On the other hand,  $\A_\T$ has exact products, it has a projective generator $X$ and it is (Ab.$4$)-$n$. Let us verify that $\t_I:=(\D_\T^{\leq0}(I),\D_\T^{\geq0}(I))$ and $\A_\T^I=\D_\T^{\leq0}(I)\cap\D_\T^{\geq0}(I)$ have the same properties for all $I\in\Dia$. In fact it is easy to verify that $\t_I$ is cosmashing, non-degenerate, $n$-smashing, and that $\A_\T^I$ has exact products, it has a set of projective generators $\{i_!T:i\in I,\, T\in\T\}$ and it is (Ab.$4$)-$n$. It remains to show that $\t_I$ is effa\c{c}able. Let $Y_1,\, Y_2\in \A_\T^I$, $n>0$ and  $\phi\colon Y_1\to \Sigma^nY_2$ in $\D(I)$. Since $\{i_!T:i\in I,\, T\in\T\}$  is a family of generators, there is an epimorphism $\coprod_{i\in I,\, T\in \T}i_!T^{(A_{i,T})}\to Y_1$. Then, 
\[
\D(I)\left(\coprod_{i\in I,\, T\in \T}i_!T^{(A_{i,T})},\Sigma^nY_2\right)=\prod_{i\in I,\, T\in \T}\prod_{A_{i,T}}\D(I)(T,\Sigma^ni^*Y_2)=0
\]
since $Y_2\in \T^{\perp_{>0}}$. Hence, by Theorem \ref{unbounded_ff_real}, 
\[
\real_{\t_\T}\colon \sfD_{\A_\T}\to \D
\]
is exact, fully faithful and it commutes with products and coproducts. Therefore, the essential image of  $\real_{\t_\T}$ is a localizing category, furthermore $\S\subseteq \S^{\perp_{\neq0}}\subseteq  \D^b_{\t_\T}= \real_{\t_\T}(\sfD_{\A_\T}^b)$, thus by Lemma \ref{brown_implies_ess_su}, $\real_{\t_\T}(\sfD_{\A_\T}(I))=\D(I)$ for all $I$, so $\real_{\t_\T}$ is also essentially surjective.
\end{proof}

As a corollary we obtain one of the main results of~\cite{NSZ}:

\begin{cor}\label{coro_NSZ_tilt_alg}\cite[Thm.\,7.4]{NSZ}
Let $K$ be a commutative ring, let $\mathcal D$ be a compactly generated algebraic triangulated $K$-category, let $\T$ be a bounded tilting set of $\mathcal D$ and let $\A_\T$ be the heart of the associated $t$-structure. The inclusion $\A_\T\to \mathcal D$ extends to a triangulated equivalence $\Psi\colon \sfD(\A_\T)\to \mathcal D$.
\end{cor}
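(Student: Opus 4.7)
The plan is to reduce to Theorem \ref{general_tilt_thm} after enhancing $\mathcal D$ to a strong stable derivator. Since $\mathcal D$ is algebraic and compactly generated, a theorem of Keller identifies $\mathcal D$ with the unbounded derived category $\mathrm{D}(\mathcal B)$ of DG-modules over some small DG $K$-category $\mathcal B$. The projective model structure on $\mathrm{DGMod}(\mathcal B)$ is stable, so by the construction recalled in Subsection \ref{prelim_stab_der} it gives rise to a strong, stable derivator $\D\colon \Cat^{\op}\to \CAT$ with $\D(\bbone)\cong\mathcal D$.

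Next, the representable DG-modules $\S := \{\mathcal B(-,b) : b\in \Ob(\mathcal B)\}$ form a set of compact generators of $\D(\bbone)$, and since they are cofibrant they satisfy $\Add(\S)\subseteq \S^{\perp_{\neq 0}}$; hence $\S$ is a classical tilting set in $\D(\bbone)\cong\mathcal D$ whose associated $t$-structure $\t_\S$ coincides with the canonical $t$-structure on $\mathrm{D}(\mathcal B)$. The \emph{bounded} hypothesis on the tilting set $\T$ in the sense of~\cite{NSZ} translates precisely to: the $t$-structure $\t_\T$ is at finite distance from $\t_\S$ in the sense used throughout this section. This translation between the two vocabularies (together with the observation that NSZ's definition of a bounded tilting set forces exactly the two-sided containment $\D_\S^{\leq -n}\subseteq \D_\T^{\leq 0}\subseteq \D_\S^{\leq n}$ for some $n\in\N$) is the main and essentially the only nontrivial bookkeeping in the argument.

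Once this is granted, Theorem \ref{general_tilt_thm} applies directly and yields an equivalence of prederivators $\real_{\t_\T}\colon \sfD_{\A_\T}\to \D$, which moreover restricts to the inclusion $y(\A_\T)\hookrightarrow \D^{\heartsuit}\cong \A_\T$ on hearts. Setting $\Psi := (\real_{\t_\T})_{\bbone}\colon \sfD(\A_\T)\to \mathcal D$ thus gives the required triangulated equivalence extending the inclusion $\A_\T\to \mathcal D$. The main obstacle in the plan above is only the identification of NSZ's notion of boundedness with the finite-distance condition of Section \ref{Sec:lifting}; every other step is a direct citation of results already established in the paper.
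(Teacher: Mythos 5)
Your approach matches the paper's almost step for step: enhance $\mathcal D$ to a strong, stable derivator via the stable model structure on dg-modules over a small dg-category $\mathcal B$ realizing $\mathcal D$, identify the representables as a classical tilting set $\S$, translate NSZ's ``bounded'' to the paper's ``finite distance'' condition, and cite Theorem~\ref{general_tilt_thm}.

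The one point where your justification is actually wrong (rather than merely terse, as the paper's is at the same spot) is the claim that cofibrancy of the representables gives $\Add(\S)\subseteq\S^{\perp_{\neq 0}}$. Cofibrancy only tells you how to compute: $\D(\bbone)(\mathcal B(-,b),\Sigma^n\mathcal B(-,b'))\cong H^n\mathcal B(b,b')$. It does not make these groups vanish for $n\neq 0$. That vanishing holds precisely when $\mathcal B$ has cohomology concentrated in degree $0$, which need not be true for an arbitrary dg-category of compact generators: take $\mathcal B$ to be the one-object dg-category given by the exterior algebra $K[\varepsilon]$ with $|\varepsilon|=-1$ and $d=0$; the representable is cofibrant, compact and generating, yet $H^{-1}\mathcal B\neq 0$, so it is not a tilting object. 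What is really needed here --- and what must be extracted from the hypothesis of a bounded tilting set together with~\cite{stovivcek2016compactly} and~\cite[Section~5]{NSZ} --- is that $\mathcal B$ can be chosen so that the representables do form a classical tilting set. Cofibrancy plays no role in that step.
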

\begin{proof}
Compactly generated algebraic triangulated $K$-categories are triangle equivalent to the derived categories of small dg-categories, and such derived categories are the homotopy categories of a suitable (combinatorial) model category. We refer to~\cite[Sections 2.2 and 2.3]{stovivcek2016compactly} for a more detailed discussion. In particular, one can show that there is a strong stable derivator $\D\colon \Cat^{\op}\to \CAT$ such that $\D(\bbone)\cong \mathcal D$. Furthermore, using that $\mathcal D$ is equivalent to the derived category of a small dg-category, one can consider the set $\S$ of representable modules as a classical tilting set in $\mathcal D$. By the results in~\cite[Section 5]{NSZ}, the fact that $\S$ is bounded implies that the $t$-structures $\t_\T$ and $\t_\S$ are at finite distance, so Theorem \ref{general_tilt_thm} applies to give an equivalence of derivators $\real_{\t_\T}\colon \sfD_{\A_\T}\to \D$ that, evaluated at $\bbone$, gives an equivalence $ \sfD({\A_\T})\to \mathcal D$.
\end{proof}

Let us now pass to the dual setting of cotilting $t$-structures:

\begin{thm}\label{general_co_tilt_thm}
Let $\D\colon \Cat^{\op}\to \CAT$ be a strong, stable derivator, and suppose that we have a cotilting set $\mathbf C$ and a classical tilting set $\T$ in $\D(\bbone)$. Let $\t_{\mathbf C}=(\D_{\mathbf C}^{\leq0},\D_{\mathbf C}^{\geq0})$ and $\t_\T=(\D_\T^{\leq0},\D_\T^{\geq0})$ be the induced $t$-structures and suppose that they have finite distance. Letting $\A_{\mathbf C}:=\D_{\mathbf C}^{\leq0}(\bbone)\cap\D_{\mathbf C}^{\geq0}(\bbone)$, there is an equivalence of prederivators
\[
\real_{\t_{\mathbf C}}\colon \sfD_{\A_{\mathbf C}}\to \D
\]
that restricts to equivalences $\real_{\t_{\mathbf C}}^+\colon \sfD_{\A_{\mathbf C}}^+\to \D^{+}_\t$, $\real_{\t_{\mathbf C}}^b\colon \sfD_{\A_{\mathbf C}}^b\to \D^{b}_{\t_{\mathbf C}}$, and $y(\A_{\mathbf C})\to \D^{\heartsuit}$. In particular, $\sfD_{\A_{\mathbf C}}$ is a (strong and stable) derivator.
\end{thm}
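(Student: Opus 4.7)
The plan is to dualize the proof of Theorem \ref{general_tilt_thm} systematically. First, by Lemma \ref{lem_for_ff_co_tilting_from_lit}(2), the cotilting $t$-structure $\t_{\mathbf C}$ on $\D(\bbone)$ is (co)effa\c cable, non-degenerate, $0$-smashing, and its heart $\A_{\mathbf C}$ has exact coproducts with $\mathbf C$ as a set of injective cogenerators. Since $\t_{\mathbf C}$ has finite distance from $\t_\T$, and a classical tilting $t$-structure is cosmashing by Lemma \ref{lem_for_ff_co_tilting_from_lit}(1), Lemma \ref{finite_distance} yields that $\t_{\mathbf C}$ is $n$-cosmashing for some $n\in\N$; dually, this forces $\A_{\mathbf C}$ to be (Ab.$4^*$)-$n$.

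Next, I would verify that these properties lift to $\D(I)$ for every $I\in\Dia$. The lifted $t$-structure $\t_I$ inherits smashing, $n$-cosmashing, and non-degeneracy directly from the pointwise description in Proposition \ref{lift_tstructure_general}. Moreover, since $i_*\colon \D(\bbone)\to \D(I)$ is right adjoint to $i^*$ (which restricts to an exact functor between hearts), the family $\{i_*C:i\in I,\, C\in\mathbf C\}$ is a set of injective cogenerators of $\A_{\mathbf C}^I$. Co-effa\c cability of $\t_I$ is then checked as follows: given $Y_1,Y_2\in\A_{\mathbf C}^I$ and $\phi\colon Y_1\to \Sigma^nY_2$ with $n>0$, embed $Y_2$ as a monomorphism $\psi\colon Y_2\to Z:=\prod_{i\in I,\, C\in\mathbf C}(i_*C)^{A_{i,C}}$; then
\[
\D(I)\left(Y_1,\Sigma^n Z\right)\cong \prod_{i,C}\prod_{A_{i,C}}\D(\bbone)(i^*Y_1,\Sigma^n C)=0,
\]
since $i^*Y_1\in \A_{\mathbf C}\subseteq {}^{\perp_{>0}}\mathbf C$, so $\Sigma^n\psi\circ\phi=0$, as required.

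With all hypotheses (1)--(5) of Theorem \ref{unbounded_ff_real} satisfied at every level, we conclude that $\real_{\t_{\mathbf C}}\colon \sfD_{\A_{\mathbf C}}\to \D$ is exact, fully faithful, and commutes with products and coproducts. By Proposition \ref{effacable} and the construction in Subsection \ref{half_bounded_subs}, it restricts to equivalences $\sfD^b_{\A_{\mathbf C}}\to \D^b_{\t_{\mathbf C}}$, $\sfD^+_{\A_{\mathbf C}}\to \D^+_{\t_{\mathbf C}}$, and $y(\A_{\mathbf C})\to \D^\heartsuit_{\t_{\mathbf C}}$.

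For essential surjectivity at each $I\in\Dia$, the essential image of $\real_{\t_{\mathbf C}}$ is a triangulated subcategory of $\D(I)$ closed under coproducts, hence localizing. By finite distance, $\T\subseteq \D^b_{\t_{\mathbf C}}(\bbone)$, so each $i_!T$ with $i\in I,\, T\in \T$ lies in the essential image (using that $i_!$ preserves the bounded subcategory up to a finite shift controlled by the finite distance, combined with exactness of $\real_{\t_{\mathbf C}}$). Since $\{i_!T:i\in I,\, T\in\T\}$ is a set of compact generators of $\D(I)$ (as the classical tilting set $\T$ consists of compact generators of $\D(\bbone)$), Lemma \ref{brown_implies_ess_su} forces the essential image to equal $\D(I)$. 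The main obstacle will be the careful verification of co-effa\c cability and of the injective-cogenerator property at the lifted level $\A_{\mathbf C}^I$, since the dualization of the tilting argument swaps $i_!$ with $i_*$ and coproducts with products, and one must check that the product appearing in the co-effa\c cability argument is well-behaved (which is precisely where the $n$-cosmashing hypothesis plays its role).
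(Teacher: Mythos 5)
Your proof is correct, and for the fully-faithfulness and commutation-with-(co)products part you follow the same dualization of Theorem~\ref{general_tilt_thm} that the paper uses. Where you diverge is in the argument for essential surjectivity. You keep the ``localizing'' viewpoint from the tilting case: the essential image of $\real_{\t_{\mathbf C}}$ at level $I$ is a localizing triangulated subcategory of $\D(I)$ containing the compact generators $\{i_!T: i\in I,\, T\in\T\}$ (which land in $\D^b_{\t_{\mathbf C}}(I)$ because $\T\subseteq \T^{\perp_{\neq 0}}\subseteq \D^b_{\t_{\mathbf C}}(\bbone)$ by finite distance, and $j^*i_!T$ is a coproduct of copies of $T$, hence stays bounded since $\t_{\mathbf C}$ is smashing), so Lemma~\ref{brown_implies_ess_su} applies directly. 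The paper instead first invokes Theorem~\ref{compact_case} to identify $\D$ with $\sfD_{\Mod(\C_\T)}$, then observes that the essential image is a \emph{colocalizing} sub-prederivator containing $\sfD^b_{\Mod(\C_\T)}$, and concludes via the principle of dual infinite d\'evissage (Lemma~\ref{devissage_of_der}(2), which needs $\Mod(\C_\T)$ to have exact products). Your version is literally the ``same side'' of the argument as in the tilting case and relies on Brown representability through Lemma~\ref{brown_implies_ess_su}; the paper's version is the properly dual argument (colocalizing + dual d\'evissage) but buys nothing extra in generality here since both the localizing and colocalizing closure properties are available once $\real_{\t_{\mathbf C}}$ is known to commute with products and coproducts. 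Both are valid, and either is a legitimate completion of the proof.
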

\begin{proof}
The proof that $\real_{\t_{\mathbf C}}\colon \sfD_{\A_{\mathbf C}}\to \D$ is exact, fully faithful and that it commutes with products and coproducts is completely dual to that given for Theorem \ref{general_tilt_thm}. It remains to verify that $\real_{\t_{\mathbf C}}$ is essentially surjective. By Theorem \ref{compact_case}, there is a small preadditive category $\C_\T$ and an equivalence of derivators
\[
\real_{\t_\T}\colon \sfD_{\Mod(\C_\T)}\to \D, 
\]
so we can identify these two derivators and consider $\real_{\t_{\mathbf C}}$ as a morphism $\sfD_{\A_{\mathbf C}}\to \sfD_{\Mod(\C_\T)}$. Notice that the image of this morphism is a colocalizing sub-prederivator containing $\sfD^b_{\Mod(\C_\T)}$ so, by Lemma \ref{devissage_of_der}, $\real_{\t_{\mathbf C}}$ is  essentially surjective.
\end{proof}

The setting of the above theorem may seem a bit unnatural since we are asking to a cotilting $t$-structure to be ``close" to a tilting $t$-structure. To show that this is not so artificial, let us recall the following definition from~\cite{hugel2001infinitely}.
 
\begin{defn}\label{big_cotilting}
Given a ring $R$, a right $R$-module $C$ is said to be {\bf big cotilting} if there exists $n\in\N$ such that the following three conditions are satisfied:
\begin{enumerate}
\item $C$ has injective dimension bounded by $n$;
\item $\Ext^j_R(C^I, C) = 0$ for every $j > 0$ and every set $I$;
\item there exists an exact sequence
$
0\to C_r \to \ldots\to C_1\to C_0\to Q \to 0
$
in $\Mod(R)$ such that $Q$ is an injective cogenerator of $\Mod(R)$, $r\in\N$ and $C_i \in \Prod(C)$ for all $i$.
\end{enumerate}
\end{defn}

By~\cite[Thm.\,4.5]{stovicek2014derived}, a big cotilting $R$-module $C$, when considered as an object of $\sfD(\Mod(R))$, is a cotilting object. Furthermore, letting $\t_C$ be the associated $t$-structure in $\sfD(\Mod(R))$, applying~\cite[Prop.\,4.17 and 4.14]{Jorge_e_Chrisostomos}, one gets that $\t_C$ is at finite distance from the natural $t$-structure $\t_R$ in $\sfD(\Mod(R))$. To conclude notice that $\t_R$ is induced by the classical tilting object $R$, so we are exactly in the setting of Theorem \ref{general_co_tilt_thm}. In this way we obtain a new proof for the following important result:

\begin{cor}\label{coro_stovicek2014derived}\cite[Thm.\,5.21]{stovicek2014derived}
Let $R$ be a ring, $C \in \Mod(R)$ a big cotilting module and $\mathcal G$ the corresponding tilted Abelian category.
Then the prederivators $\sfD_{\Mod(R)}$ and $\sfD_{\mathcal G}$ are equivalent and
both are strong stable derivators.
\end{cor}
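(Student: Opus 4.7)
The plan is to recognize this as a direct consequence of Theorem \ref{general_co_tilt_thm} once we verify its hypotheses in the specific setting of $\D = \sfD_{\Mod(R)}$. The discussion immediately before the corollary essentially spells out the input we need: the big cotilting module $C$ gives a cotilting object in $\sfD(\Mod(R))$ by \cite[Thm.\,4.5]{stovicek2014derived}, the free module $R$ is a classical tilting object generating the natural $t$-structure $\t_R$, and by \cite[Prop.\,4.14 and 4.17]{Jorge_e_Chrisostomos} the $t$-structures $\t_C$ and $\t_R$ are at finite distance. So essentially all the work has already been done.

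The detailed steps I would carry out are the following. First I would observe that $\sfD_{\Mod(R)}\colon \Cat^{\op}\to \CAT$ is a strong stable derivator: since $\Mod(R)$ is a Grothendieck category with a projective generator, it carries the standard projective model structure on unbounded complexes, and the associated prederivator is a strong stable derivator by the discussion around Theorem \ref{derivator_from_approximation} and the example at the end of Subsection \ref{prelim_stab_der}. Second, I would take $\mathbf{C}:=\{C\}\subseteq \sfD(\Mod(R))$ and $\T:=\{R\}\subseteq \sfD(\Mod(R))$: by the cited results, $\mathbf C$ is a cotilting set and $\T$ is a classical tilting set, and their associated $t$-structures $\t_C$ and $\t_R$ are at finite distance. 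Third, I would identify the heart $\A_{\mathbf C}=\D_{\mathbf C}^{\leq 0}(\bbone)\cap \D_{\mathbf C}^{\geq 0}(\bbone)$ with the tilted Abelian category $\mathcal G$ associated to $C$; this is precisely the definition of $\mathcal G$ in the terminology of \cite{stovicek2014derived}.

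At this point the hypotheses of Theorem \ref{general_co_tilt_thm} are satisfied, so I obtain an equivalence of prederivators
\[
\real_{\t_C}\colon \sfD_{\mathcal G}=\sfD_{\A_{\mathbf C}}\longrightarrow \sfD_{\Mod(R)},
\]
which moreover restricts to equivalences on the bounded and left-bounded levels. Since $\sfD_{\Mod(R)}$ is a strong stable derivator and this property is invariant under equivalence of prederivators, $\sfD_{\mathcal G}$ is also a strong stable derivator, as claimed.

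There is no real obstacle here beyond bookkeeping: the difficulty of the result is entirely absorbed into Theorem \ref{general_co_tilt_thm} and the two cited external facts (that $C$ is cotilting in the derived sense, and that $\t_C$ has finite distance from $\t_R$). The only minor point worth being explicit about is that $\sfD_{\mathcal G}$ a priori is only a prederivator (since $\mathcal G$ need not be Grothendieck) and the strength/stability has to be transported along the equivalence rather than verified intrinsically on $\mathcal G$; this is automatic because both axioms of a derivator and the strength/stability conditions are formulated in terms that are manifestly preserved by equivalences of prederivators.
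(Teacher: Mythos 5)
Your proposal is correct and follows exactly the same route the paper takes: the paper's justification is precisely the paragraph preceding the corollary, which invokes \cite[Thm.\,4.5]{stovicek2014derived} to get that $C$ is cotilting in $\sfD(\Mod(R))$, invokes \cite[Prop.\,4.17 and 4.14]{Jorge_e_Chrisostomos} for the finite-distance condition against the canonical $t$-structure $\t_R$ generated by the classical tilting object $R$, and then applies Theorem \ref{general_co_tilt_thm}. Your additional remarks (that $\sfD_{\Mod(R)}$ is a strong stable derivator, and that this property transports along equivalences of prederivators to $\sfD_{\mathcal G}$) are correct and merely make explicit what the paper leaves implicit.
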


As another consequence of our general theory of realization functors, one can obtain a ``Derived Morita Theory" for Abelian categories, completing~\cite[Thm.\,A]{Jorge_e_Chrisostomos}. Let us remark that the implication ``(3)$\Rightarrow$(1)", in the special case when $X\in \A\subseteq \sfD(\A)$, can be proved alternatively with the methods of~\cite{fiorot2017derived}.

\begin{thm}\label{morita_derived_derivators}
Let $\A$ be a Grothendieck category (resp., an (Ab.$4^*$)-$h$ Grothendieck category for some $h\in\N$), denote by $\t_\A=(\sfD_\A^{\leq0},\sfD_\A^{\geq0})$ the canonical $t$-structure on $\sfD_\A$, and let $\B$ be an Abelian category. The following are equivalent:
\begin{enumerate}
\item $\B$ has a projective generator (resp., an injective cogenerator) and there is an exact equivalence $\sfD(\B)\to \sfD(\A)$ that restricts to bounded derived categories;
\item $\B$ has a projective generator  (resp., an injective cogenerator) and there is an exact equivalence of prederivators $\sfD_\B\to \sfD_\A$ that restricts to an equivalence $\sfD^b_\B\to \sfD^b_\A$;
\item there is a tilting (resp., cotilting) object $X$  in $\sfD(\A)$,  whose heart $X^{\perp_{\neq 0}}$ (resp., $ {}^{\perp_{\neq0}}X$) is equivalent to $\B$ and such that the associated $t$-structure $\t_X=(\D_X^{\leq0},\D_X^{\geq0})$ has finite distance from $\t_\A$.
\end{enumerate} 
\end{thm}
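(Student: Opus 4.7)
The plan is to prove the three-way equivalence by the cycle (3)$\Rightarrow$(2)$\Rightarrow$(1)$\Rightarrow$(3), where (2)$\Rightarrow$(1) is immediate by evaluation at $\bbone$. I focus on the tilting case throughout, since the cotilting case is formally dual (using the (Ab.$4^*$)-$h$ hypothesis to ensure $\sfD_\A$ is a well-defined prederivator and that the dual of Lemma \ref{devissage_of_der} gives dual infinite d\'evissage by $\A^I$ in $\sfD(\A^I)$).

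For (3)$\Rightarrow$(2), the idea is to apply the dual of Theorem \ref{unbounded_ff_real} to the derivator $\sfD_\A$ with the tilting $t$-structure $\t_X$. The hypotheses (1'), (2'), (4'), (5') are immediate from Lemma \ref{lem_for_ff_co_tilting_from_lit}(1): $X$ tilting gives effa\c{c}ability, non-degeneracy, cosmashing-ness, and a projective generator in the heart. For hypothesis (3'), observe that the canonical $t$-structure $\t_\A$ on the Grothendieck category $\A$ is $0$-smashing (since Grothendieck categories have exact coproducts), so by Lemma \ref{finite_distance} the finite-distance assumption makes $\t_X$ a $k$-smashing $t$-structure for some $k\in\N$. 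Identifying the heart of $\t_X$ at each $I\in\Dia$ with $\B^I$ via Proposition \ref{lift_tstructure_general}, the theorem produces an exact, fully faithful morphism of prederivators $\real_{\t_X}\colon \sfD_\B\to\sfD_\A$ commuting with products and coproducts. Essential surjectivity at each level $I$ follows from d\'evissage: $\A^I$ is again Grothendieck, hence by Lemma \ref{devissage_of_der}(2) we have $\sfD(\A^I)=\mathrm{Loc}(\A^I)$; the essential image of $\real_{\t_X}$ at level $I$ is localizing (since $\real_{\t_X}$ commutes with coproducts) and contains $\sfD^b(\A^I)$, because the restriction $\real_{\t_X}^b$ is an equivalence onto $\D^b_{\t_X}(I)$ which, by the finite-distance hypothesis, coincides with $\sfD^b(\A^I)=\D^b_{\t_\A}(I)$.

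For (1)$\Rightarrow$(3), the plan is to reduce to the already-established bounded derived Morita theory of~\cite[Thm.\,A]{Jorge_e_Chrisostomos}. By assumption the unbounded equivalence $F\colon\sfD(\B)\to\sfD(\A)$ restricts to an exact equivalence $F^b\colon\sfD^b(\B)\to\sfD^b(\A)$; applying~\cite[Thm.\,A]{Jorge_e_Chrisostomos} to $F^b$ immediately yields a tilting (resp., cotilting) object $X\in\sfD(\A)$ whose heart is equivalent to $\B$ and whose associated $t$-structure $\t_X$ has finite distance from the canonical $t$-structure $\t_\A$.

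The main obstacle is the essential-surjectivity step in (3)$\Rightarrow$(2): one must verify that the bounded equivalence $\real_{\t_X}^b\colon\sfD^b_\B\to\D^b_{\t_X}$ combined with the finite-distance condition indeed hits every object of $\sfD^b(\A^I)$ at each level $I$, and that the d\'evissage from $\A^I$ to all of $\sfD(\A^I)$ propagates correctly through the coproduct-preservation of $\real_{\t_X}$. Once these routine level-wise checks are in place, all three implications follow from the machinery already developed.
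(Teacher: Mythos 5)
Your proposal is correct and follows the same route as the paper: the cycle (3)$\Rightarrow$(2)$\Rightarrow$(1)$\Rightarrow$(3), with (1)$\Rightarrow$(3) handled by citing~\cite[Thm.\,A]{Jorge_e_Chrisostomos}, (2)$\Rightarrow$(1) trivial by evaluation at $\bbone$, and (3)$\Rightarrow$(2) via the dual of Theorem~\ref{unbounded_ff_real} combined with Lemmas~\ref{lem_for_ff_co_tilting_from_lit}, \ref{finite_distance} and the d\'evissage Lemma~\ref{devissage_of_der}. Your addition in the essential-surjectivity step --- that the finite-distance hypothesis identifies $\D^b_{\t_X}$ with $\sfD^b_{\t_\A}$ so the image of $\real_{\t_X}$ contains $\sfD^b(\A^I)$ --- is a valid refinement of the paper's shorter remark that the image contains $\A$.

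One point you pass over too quickly, though: you assert that ``(1'), (2'), (4'), (5') are immediate from Lemma~\ref{lem_for_ff_co_tilting_from_lit}(1).'' This is not quite true of hypothesis (1'). In this paper, effa\c{c}ability is a \emph{derivator-level} condition --- the defining property must hold at every $I\in\Dia$, not just at $\bbone$ --- whereas Lemma~\ref{lem_for_ff_co_tilting_from_lit} cites results of~\cite{Jorge_e_Chrisostomos} which live in a single triangulated category and therefore only give effa\c{c}ability at the base $\D(\bbone)$. The paper explicitly bridges this gap (in the proof of Theorem~\ref{general_tilt_thm}, to which Theorem~\ref{morita_derived_derivators} appeals): given $Y_1,Y_2$ in the heart of the lifted $t$-structure on $\D(I)$ and a morphism $\phi\colon Y_1\to\Sigma^n Y_2$, one produces an epimorphism from a coproduct of objects $i_!T$ ($i\in I$, $T$ in the tilting set) onto $Y_1$, and checks by adjunction that $\phi$ kills it using $Y_2\in T^{\perp_{>0}}$. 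Without this argument the hypothesis package for Theorem~\ref{unbounded_ff_real} is not verified, so you should not call it ``immediate''; the same caveat applies to the cosmashing, smashing and non-degeneracy conditions, though those do lift more formally. A small further remark: your appeal to \emph{dual} infinite d\'evissage in the cotilting case would require $\A^I$ to have exact products, which the (Ab.$4^*$)-$h$ hypothesis does not guarantee; but since $\real_{\t_X}$ commutes with coproducts as well, and $\A^I$ is Grothendieck hence has exact coproducts, you can instead use ordinary infinite d\'evissage via Lemma~\ref{devissage_of_der}(2), which is what actually closes the argument.
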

\begin{proof}
The implication ``(1)$\Rightarrow$(3)'' is proved in~\cite[Thm.\,A]{Jorge_e_Chrisostomos}, while the implication ``(2)$\Rightarrow$(1)'' is trivial. Let us give an argument for the implication ``(3)$\Rightarrow$(2)'' in the tilting case, the cotilting case is dual. Indeed, by Lemma \ref{lem_for_ff_co_tilting_from_lit}, $\t_X:=(X^{\perp_{>0}},X^{\perp_{<0}})$ is (co)effa\c{c}able, cosmashing and non-degenerate. Since coproducts are exact in any Grothendieck category, $\t_\A$ is smashing and so, by Lemma \ref{finite_distance}, $\t_X$ is $n$-smashing for some $n\in\N$. Similarly, $\B$ has exact products, it has a projective generator $X$ and it is (Ab.$4$)-$n$. Exactly as in the proof of Theorem \ref{general_co_tilt_thm}, one can show that the lifting of $\t_X$ to any $\sfD(\A^I)$ has the same properties. Hence, the dual of Theorem \ref{unbounded_ff_real} applies to give an exact fully faithful morphism of prederivators
\[
\real_{\t_X}\colon \sfD_\B\to \sfD_\A
\]
that commutes with products and coproducts. Therefore, for any $I\in\Cat$, $\real_{\t_X}(\sfD(\B^I))$ is a localizing subcategory of $\sfD(\A^I)$ that contains $\A$ so, by Lemma \ref{devissage_of_der}, $\real_{\t_X}(\sfD(\B^I))=\sfD(\A^I)$, so $\real_{\t_X}$ is also essentially surjective.
\end{proof}

\bibliographystyle{alpha}
\bibliography{refs}

\medskip
\noindent\rule{5cm}{0.4pt}

\medskip
Simone Virili -- \texttt{s.virili@um.es} or \texttt{virili.simone@gmail.com}\\
{Departamento de Matem\'{a}ticas,
Universidad de Murcia,  Aptdo. 4021,
30100 Espinardo, Murcia,
SPAIN}

\end{document}